\documentclass[a4paper,11pt]{article}
\usepackage[utf8]{inputenc}
\usepackage[hmargin={20mm,20mm},vmargin={25mm,25mm}]{geometry}
\usepackage{amsmath,amsfonts,amsthm,amssymb,mathtools}
\usepackage{cancel}
\usepackage{comment}
\usepackage{enumerate}
\usepackage{graphicx}
\usepackage[ocgcolorlinks,allcolors={blue}]{hyperref}
\usepackage{placeins}

\usepackage{xcolor}

\usepackage{authblk}
\usepackage[percent]{overpic}
\usepackage{epstopdf}
\usepackage{graphicx}
\usepackage{subcaption}
\usepackage{tabularx}
\usepackage{multirow}
\usepackage{booktabs}

\usepackage{tcolorbox}



\newtheorem{theorem}{Theorem}
\newtheorem{proposition}[theorem]{Proposition}
\newtheorem{lemma}[theorem]{Lemma}
\newtheorem{corollary}[theorem]{Corollary}
\theoremstyle{remark}
\newtheorem{remark}[theorem]{Remark}
\theoremstyle{definition}
\newtheorem{assumption}{Assumption}


\usepackage{stackengine}

\usepackage{newtxtext,newtxmath}

\usepackage[normalem]{ulem}
\normalem
\newcounter{corr}
\definecolor{violet}{rgb}{0.580,0.,0.827}
\newcommand{\corr}[3]{\typeout{Warning : a correction remains in page \thepage}
\stepcounter{corr}        
{\color{magenta}\ifmmode\text{\,\sout{\ensuremath{#1}}\,}\else\sout{#1}\fi}{\color{red}#2}{\color{violet} [#3]}
}
\definecolor{mygreen}{rgb}{0,0.4,0.2}


\def\b{\boldsymbol}

\newcommand{\red}[1]{{\color{red}#1}}

\newcommand{\R}{\mathbb{R}}    
\newcommand{\N}{\mathbb{N}} 
\newcommand{\Pk}{\mathbb{P}} 
\newcommand{\M}{\mathbb{M}} 
\newcommand{\Pkw}{\widehat{\Pk}}

\newcommand{\GRAD}{\b\nabla}                    
\newcommand{\GRADs}{\b{\epsilon}}         
\newcommand{\divs}{\nabla{\cdot}} 
\newcommand{\DIV}{\b\nabla{\cdot}}               
\newcommand{\norm}[2][]{\|#2\|_{#1}}

\newcommand{\tri}{|\!|\!|}

\newcommand{\VDL}{\b U_h(E)}
\newcommand{\VDG}{\b U_h}

\newcommand{\ZDG}{\b Z_h}
\newcommand{\QCG}{P}

\newcommand{\QDG}{P_h}

\def\P0{{\Pi^{0, E}_k}}

\def\PP0{{\boldsymbol{\Pi}^{0, E}_{k-1}}}


\newcommand{\email}[1]{\href{mailto:#1}{#1}}



\def\w{{\b w}}
\def\DOF{\boldsymbol{\chi}}  
 
\def\mesh{\Omega_h}
\def\Proj{\P0}


\title{Virtual Element methods for non-Newtonian shear-thickening fluid flow problems}
\author[1]{P. F. Antonietti\footnote{\email{paola.antonietti@polimi.it}}}
\author[2,5]{L. Beir\~{a}o da Veiga\footnote{\email{lourenco.beirao@unimib.it}}}
\author[1]{M. Botti\footnote{\email{michele.botti@polimi.it}}}
\author[3]{A. Harnist\footnote{\email{andre.harnist@utc.fr}}}
\author[4]{G. Vacca\footnote{\email{giuseppe.vacca@uniba.it}}}
\author[1]{M. Verani \footnote{\email{marco.verani@polimi.it}}}

\affil[1]{MOX-Laboratory for Modeling and Scientific Computing,  Dipartimento di Matematica, Politecnico di Milano, 
Piazza Leonardo da Vinci 32 - 20133 Milano, Italy}

\affil[2]{Dipartimento di Matematica e Applicazioni, 
Universit\`a degli Studi di Milano-Bicocca, 
Via Roberto Cozzi 55  - 20125 Milano, Italy}

\affil[3]{Universit\'e de technologie de Compi\`egne, Alliance Sorbonne Universit\'e, LMAC, Compi\`egne, France}

\affil[4]{Dipartimento di Matematica, 
Universit\`a degli Studi di Bari, 
Via Edoardo Orabona 4  - 70125 Bari, Italy}

\affil[5]{IMATI-CNR, 
Via Ferrata 1  - 27100 Pavia, Italy}

\begin{document}

\maketitle
\abstract{In this work, we present a comprehensive theoretical analysis for Virtual Element discretizations of incompressible non-Newtonian flows governed by the Carreau-Yasuda constitutive law, in the shear-thickening regime ($r>2$)  including both degenerate ($\delta=0$) and non-degenerate ($\delta>0$) cases. The proposed Virtual Element method features two distinguishing advantages: the construction of an exactly divergence-free discrete velocity field and compatibility with general polygonal meshes.
The analysis presented in this work extends the results of \cite{Antonietti_et_al_2024}, where only shear-thinning behavior ($1<r<2$) was considered. Indeed, the theoretical analysis of the shear-thickening setting requires several novel analytical tools, including: an inf–sup stability analysis of the discrete velocity-pressure coupling in non-Hilbertian norms, a stabilization term specifically designed to address the nonlinear structure as the exponent $r>2$; and the introduction of a suitable discrete norm tailored to the underlying nonlinear constitutive relation. Numerical results demonstrate the practical performance of the proposed formulation.
}

\section{Introduction} 

Numerous applications, such as polymer processing, additive manufacturing, material deposition, concentrated suspensions, and high-shear biological fluids, as well as various materials science problems, involve fluids that exhibit non-Newtonian behavior. A nonlinear relation between the strain rate and the shear stress characterizes this behavior. A paradigmatic example is the Carreau-Yasuda model, introduced in \cite{Yasuda.Armstrong.ea:81}, where the relation between the shear stress $\b{\sigma}(\cdot,\GRADs)$ and the strain rate $\GRADs$ is given by 
\begin{equation*}
\b{\sigma}(\cdot,\GRADs) = 
\mu(\cdot) (\delta^\alpha + |\GRADs|^\alpha)^{\frac{r-2}\alpha} \GRADs,
\end{equation*}
for $\mu$, $\alpha$, $\delta$ and the power-law index $r$ to be specified later on.  The Carreau-Yasuda model is a generalization of the Carreau model, which corresponds to the choice $\alpha=2$. The Carreau-Yasuda model models both shear-thinning (pseudo-plastic) behavior for $1 <r < 2$ and shear-thickening (dilatant) behavior for $r > 2$.   When $r=2$, the model reduces to the standard Newtonian fluid case. The case $\delta=0$ corresponds to the classical power-law model. 
From a mathematical and numerical analysis perspective, the shear-thickening regime presents distinct challenges, because the value of the power-law index ($r>2$) and the possible presence of the degenerate case $\delta=0$ lead to a significantly different mathematical structure. Indeed, in such cases, designing robust discretization schemes and deriving stability and \emph{a priori} estimates are particularly challenging.  \\

Numerical methods for non-Newtonian flows 
have a long history, beginning with the seminal work of \cite{BN:1990}, which proposed a Finite Element approximation of a non-Newtonian flow model governed by either the Carreau or the power-law model. Still in the Finite Element framework, sharp error estimates were subsequently established in \cite{Sandri:93, BL:1993, Barrett.Liu:94}. In particular, the pioneering studies \cite{BL:1993} and \cite{Barrett.Liu:94} derived (in some cases, optimal) velocity and pressure error bounds in appropriate quasi-norms for models incorporating Carreau or power-law models. We also refer to, e.g., \cite{Belenki.Berselli.ea:12, Hirn:13, Kreuzer.Suli:16, Kalte.Ruzi:23} for more recent contributions.
In practical applications, computational domains often exhibit complex geometric features, necessitating discretization schemes that efficiently accommodate flexible, possibly adapted, grids. Consequently, discretization methods that can support general polyhedral meshes have been recently investigated, including Virtual Element Methods (VEM) for non-Newtonian incompressible fluids in the shear-thinning regime ($1 <r < 2$), see \cite{Antonietti_et_al_2024}, discontinuous Galerkin \cite{Malkmus.Ruzi.ea:18} and hybridizable discontinuous Galerkin \cite{Gatica.Sequeira:15} methods, and Hybrid High-Order schemes for non-Newtonian fluids governed by the Stokes equations \cite{Botti.Castanon-Quiroz.ea:21} and Navier-Stokes equations with nonlinear convection in \cite{Castanon.Di-Pietro.ea:21}. \\

In this work, we focus on the Virtual Element method, originally introduced in \cite{volley} for second-order elliptic problems and subsequently generalized to a broad class of differential problems. In the context of fluid flow problems, a key advantage of VEM is its ability to construct divergence-free discrete velocity spaces on general polygonal or polyhedral meshes, thereby eliminating the need for ad hoc stabilization and ensuring mass conservation. For this reason, over the past decade, VEM has been extensively developed for approximating Newtonian fluid flow problems. In \cite{AntoniettiBeiraoMoraVerani_2014}, a novel stream-function-based VEM formulation for the Stokes problem, relying on a suitably designed stream function space that characterizes the divergence-free subspace of the discrete velocity field, has been proposed and analyzed. Divergence-free virtual elements have been introduced in \cite{BLV:2017}.
Further developments for the Stokes problem can be found in \cite{Cangiani-Gyrya-Manzini:2016, Caceres-Gatica_2017, BeiraodaVeiga-Dassi-Vacca_2020, Chernov-Marcati-Mascotto_2021, Lepe-Rivera_2021, Bevilacqua-Scacchi_2022, VEM_Oseen_stab, Bevilacqua-Dassi-Scacchi-Zampini_2024}; see also \cite{Frerichs-Merdon_2022, BeiraodaVeiga-Dassi-DiPietro-Droniou_2022}, where arbitrary-order pressure-robust VEMs have been investigated. The Virtual Element discretization of the Navier--Stokes equations was first studied in \cite{BLV:2018} and further investigated in \cite{Gatica-Munar-Sequeira_2018, BeiraodaVeiga-Mora-Vacca_2019, Adak-Mora-Natarajan_2021, Gatica-Sequeira_2021}; we also refer to \cite{Caceres-Gatica-Sequeira_2017, Adak-Mora-Silgado_2024, Li-Hu-Feng_2024}, for quasi-Newtonian Stokes flows. 
Recently, Virtual Element approximations to the coupled Navier-Stokes and heat equations have been analyzed in \cite{Antonietti-Vacca-Verani_2023, BMS-23}.  Moreover, least-squares Virtual Element discretizations of the Stokes and Navier-Stokes systems have recently been analyzed in \cite{Li-Hu-Feng_2022} and \cite{Wang-Wang_2024}, respectively. For a comprehensive review of recent advances in VEM, we refer to the monograph \cite{bookVEM}. \\

In this paper, we analyze a Virtual Element discretization for steady incompressible non-Newtonian flow governed by the Carreau-Yasuda constitutive law, explicitly addressing the shear-thickening range ($r>2$) and both the degenerate ($\delta=0$) and non-degenerate ($\delta>0$) cases. 
To the best of the authors' knowledge, previous $W^{1,r}$ \emph{a priori} estimates on polytopal methods for non-Newtonian flow have focused on the shear-thinning (pseudoplastic) regime, with the exception of the results in \cite{Botti.Castanon-Quiroz.ea:21}, where a Hybrid High Order discretization of non-Newtonian fluids with $r\in(1, \infty)$ is studied, leading to velocity and pressure error bounds of order $\frac{k+1}{r-1}$, being $k$ the polynomial order of the discretization. Notice that here we get a similar dependence of $r$ in the convergence order. However, we provide a better convergence order ($2k/r$) of the $W^{1,r}$ error norm under more regularity of the flux contribution.
The Virtual Element formulation is based on employing the divergence-free Virtual Element spaces introduced in \cite{BLV:2017,BLV:2018} and we focus on the extension of the stability and convergence analysis in the case $r>2$ and, possibly, $\delta=0$, addressing thereby the theoretical analysis not covered in \cite{Antonietti_et_al_2024}, where only the shear thinning regimes was considered.  The proposed method offers two principal advantages: it accommodates general polygonal meshes and leads to a discrete velocity field that is exactly divergence-free.  We point out that, the mathematical analysis of the shear-thickening range $r>2$ and the degenerate case $\delta=0$ presented in this work requires several novel technical contributions, including the establishment of inf-sup stability of the discrete velocity-pressure coupling in non-Hilbertian norms and the introduction of a stabilization term tailored to the distinct analytical structure that arises when the exponent crosses the threshold $r=2$. 
Furthermore, we note that, in the presence of non convex-elements $E$, the definition of the aforementioned Virtual Elements does not guarantee that the local spaces are in the natural Sobolev space $W^{1,r}(E)$ for large values of $r$. As a consequence, and in order to include in our analysis all type of polygonal meshes, we evaluate the stability and the error of the scheme in a discrete norm. Whenever the elements are convex, the standard norm is immediately recovered.
We prove that if $r>2$, and the underlying solution is sufficiently smooth, the order of convergence of the velocity and the pressure is $\frac{k}{r-1}$ for $\delta \ge 0$. In the non-degenerate case, i.e. $\delta > 0$, we show that the order of convergence is $\frac{2k}{r}$. To the authors' best knowledge, this is the first result for a polytopal discretization of non-Newtonian flows in the degenerate power-law/shear-thickening regime.  \\

The remainder of the manuscript is organized as follows. In Section~\ref{sec:model}, after introducing the notation used throughout the paper,  we present the weak formulation of the continuous problem and discuss its well-posedness. Section~\ref{sec:VEM} describes the proposed divergence-free Virtual Element discretization and presents the well-posedness analysis. The \emph{a priori} error analysis is detailed in Section~\ref{sec:error_analysis}. Section~\ref{sec:tests} reports numerical experiments illustrating the method's performance. Finally, Section~\ref{sec:conclusions} contains a summary of the obtained results and some concluding remarks.

\section{Model problem}\label{sec:model}
In this section, we introduce the notation used throughout the paper, present the model problem, and discuss its well-posedness.

The vector spaces considered hereafter are over $\mathbb{R}$. We denote by $\mathbb{R}_{+}$ the set of non-negative real numbers. Given a vector space $V$ with norm $\norm[V]{\cdot}$, the notation $V'$ denotes its dual space and ${}_{V'}\langle\cdot,\cdot\rangle_V$ the duality between $V$ and $V'$. The notation $\b v\cdot\b w$ and $\b v\times\b w$ designates the scalar and vector products of two vectors $\b v,\b w\in\mathbb{R}^d$, and $\vert \b v \vert$ denotes the Euclidean norm of $\b v$ in  $\mathbb{R}^d$.
The inner product in $\mathbb{R}^{d\times d}$ is defined for $\b\tau,\b\eta\in\mathbb{R}^{d\times d}$ by $\b\tau:\b\eta\coloneq\sum_{i,j=1}^d \tau_{i,j}\eta_{i,j}$ and the induced norm is given by $|\b\tau|= \sqrt{\b\tau:\b\tau}$.\\

Let $\Omega\subset\mathbb{R}^d$ denote a bounded, connected, polyhedral open set with Lipschitz boundary $\partial\Omega$ and let $\b n$ be the outward unit normal to $\partial\Omega$. To simplify the exposition, we restrict the presentation to the two-dimensional case, i.e., $d=2$, but the analysis remains valid in the three-dimensional case $d=3$ as well, with minor technical differences.
We denote with $\b x = (x_1, x_2)$ the independent variable.
We assume that the boundary is partitioned into two disjoint subsets $\partial\Omega=\Gamma_D\cup\Gamma_N$, 
with $|\Gamma_D|>0$, such that a Dirichlet condition is given on $\Gamma_D$ and a Neumann condition on $\Gamma_N$. \\

Throughout the article, spaces of functions, vector fields, and tensor fields, defined over any $X\subset\overline{\Omega}$ are denoted by italic capitals, boldface Roman capitals, and special Roman capitals, respectively. The subscript $\rm{s}$ 
denotes a space of symmetric tensor fields. For example, $L^2(X)$, $\b L^2(X)$, and $\mathbb{L}^2_s(X)$ denote the spaces of square-integrable functions, vector fields, and symmetric tensor fields, respectively. 
The notation $W^{m,r}(X)$, for $m \geq 0$ and $r\in[1,+\infty]$, with the convention that $W^{0,r}(X) =L^r(X)$, and $W^{m,2}(X) = H^m(X)$, designates the classical Sobolev spaces.  
The trace map is denoted by $\gamma:W^{1,r}(\Omega)\to W^{1-\frac1r,r}(\partial\Omega)$.
Finally, given $\Gamma\subset\partial\Omega$, we denote by $W^{1,r}_{0,\Gamma}(\Omega)$ the subspace of $W^{1,r}(\Omega)$ spanned by functions having zero trace on $\Gamma$.
The symbol $\nabla$ denotes the gradient for scalar functions, while  
$\GRAD$, $\b{\epsilon(\cdot)} = \frac{\GRAD(\cdot) + \GRAD^{\rm T}(\cdot)}2$, and $\divs$ denote the gradient, the symmetric gradient operator, and the divergence operator, respectively, whereas $\DIV$ denotes the vector-valued divergence operator for tensor fields.\\

We consider the creeping flow of a non-Newtonian fluid occupying $\Omega$ and subjected to a volumetric force $\b{f}:\Omega\to\mathbb{R}^d$ and a traction $\b{g}:\Gamma_N\to\mathbb{R}^d$ described by the non-linear Stokes equations
\begin{equation}\label{eq:stokes.continuous}
  \begin{aligned} 
    -\DIV\b{\sigma}(\cdot,\b{\epsilon}(\b{u}))+\GRAD p &= 
    \b{f} &\qquad& \text{ in } \Omega,  \\
    \divs \b{u} &= 0 &\qquad& \text{ in }  \Omega, \\
    \b{\sigma}(\cdot,\b\epsilon(\b{u}))\b{n} - p\b{n} &= \b{g} &\qquad& \text{ on } \Gamma_N, \\
    \b{u} &= \b{0} &\qquad& \text{ on } \Gamma_D, 
  \end{aligned}
\end{equation} 
where $\b{u}:\Omega\to\mathbb{R}^d$ and $p:\Omega\to\mathbb{R}$ denote the velocity field and the pressure field, respectively. Non-homogeneous Dirichlet conditions can be considered in place of \eqref{eq:stokes.continuous} up to minor modifications.\\

In this work, we consider the Carreau--Yasuda model,  introduced in \cite{Yasuda.Armstrong.ea:81}, as a reference model for the non-linear shear stress-strain rate relation, i.e.,
\begin{equation}\label{eq:Carreau}
\b{\sigma}(\b{x},\GRADs(\b{v})) = 
\mu(\b x) (\delta^\alpha + |\GRADs(\b{v})|^\alpha)^{\frac{r-2}\alpha} \GRADs(\b{v}),
\end{equation}
where $\mu:\Omega\to[\mu_{-}, \mu_{+}]$, with $0<\mu_{-}<\mu_{+}<\infty$, $\alpha\in (0,\infty)$, and $\delta\ge 0$ and  $r\in[2,\infty)$. 
The Carreau--Yasuda law is a generalization of the Carreau model corresponding to the case $\alpha=2$.
The case $\delta = 0$ corresponds to the classical power-law model. 
The stress-strain law \eqref{eq:Carreau} satisfies the following assumption:
\begin{assumption}\label{ass:hypo}
The shear stress-strain rate law $\b\sigma:\Omega\times\mathbb{R}^{d\times d}_{\rm s} \to \mathbb{R}^{d\times d}_{\rm s}$ appearing in \eqref{eq:stokes.continuous} is a Caratheodory function satisfying $\b{\sigma}(\cdot,\b{0})=\b{0}$ and for a fixed $r\in [2,\infty)$ there exist real numbers $\delta \in [0,+\infty)$ and $\sigma_{\rm{c}}, \sigma_{\rm{m}}\in(0,+\infty)$ such that the following conditions hold:
\begin{subequations}\label{eq:hypo}
  \begin{alignat}{2} 
  \label{eq:hypo.continuity}
  &|\b{\sigma}(\b{x},\b{\tau})-\b{\sigma}(\b{x},\b{\eta})| \le \sigma_{\rm c} \left(\delta^r + |\b\tau|^r + |\b\eta|^r \right)^\frac{r-2}{r}| \b\tau-\b\eta|,
  &\quad&\text{(H\"older continuity)}
  \\
  \label{eq:hypo.monotonicity}
  &\left(\b{\sigma}(\b x,\b\tau)-\b{\sigma}(\b x,\b\eta)\right):\left(\b\tau-\b\eta\right) \ge \sigma_{\rm m}\left(\delta^r+|\b\tau|^r + |\b\eta|^r \right)^\frac{r-2}{r}|\b\tau-\b\eta|^{2},
  &\quad& \text{(strong monotonicity)}
  \end{alignat}
\end{subequations}
for almost every $\b{x}\in\Omega$ and all $\b{\tau},\b{\eta}\in\mathbb{R}^{d\times d}_{\rm{s}}$.
\end{assumption}
 The constants $\sigma_{\rm{c}}, \sigma_{\rm{m}}>0$ in \eqref{eq:hypo} for the Carreau-Yasuda model \eqref{eq:Carreau} 
satisfy 
  \[
    \sigma_{\rm{c}} = \mu_+(r-1)2^{\left(\frac{1}{\alpha}-\frac{1}{r}\right)^\oplus(r-2)}
    \quad\text{and}\quad
    \sigma_{\rm{m}} = \dfrac{\mu_-}{r-1}2^{\left[-\left(\frac{1}{\alpha}-\frac{1}{r}\right)^\ominus-1\right](r-2)-1} \,,
    \]
where $\xi^\oplus\coloneq\max(0,\xi)$ and $\xi^\ominus\coloneq-\min(0,\xi)$ for all $\xi \in \R$  (cf. \cite{Botti.Castanon-Quiroz.ea:21}).
Finally, for further use, we adopt the short-hand notation $\mathsf{a} \lesssim  \mathsf{b}$ to denote the inequality $\mathsf{a} \leq C \mathsf{b}$, for a constant $C>0$ that may depend on $\sigma_{\rm{c}}, \sigma_{\rm{m}},\delta$ (or related parameters) in Assumption~\ref{ass:hypo} and $r$, but is independent of the discretization parameters.
The obvious extensions $\mathsf{a} \gtrsim  \mathsf{b}$ and $\mathsf{a} \simeq  \mathsf{b}$ hold.

\subsection{Weak formulation}

In this section, we provide the variational formulation of \eqref{eq:stokes.continuous}. For $r\in(1,\infty)$, we introduce the conjugate index  defined as $r'= \frac{r}{r-1}$
and recall Korn's first inequality (see, e.g., \cite[Theorem 1.2]{Ciarlet.Ciarlet:05} and \cite[Theorem 1]{Geymonat.Suquet:86}): there is $C_{\rm K} > 0$ depending only on $\Omega$ and $r$ such that for all $\b v \in \b{W}^{1,r}_{0,\Gamma_D}(\Omega)$,
\begin{equation}\label{eq:Korn}
\|\b v\|_{\b{W}^{1,r}(\Omega)} \le C_{\rm K} \|\GRADs (\b v)\|_{\mathbb{L}^r(\Omega)}.
\end{equation}

Let $r\in [2,\infty)$ be the Sobolev exponent dictated by the non-linear stress-strain law characterizing problem \eqref{eq:stokes.continuous} and satisfying Assumption~\ref{ass:hypo}.
We define the velocity and pressure spaces incorporating the boundary condition on $\Gamma_D$ and the zero-average constraint in the case $\Gamma_D=\partial\Omega$, respectively: 
\[
\b U = \b{W}^{1,r}_{0,\Gamma_D}(\Omega) 
\qquad
P = \begin{cases} L^{r'}(\Omega)&\text{if } |\Gamma_D|<|\partial\Omega| \\ L^{r'}_0(\Omega) \coloneqq \left\{q \in L^{r'}(\Omega)\ : \ \textstyle\int_\Omega q = 0 \right\} &\text{if } \Gamma_D =\partial\Omega. \end{cases} 
\]
Assuming $\b f \in \b{L}^{r'}(\Omega)$ and $\b g \in \b{L}^{r'}(\Gamma_N)$, the weak form of \eqref{eq:stokes.continuous} reads:
Find $(\b u,p) \in \b U \times P$ such that
\begin{equation}\label{eq:stokes.weak}
  \begin{aligned}
     a(\b u,\b v)+b(\b v,p) &= \int_\Omega \b f \cdot \b v + \int_{\Gamma_N} \b g \cdot\b v&\qquad \forall \b v \in \b U, \\
     -b(\b u,q) &= 0 &\qquad \forall q \in P,   
  \end{aligned}
\end{equation}
where $a : \b U \times \b U \to \mathbb{R}$ and $b : \b U \times P \to \mathbb{R}$ are defined  for all $\b v,\b w \in \b U$ and all $q \in L^{r'}(\Omega)$ by
\begin{equation}\label{eq:a.b}
  a(\b w,\b v) \coloneqq \int_\Omega \b\sigma(\cdot,\b\epsilon(\b w)) : \b\epsilon(\b v),\qquad
  b(\b v,q)  \coloneqq -\int_\Omega (\divs \b v) q.
\end{equation}

Introducing the subset $\b Z = \{ \b v \in \b U \quad \text{s.t.} \quad \divs  \b v = 0  \}\subset \b U$ corresponding to the kernel of $b(\cdot,\cdot)$, problem~\eqref{eq:stokes.weak} can be formulated in the equivalent kernel form:
Find $\b u \in \b Z$ such that
\begin{equation}\label{eq:stokes.weak.Z}
     a(\b u, \b v) = \int_\Omega \b f \cdot \b v + \int_{\Gamma_N} \b g \cdot  \b v \qquad \forall \b v \in \b Z \,.
\end{equation}

\subsection{Well-posedness}

In this section, we report the properties of the functions $a(\cdot,\cdot)$ and $b(\cdot,\cdot)$ defined in \eqref{eq:a.b} and we prove the well-posedness of  \eqref{eq:stokes.weak}. 

\begin{lemma}[Continuity and monotonicity of $a$]
For all $\b u, \b v, \b w \in \b U$, setting $\b e = \b u - \b w$, it holds
\begin{subequations}
  \begin{alignat}{2} 
  \label{eq:a.continuity}
  |a(\b u, \b v) - a(\b w, \b v)| &\lesssim \sigma_{\rm c}
  \left(\delta^r +\norm[\b{W}^{1,r}(\Omega)]{\b u}^r + \norm[\b{W}^{1,r}(\Omega)]{\b w}^r\right)^{\frac{r-2}r} \norm[\b{W}^{1,r}(\Omega)]{\b e} \norm[\b{W}^{1,r}(\Omega)]{\b v},
  \\
  \label{eq:a.monotonicity}
  a(\b u, \b e) - a(\b w, \b e) &\gtrsim \sigma_{\rm m} 
  \norm[\b{W}^{1,r}(\Omega)]{\b e}^r.
  \end{alignat}
\end{subequations}
\end{lemma}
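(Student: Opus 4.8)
The plan is to deduce both estimates directly from the pointwise structural assumptions in Assumption~\ref{ass:hypo}, transferring the algebraic H\"older continuity~\eqref{eq:hypo.continuity} and strong monotonicity~\eqref{eq:hypo.monotonicity} from the tensor arguments to the bilinear-type forms via integration over $\Omega$ and H\"older's inequality in the exponents dictated by the power-law index $r$. The point throughout is that the correct integrability pairing is $(r, r, r')$ with $r'=\frac{r}{r-1}$, and that the factor $(\delta^r+|\b\tau|^r+|\b\eta|^r)^{\frac{r-2}{r}}$ must be integrated against the appropriate power so that everything closes into $\b W^{1,r}$ norms.

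For the continuity bound~\eqref{eq:a.continuity}, I would start from the definition $a(\b u,\b v)-a(\b w,\b v)=\int_\Omega\bigl(\b\sigma(\cdot,\b\epsilon(\b u))-\b\sigma(\cdot,\b\epsilon(\b w))\bigr):\b\epsilon(\b v)$, and apply~\eqref{eq:hypo.continuity} pointwise with $\b\tau=\b\epsilon(\b u)$, $\b\eta=\b\epsilon(\b w)$ to bound the integrand by $\sigma_{\rm c}\,(\delta^r+|\b\epsilon(\b u)|^r+|\b\epsilon(\b w)|^r)^{\frac{r-2}{r}}|\b\epsilon(\b e)|\,|\b\epsilon(\b v)|$. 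The next step is a triple H\"older inequality: the weight raised to the power $\frac{r}{r-2}$ returns the $L^r$ content of $\delta,\b\epsilon(\b u),\b\epsilon(\b w)$, while $|\b\epsilon(\b e)|$ and $|\b\epsilon(\b v)|$ are each placed in $L^r$. Concretely one uses exponents $\frac{r}{r-2},\,r,\,r$, whose reciprocals sum to $\frac{r-2}{r}+\frac1r+\frac1r=1$, which is exactly why the estimate closes. The weight term then produces $(\delta^r+\|\b\epsilon(\b u)\|_{\mathbb L^r}^r+\|\b\epsilon(\b w)\|_{\mathbb L^r}^r)^{\frac{r-2}{r}}$, and the two remaining factors give $\|\b\epsilon(\b e)\|_{\mathbb L^r}\|\b\epsilon(\b v)\|_{\mathbb L^r}$; invoking Korn's inequality~\eqref{eq:Korn} to replace the symmetric-gradient $L^r$ norms by full $\b W^{1,r}(\Omega)$ norms (and bounding $\|\b\epsilon(\cdot)\|_{\mathbb L^r}\le\|\cdot\|_{\b W^{1,r}}$ on the strain side) yields precisely~\eqref{eq:a.continuity}, with the hidden constant absorbing $C_{\rm K}$.

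For the monotonicity bound~\eqref{eq:a.monotonicity}, I would write $a(\b u,\b e)-a(\b w,\b e)=\int_\Omega\bigl(\b\sigma(\cdot,\b\epsilon(\b u))-\b\sigma(\cdot,\b\epsilon(\b w))\bigr):\b\epsilon(\b e)$ and apply~\eqref{eq:hypo.monotonicity} pointwise to get the lower bound $\sigma_{\rm m}\int_\Omega(\delta^r+|\b\epsilon(\b u)|^r+|\b\epsilon(\b w)|^r)^{\frac{r-2}{r}}|\b\epsilon(\b e)|^2$. Since $r\ge2$, the weight is bounded below by $|\b\epsilon(\b u)-\b\epsilon(\b w)|^{r-2}=|\b\epsilon(\b e)|^{r-2}$ (using $|\b\tau|^r+|\b\eta|^r\gtrsim|\b\tau-\b\eta|^r$ up to a dimensional constant, valid exactly in the shear-thickening range), so the integrand dominates a constant times $|\b\epsilon(\b e)|^{r}$, giving $\gtrsim\sigma_{\rm m}\|\b\epsilon(\b e)\|_{\mathbb L^r}^r$, and Korn's inequality again converts this into $\|\b e\|_{\b W^{1,r}(\Omega)}^r$. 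I expect the main obstacle to be this last lower-bounding step for the monotonicity weight: unlike the shear-thinning case $1<r<2$ treated in~\cite{Antonietti_et_al_2024}, here one cannot simply drop the weight, and the inequality $\delta^r+|\b\tau|^r+|\b\eta|^r\gtrsim|\b\tau-\b\eta|^r$ (an elementary but sign-of-$(r-2)$-sensitive convexity estimate) is what makes the exponent $r$ rather than $2$ appear on the right-hand side, so I would take care to record the correct dimension-dependent constant there.
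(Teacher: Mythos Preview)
Your proposal is correct and follows essentially the same route as the paper: pointwise application of~\eqref{eq:hypo.continuity} followed by the three-term H\"older inequality with exponents $(\tfrac{r}{r-2},r,r)$ for~\eqref{eq:a.continuity}, and pointwise application of~\eqref{eq:hypo.monotonicity} together with the elementary bound $|\b\tau|^r+|\b\eta|^r\gtrsim|\b\tau-\b\eta|^r$ and Korn's inequality for~\eqref{eq:a.monotonicity}. One minor remark: for the continuity estimate you only need the trivial bound $\|\b\epsilon(\cdot)\|_{\mathbb L^r}\le\|\cdot\|_{\b W^{1,r}}$, not Korn's inequality (which goes the other way and is used only in the monotonicity step).
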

\begin{proof} 
We follow the lines of \cite[Lemma 7.3]{Botti.Castanon-Quiroz.ea:21}. Let $\b u, \b v, \b w \in \b U$ and set $\b e = \b u - \b w$. 

\noindent
\emph{(i) H\"older continuity.}
Recalling the H\"older continuity property \eqref{eq:hypo.continuity} and using the generalized H\"older inequality with exponents $(\frac{r}{r-2},r, r)$, we have 
\begin{equation}
\label{eq:for-holder}
\begin{aligned}
|a(\b u, \b v) - a(\b w, \b v)| &\le
\int_\Omega \left|[\b\sigma(\cdot,\b\epsilon(\b u)) - \b\sigma(\cdot,\b\epsilon(\b w)) ] : \b\epsilon(\b v)  \right| \\
&\lesssim
\sigma_{\rm c} \left(\int_\Omega \delta^r + |\b\epsilon(\b u)|^r + |\b\epsilon(\b w)|^r \right)^\frac{r-2}{r} \norm[\b{W}^{1,r}(\Omega)]{\b e} \norm[\b{W}^{1,r}(\Omega)]{\b v}.
\end{aligned}
\end{equation}
\emph{(ii) Strong monotonicity.} 
First, we observe that for all $\b{\tau},\b{\eta}\in\mathbb{R}^{d\times d}_{\rm{s}}$ the triangle inequality implies 
$
2^{1-r} \vert\b\tau -\b\eta\vert^r \le 
\vert\b\tau\vert^r +\vert\b\eta\vert^r$.
Thus, since $\delta\ge0$ and $r \ge 2$, 
\begin{equation}\label{eq:pre_holdercont}
\left(\delta^r +\vert\b\tau\vert^r +\vert\b\eta\vert^r \right)^{\frac{r-2}{r}}
\ge \left(2^{1-r} \vert\b\tau -\b\eta\vert^r\right)^{\frac{r-2}{r}}
\gtrsim \vert\b\tau -\b\eta\vert^{r-2}.
\end{equation}
Using Korn's inequality \eqref{eq:Korn} together with 
\eqref{eq:hypo.monotonicity} and the inequality in \eqref{eq:pre_holdercont}, we obtain
$$
\begin{aligned}
\sigma_{\rm m} \norm[\b{W}^{1,r}(\Omega)]{\b e}^r &\le C_{\rm K}^r
\left(\int_\Omega \sigma_{\rm m}\vert \b\epsilon(\b u-\b w) \vert^r\right) 
\lesssim
\int_\Omega \sigma_{\rm m}
\vert\b\epsilon(\b u) -\b\epsilon(\b w)\vert^2 
\vert\b\epsilon(\b u) -\b\epsilon(\b w)\vert^{r-2}
\\
&\lesssim
\int_\Omega \left(\delta^r+|\b\epsilon(\b u)|^r + |\b\epsilon(\b w)|^r \right)^{\frac{2-r}{r} + \frac{r-2}{r}}
(\b\sigma(\cdot,\b\epsilon(\b u)) - \b\sigma(\cdot,\b\epsilon(\b w)) ) : \b\epsilon(\b u -\b w) \\
&\lesssim
a(\b u, \b e) - a(\b w, \b e).
\end{aligned}
$$
\end{proof}

The following result is needed to infer the existence of a unique pressure $p\in P$ solving problem \eqref{eq:stokes.weak} from the well-posedness of problem \eqref{eq:stokes.weak.Z}. For its proof, we refer to \cite[Theorem 1]{Bogovski:79}.
\begin{lemma}[Inf-sup condition]
For any $r\in[2,\infty)$ there exists a positive constant $\beta(r)$ such that the bilinear form $b(\cdot,\cdot)$ defined in \eqref{eq:a.b} satisfies
\begin{equation}\label{eq:inf_sup}
\inf_{q\in P}\;\sup_{\b w\in \b{U}\setminus\{\b 0\}}\; 
\frac{b(\b w,q)}{\norm[L^{r'}(\Omega)]{q}\norm[\b{W}^{1,r}(\Omega)]{\b w}} \ge \beta(r) > 0.
\end{equation}
\end{lemma}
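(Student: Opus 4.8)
The plan is to reduce the stated inf--sup condition to the solvability of the divergence equation in the Sobolev--Banach setting, which is precisely the content of the result of \cite[Theorem 1]{Bogovski:79}. The starting observation is the standard Banach-space equivalence: writing $\mathcal B\colon \b U \to P'$ for the operator induced by $b$ (so that $\mathcal B\b w = -\divs\b w$ in $P'$), the inequality \eqref{eq:inf_sup} holds with constant $\beta(r)$ if and only if the transpose $\mathcal B^\ast$, which acts as the distributional gradient $q\mapsto \nabla q$, is bounded below by $\beta(r)$; by the closed-range theorem this is in turn equivalent to the surjectivity of $\mathcal B$, i.e.\ of the divergence operator $\divs\colon \b U \to P'$, together with a uniformly bounded right inverse. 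Thus it suffices to produce, for every datum in the range space, a velocity field with prescribed divergence and a controlled $\b W^{1,r}$ norm.

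Rather than invoke the abstract equivalence, I would give the explicit realization, which also exhibits $\beta(r)$ concretely. Fix $q\in P$ and set $g \coloneq |q|^{r'-2}q$. Since $(r'-1)r = r'$, one has $g\in L^r(\Omega)$ with $\norm[L^r(\Omega)]{g} = \norm[L^{r'}(\Omega)]{q}^{r'-1}$ and $\int_\Omega g\, q = \norm[L^{r'}(\Omega)]{q}^{r'}$. In the mixed case $|\Gamma_D|<|\partial\Omega|$ no compatibility constraint is needed; in the pure Dirichlet case $\Gamma_D=\partial\Omega$, where $P=L^{r'}_0(\Omega)$, I would replace $g$ by its mean-free part $g-\tfrac1{|\Omega|}\int_\Omega g$, which leaves the pairing $\int_\Omega g\,q$ unchanged, because $q$ has zero average, and only enlarges $\norm[L^r(\Omega)]{g}$ by a fixed factor. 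The result of \cite[Theorem 1]{Bogovski:79} then furnishes $\b w\in \b U$ with $\divs\b w = -g$ and $\norm[\b{W}^{1,r}(\Omega)]{\b w}\lesssim \norm[L^r(\Omega)]{g} = \norm[L^{r'}(\Omega)]{q}^{r'-1}$.

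For this test field one computes $b(\b w,q) = -\int_\Omega(\divs\b w)\,q = \int_\Omega g\, q = \norm[L^{r'}(\Omega)]{q}^{r'}$, whence
\[
\frac{b(\b w,q)}{\norm[L^{r'}(\Omega)]{q}\,\norm[\b{W}^{1,r}(\Omega)]{\b w}}
\gtrsim
\frac{\norm[L^{r'}(\Omega)]{q}^{r'}}{\norm[L^{r'}(\Omega)]{q}\,\norm[L^{r'}(\Omega)]{q}^{r'-1}} = 1,
\]
uniformly in $q$. Taking the supremum over $\b w$ and then the infimum over $q$ yields \eqref{eq:inf_sup} with $\beta(r)$ equal to the reciprocal of the Bogovski\u{\i} constant.

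I expect the main obstacle to be entirely hidden inside the construction of the right inverse of the divergence, namely its surjectivity with an $r$-dependent bound; the relevant constant degenerates as $r\to 1$ or $r\to\infty$, which is the origin of the dependence $\beta=\beta(r)$. A secondary technical point, which I would handle by a standard domain-extension (reflection) argument or by citing the mixed-boundary version of the statement, is that the classical result is usually phrased for homogeneous Dirichlet data on the whole boundary, whereas here the field need only vanish on $\Gamma_D$; the presence of the free portion $\Gamma_N$ removes the zero-average compatibility condition and, if anything, simplifies the construction.
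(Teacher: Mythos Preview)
The paper does not supply its own proof of this lemma; it merely states the result and refers to \cite[Theorem~1]{Bogovski:79}. Your proposal is the standard and correct way to extract the inf--sup inequality from Bogovski\u{\i}'s surjectivity result: the duality trick $g=|q|^{r'-2}q$ together with the $W^{1,r}$-bounded right inverse of the divergence gives exactly the estimate claimed, and your treatment of the compatibility condition in the pure Dirichlet case is accurate. The only point that deserves a word of caution is the mixed-boundary case $|\Gamma_D|<|\partial\Omega|$: there $P=L^{r'}(\Omega)$ carries no mean-zero constraint, so $g$ need not have vanishing average, and the classical Bogovski\u{\i} operator (which lands in $\b W^{1,r}_0(\Omega)$ and requires zero-mean data) cannot be applied directly; one must either subtract the mean of $g$ and absorb it via a fixed field supported near $\Gamma_N$, or invoke the mixed-boundary variant you allude to at the end. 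You flag this yourself, so the argument is complete once that detail is filled in.
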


We are now ready to establish the well-posedness of problem \eqref{eq:stokes.weak}.

\begin{proposition}[Well-posedness]\label{prop:well-posed_cont}
For any $r\in[2,\infty)$, there exists a unique solution $(\b u, p)\in \b{U}\times P$ to problem \eqref{eq:stokes.weak} satisfying the a priori estimates
\begin{subequations}
\begin{align}\label{eq:a-priori_cont.u}
    \norm[\b{W}^{1,r}(\Omega)]{\b u}&\lesssim
\sigma_{\rm m}^{-\frac1{r-1}}
\left(\| \b f \|_{\b L^{r'}(\Omega)} +
\| \b g \|_{\b L^{r'}(\Gamma_N)}\right)^{\frac1{r-1}},
\\
\label{eq:a-priori_cont.p}
\norm[L^{r'}(\Omega)]{p} &\lesssim
\left(1+\frac{\sigma_{\rm c}}{\sigma_{\rm m}}\right)  
\left(\| \b f \|_{\b L^{r'}(\Omega)} + \| \b g \|_{\b L^{r'}(\Gamma_N)}\right) + \sigma_{\rm c}\delta^{r-2}
\left(\frac{\| \b f \|_{\b L^{r'}(\Omega)} +
\| \b g \|_{\b L^{r'}(\Gamma_N)}}{\sigma_{\rm m}}\right)^{\frac1{r-1}} \hspace{-3mm}.
\end{align}
\end{subequations}
\end{proposition}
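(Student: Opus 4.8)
The plan is to reduce \eqref{eq:stokes.weak} to its equivalent kernel formulation \eqref{eq:stokes.weak.Z}, solve the latter by monotone operator theory, and then recover the pressure through the inf-sup condition \eqref{eq:inf_sup}. I would write $\langle F,\b v\rangle \coloneqq \int_\Omega \b f\cdot\b v + \int_{\Gamma_N}\b g\cdot\b v$; by H\"older's inequality and continuity of the trace map, $F$ is a bounded linear functional on $\b U$ with $|\langle F,\b v\rangle| \lesssim (\norm[\b L^{r'}(\Omega)]{\b f}+\norm[\b L^{r'}(\Gamma_N)]{\b g})\,\norm[\b{W}^{1,r}(\Omega)]{\b v}$. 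Define the nonlinear operator $A\colon \b Z\to\b Z'$ by $\langle A\b w,\b v\rangle\coloneqq a(\b w,\b v)$.

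First I would settle existence and uniqueness for the kernel problem. Since $1<r<\infty$, the space $\b U$ is reflexive and $\b Z$ is a closed subspace, hence reflexive as well. Choosing $\b w=\b 0$ in \eqref{eq:a.monotonicity} and using $\b\sigma(\cdot,\b 0)=\b 0$ (so that $a(\b 0,\cdot)\equiv 0$) yields the coercivity bound $a(\b v,\b v)\gtrsim\sigma_{\rm m}\norm[\b{W}^{1,r}(\Omega)]{\b v}^r$, whence $\langle A\b v,\b v\rangle/\norm[\b{W}^{1,r}(\Omega)]{\b v}\to+\infty$ as $\norm[\b{W}^{1,r}(\Omega)]{\b v}\to\infty$ because $r>1$. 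Strict monotonicity of $A$ is exactly \eqref{eq:a.monotonicity}, and \eqref{eq:a.continuity} shows $A$ is bounded and hemicontinuous. By the Browder--Minty theorem for monotone, coercive, hemicontinuous operators on a reflexive Banach space, problem \eqref{eq:stokes.weak.Z} has a unique solution $\b u\in\b Z$. Testing \eqref{eq:stokes.weak.Z} with $\b v=\b u$ and combining coercivity with the bound on $F$ gives $\sigma_{\rm m}\norm[\b{W}^{1,r}(\Omega)]{\b u}^r\lesssim(\norm[\b L^{r'}(\Omega)]{\b f}+\norm[\b L^{r'}(\Gamma_N)]{\b g})\norm[\b{W}^{1,r}(\Omega)]{\b u}$, which after dividing by $\norm[\b{W}^{1,r}(\Omega)]{\b u}$ and raising to the power $\tfrac1{r-1}$ is precisely \eqref{eq:a-priori_cont.u}.

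To recover the pressure, I would observe that the residual $\b v\mapsto\langle F,\b v\rangle-a(\b u,\b v)$ is a bounded linear functional on $\b U$ vanishing on $\b Z=\ker b$. The inf-sup condition \eqref{eq:inf_sup} is equivalent (by the Banach closed range theorem) to $b(\cdot,\cdot)$ inducing an isomorphism from $P$ onto the annihilator of $\b Z$; therefore there is a unique $p\in P$ with $b(\b v,p)=\langle F,\b v\rangle-a(\b u,\b v)$ for all $\b v\in\b U$, so that $(\b u,p)$ solves \eqref{eq:stokes.weak}. The same inequality gives $\beta(r)\norm[L^{r'}(\Omega)]{p}\le\sup_{\b w}(\langle F,\b w\rangle-a(\b u,\b w))/\norm[\b{W}^{1,r}(\Omega)]{\b w}$. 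Applying the continuity estimate \eqref{eq:a.continuity} with second slot $\b 0$ (so that $\b e=\b u$), together with $(\delta^r+\norm[\b{W}^{1,r}(\Omega)]{\b u}^r)^{\frac{r-2}r}\lesssim\delta^{r-2}+\norm[\b{W}^{1,r}(\Omega)]{\b u}^{r-2}$, bounds $|a(\b u,\b w)|\lesssim\sigma_{\rm c}(\delta^{r-2}\norm[\b{W}^{1,r}(\Omega)]{\b u}+\norm[\b{W}^{1,r}(\Omega)]{\b u}^{r-1})\norm[\b{W}^{1,r}(\Omega)]{\b w}$; inserting the velocity estimate \eqref{eq:a-priori_cont.u} and regrouping the terms yields \eqref{eq:a-priori_cont.p}.

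The hard part will be the pressure bookkeeping rather than the existence argument: once monotonicity, coercivity and hemicontinuity are in hand, Browder--Minty is essentially black-box, but tracking the explicit dependence on $\sigma_{\rm c}$, $\sigma_{\rm m}$ and $\delta$ through the continuity bound and the velocity estimate demands care, and the de Rham--type recovery of $p$ must lean on the inf-sup condition \eqref{eq:inf_sup} (closed range) rather than on Hilbert-space orthogonality, since the pressure lives in the non-Hilbertian space $L^{r'}(\Omega)$.
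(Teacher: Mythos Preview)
Your proposal is correct and follows essentially the same route as the paper for the a priori bounds: test the kernel problem with $\b v=\b u$ and invoke \eqref{eq:a.monotonicity} for the velocity estimate, then use the inf-sup condition \eqref{eq:inf_sup} together with \eqref{eq:a.continuity} (at $\b w=\b 0$) and the velocity bound to control the pressure. The only difference is that the paper defers existence and uniqueness to the literature, whereas you supply a self-contained Browder--Minty argument on the reflexive space $\b Z$; this is a reasonable addition and does not change the structure of the proof.
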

\begin{proof}
We focus here on the \emph{a priori} bounds \eqref{eq:a-priori_cont.u} and \eqref{eq:a-priori_cont.p}, while for the uniqueness and existence we refer to \cite{Beirao-da-Veiga:09, Berselli.Diening.ea:10}. Owing to \eqref{eq:a.monotonicity} with $\b w = \b 0$, taking $\b v = \b u$ in \eqref{eq:stokes.weak.Z}, and using the H\"older inequality together with the continuity of the trace map, one has
$$
\sigma_{\rm m} 
\norm[\b{W}^{1,r}(\Omega)]{\b u}^r \lesssim 
a(\b u, \b u) = \int_\Omega \b f \cdot \b u + \int_{\Gamma_N} \b g \cdot \b\gamma(\b u)
\lesssim 
\left(\| \b f \|_{\b L^{r'}(\Omega)} +
\| \b g \|_{\b L^{r'}(\Gamma_N)}\right)
\norm[\b{W}^{1,r}(\Omega)]{\b u}.
$$
From the previous bound, the velocity estimate in \eqref{eq:a-priori_cont.u} follows.
Concerning the estimate of the pressure field: owing to the inf-sup condition \eqref{eq:inf_sup} and equation \eqref{eq:stokes.weak}, it is inferred that
$$
\beta(r) \norm[L^{r'}(\Omega)]{p} \le 
\sup_{\b v\in\b U\setminus\{\b 0\}}\frac{b(\b v, p)}{\norm[\b{W}^{1,r}(\Omega)]{\b v}} = 
\sup_{\b v\in\b U\setminus\{\b 0\}}\frac{\int_\Omega \b f \cdot \b v + \int_{\Gamma_N} \b g \cdot \b v - a(\b u, \b v)}{\norm[\b{W}^{1,r}(\Omega)]{\b v}}.
$$
Applying the H\"older inequality, the continuity of $a$ in \eqref{eq:a.continuity} with $\b{w}=\b0$, and the \emph{a priori} estimate of the velocity \eqref{eq:a-priori_cont.u}, we obtain 
$$
\begin{aligned}
\norm[L^{r'}(\Omega)]{p} &\lesssim
\| \b f \|_{\b L^{r'}(\Omega)} + \| \b g \|_{\b L^{r'}(\Gamma_N)} +
\sigma_{\rm c}
  \left(\delta^r +\norm[\b{W}^{1,r}(\Omega)]{\b u}^r \right)^{\frac{r-2}r} \norm[\b{W}^{1,r}(\Omega)]{\b u}
  \\ &\lesssim 
\left(1+\frac{\sigma_{\rm c}}{\sigma_{\rm m}}\right)  
\left(\| \b f \|_{\b L^{r'}(\Omega)} + \| \b g \|_{\b L^{r'}(\Gamma_N)}\right) + \sigma_{\rm c}\delta^{r-2}
\sigma_{\rm m}^{-\frac1{r-1}}
\left(\| \b f \|_{\b L^{r'}(\Omega)} +
\| \b g \|_{\b L^{r'}(\Gamma_N)}\right)^{\frac1{r-1}}.
\end{aligned}
$$
\end{proof}

\section{Virtual Element method}\label{sec:VEM}

In the present section, we initially review divergence-free Virtual Elements of general order \cite{BLV:2017,BLV:2018}. Afterterwards, we design the computable forms and formulate the discrete problem that approximates the nonlinear equation \eqref{eq:stokes.weak}, finally establishing its well-posedness.

\subsection{Mesh and discrete spaces}
\label{sub:preliminaries}
Let $\{\Omega_h\}_h$ be a sequence of decompositions of the domain $\Omega \subset \R^2$ into general polytopal elements. 
For each $E \in \Omega_h$, we denote with $h_{E}$ the diameter, with $|E|$ the area, with $\b x_{E} = (x_{E, 1}, x_{E, 2})$ the centroid and we set $h = \sup_{E \in \Omega_h} h_{E}$. 
We suppose that $\{\Omega_h\}_h$ fulfills the following assumption.
\begin{assumption}\label{ass:mesh}
\textbf{(Mesh assumptions).} There exists a positive constant $\rho$ such that for any $E \in \{\Omega_h\}_h$ 
\begin{itemize}
\item $E$ is star-shaped with respect to a ball $B_E$ of radius $ \geq\, \rho \, h_E$;
\item any edge $e$ of $E$ has length  $ \geq\, \rho \, h_E$.
\end{itemize}
\end{assumption}

Given $\omega \subset \overline\Omega$ and $n \in \N$, we denote by $\Pk_n(\omega)$ the set of polynomials on $\omega$ of degree less or equal to $n$, with the convention that $\Pk_{-1}(\omega)=\{ 0 \}$.
A natural basis for the space $\Pk_n(E)$ is the set of normalized
monomials
$
\M_n(E) = \left\{ 
m_{\boldsymbol{\alpha}},
\,\,  \text{with} \,\, \boldsymbol{\alpha} = (\alpha_1, \alpha_2) \in \N^2 \,\, \text{such that} \,\,
|\boldsymbol{\alpha}| \leq n
\right\},
$
where
\[
m_{\boldsymbol{\alpha}} =
\prod_{i=1}^2  
\left(\frac{x_i - x_{E, i}}{h_{E}} \right)^{\alpha_i}
\qquad \text{and} \qquad
|\boldsymbol{\alpha}|= \sum_{i=1}^2 \alpha_i \,.
\]
For any $e$ edge of $\Omega_h$, let $\boldsymbol{t}_e$ and  ${\boldsymbol{n}}_e$ denote the tangent and the normal vectors to the edge $e$ respectively. 
Moreover, the normalized monomial set $\M_n(e)$ is defined analogously as the span of all one-dimensional normalized monomials of degree up to $n$.
For any $m \leq n$, we denote with
\[
\Pkw_{n \setminus m}(E) = {\rm span}  
\left\{ 
m_{\boldsymbol{\alpha}},
\,\,\,  \text{with} \,\,\,
m +1 \leq |\boldsymbol{\alpha}| \leq n
\right\} \,.
\]
For any $E\in\Omega_h$, the $L^2$-projection $\Pi_n^{0, E} \colon L^2(E) \to \Pk_n(E)$ is defined such that
\begin{equation}
\label{eq:P0_k^E}
\int_{E} q_n (v - \, {\Pi}_{n}^{0, E}  v) \, {\rm d} E = 0 \qquad  \text{for all $v \in L^2(E)$  and $q_n \in \Pk_n(E)$,} 
\end{equation} 
with obvious extension $\Pi^{0, E}_{n} \colon \b L^2(E) \to [\Pk_n(E)]^d$ and $\boldsymbol{\Pi}^{0, E}_{n} \colon \mathbb{L}^2(E) \to [\Pk_n(E)]^{d\times d}$ for vector and tensor functions, respectively.
Moreover, the elliptic projection ${\Pi}_{n}^{\nabla,E} \colon W^{1,2} (E) \to \Pk_n(E)$ is given by 
\begin{equation*}
\left\{
\begin{aligned}
& \int_{E} \nabla  \,q_n \cdot \nabla ( v - \, {\Pi}_{n}^{\nabla,E}   v)\, {\rm d} E = 0 \quad  \text{for all $v \in W^{1,2}(E)$ and  $q_n \in \Pk_n(E)$,} \\
& \int_{\partial E}(v - \,  {\Pi}_{n}^{\nabla, E}  v) \, {\rm d}s = 0 \, ,
\end{aligned}
\right.
\end{equation*}
with extension for vector fields $\Pi^{\nabla, E}_{n} \colon \b W^{1,2}(E) \to [\Pk_n(E)]^d$.

We also recall the following useful results:
\begin{itemize}
\item Trace inequality with scaling \cite{brenner-scott:book}:
For any $E \in \Omega_h$ and for any  function $v \in W^{1,r}(E)$ it holds 
\begin{equation}
\label{eq:cont_trace}
\|v\|^r_{L^r(\partial E)} \lesssim h_E^{-1}\|v\|^r_{L^r(E)} + h_E^{r-1} \|\nabla v\|^r_{\b{L}^r(E)} \,.
\end{equation}

\item
Polynomial inverse estimate \cite[Theorem 4.5.11]{brenner-scott:book}: 
Let $1 \leq q, \ell \leq \infty$ and let $s\geq 0$, then for any $E \in \Omega_h$
\begin{equation}
\label{eq:inverse}
\Vert  p_n \Vert_{W^{s,q}(E)} \lesssim h_E^{2/q - 2/ \ell -s} \Vert  p_n \Vert_{L^{\ell }(E)}
\quad \text{for any $p_n \in \Pk_n(E)$.}
\end{equation}

\end{itemize}

At the global level, given $n\in \mathbb{N}$, $m \in \R_+$, and $l \in [1,+\infty)$, we introduce the piecewise regular spaces
\begin{itemize}
\item $\Pk_n(\Omega_h) = \{q \in L^2(\Omega) \quad \text{s.t} \quad q|_{E} \in  \Pk_n(E) \quad \text{for all $E \in \Omega_h$}\}$,
\item $W^{m,l}(\Omega_h) = \{v \in L^l(\Omega) \quad \text{s.t} \quad v|_{E} \in  W^{m,l}(E) \quad \text{for all $E \in \Omega_h$}\}$,
\end{itemize}
equipped with the broken norm and seminorm
\begin{equation}
\label{eq:normebroken}
\begin{aligned}
\norm[W^{m,l}(\Omega_h)]{v}^l
&= \sum_{E \in \Omega_h} \norm[W^{m,l}(E)]{v}^l\,,
&\quad
|v|^l_{W^{m,l}(\Omega_h)} &= \sum_{E \in \Omega_h} |v|^l_{W^{m,l}(E)}\,, 
&\qquad &\text{if $1 \leq l < \infty$.}
\end{aligned}
\end{equation}
Further we define the operator $\Pi^0_{n}\colon L^2(\Omega) \to \Pk_n(\Omega_h)$ such that $\Pi^0_{n}\vert_E=\Pi^{0,E}_n$ for any $E \in \Omega_h$.

Let $k \geq 1$ be the polynomial order of the method.
We consider on each polygonal element $E \in \Omega_h$ the ``enhanced'' virtual space \cite{k1,BLV:2017,BLV:2018,vacca:2018}:
\begin{equation}
\label{eq:v-loc}
\begin{aligned}
\VDL = \biggl\{  
\b v_h \in [C^0(\overline{E})]^2 \,\, \text{s.t.} \,\,
(i)& \,\,  
  \boldsymbol{\Delta}    \b v_h  +  \nabla s \in \b x ^\perp \Pk_{k-1}(E), 
\,\,\text{ for some $s \in L_0^2(E)$,} 
\\
(ii)&  
\,\,   \divs \, \b v_h \in \Pk_{k-1}(E) \,, 
\\
(iii) &
\,\,  {\b v_h }_{|e}\cdot \b n_e \in \Pk_{\max\{2,k\}}(e)\,,
\,  {\b v_h }_{|e}\cdot \b t_e \in \Pk_k(e)
 \,\,\, \forall e \in \partial E, 
\\
(iv) &
\,\,   (\b v_h - \Pi^{\nabla,E}_k \b v_h, \, \b x ^\perp \, \widehat{p}_{k-1} )_E = 0
\,\,\, \text{$\forall \widehat{p}_{k-1} \in \widehat{\Pk}_{(k-1) \setminus (k-3)}(E)$}
\biggr\},
\end{aligned}
\end{equation}
where $\b x ^\perp = (x_2, -x_1)$.
Next, we summarize the main properties of the space $\VDL$.

\begin{itemize}
\item [\textbf{(P1)}] \textbf{Polynomial inclusion:} $[\Pk_k(E)]^2 \subseteq \VDL$;
\item [\textbf{(P2)}] \textbf{Degrees of freedom:}
the following linear operators $\mathbf{D_{\boldsymbol{U}}}$ constitute a set of DoFs for $\VDL$:
\begin{itemize}
\item[$\mathbf{D_{\boldsymbol{U}}1}$] the values of $\b v_h$ at the vertexes of the polygon $E$,
\item[$\mathbf{D_{\boldsymbol{U}}2}$] 
 the edge moments of $\b v_h$  for every edge $e \in \partial E$,
\[
\begin{aligned}
&\frac{1}{|e|} \int_e {\b v_h} \cdot {\boldsymbol{t}}_e m_\alpha \,{\rm d}s \,, 
&\qquad 
&\text{for any $m_\alpha$} \in \M_{k-2}(e) \, ,
\\
&\frac{1}{|e|} \int_e {\b v_h} \cdot {\boldsymbol{n}}_e m_\alpha \,{\rm d}s &\qquad 
&\text{for any $m_\alpha$} \in \M_{\max\{2,k\}-2}(e) \, ,
\end{aligned}
\]

\item[$\mathbf{D_{\boldsymbol{U}}3}$] the moments of $\b v_h$ 
$$
\frac{1}{|E|}\int_E  \b v_h \cdot \frac{m_{\boldsymbol{\alpha}}}{h_E}(x_2 - x_{2,E}, -x_1 + x_{1,E})  \, {\rm d}E  
\qquad 
\text{for any $m_{\boldsymbol{\alpha}} \in \M_{k-3}(E)$,}
$$

\item[$\mathbf{D_{\boldsymbol{U}}4}$] the moments of $\divs \b v_h$ 
$$
\frac{h_E}{|E|}\int_E (\divs \b v_h) \, m_{\boldsymbol{\alpha}} \, {\rm d}E  
\qquad \text{for any $m_{\boldsymbol{\alpha}} \in \M_{k-1}(E)$ with $|\boldsymbol{\alpha}| > 0$;}
$$
\end{itemize}
\item [\textbf{(P3)}] \textbf{Polynomial projections:}
the DoFs $\mathbf{D_{\boldsymbol U}}$ allow us to compute the following linear operators:
\[
\P0 \colon \VDL \to [\Pk_{k}(E)]^2, \qquad
\PP0 \colon \GRAD \VDL \to [\Pk_{k-1}(E)]^{2 \times 2} \,.
\]
\end{itemize} 

The global velocity space $\VDG= \{\b v_h \in C^0(\Omega) \quad \text{s.t.} \quad {\b v_h}|_E \in \VDL \quad \text{for all $E \in \Omega_h$} \}$ is defined by gluing the local spaces with the obvious associated sets of global DoFs.

The discrete pressure space $\QDG$ is given by the piecewise polynomial functions of degree $k-1$:
\begin{equation}
\label{eq:q-glo}
\QDG = \{q_h \in \QCG \quad \text{s.t.} \quad {q_h}_{|E} \in \Pk_{k-1}(E) \quad \text{for all $E \in \Omega_h$} \}\,.
\end{equation}
The couple of spaces $(\VDG, \, \QDG)$ is well known to be inf-sup stable in the classical Hilbertian setting \cite{BLV:2017,BLV:2018}. The inf-sup stability for $r > 2$ is proven below (Lemma \ref{inf-sup:vem}).
Let us introduce the discrete kernel
\begin{equation}
\label{eq:z-glo}
\ZDG = \{ \b v_h \in \VDG \quad \text{s.t.} \quad  b(\b v_h, q_h) = 0 \quad \text{for all $q_h \in \QDG$}\}
\end{equation}
and observe that, owing to $(ii)$ in \eqref{eq:v-loc} and \eqref{eq:q-glo},  $\divs \b v_h = 0$, for all $\b v_h \in \ZDG$.


Given any $\b v \in \b W^{s,p}(E)$, with $p \in (1,\infty)$ and $s \in {\mathbb R}_+$, $s>2/p$, we define its approximant 
${\b v_I} \in \VDG$ as the unique function in $\VDG$ that interpolates $\b v$ with respect to the DoF set  
$\mathbf{D_{\boldsymbol{U}}}$.
It is easy to check that, whenever $\nabla\cdot \b v = 0$, then $\b v_I \in \ZDG$.
Furthermore, the following approximation property is a trivial generalization of the results in \cite{MBM_23}. 

\begin{lemma}\label{lem:approx-interp}
Let $E \in \Omega_h$, $n \in \mathbb{N}$, $\ell  \in [1,\infty]$, $s \in {\mathbb R}_{+}$ and $\b v \in \b W^{s, \ell }(E)$. Then
$$
| \b v - \Pi^{0, E}_{n} \b v |_{\b W^{m, \ell }(E)} \lesssim h_E^{s-m} |\b v|_{\b W^{s, \ell }(E)} 
\quad \text{for $0 \le m \le s \le n+1$ .} \\
$$
Furthermore, given $\b v \in \b W^{s, 2}(E)$, $s>1$, let ${\b v_I} \in \VDG$ be the interpolant of $\b v$ defined above. It holds
$$
| \b v - \b v_I |_{\b W^{m,2}(E)} \lesssim 
h_E^{s-m} |\b v|_{\b W^{s,2}(E)}
\quad  \text{for $1  < s \le k+1 , \ m \in \{0,1\}$ .}
$$
The first bound above extends identically to the scalar and tensor-valued cases.
\end{lemma}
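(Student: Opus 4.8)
The plan is to establish the two bounds separately, reducing each to classical polynomial approximation theory on star-shaped domains (Bramble--Hilbert/Dupont--Scott) combined with the mesh regularity of Assumption~\ref{ass:mesh}. Throughout I would work component-wise, so that the scalar, vector, and tensor statements are all covered by the same argument.

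For the projection bound, I would first introduce the averaged Taylor polynomial $\b q \in [\Pk_n(E)]^2$ of $\b v$. Because $E$ is star-shaped with respect to a ball of radius $\gtrsim h_E$, the Dupont--Scott estimate gives $|\b v - \b q|_{\b W^{m,\ell}(E)} \lesssim h_E^{s-m}|\b v|_{\b W^{s,\ell}(E)}$ for $0 \le m \le s \le n+1$. Since $\Pi^{0,E}_n$ reproduces polynomials of degree $\le n$, I would write $\b v - \Pi^{0,E}_n \b v = (\b v - \b q) - \Pi^{0,E}_n(\b v - \b q)$ and split by the triangle inequality. The first term is already controlled. For the second term I would apply the polynomial inverse estimate \eqref{eq:inverse} with $q=\ell$, $s=m$ to pass from the $\b W^{m,\ell}$ seminorm to the $\b L^\ell$ norm at the cost of $h_E^{-m}$, and then the $L^\ell$-stability of the $L^2$-projection on shape-regular elements, $\|\Pi^{0,E}_n \b w\|_{\b L^\ell(E)} \lesssim \|\b w\|_{\b L^\ell(E)}$, to reduce to $\|\b v - \b q\|_{\b L^\ell(E)} \lesssim h_E^s |\b v|_{\b W^{s,\ell}(E)}$ (the $m=0$ instance of Dupont--Scott). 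Both contributions then scale as $h_E^{s-m}|\b v|_{\b W^{s,\ell}(E)}$, which is the claim.

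For the interpolation bound I would use the standard Virtual Element argument. By the polynomial inclusion \textbf{(P1)}, the interpolant reproduces any $\b q \in [\Pk_k(E)]^2$, so $(\b v - \b q)_I = \b v_I - \b q$ and hence $\b v - \b v_I = (\b v - \b q) - (\b v - \b q)_I$; note that $s>1$ guarantees $\b v \in C^0(\overline E)$ in $d=2$, so the vertex and edge degrees of freedom are well defined. It then suffices to establish local stability of the interpolation operator, $|\b w_I|_{\b W^{m,2}(E)} \lesssim \sum_{j} h_E^{j-m} |\b w|_{\b W^{j,2}(E)}$ for $m \in \{0,1\}$, which is obtained by bounding each functional in $\mathbf{D_{\boldsymbol{U}}}$ via the scaled trace inequality \eqref{eq:cont_trace} and the inverse estimate \eqref{eq:inverse}, and then choosing $\b q$ once more as the averaged Taylor polynomial and invoking Dupont--Scott. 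This is precisely the computation of \cite{MBM_23}; the enhancement constraint $(iv)$ and the mixed edge degrees in $(iii)$ enter only through the (computable) degrees of freedom and do not alter the structure of the estimate, whence the ``trivial generalization''. The only genuinely technical point I would isolate, or simply quote from the literature, is the $h_E$-uniform $L^\ell$-stability of the $L^2$-projection: on a shape-regular element this follows from the equivalence of all norms on the finite-dimensional space $\Pk_n(E)$ together with the correct $h_E$-scaling of each norm, obtained by rescaling to a reference configuration. Everything else is a direct application of Assumption~\ref{ass:mesh}, \eqref{eq:inverse}, \eqref{eq:cont_trace}, and classical Bramble--Hilbert theory, so no new ideas beyond \cite{MBM_23} are needed.
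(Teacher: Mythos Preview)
The paper does not actually prove this lemma: it simply states that the result is ``a trivial generalization of the results in \cite{MBM_23}'' and gives no further argument. Your proposal correctly supplies the standard argument underlying that citation --- Dupont--Scott/Bramble--Hilbert on star-shaped domains for the projection bound (with the $L^\ell$-stability of $\Pi^{0,E}_n$ as the only nontrivial ingredient), and the polynomial-reproduction plus DoF-stability route for the VEM interpolant --- so there is nothing to compare and nothing to correct.
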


\begin{remark}\label{rem:nonincl}
We must note that, by known regularity results on Lipschitz domains, definition \eqref{eq:v-loc} guarantees $\VDL \subset \b W^{1,r}(E)$ for all $r \in (1,\infty)$ only if the polygonal element $E$ is convex \cite{Dauge:89, Kellogg.Osborn:76} or has a small Lipschitz constant \cite{Galdi-Simader-Sohr_1994}. Otherwise, if $d=2$ and $r > 4$ or $d=3$ and $r > 3$, such inclusion does not hold (see, e.g., \cite{Geng-Kilty_2015, Savare:1998}). This is the reason why, in the following analysis, we make use of discrete norms (see definition \eqref{eq:discrete_norm} below). 
Additionally, assuming that all the mesh elements are convex, the \emph{a priori} estimates established in Section~\ref{sec:error_analysis} below imply error bounds with respect to standard $\b W^ {1,r}$-norms (see Corollary~\ref{cor:W1r.estimate} below). 
\end{remark}

\subsection{Discrete problem}
\label{sub:vem problem}

To formulate the VEM approximation of \eqref{eq:stokes.weak}, we aim to construct a discrete version of the nonlinear form $a(\cdot, \cdot)$ in \eqref{eq:a.b} and the approximation of the loading term $\b f$. For the latter, we define the discrete volumetric force as
\begin{equation}
\label{eq:forma fh}
\b f_h= \Pi^0_k \b f \,,
\end{equation}
owing to the fact that $(\b f_h, \b v_h)$ is computable by property \textbf{(P3)}.
The discrete nonlinear form $a_h(\cdot,\cdot)$ is given as the sum of a consistency term and a stabilizing form that is suited for the non-linearity under consideration. Following \cite{Antonietti_et_al_2024},  we consider a non-linear \texttt{dofi-dofi} stabilization $S(\cdot,\cdot) \colon \VDG \times \VDG \to \R$ defined as 
\begin{equation}
\label{eq:Sglobal}    
S(\b v_h, \b w_h)=\sum_{E\in\Omega_h} S^E(\b v_h, \b w_h) \qquad \text{for all $\b v_h$, $\b w_h \in \VDG$,}
\end{equation} 
with $S^E(\cdot, \cdot) \colon \VDL \times \VDL \to \R$
resembling the nonlinear law in \eqref{eq:Carreau}, i.e.
\begin{equation}
\label{eq:dofi}
\begin{aligned}
S^E(\b v_h,\w_h) & = \overline{\mu}_E \, \big( \delta^\alpha + h_E^{-\alpha}\vert \DOF(\b v_h) \vert^\alpha \big)^{\frac{r-2}\alpha}  \DOF (\b v_h) \cdot \DOF (\w_h)\,,
\end{aligned}
\end{equation}
where $\overline{\mu}_E = \Pi^{0,E}_0 \mu$ and $\DOF \colon \VDL \to {\mathbb R}^{N_E}$, with $N_E$ denoting the dimension of $\VDL$, is the function that associates to each $\b v_h \in \VDL$ the vector of the local degrees of freedom in {\bf (P2)}. 
We remark that, according to the strong monotonicity and H\"older continuity of the Carreau--Yasuda law, any choice of $\alpha\in (0,\infty)$ in \eqref{eq:dofi} give rise to an equivalent stabilization, in the sense that
\begin{equation}\label{eq:equi.stab}
\overline{\mu}_E \, \big( \delta^r + h_E^{-r}\vert \DOF(\b v_h) \vert^r \big)^{\frac{r-2}r}  \vert\DOF (\b v_h) \vert^2
\lesssim S^E(\b v_h,\b v_h) \lesssim
\overline{\mu}_E \, \big( \delta^r + h_E^{-r}\vert \DOF(\b v_h) \vert^r \big)^{\frac{r-2}r}  \vert\DOF (\b v_h) \vert^2.
\end{equation}
%
Thus, we define the global form $a: \VDG \times \VDG \to \R$ such that
\begin{equation}
\label{eq:forma ah}
a_h(\b v_h, \b w_h) = 
\int_\Omega \b \sigma(\cdot, \b \Pi^0_{k-1} \b \epsilon(\b v_h)) : 
\b \Pi^0_{k-1} \b \epsilon(\b \w_h) + 
S((I - \Pi^0_k) \b v_h, (I - \Pi^0_k) \b w_h)
\quad \forall\b v_h,\b w_h \in \VDG.
\end{equation}
We observe that, owing to property \textbf{(P3)}, all projection operators appearing above are computable explicitly in terms of the velocity DOFs.

The virtual element discretization of Problem \eqref{eq:stokes.weak} is given by:
Find $(\b u_h, p_h) \in \VDG \times \QDG$ such that
\begin{equation}\label{eq:stokes.vem}
  \begin{aligned}
     a_h(\b u_h,\b v_h)+b(\b v_h, p_h) &= \int_\Omega \b f_h \cdot \b v_h + \int_{\Gamma_N} \b g \cdot \b v_h &\qquad \forall \b v_h \in \VDG , \\
     b(\b u_h, q_h) &= 0 &\qquad \forall q_h \in \QDG \,. 
  \end{aligned}
\end{equation}
Recalling the definition of the discrete kernel $\ZDG$ in \eqref{eq:z-glo}, the previous problem can also be written in the kernel formulation:
Find $\b u_h \in \b \ZDG$ such that
\begin{equation}\label{eq:stokes.vem.Z}
     a_h(\b u_h, \b v_h) = \int_\Omega \b f_h \cdot \b v_h + \int_{\Gamma_N} \b g \cdot \b v_h \qquad \forall \b v_h \in \b \ZDG \,.
\end{equation}

\subsection{Well-posedness}
This section aims to establish the well-posedness of the discrete problem \eqref{eq:stokes.vem}. To do so, first we prove the inf-sup stability of the bilinear form $b(\cdot,\cdot)$ and then we investigate the continuity and monotonicity properties of $a_h(\cdot,\cdot)$.

We define, for all $\b v \in \b W^{1,r}(\Omega) \cup \VDG$ with $r \in [2,\infty)$, the discrete quantity:
\begin{equation}
\label{eq:discrete_norm}
\tri \b v \tri_{r}^r \coloneq 
\|\b\Pi^0_{k-1}\b \epsilon(\b v)\|_{\mathbb{L}^r(\Omega)}^r
+\sum_{E \in \Omega_h} h_E^{2-r} | \DOF((I-\Pi^0_{k})\b v) |^r.
\end{equation}
Note that, owing to Lemma \ref{Lem:vemstab-2} below, $\tri \cdot \tri_{r}$ defines a norm on $\VDG$. In Section~\ref{sec:error_analysis}, we will measure the discretization error with respect to the quantity
\begin{equation}
\label{eq:error_measure}
\tri \b v \tri_{\delta,r}^r \coloneq 
\|\b\Pi^0_{k-1}\b \epsilon(\b v)\|_{\mathbb{L}^r(\Omega)}^r
+S((I-\Pi^0_{k})\b v,(I-\Pi^0_{k})\b v),
\end{equation}
which is not a norm since absolute homogeneity does not hold due to the dependence on $\delta$ of the stabilization term. However, for all $\delta\ge0$ and $r\ge 2$, the error measure in \eqref{eq:error_measure} controls the discrete norm. This is established in the next Lemma.
\begin{lemma}
Given $\delta\ge 0$, the discrete norm defined in \eqref{eq:discrete_norm}, and the error measure as in \eqref{eq:error_measure}, the following inequalities hold:
\begin{equation}\label{eq:comparison_delta_r_norm}
\tri \b v \tri_{r} \le \tri \b v \tri_{\delta,r} \lesssim (\delta+\tri \b v \tri_{r})^\frac{r-2}{r}\tri \b v \tri_{r}^\frac{2}{r}.
\end{equation} 
\end{lemma}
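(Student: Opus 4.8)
The plan is to reduce both inequalities to elementwise estimates on the stabilization, driven by the two-sided bound \eqref{eq:equi.stab}. Throughout I would write $X_E := |\DOF((I-\Pi^0_k)\b v)|$, so that the stabilization contribution in \eqref{eq:discrete_norm} is $\sum_E h_E^{2-r}X_E^r$, whereas \eqref{eq:equi.stab} (with $\overline{\mu}_E \in [\mu_-,\mu_+]$) gives $S^E((I-\Pi^0_k)\b v,(I-\Pi^0_k)\b v) \simeq (\delta^r + h_E^{-r}X_E^r)^{\frac{r-2}{r}}X_E^2$. Since the consistency term $\|\b\Pi^0_{k-1}\b\epsilon(\b v)\|_{\mathbb{L}^r(\Omega)}^r$ is common to $\tri\b v\tri_r^r$ and $\tri\b v\tri_{\delta,r}^r$, everything reduces to comparing $\sum_E h_E^{2-r}X_E^r$ with $S((I-\Pi^0_k)\b v,(I-\Pi^0_k)\b v)$.

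For the first (lower) inequality I would argue elementwise: since $\delta\ge0$ and $r\ge2$, one has $(\delta^r + h_E^{-r}X_E^r)^{\frac{r-2}{r}} \ge (h_E^{-r}X_E^r)^{\frac{r-2}{r}} = h_E^{2-r}X_E^{r-2}$, so the lower bound in \eqref{eq:equi.stab} yields $S^E \gtrsim h_E^{2-r}X_E^{r-2}\cdot X_E^2 = h_E^{2-r}X_E^r$. Summing over $E$ and adding the common consistency term gives $\tri\b v\tri_r \le \tri\b v\tri_{\delta,r}$, where the constant comes from the normalization of the stabilization (it is $1$ for the sharp choice $\alpha=r$ in \eqref{eq:dofi}).

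For the second (upper) inequality I would take $r$-th powers and establish $\tri\b v\tri_{\delta,r}^r \lesssim (\delta + \tri\b v\tri_r)^{r-2}\tri\b v\tri_r^2$. The consistency term is immediate, since $\|\b\Pi^0_{k-1}\b\epsilon(\b v)\|^r_{\mathbb{L}^r(\Omega)} \le \tri\b v\tri_r^r = \tri\b v\tri_r^{r-2}\tri\b v\tri_r^2 \le (\delta+\tri\b v\tri_r)^{r-2}\tri\b v\tri_r^2$. For the stabilization I would use the upper bound in \eqref{eq:equi.stab} together with the subadditivity $(a+b)^\theta \lesssim a^\theta + b^\theta$ with $\theta=\tfrac{r-2}{r}\in[0,1)$ to split
$$S^E \lesssim (\delta^r + h_E^{-r}X_E^r)^{\frac{r-2}{r}}X_E^2 \lesssim \delta^{r-2}X_E^2 + h_E^{2-r}X_E^r.$$
The second summand reproduces exactly the stabilization part of $\tri\b v\tri_r^r$, hence sums to at most $\tri\b v\tri_r^r$. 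The first summand reduces to controlling $\sum_E X_E^2$ by $\tri\b v\tri_r^2$, which is the crux: writing $X_E^2 = (h_E^{2-r}X_E^r)^{2/r}\,h_E^{2(r-2)/r}$ and applying Hölder's inequality across elements with conjugate exponents $\tfrac r2$ and $\tfrac r{r-2}$ gives
$$\sum_E X_E^2 \le \Big(\sum_E h_E^{2-r}X_E^r\Big)^{2/r}\Big(\sum_E h_E^2\Big)^{(r-2)/r} \le \tri\b v\tri_r^2\,\Big(\sum_E h_E^2\Big)^{(r-2)/r}.$$
Mesh regularity (Assumption~\ref{ass:mesh}) yields $h_E^2 \lesssim |E|$, so $\sum_E h_E^2 \lesssim |\Omega|$ is a fixed constant; hence $\sum_E X_E^2 \lesssim \tri\b v\tri_r^2$ and $\delta^{r-2}\sum_E X_E^2 \lesssim (\delta+\tri\b v\tri_r)^{r-2}\tri\b v\tri_r^2$. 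Combining the consistency and stabilization estimates and taking $r$-th roots gives the second inequality.

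The main obstacle is the second inequality: the exponent on the degrees of freedom switches from $r$ (in $\tri\cdot\tri_r$) to $2$ (in the $\delta$-part of the stabilization), so the bound cannot be obtained termwise and forces a Hölder inequality over the mesh together with the geometric input $\sum_E h_E^2 \lesssim |\Omega|$. I expect the careful bookkeeping of the $h_E$-weights — aligning the conjugate exponents so that the weights collapse to $\sum_E h_E^2$ — to be the delicate point.
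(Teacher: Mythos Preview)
Your argument is correct and uses the same ingredients as the paper: the equivalence \eqref{eq:equi.stab}, a discrete H\"older inequality with exponents $(\tfrac{r}{r-2},\tfrac{r}{2})$, and the geometric fact $\sum_E h_E^2 \lesssim |\Omega|$. The only cosmetic difference is that you first split $(\delta^r+h_E^{-r}X_E^r)^{(r-2)/r}$ via subadditivity and then apply H\"older to the $\delta^{r-2}X_E^2$ piece, whereas the paper keeps the factor intact, rewrites it as $(|E|\delta^r+h_E^{2-r}X_E^r)^{(r-2)/r}(h_E^{2-r}X_E^r)^{2/r}$, and applies H\"older in one stroke---but this is the same idea and yields the same bound.
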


\begin{proof}
The first inequality in \eqref{eq:comparison_delta_r_norm} is a direct consequence of the definition of the stabilization function \eqref{eq:dofi} and the fact that $r\ge2$. To prove the second inequality, we set ${\b v}^\perp =(I - \Pi^0_k) \b v$, use the equivalence property \eqref{eq:equi.stab}, recall that $h_E^2 \simeq |E|$, and apply the discrete $(\frac{r}{r-2},\frac{r}{2})$-H\"older inequality, to infer
\[
\begin{aligned}
    S({\b v}^\perp,{\b v}^\perp) &\lesssim \sum_{E\in\Omega_h}\big( \delta^r + h_E^{-r}\vert \DOF({\b v}^\perp) \vert^r \big)^{\frac{r-2}{r}}  |\DOF ({\b v}^\perp)|^2 \\
    &\lesssim \sum_{E\in\Omega_h}\big( |E|\delta^r + h_E^{2-r}\vert \DOF({\b v}^\perp) \vert^r \big)^{\frac{r-2}{r}} (h_E^{2-r}|\DOF ({\b v}^\perp)|^r)^\frac{2}{r}\\
    &\le \big(|\Omega|\delta^r + \sum_{E\in\Omega_h}h_E^{2-r}\vert \DOF({\b v}^\perp) \vert^r \big)^{\frac{r-2}{r}} (\sum_{E\in\Omega_h}h_E^{2-r}|\DOF ({\b v}^\perp)|^r)^\frac{2}{r}
    \lesssim  (\delta+\tri \b v \tri_{r})^{r-2}\tri \b v \tri_{r}^{2}.
\end{aligned}
\]
Moreover, owing to $\delta\ge0$ and $r\ge2$, one also has
$\|\b\Pi^0_{k-1}\b \epsilon(\b v)\|_{\mathbb{L}^r(\Omega)}^r 
\le (\delta+\tri \b v \tri_{r})^{r-2}\tri \b v \tri_{r}^2$.
\end{proof}

\subsubsection{Discrete inf-sup condition}\label{sec:infsup}
\def\ww{{\bf w}}
We here prove a discrete inf-sup condition analogous to the continuous one. The difference with respect to the analogous condition proved in \cite{Antonietti_et_al_2024} is that here we make use of the discrete norm in \eqref{eq:discrete_norm}.

We recall the following Lemma established in \cite{MBM_23} (see also \cite[Lemmas 8, 9]{Antonietti_et_al_2024}).
\begin{lemma}\label{Lem:vemstab-2}
Let the mesh regularity in Assumption~\ref{ass:mesh} hold.
For any $E \in \mesh$ we have
$$ 
|\b v_h|_{\b W^{1,2}(E)} 
\lesssim
| \DOF(\b v_h) | 
\lesssim 
|\b v_h|_{\b W^{1,2}(E)} 
\qquad \text{for all $\b v_h \in \VDL$ s.t. $\ \Proj\b v_h=0$.}
$$
\end{lemma}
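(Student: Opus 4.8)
The lemma claims a two-sided equivalence between the $W^{1,2}$-seminorm of a virtual function $\boldsymbol{v}_h$ and the Euclidean norm of its degree-of-freedom vector $\boldsymbol{\chi}(\boldsymbol{v}_h)$, valid on the subspace where the $L^2$-projection $\Pi^{0,E}_k \boldsymbol{v}_h = 0$. This is a standard "stability of the local basis" estimate for VEM, here in the enhanced divergence-free space.

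**The plan.** The standard route is a scaling/compactness argument combined with a norm-equivalence on a finite-dimensional space. First I would reduce to a reference configuration: by Assumption~\ref{ass:mesh} every element $E$ is star-shaped with respect to a ball of radius $\gtrsim \rho h_E$ and has edges of length $\gtrsim \rho h_E$, so after the affine rescaling $\boldsymbol{x}\mapsto (\boldsymbol{x}-\boldsymbol{x}_E)/h_E$ the element has unit diameter and shape-regularity controlled uniformly by $\rho$. The DoF functionals $\boldsymbol{\chi}$ in \textbf{(P2)} are scaled precisely so that $|\boldsymbol{\chi}(\boldsymbol{v}_h)|$ and $|\boldsymbol{v}_h|_{\boldsymbol{W}^{1,2}(E)}$ are both invariant (or scale identically) under this map; one checks this by inserting the monomial normalizations $m_{\boldsymbol{\alpha}}=\prod_i((x_i-x_{E,i})/h_E)^{\alpha_i}$ and the $1/|e|$, $h_E/|E|$ prefactors into each moment. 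Hence it suffices to prove the equivalence on the reference element with constants depending only on $\rho$.

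**The core of the argument.** On the (rescaled) element both $\boldsymbol{v}_h\mapsto|\boldsymbol{v}_h|_{\boldsymbol{W}^{1,2}(E)}$ and $\boldsymbol{v}_h\mapsto|\boldsymbol{\chi}(\boldsymbol{v}_h)|$ are continuous functions on the finite-dimensional space $\boldsymbol{U}_h(E)$. The DoFs form a unisolvent set by property \textbf{(P2)}, so $\boldsymbol{\chi}$ is a linear isomorphism onto $\mathbb{R}^{N_E}$; thus $|\boldsymbol{\chi}(\cdot)|$ is a genuine norm. The seminorm $|\cdot|_{\boldsymbol{W}^{1,2}(E)}$ is only a seminorm, with kernel the constants. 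The key point is that on the subspace $\{\Pi^{0,E}_k\boldsymbol{v}_h=0\}$ the constants are excluded: a constant function $\boldsymbol{c}$ satisfies $\Pi^{0,E}_k\boldsymbol{c}=\boldsymbol{c}$, so $\Pi^{0,E}_k\boldsymbol{v}_h=0$ forces $\boldsymbol{v}_h$ to have zero mean (indeed zero $\Pk_k$-projection), and then $|\boldsymbol{v}_h|_{\boldsymbol{W}^{1,2}(E)}=0$ together with connectedness of $E$ implies $\boldsymbol{v}_h$ constant, hence $\boldsymbol{v}_h=\boldsymbol{0}$. Therefore $|\cdot|_{\boldsymbol{W}^{1,2}(E)}$ restricted to this subspace is a norm, and on a finite-dimensional space all norms are equivalent, giving both inequalities with constants depending only on $\rho$ (uniformity across the mesh family following from the compactness of the class of admissible rescaled elements, or equivalently from the explicit dependence of all constants on $\rho$ alone).

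**The main obstacle.** The delicate step is the \emph{uniformity} of the equivalence constants over the whole mesh sequence $\{\Omega_h\}_h$: a bare finite-dimensional norm-equivalence gives element-dependent constants, and one must upgrade this to a single $\rho$-dependent constant. This is handled either by a compactness argument over the (pre-compact) set of rescaled shape-regular elements, or—more robustly, since the dimension $N_E$ itself varies with the number of edges—by tracking the explicit scaling in the trace inequality \eqref{eq:cont_trace} and the polynomial inverse estimate \eqref{eq:inverse}, both of which already carry only $\rho$-dependent constants under Assumption~\ref{ass:mesh}. Since this result is quoted from \cite{MBM_23} (see also \cite[Lemmas 8, 9]{Antonietti_et_al_2024}), I would reference those sources for the technical uniformity and present only the reduction and the kernel-exclusion observation in detail.
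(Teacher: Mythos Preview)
The paper does not actually prove this lemma: it merely recalls it, citing \cite{MBM_23} (and \cite[Lemmas 8, 9]{Antonietti_et_al_2024}) for the proof. Your sketch is the standard route to such dof--norm equivalences (rescale to unit diameter using the built-in normalization of the DoFs, observe that the condition $\Pi^{0,E}_k\boldsymbol{v}_h=0$ removes the constants so that $|\cdot|_{\boldsymbol{W}^{1,2}(E)}$ becomes a genuine norm on the relevant finite-dimensional subspace, then invoke equivalence of norms), and you correctly flag uniformity in $\rho$ across the mesh family---with the varying local dimension $N_E$---as the only nontrivial point, which is precisely what the cited references handle. In short, your proposal is sound and aligned with the literature the paper defers to; there is nothing to compare against in the paper itself.
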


\begin{lemma}[Discrete inf-sup]\label{inf-sup:vem}
Let the mesh regularity assumptions stated in Assumption~\ref{ass:mesh} hold. Then, for any $r\in [2,\infty)$ it exists a constant $\overline{\beta}(r)$, such that
$$
\inf_{q_h\in \QDG}\;\sup_{\b w_h\in \VDG}\; 
\frac{b(\b w_h,q_h)}{\norm[L^{r'}(\Omega)]{q_h} \tri {\b w_h} \tri_r} 
\ge \overline{\beta}(r) > 0.
$$
\end{lemma}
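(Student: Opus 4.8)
The plan is to transfer the continuous inf-sup condition \eqref{eq:inf_sup} to the discrete level by a Fortin-type argument built on the DoF interpolation operator $\b v\mapsto\b v_I$ introduced above. Fix $q_h\in\QDG\subset P$. Since $q_h$ is admissible in \eqref{eq:inf_sup}, there is $\b w\in\b U$, which we normalise so that $\norm[\b{W}^{1,r}(\Omega)]{\b w}=1$, with $b(\b w,q_h)\ge\frac12\beta(r)\norm[L^{r'}(\Omega)]{q_h}$. Because $r>2=d$, Sobolev embedding gives $\b{W}^{1,r}(\Omega)\hookrightarrow\b C^0(\overline\Omega)$, so the vertex and moment DoFs of $\b w$ are well defined and $\b w_I\in\VDG$ exists. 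The proof then rests on two properties of this interpolant: \textbf{(F1)} the divergence-commuting identity $b(\b w_I,q_h)=b(\b w,q_h)$ for all $q_h\in\QDG$, and \textbf{(F2)} the discrete-norm stability $\tri\b w_I\tri_r\lesssim\norm[\b{W}^{1,r}(\Omega)]{\b w}$. Granting these, choosing $\b w_h=\b w_I$ gives $\sup_{\b w_h\in\VDG}b(\b w_h,q_h)/\tri\b w_h\tri_r\ge b(\b w_I,q_h)/\tri\b w_I\tri_r\gtrsim\beta(r)\norm[L^{r'}(\Omega)]{q_h}$, whence the claim with $\overline\beta(r)\simeq\beta(r)$.

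For \textbf{(F1)} I would argue elementwise. By property $(ii)$ in \eqref{eq:v-loc} one has $\divs\b w_I\in\Pk_{k-1}(E)$, so it suffices to match its moments against $\Pk_{k-1}(E)$ with those of $\divs\b w$. The DoFs $\mathbf{D_{\boldsymbol{U}}4}$ match the moments against every $m_{\boldsymbol\alpha}$ with $0<|\boldsymbol\alpha|\le k-1$, since the corresponding functionals agree on $\b w_I$ and $\b w$; the constant mode is handled by the divergence theorem $\int_E\divs\b w_I=\int_{\partial E}\b w_I\cdot\b n_e$ together with the zeroth normal edge moments in $\mathbf{D_{\boldsymbol{U}}2}$, which yield $\int_{\partial E}\b w_I\cdot\b n_e=\int_{\partial E}\b w\cdot\b n_e=\int_E\divs\b w$. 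Hence $\divs\b w_I=\Pi^0_{k-1}(\divs\b w)$, and since $q_h|_E\in\Pk_{k-1}(E)$ this gives $b(\b w_I,q_h)=b(\b w,q_h)$.

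The core of the argument is \textbf{(F2)}, the stability of the interpolant in the \emph{non-Hilbertian} norm \eqref{eq:discrete_norm}; I would treat its two contributions separately and track the powers of $h_E$. For the stabilisation part, $(I-\Pi^0_k)\b w_I\in\VDL$ has vanishing $\Pi^0_k$-projection, so Lemma~\ref{Lem:vemstab-2} gives $|\DOF((I-\Pi^0_k)\b w_I)|\lesssim|(I-\Pi^0_k)\b w_I|_{\b W^{1,2}(E)}$; combining the $\b W^{1,2}$-stability of $I-\Pi^0_k$ and of the interpolation operator (a companion of Lemma~\ref{lem:approx-interp}) with the local Hölder bound $|\cdot|_{\b W^{1,2}(E)}\lesssim h_E^{1-2/r}|\cdot|_{\b W^{1,r}(E)}$, the weight $h_E^{2-r}$ is exactly absorbed, so that $\sum_E h_E^{2-r}|\DOF((I-\Pi^0_k)\b w_I)|^r\lesssim\norm[\b{W}^{1,r}(\Omega)]{\b w}^r$. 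For the consistency part, $\b\Pi^0_{k-1}\b\epsilon(\b w_I)$ is a piecewise polynomial, so the inverse estimate \eqref{eq:inverse} gives $\|\b\Pi^0_{k-1}\b\epsilon(\b w_I)\|_{\mathbb L^r(E)}\lesssim h_E^{2/r-1}\|\b\epsilon(\b w_I)\|_{\mathbb L^2(E)}$; using $\mathbb L^2$-contractivity of $\b\Pi^0_{k-1}$, $\b W^{1,2}$-stability of the interpolant, and the same Hölder bound, the factors $h_E^{2/r-1}$ and $h_E^{1-2/r}$ cancel and $\|\b\Pi^0_{k-1}\b\epsilon(\b w_I)\|_{\mathbb L^r(\Omega)}\lesssim\norm[\b{W}^{1,r}(\Omega)]{\b w}$.

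The main obstacle is precisely \textbf{(F2)}: the available VEM interpolation and projection bounds (Lemmas~\ref{lem:approx-interp} and \ref{Lem:vemstab-2}) are Hilbertian ($\b W^{1,2}$/$\mathbb L^2$), whereas the target norm lives in $L^r$ with $r>2$. Bridging this gap forces the systematic use of inverse estimates and local Hölder inequalities, and the delicate point is to verify that every passage between $\mathbb L^2$ and $\mathbb L^r$ produces exactly the power of $h_E$ needed to cancel the weights in \eqref{eq:discrete_norm}, so that the elementwise bounds sum to a mesh-independent constant. A secondary subtlety is the minimal regularity $\b w\in\b W^{1,r}(\Omega)$: the interpolation stability must be invoked at borderline smoothness (or through a density argument), which is legitimate here only because $r>2$ guarantees continuity of $\b w$ and hence well-posedness of the DoFs.
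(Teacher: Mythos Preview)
Your strategy is the same as the paper's: a Fortin argument, with the stability step \textbf{(F2)} proved by the chain
\[
\tri\,\cdot\,\tri_r \ \xrightarrow{\text{inverse est.\ / Lemma \ref{Lem:vemstab-2}}}\ \text{local }W^{1,2}
\ \xrightarrow{\text{Fortin stability}}\ |\b w|_{W^{1,2}(\cdot)}
\ \xrightarrow{\text{H\"older}}\ |\b w|_{W^{1,r}(\cdot)} \, .
\]
The difference is the Fortin operator. The paper does \emph{not} use the DoF interpolant $\b v_I$: it imports the operator $\Pi^{\cal F}$ from \cite[Lemma~16]{Antonietti_et_al_2024}, a quasi-interpolation built so that $|\Pi^{\cal F}\b w|_{\b W^{1,2}(E)}\lesssim|\b w|_{\b W^{1,2}(\omega_E)}$ on element patches $\omega_E$ for any $\b w\in\b W^{1,2}(\Omega)$. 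That patch-local $W^{1,2}$-stability is exactly the middle arrow above, and it holds uniformly for every $r\in[2,\infty)$, including the endpoint $r=2$.

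Your choice $\b v_I$ is cleaner (element-local, and \textbf{(F1)} is immediate from the DoF structure as you argue), but the step you label ``$\b W^{1,2}$-stability of the interpolation operator'' is not available as stated: the vertex DoFs are unbounded on $\b W^{1,2}(E)$, so $|\b w_I|_{\b W^{1,2}(E)}\lesssim|\b w|_{\b W^{1,2}(E)}$ fails. What \emph{does} hold, for $r>2$, is the direct bound $|\b w_I|_{\b W^{1,2}(E)}\lesssim h_E^{1-2/r}|\b w|_{\b W^{1,r}(E)}$, obtained by a reference-element scaling argument (using $\b W^{1,r}\hookrightarrow C^0$) together with polynomial preservation of $\b v_I$ and a Poincar\'e inequality to kill the $L^r$ contribution. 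With that replacement your \textbf{(F2)} goes through for $r>2$, and the case $r=2$ must be handled separately by the classical Hilbertian inf-sup. The paper's $\Pi^{\cal F}$ avoids both this patch-up and the endpoint issue at the price of a non-local construction and a finite-overlap argument for the patches $\omega_E$.
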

\begin{proof}
The proof is a modification of that of Lemma 16 in \cite{Antonietti_et_al_2024}; to shorten the exposition, we refer here directly to the notation and derivations in that article. We consider the same Fortin operator $\Pi^{\cal F}$ introduced in the lemma above. The only difference in the present proof is that when showing the continuity of $\Pi^{\cal F}$ from $W^{1,r}(\Omega)$ into $\VDG$, the latter space is equipped with the $\tri\cdot\tri_r$ norm instead of the $W^{1,r}$ norm.

Let now $\b w \in W^{1,r}(\Omega)$. First, by definition of the stabilization form and some trivial algebra, then combining a standard inverse estimate for polynomials with the continuity of the $\Pi^0_{k-1}$ operator and recalling  Lemma \ref{Lem:vemstab-2}, we obtain
$$
\begin{aligned}
\tri \Pi^{\cal F} \b w \tri_r^r & =  
\sum_{E \in \Omega_h} \Big( \|\b\Pi^0_{k-1}\b \epsilon(\Pi^{\cal F} \b w) \|_{\mathbb{L}^r(E)}^r
+   h_E^{2-r} |\DOF (I-\Pi^0_{k})\Pi^{\cal F} \b w|^r \Big)
\\
& \lesssim \sum_{E \in \Omega_h} h_E^{2-r} \Big(
\| \b \epsilon(\Pi^{\cal F} \b w) \|_{\mathbb{L}^2(E)}^r
+ |(I-\Pi^0_{k})\Pi^{\cal F}\b w|_{W^{1,2}(E)}^r \Big) \, .
\end{aligned}
$$
Using the continuity of $\Pi^0_{k}$, the ``local'' continuity of $\Pi^{\cal F}$ in $W^{1,2}$ (see for instance Lemma 16 in \cite{Antonietti_et_al_2024}) and a H\"older inequality we get
$$
\tri \Pi^{\cal F} \b w \tri_r^r 
\lesssim \sum_{E \in \Omega_h} h_E^{2-r} | \Pi^{\cal F} \b w |_{W^{1,2}(E)}^r \lesssim \sum_{E \in \Omega_h} h_E^{2-r} | \b w |_{W^{1,2}(\omega_E)}^r
\lesssim \sum_{E \in \Omega_h} | \b w |_{W^{1,r}(\omega_E)}^r \, ,
$$
where $\omega_E$ is the union of all elements sharing at least a vertex with $E$. The above bound shows the required continuity for the $\Pi^{\cal F}$ operator.
\end{proof}

\subsubsection{Properties of the stabilization}\label{sec:stab}
We establish the continuity and monotonicity of the local stabilization form.
\begin{lemma}[H\"older continuity and strong monotonicity of $S^E(\cdot,\cdot)$]
\label{Lemma:S_holder_monotonicity}
    Let the mesh regularity assumptions stated in Assumption~\ref{ass:mesh} hold.
    Let $\b u_h$, $\b w_h \in \VDL$  and set $\b e_h=\b u_h - \b w_h$. Then, for all $\b v_h \in \VDL$, there holds
    \begin{equation}
    \label{eq:continuity:local_stab1}
        \vert S^E(\b u_h,\b v_h)-S^E(\w_h,\b v_h)\vert
        \lesssim \left(\delta^r + h_E^{-r}\vert \DOF(\b u_h) \vert^{r} 
      + h_E^{-r}\vert \DOF(\b w_h) \vert^{r}\right)^{\frac{r-2}{r}} \vert\DOF (\b e_h) \vert \,
      \vert\DOF(\b v_h)\vert.
    \end{equation}
Moreover, there holds
\begin{equation}\label{eq:monotonicity:local_stab}
    S^E(\b u_h,\b e_h)-S^E(\b w_h,\b e_h) \gtrsim S^E(\b e_h,\b e_h) \gtrsim h_E^{2-r}\vert \DOF(\b e_h) \vert^{r}.
\end{equation}
    
\end{lemma}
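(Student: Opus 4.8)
The plan is to recognise $S^E$ as a rescaled copy of the Carreau--Yasuda stress map \eqref{eq:Carreau}, now acting on the finite-dimensional Euclidean space $\R^{N_E}$ (endowed with its standard inner product $\cdot$, playing the role of the tensor contraction $:$) rather than on $\mathbb{R}^{d\times d}_{\rm s}$. Concretely, for $\b\chi\in\R^{N_E}$ define $\b\sigma_E(\b\chi)\coloneq\overline{\mu}_E\big(\delta^\alpha+|\b\chi|^\alpha\big)^{\frac{r-2}{\alpha}}\b\chi$, which is exactly \eqref{eq:Carreau} with the constant viscosity $\overline{\mu}_E$. Writing $\b\tau\coloneq h_E^{-1}\DOF(\b u_h)$ and $\b\eta\coloneq h_E^{-1}\DOF(\w_h)$, linearity of $\DOF$ gives $\DOF(\b e_h)=h_E(\b\tau-\b\eta)$ and, after collecting the powers of $h_E$, the identity $S^E(\b u_h,\w_h)=h_E^2\,\b\sigma_E(\b\tau)\cdot\b\eta$. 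The key observation is that the proofs of the pointwise bounds \eqref{eq:hypo.continuity}--\eqref{eq:hypo.monotonicity} in \cite{Botti.Castanon-Quiroz.ea:21} use nothing but the Euclidean structure of the target space; hence $\b\sigma_E$ satisfies both inequalities verbatim on $\R^{N_E}$, with the same constants $\sigma_{\rm c},\sigma_{\rm m}$ (depending on $\alpha$). Everything that follows is then bookkeeping of the scalings in $h_E$.

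For the H\"older continuity \eqref{eq:continuity:local_stab1}, I would write $S^E(\b u_h,\b v_h)-S^E(\w_h,\b v_h)=h_E\big(\b\sigma_E(\b\tau)-\b\sigma_E(\b\eta)\big)\cdot\DOF(\b v_h)$, bound it by Cauchy--Schwarz with $h_E\,|\b\sigma_E(\b\tau)-\b\sigma_E(\b\eta)|\,|\DOF(\b v_h)|$, and then apply the continuity inequality for $\b\sigma_E$. Using $|\b\tau-\b\eta|=h_E^{-1}|\DOF(\b e_h)|$, $|\b\tau|^r=h_E^{-r}|\DOF(\b u_h)|^r$ and $|\b\eta|^r=h_E^{-r}|\DOF(\w_h)|^r$, the prefactor $h_E$ cancels the $h_E^{-1}$ coming from $|\b\tau-\b\eta|$, and \eqref{eq:continuity:local_stab1} drops out directly.

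For \eqref{eq:monotonicity:local_stab} I would start from $S^E(\b u_h,\b e_h)-S^E(\w_h,\b e_h)=h_E^2\big(\b\sigma_E(\b\tau)-\b\sigma_E(\b\eta)\big)\cdot(\b\tau-\b\eta)$ and apply the strong monotonicity of $\b\sigma_E$, which yields the lower bound $\gtrsim\big(\delta^r+h_E^{-r}|\DOF(\b u_h)|^r+h_E^{-r}|\DOF(\w_h)|^r\big)^{\frac{r-2}{r}}|\DOF(\b e_h)|^2$ once the $h_E^2$ is absorbed. To convert this into $S^E(\b e_h,\b e_h)$ I would use the elementary estimate $|\DOF(\b e_h)|^r\le 2^{r-1}\big(|\DOF(\b u_h)|^r+|\DOF(\w_h)|^r\big)$ together with the monotonicity of $t\mapsto t^{\frac{r-2}{r}}$ (valid since $r\ge2$) to replace the ``$u,w$-bracket'' by the smaller ``$e$-bracket'' $\big(\delta^r+h_E^{-r}|\DOF(\b e_h)|^r\big)^{\frac{r-2}{r}}$ up to a constant, and finally the equivalence \eqref{eq:equi.stab} to identify the result with $S^E(\b e_h,\b e_h)$; this gives the first $\gtrsim$. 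The second $\gtrsim$ is immediate: since $\delta\ge0$ and $\frac{r-2}{r}\ge0$, discarding $\delta^r$ inside the bracket of $S^E(\b e_h,\b e_h)\simeq\big(\delta^r+h_E^{-r}|\DOF(\b e_h)|^r\big)^{\frac{r-2}{r}}|\DOF(\b e_h)|^2$ leaves $h_E^{2-r}|\DOF(\b e_h)|^r$.

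The hard part is conceptual rather than computational: realising that the entire statement is just the pointwise Carreau--Yasuda continuity and monotonicity transported to $\R^{N_E}$, so that no new nonlinear analysis is required. The only place that demands a little care is the passage from the symmetric ``$u,w$-bracket'' to the ``$e$-bracket'' in the monotonicity estimate, where the triangle inequality and the hypothesis $r\ge2$ are both essential.
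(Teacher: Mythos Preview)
Your proposal is correct and follows essentially the same route as the paper: both recognise that $S^E$ is the Carreau--Yasuda map acting on the Euclidean space $\R^{N_E}$ (with the rescaled argument $h_E^{-1}\DOF(\cdot)$) and then invoke the pointwise H\"older continuity \eqref{eq:hypo.continuity} and strong monotonicity \eqref{eq:hypo.monotonicity} together with the triangle-inequality comparison of the ``$u,w$-bracket'' and the ``$e$-bracket''. Your explicit substitution $\b\tau=h_E^{-1}\DOF(\b u_h)$, $\b\eta=h_E^{-1}\DOF(\w_h)$ makes the bookkeeping of the $h_E$ powers slightly cleaner than the paper's direct computation, but the arguments are otherwise identical.
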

\begin{proof}
    {\em (i) H\"older continuity}. Recalling the definition of $S^E(\cdot,\cdot)$ in \eqref{eq:dofi} and employing \eqref{eq:hypo.continuity}, bound \eqref{eq:continuity:local_stab1} is derived as follows
    \begin{equation*}
    \begin{aligned}
    \vert S^E(\b u_h,\b v_h)&-S^E(\b w_h,\b v_h)\vert\\
    &\lesssim \left\vert (\delta^\alpha + h_E^{-\alpha}\vert \DOF(\b u_h) \vert^{\alpha})^{\frac{r-2}\alpha} \DOF(\b u_h) 
      - (\delta^\alpha + h_E^{-\alpha}\vert \DOF(\b w_h) \vert^{\alpha})^{\frac{r-2}\alpha} \DOF (\b w_h)\right\vert 
      \vert \DOF(\b v_h) \vert
      \\
     &\lesssim
      \left(\delta^r + h_E^{-r}\vert \DOF(\b u_h) \vert^{r} 
      + h_E^{-r}\vert \DOF(\b w_h) \vert^{r}\right)^{\frac{r-2}{r}} \vert\DOF (\b e_h) \vert \,
      \vert\DOF(\b v_h)\vert  \,.
    \end{aligned}
    \end{equation*}

     {\em (ii) Strong monotonicity.}
    We recall the strong monotonicity bound stating that, for all $\b x,\b y \in\mathbb{R}^n$, it holds
   \begin{equation}\label{eq:bound_hyp_model_alpha}
       \vert \b x- \b y\vert^2 (\delta^r+ \vert \b x\vert^r+\vert \b y \vert^r)^{\frac{r-2}r} \lesssim
       \left\{ 
       (\delta^\alpha + \vert \b x\vert^\alpha)^{\frac{r-2}{\alpha}}\b x - 
       (\delta^\alpha + \vert \b y\vert^\alpha)^{\frac{r-2}{\alpha}}\b y
       \right\} \cdot (\b x -\b y).
   \end{equation}
  Inserting $\delta^\alpha \ge 0$, and using a triangle inequality together with the fact that $(x+y)^\frac\alpha r \lesssim x^\frac\alpha r+y^\frac\alpha r$ for all $x,y \in [0,\infty)$, and employing \eqref{eq:bound_hyp_model_alpha} with 
$\b x= \DOF(\b u_h)$,  $\b y= \DOF(\b w_h) $, and recalling the definition of $S^E(\cdot, \cdot)$ in \eqref{eq:dofi} we infer
   \begin{equation}
   \label{eq:strong1}
   \begin{aligned}
   h_E^{2-r}\vert \DOF(\b e_h)\vert^r 
&\lesssim
       \vert \DOF(\b e_h)\vert^2 (\delta^\alpha+h_E^{-\alpha}\vert \DOF(\b e_h)\vert^\alpha)^{\frac{r-2}{\alpha}} \simeq S^E(\b e_h,\b e_h) \\   
       &\lesssim
       \vert \DOF(\b e_h)\vert^2 (\delta^r+h_E^{-r}\vert \DOF(\b u_h)\vert^r+h_E^{-r}\vert \DOF(\b w_h) \vert^r)^{\frac{r-2}r} \\
       &\lesssim 
       \left(
       (\delta^\alpha+h_E^{-\alpha}\vert \DOF(\b u_h)\vert^\alpha)^{\frac{r-2}{\alpha}} \DOF(\b u_h) - 
       (\delta^\alpha+h_E^{-\alpha}\vert \DOF(\b w_h)\vert^\alpha)^{\frac{r-2}{\alpha}}\DOF(\b w_h)
       \right) \cdot \DOF(\b e_h)
       \\
       &\lesssim 
       S^E(\b u_h, \b e_h) - S^E(\b w_h, \b e_h)  \,.
  \end{aligned}     
  \end{equation}

\end{proof}

Next, we prove the following result that be useful in the sequel.
\begin{proposition}
For all $\b u_h$, $\b w_h \in \VDG$,
   \begin{equation}
\label{eq:S_ineq_1}
\begin{aligned}
    |S( {\b u_h}, {\b w_h})| \lesssim \big(\delta^r+S( {\b u}_h,  {\b u}_h)\big)^\frac{r-2}{2r}S( {\b u}_h,  {\b u}_h)^\frac{1}{2}S( {\b w}_h,  {\b w}_h)^\frac{1}{r}\,.
    \end{aligned}
    \end{equation}  
\end{proposition}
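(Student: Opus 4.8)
The plan is to estimate $S$ elementwise, via $|S(\b u_h,\b w_h)|\le\sum_{E\in\Omega_h}|S^E(\b u_h,\b w_h)|$, and then to recombine the local contributions through a three-exponent discrete Hölder inequality with exponents $\bigl(\tfrac{2r}{r-2},2,r\bigr)$, whose reciprocals sum to $1$. These exponents are dictated by the right-hand side of \eqref{eq:S_ineq_1}: the exponent-$2$ slot will rebuild $S(\b u_h,\b u_h)^{1/2}$, the exponent-$r$ slot will rebuild $S(\b w_h,\b w_h)^{1/r}$, and the exponent-$\tfrac{2r}{r-2}$ slot the factor $(\delta^r+S(\b u_h,\b u_h))^{\frac{r-2}{2r}}$. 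The borderline value $r=2$ is trivial and may be set aside: there $S^E(\cdot,\cdot)$ is a symmetric positive semidefinite bilinear form and \eqref{eq:S_ineq_1} is just Cauchy--Schwarz, so I assume $r>2$ throughout.

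I would start from the local Hölder continuity \eqref{eq:continuity:local_stab1}: since $S^E(\b 0,\cdot)=0$, taking the reference state equal to $\b 0$ yields the one-sided bound
\[
|S^E(\b u_h,\b w_h)|\lesssim\bigl(\delta^r+h_E^{-r}|\DOF(\b u_h)|^r\bigr)^{\frac{r-2}{r}}\,|\DOF(\b u_h)|\,|\DOF(\b w_h)|.
\]
The crucial local step is to recast this as the three-factor product
\[
|S^E(\b u_h,\b w_h)|\lesssim\bigl[(\delta^r+h_E^{-r}|\DOF(\b u_h)|^r)\,h_E^2\bigr]^{\frac{r-2}{2r}}\;S^E(\b u_h,\b u_h)^{\frac12}\;S^E(\b w_h,\b w_h)^{\frac1r}.
\]
Replacing $S^E(\b u_h,\b u_h)$ and $S^E(\b w_h,\b w_h)$ by the equivalent expressions of \eqref{eq:equi.stab} and cancelling the common $\b u_h$-factors, this reduces to the elementary inequality $(h_E^{-1}|\DOF(\b w_h)|)^{\frac{r-2}{r}}\le(\delta^r+h_E^{-r}|\DOF(\b w_h)|^r)^{\frac{r-2}{r^2}}$, which holds simply because $\delta\ge0$. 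This is exactly the point at which the $\b w_h$-dependence is discharged, so that the middle factor depends only on $\b u_h$ and $h_E$.

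Applying the three-exponent Hölder inequality to this product bound produces $S(\b u_h,\b u_h)^{1/2}$ and $S(\b w_h,\b w_h)^{1/r}$ immediately, and leaves the middle factor $\bigl(\sum_{E}(\delta^r+h_E^{-r}|\DOF(\b u_h)|^r)h_E^2\bigr)^{\frac{r-2}{2r}}$ to be controlled. The step I expect to be the main obstacle is precisely this summability. One writes $(\delta^r+h_E^{-r}|\DOF(\b u_h)|^r)h_E^2=\delta^r h_E^2+h_E^{2-r}|\DOF(\b u_h)|^r$ and sums, using $\sum_E h_E^2\simeq|\Omega|\lesssim1$ for the first term and the lower bound $h_E^{2-r}|\DOF(\b u_h)|^r\lesssim S^E(\b u_h,\b u_h)$ (again from \eqref{eq:equi.stab}) for the second, to obtain $\sum_E(\delta^r+h_E^{-r}|\DOF(\b u_h)|^r)h_E^2\lesssim\delta^r+S(\b u_h,\b u_h)$.

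Chaining the local product bound, the discrete Hölder inequality, and these three summations then yields \eqref{eq:S_ineq_1}. Elements on which $\DOF(\b u_h)$ or $\DOF(\b w_h)$ vanishes contribute zero and need no special treatment, so the cancellations above are harmless. The only genuinely nontrivial ingredients are the choice of Hölder exponents together with the local refactorization, and the cancellation $|\DOF(\b w_h)|^2(\delta^r+h_E^{-r}|\DOF(\b w_h)|^r)^{-2/r}\le h_E^2$ implicit in it, which is what makes the middle factor summable against $\delta^r+S(\b u_h,\b u_h)$ rather than against a mixed $\b u_h,\b w_h$ quantity.
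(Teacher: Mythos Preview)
Your proposal is correct and follows essentially the same route as the paper: an elementwise bound on $|S^E(\b u_h,\b w_h)|$ obtained from the one-sided H\"older continuity, a local factorization into three terms adapted to the exponents $(\tfrac{2r}{r-2},2,r)$, and then the discrete H\"older inequality combined with $\sum_E h_E^2\simeq|\Omega|$ and the equivalences \eqref{eq:equi.stab}/\eqref{eq:monotonicity:local_stab}. The only cosmetic difference is that the paper writes the local three-factor identity in terms of the raw DoF quantities and recognizes $S^E$ after summing, whereas you insert $S^E(\b u_h,\b u_h)^{1/2}$ and $S^E(\b w_h,\b w_h)^{1/r}$ already at the local level via \eqref{eq:equi.stab}; the underlying algebra and the key cancellation are identical.
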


\begin{proof}
Employing the discrete 3-terms $(\frac{2r}{r-2},2,r)$-H\"older inequality together with $h_E^2 \simeq |E|$, also recalling \eqref{eq:monotonicity:local_stab}, we obtain
    \begin{equation}
\label{eq:pressT4:1}
\begin{aligned}
    &|S({\b u_h}, {\b w_h})|
    \\&\lesssim \sum_{E\in \Omega_h}  \big(\delta^r +h_E^{-r}|\DOF({\b u}_h)|^r\big)^\frac{r-2}{r} \vert\DOF ({\b u}_h) \vert \, \vert \DOF ({\b w}_h)\vert
    \\
    &= \sum_{E\in \Omega_h}  \big(h_E^2\delta^r+h_E^{2-r}|\DOF({\b u}_h)|^r\big)^\frac{r-2}{2r}\big(\big(\delta^r+h_E^{-r}|\DOF({\b u}_h)|^r\big)^\frac{r-2}{r} \vert\DOF ({\b u}_h) \vert^2)^\frac{1}{2} \, (h_E^{2-r}\vert \DOF ({\b w}_h)\vert^r)^\frac{1}{r}
    \\
    &\lesssim \left(\sum_{E\in \Omega_h}  \big(|E|\delta^r +h_E^{2-r}|\DOF({\b u}_h)|^r\big)\right)^\frac{r-2}{2r}\left(\sum_{E\in \Omega_h}  \big(\delta^r +h_E^{-r}|\DOF({\b u}_h)|^r\big)^\frac{r-2}{r} \vert\DOF ({\b u}_h) \vert^2\right)^\frac{1}{2} \, \left(\sum_{E\in \Omega_h}h_E^{2-r}\vert \DOF ({\b w}_h)\vert^r\right)^\frac{1}{r}
    \\
    &\lesssim \big(\delta^r+S( {\b u}_h,  {\b u}_h)\big)^\frac{r-2}{2r}S( {\b u}_h,  {\b u}_h)^\frac{1}{2}S( {\b w}_h,  {\b w}_h)^\frac{1}{r}\,.
    \end{aligned}
    \end{equation}
\end{proof}

\subsubsection{Properties of the discrete viscous term}\label{sec:viscous}

Hinging on the results of the previous section, we establish the continuity and monotonicity properties of the discrete viscous function $a_h(\cdot, \cdot)$.

\begin{proposition}[H\"older continuity and strong monotonicity of $a_h(\cdot,\cdot)$] 
\label{lem:holder_monotonicity_ah}
Let $\b u_h$, $\b w_h \in \VDG$ and set $\b e_h=\b u_h - \b w_h \in \VDG$. Then for any $\b v_h \in \VDG$ there holds
\begin{equation}\label{eq:ah_ineq_1}
\begin{aligned}
    |a_h(\b u_h, \b v_h) - a_h(\b w_h, \b v_h)| & \lesssim
    \left(\delta^r 
+ \tri \b u_h \tri_{r}^r + \tri \b w_h \tri_{r}^r  \right)^\frac{r-2}{r} 
\tri \b e_h \tri_{r} \tri \b v_h \tri_{r}.
\end{aligned}
\end{equation}
Moreover, it holds
\begin{equation}\label{eq:ah_ineq_2}
\begin{aligned}
    a_h(\b u_h, \b e_h) - a_h(\b w_h, \b e_h) &  \gtrsim \tri \b e_h \tri_{\delta,r}^r \gtrsim
    \tri \b e_h \tri_{r}^r.
\end{aligned}
\end{equation}
\end{proposition}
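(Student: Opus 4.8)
The plan is to exploit the additive structure of $a_h$ in \eqref{eq:forma ah}: write $a_h = a_h^{\rm c} + S((I-\Pi^0_k)\,\cdot\,,(I-\Pi^0_k)\,\cdot\,)$, where $a_h^{\rm c}(\b v_h,\b w_h) = \int_\Omega \b\sigma(\cdot,\b\Pi^0_{k-1}\b\epsilon(\b v_h)):\b\Pi^0_{k-1}\b\epsilon(\b w_h)$ is the consistency part, and to bound each piece against the two additive contributions that define $\tri\cdot\tri_r$ and $\tri\cdot\tri_{\delta,r}$ in \eqref{eq:discrete_norm}--\eqref{eq:error_measure}. The consistency part will be controlled by the pointwise assumptions \eqref{eq:hypo.continuity}--\eqref{eq:hypo.monotonicity} applied to the projected symmetric gradients, while the stabilization part is controlled by the local bounds \eqref{eq:continuity:local_stab1} and \eqref{eq:monotonicity:local_stab} already established in Lemma~\ref{Lemma:S_holder_monotonicity}.

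For the continuity estimate \eqref{eq:ah_ineq_1} I would first handle $a_h^{\rm c}$. Applying \eqref{eq:hypo.continuity} pointwise with $\b\tau = \b\Pi^0_{k-1}\b\epsilon(\b u_h)$ and $\b\eta = \b\Pi^0_{k-1}\b\epsilon(\b w_h)$ (so that $\b\tau-\b\eta = \b\Pi^0_{k-1}\b\epsilon(\b e_h)$ by linearity of the projection), then invoking the generalized Hölder inequality with exponents $(\tfrac{r}{r-2},r,r)$ exactly as in the proof of \eqref{eq:a.continuity}, gives a bound in terms of $\|\b\Pi^0_{k-1}\b\epsilon(\cdot)\|_{\mathbb{L}^r(\Omega)}$; since each such $L^r$-norm is by construction a summand of $\tri\cdot\tri_r^r$ and $\int_\Omega\delta^r \lesssim \delta^r$, this reproduces the required right-hand side. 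For the stabilization part I would sum the local bound \eqref{eq:continuity:local_stab1}, applied with $(I-\Pi^0_k)\b u_h$ and $(I-\Pi^0_k)\b w_h$ in place of $\b u_h,\b w_h$, and then apply the discrete three-term $(\tfrac{r}{r-2},r,r)$-Hölder inequality, splitting each summand as $A_E^{(r-2)/r}B_E^{1/r}C_E^{1/r}$ with $A_E = |E|\delta^r + h_E^{2-r}(|\DOF((I-\Pi^0_k)\b u_h)|^r + |\DOF((I-\Pi^0_k)\b w_h)|^r)$, $B_E = h_E^{2-r}|\DOF((I-\Pi^0_k)\b e_h)|^r$ and $C_E = h_E^{2-r}|\DOF((I-\Pi^0_k)\b v_h)|^r$. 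Using $|E|\simeq h_E^2$ one checks that the net power of $h_E$ vanishes, so the factored sums reproduce exactly the stabilization summands of $\tri\cdot\tri_r^r$; recognizing $\sum_E A_E \lesssim \delta^r + \tri\b u_h\tri_r^r + \tri\b w_h\tri_r^r$ then closes the estimate.

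For the monotonicity estimate \eqref{eq:ah_ineq_2} I would split again. On the consistency part, $a_h^{\rm c}(\b u_h,\b e_h) - a_h^{\rm c}(\b w_h,\b e_h) = \int_\Omega [\b\sigma(\cdot,\b\Pi^0_{k-1}\b\epsilon(\b u_h)) - \b\sigma(\cdot,\b\Pi^0_{k-1}\b\epsilon(\b w_h))]:\b\Pi^0_{k-1}\b\epsilon(\b e_h)$, so that applying \eqref{eq:hypo.monotonicity} pointwise followed by the lower bound \eqref{eq:pre_holdercont} yields an integrand bounded below by $|\b\Pi^0_{k-1}\b\epsilon(\b e_h)|^r$, whose integral is $\|\b\Pi^0_{k-1}\b\epsilon(\b e_h)\|_{\mathbb{L}^r(\Omega)}^r$. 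On the stabilization part, summing the local bound \eqref{eq:monotonicity:local_stab} over $E$ produces a lower bound by $S((I-\Pi^0_k)\b e_h,(I-\Pi^0_k)\b e_h)$. Adding the two contributions reconstructs exactly $\tri\b e_h\tri_{\delta,r}^r$ from \eqref{eq:error_measure}, and the final inequality $\tri\b e_h\tri_{\delta,r}^r \ge \tri\b e_h\tri_r^r$ is precisely the first bound in \eqref{eq:comparison_delta_r_norm}.

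The pointwise applications and the two-part splitting are routine; I expect the main obstacle to be the scaling bookkeeping in the stabilization continuity step, namely verifying that after inserting $|E|\simeq h_E^2$ the powers of $h_E$ in the three-term discrete Hölder split combine to exactly $h_E^{2-r}$ in each factor, so that the resulting sums coincide with the stabilization contributions to $\tri\cdot\tri_r$ rather than a mis-scaled quantity. Care is also needed to distribute the $\delta^r$ term correctly (locally as $|E|\delta^r$, summing to $|\Omega|\delta^r \lesssim \delta^r$) so that it matches the $\delta^r$ on the right-hand side of \eqref{eq:ah_ineq_1}.
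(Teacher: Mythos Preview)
Your proposal is correct and follows essentially the same approach as the paper: the same consistency/stabilization splitting, the same pointwise applications of \eqref{eq:hypo.continuity}--\eqref{eq:hypo.monotonicity} with the generalized $(\tfrac{r}{r-2},r,r)$-H\"older inequality on the consistency part, the same use of \eqref{eq:continuity:local_stab1} and \eqref{eq:monotonicity:local_stab} together with the discrete $(\tfrac{r}{r-2},r,r)$-H\"older inequality and the scaling $h_E^2\simeq|E|$ on the stabilization part, and the final appeal to \eqref{eq:comparison_delta_r_norm}. The scaling bookkeeping you flag as the main concern is handled in the paper exactly as you anticipate.
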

\begin{proof}
For the sake of conciseness, for all $\b v_h\in\b U_h$, we let $ {\b v}_h^\perp\coloneq(I - \Pi^0_k) \b v_h$.\\
{\em (i) H\"older continuity.}   Recalling the definition of $a_h(\cdot, \cdot)$ in \eqref{eq:forma ah} we can write
\begin{equation}
    \label{eq:lemmaholdh-0}
    |a_h(\b u_h, \b v_h) - a_h(\b w_h, \b v_h)| \le |T_1| + |T_2| 
\end{equation}
where
$$
    \begin{aligned}
    T_1 &: =
    \int_\Omega \big(\b\sigma(\cdot, \b\Pi^0_{k-1} \b \epsilon(\b u_h)) -  \b\sigma(\cdot, \b\Pi^0_{k-1} \b \epsilon(\b w_h))\big): \b\Pi^0_{k-1} \b\epsilon(\b v_h)   \,,
\\
T_2 &= S({\b u}_h^\perp, {\b v}_h^\perp) -
    S({\b w}_h^\perp,  {\b v}_h^\perp) \,.
    \end{aligned}
$$    
Following the lines of \eqref{eq:for-holder} and recalling the definition of the discrete norm in \eqref{eq:discrete_norm} it is inferred that
\begin{equation}
\label{eq:holder.ahT1}
\begin{aligned}
|T_1| &\le
\int_\Omega \left|\big(\b\sigma(\cdot, \b\Pi^0_{k-1} \b \epsilon(\b u_h)) -  \b\sigma(\cdot, \b\Pi^0_{k-1} \b \epsilon(\b w_h))\big): \b\Pi^0_{k-1} \b\epsilon(\b v_h)  \right| \\
&\lesssim \left(\int_\Omega \delta^r + |\b\Pi^0_{k-1} \b \epsilon(\b u_h))|^r + |\b\Pi^0_{k-1} \b \epsilon(\b w_h))|^r \right)^\frac{r-2}{r} 
\norm[\mathbb{L}^r(\Omega)]{\b\Pi^0_{k-1} \b \epsilon(\b e_h))} \norm[\mathbb{L}^r(\Omega)]{\b\Pi^0_{k-1} \b \epsilon(\b v_h))}\\
&\lesssim \left(|\Omega| \delta^r 
+ \tri \b u_h \tri_{r}^r + \tri \b w_h \tri_{r}^r  \right)^\frac{r-2}{r} 
\tri \b e_h \tri_{r} \tri \b v_h \tri_{r}.
\end{aligned}
\end{equation}
%
Then, applying \eqref{eq:continuity:local_stab1} together with a discrete H\"older inequality with exponents $(\frac{r}{r-2},r, r)$, we infer
\begin{equation}
    \label{eq:lemmaholdh-2}
\begin{aligned}
    |T_2| &\lesssim \sum_{E\in\Omega_h} 
    \vert S^E({\b u}_h^\perp, {\b v}_h^\perp) -
    S^E({\b w}_h^\perp,  {\b v}_h^\perp) \vert
    \\ &\lesssim \left(\sum_{E\in\Omega_h} \left(h_E^{2}
    \delta^r+h_E^{2-r}\vert \DOF({\b u}_h^\perp) \vert^{r} 
    +h_E^{2-r}\vert\DOF({\b w}_h^\perp)\vert^{r}\right)
    \right)^{\frac{r-2}{r}} 
    \left(\sum_{E\in\Omega_h} h_E^{2-r}\vert\DOF ({\b e}_h^\perp) \vert^r\right)^{\frac1r} 
    \left(\sum_{E\in\Omega_h} h_E^{2-r} \vert\DOF({\b v}_h^\perp)\vert^r\right)^{\frac1r}
    \\ &\lesssim
    \left(|\Omega|\delta^r+\tri \b u_h \tri_{r}^r + \tri \b w_h \tri_{r}^r\right)^{\frac{r-2}{r}} 
    \tri \b e_h \tri_{r} \tri \b v_h \tri_{r},
\end{aligned}
\end{equation}
where we used the fact that $h_E^2 \simeq |E|$. The proof follows inserting \eqref{eq:holder.ahT1} and \eqref{eq:lemmaholdh-2} in \eqref{eq:lemmaholdh-0}.

\noindent  {\em (ii) Strong monotonicity.} Recalling \eqref{eq:hypo.monotonicity} and using the inequality in \eqref{eq:pre_holdercont}, we get
$$
\begin{aligned}
\|\b\Pi^0_{k-1}\b \epsilon(\b e_h)\|_{\mathbb{L}^r(\Omega)}^r
&\le
\int_\Omega 
(\delta+\vert\b\Pi^0_{k-1} \b\epsilon(\b u_h-\b w_h) \vert)^{r-2}\vert\b\Pi^0_{k-1} \b\epsilon(\b u_h-\b w_h) \vert^2 
\\
&\lesssim
\int_\Omega 
(\b\sigma(\cdot,\b\Pi^0_{k-1}\b\epsilon(\b u_h)) - \b\sigma(\cdot,\b\Pi^0_{k-1}\b\epsilon(\b w_h)) ) : \b\Pi^0_{k-1} \b\epsilon(\b e_h). 
\end{aligned}
$$
Moreover, from the first inequality in \eqref{eq:monotonicity:local_stab} it is readily inferred that
$$
\begin{aligned}
S({\b e}_h^\perp,{\b e}_h^\perp)
&\lesssim 
S({\b u}_h^\perp,{\b e}_h^\perp) -
    S({\b w}_h^\perp,{\b e}_h^\perp)
\end{aligned}
$$
The assertion follows by summing the previous bounds, recalling the definition of the  $\tri \cdot \tri_{\delta,r}$-norm and using \eqref{eq:comparison_delta_r_norm}. 
\end{proof}

\subsubsection{Main results}\label{sec:discrete_wellpose}

We are now ready to prove the well-posedness of the discrete Virtual Element problem \eqref{eq:stokes.vem}.

\begin{theorem}[Existence and uniqueness]
    For any $r\in[2,\infty)$, there exists a unique solution $\b u_h \in \ZDG$ to the discrete problem \eqref{eq:stokes.vem.Z}.
\end{theorem}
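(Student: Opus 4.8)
The plan is to recast \eqref{eq:stokes.vem.Z} as a nonlinear equation on the finite-dimensional kernel $\ZDG$ and to leverage the strong monotonicity \eqref{eq:ah_ineq_2} and Hölder continuity \eqref{eq:ah_ineq_1} from Proposition~\ref{lem:holder_monotonicity_ah}. Uniqueness I would dispatch first, since it is immediate: given two solutions $\b u_h^{(1)}, \b u_h^{(2)} \in \ZDG$, I would subtract the corresponding relations \eqref{eq:stokes.vem.Z} and test with $\b e_h := \b u_h^{(1)} - \b u_h^{(2)}$, so that the right-hand sides cancel and \eqref{eq:ah_ineq_2} forces $\tri \b e_h \tri_r^r \lesssim 0$. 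Because $\tri\cdot\tri_r$ is a genuine norm on $\VDG$ (Lemma~\ref{Lem:vemstab-2}), and hence on the subspace $\ZDG$, this yields $\b e_h = \b 0$.

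For existence I would invoke the classical corollary of Brouwer's fixed-point theorem for coercive continuous maps on $\R^N$. Concretely, I would fix a basis $\{\b\phi_i\}_{i=1}^N$ of $\ZDG$, write $\b v_h = \sum_i V_i \b\phi_i$, and define $\Phi\colon\R^N\to\R^N$ by $\Phi_j(\b V) := a_h(\b v_h,\b\phi_j) - \int_\Omega \b f_h\cdot\b\phi_j - \int_{\Gamma_N}\b g\cdot\b\phi_j$, so that a zero of $\Phi$ is exactly a solution of \eqref{eq:stokes.vem.Z}. Continuity of $\Phi$ would follow from \eqref{eq:ah_ineq_1} (letting the first argument of $a_h$ vary), whereas coercivity would come from testing against $\b v_h$ itself: since $a_h(\b 0,\cdot)=0$ (as $\b\sigma(\cdot,\b 0)=\b 0$ and $\DOF(\b 0)=\b 0$), estimate \eqref{eq:ah_ineq_2} with $\b w_h = \b 0$ gives $a_h(\b v_h,\b v_h)\gtrsim\tri\b v_h\tri_r^r$, whence
\[
\Phi(\b V)\cdot\b V \;=\; a_h(\b v_h,\b v_h) - F(\b v_h) \;\gtrsim\; \tri\b v_h\tri_r^r - C\,\tri\b v_h\tri_r,
\]
where $F$ denotes the linear right-hand side and $C$ bounds it on the finite-dimensional space $\ZDG$. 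As $r\ge2>1$, the leading power dominates, so I can select $R>0$ with $\Phi(\b V)\cdot\b V>0$ on $\{|\b V|=R\}$ and conclude that $\Phi$ vanishes at some $\b V^\star$ inside the ball, yielding a solution $\b u_h=\sum_i V_i^\star\b\phi_i\in\ZDG$.

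The only delicate point is that the natural norm $\tri\cdot\tri_r$ is non-Hilbertian when $r>2$, so the Lax--Milgram machinery is unavailable. I would circumvent this by working in the finite-dimensional setting, where all norms are equivalent and the Euclidean structure required by Brouwer's theorem can be introduced freely without disturbing either the continuity or the coercivity estimate. I do not anticipate a genuine obstacle here: once Proposition~\ref{lem:holder_monotonicity_ah} is in place, the argument is the standard monotone-operator existence/uniqueness proof specialized to finite dimensions.
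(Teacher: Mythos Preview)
Your proposal is correct and follows essentially the same route as the paper: uniqueness via strong monotonicity \eqref{eq:ah_ineq_2} tested on the difference of two solutions, and existence via a finite-dimensional topological argument driven by the coercivity $a_h(\b v_h,\b v_h)\gtrsim\tri\b v_h\tri_r^r$ (again \eqref{eq:ah_ineq_2} with $\b w_h=\b 0$). The only cosmetic difference is that the paper equips $\ZDG$ with the $\b W^{1,2}$ inner product, defines the Riesz-type operator $(\b\Phi_h(\b v_h),\b w_h)_{\b W^{1,2}}=a_h(\b v_h,\b w_h)$, and cites Deimling's surjectivity theorem for coercive maps, whereas you work in coordinates and invoke the Brouwer corollary directly; these are the same argument in different packaging.
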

\begin{proof}
\textit{(i) Existence.} 
Let the mesh $\Omega_h$ be fixed. We equip the space $\b \ZDG$ with the $(\cdot,\cdot)_{\b W^{1,2}}$-inner product and induced norm $\norm[\b W^{1,2}(\Omega)]{\cdot}$. 
Owing to the equivalence of norms in finite-dimensional spaces, we have 
\begin{equation}\label{eq:inv_wellp}
    \norm[\b W^{1,2}(\Omega)]{\b v_h} \lesssim
    C_h\tri\b v_h\tri_{r},
\end{equation}
with the positive constant $C_h$ depending on the mesh size $h$.
We also define the nonlinear function $\b \Phi_h:\b \ZDG\to\b \ZDG$ such that
$$
(\b \Phi_h(\b v_h), \b w_h)_{\b W^{1,2}(\Omega)} \coloneq a_h(\b v_h, \b w_h), 
\qquad\forall\ \b v_h, \b w_h\in \b \ZDG.
$$
The strong monotonicity of $a_h(\cdot, \cdot)$ established in Lemma \ref{lem:holder_monotonicity_ah} together with \eqref{eq:inv_wellp} leads to, for any $\b v_h \in \b \ZDG$, 
$$
\begin{aligned}
\lim_{\norm[\b W^{1,2}(\Omega)]{\b v_h}\to\ \infty} 
\frac{(\b \Phi_h(\b v_h), \b v_h)_{\b W^{1,2}(\Omega)}}{\norm[\b W^{1,2}(\Omega)]{\b v_h}}
&\gtrsim
\lim_{\norm[\b W^{1,2}(\Omega)]{\b v_h}\to\ \infty} 
\frac{\tri \b v_h\tri_{r}^r}{\norm[\b W^{1,2}(\Omega)]{\b v_h}}
\gtrsim C_h^{-1}
\lim_{\norm[\b W^{1,2}(\Omega)]{\b v_h}\to\ \infty} 
\tri \b v_h\tri_{r}^{r-1} \to \infty.
\end{aligned}
$$
By applying \cite[Theorem 3.3]{Deimling:85}, the previous result shows that the operator $\b \Phi_h$ is onto. As a result, there exists $\b u_h\in \b \ZDG$ such that $\b \Phi_h(\b u_h) = \b z_h$, with
$\b z_h\in \b \ZDG$ defined such that 
$$(\b z_h, \b w_h)_{\b W^{1,2}(\Omega)} = \int_\Omega \b f_h \cdot \b w_h + \int_{\Gamma_N} \b g\cdot \b w_h \qquad \forall \b w_h \in \ZDG \, .
$$
Thanks to the definition of $\b \Phi_h$, this implies that $\b u_h$ is a solution to the discrete problem \eqref{eq:stokes.vem.Z}.

\medskip\noindent 
\textit{(ii) Uniqueness.}
Let $\b u_{h,1}, \b u_{h,2}\in \b \ZDG$ solve \eqref{eq:stokes.vem.Z}.
Subtracting \eqref{eq:stokes.vem.Z} for $\b u_{h,2}$ from \eqref{eq:stokes.vem.Z} for $\b u_{h,1}$ and then taking $\b v_h = \b u_{h,1} - \b u_{h,2}$ as test function, we obtain 
$$
a_h(\b u_{h,1},\b u_{h,1} - \b u_{h,2}) - a_h(\b u_{h,2},\b u_{h,1} - \b u_{h,2}) = 0.
$$
Hence, using again the strong monotonicity of $a_h(\cdot, \cdot)$ with $\b e_h = \b u_{h,1} - \b u_{h,2}$, we get $\tri \b u_{h,1} - \b u_{h,2}\tri_{r}^r = 0$, that implies $\b u_{h,1}=\b u_{h,2}$.
\end{proof}
The next result is derived by using the discrete inf-sup condition established in Lemma \ref{inf-sup:vem} and the equivalence of the discrete problems \eqref{eq:stokes.vem} and \eqref{eq:stokes.vem.Z}.
\begin{corollary}[Well-posedness of \eqref{eq:stokes.vem}]
For any $r\in[2,\infty)$, there exists a unique solution $(\b u_h, p_h) \in \b \VDG \times \QDG$ to the discrete problem \eqref{eq:stokes.vem}. 
\end{corollary}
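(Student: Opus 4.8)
The plan is to deploy the standard nonlinear saddle-point machinery (in the spirit of the Babu\v{s}ka--Brezzi theory), exploiting that the two formulations \eqref{eq:stokes.vem} and \eqref{eq:stokes.vem.Z} are equivalent and that all spaces are finite-dimensional for a fixed mesh. I would begin by recovering the velocity. If $(\b u_h,p_h)$ solves \eqref{eq:stokes.vem}, the second equation forces $\b u_h\in\ZDG$, and testing the first equation only against $\b v_h\in\ZDG$ annihilates the term $b(\b v_h,p_h)$ (by the very definition of $\ZDG$), so that $\b u_h$ solves the kernel problem \eqref{eq:stokes.vem.Z}. Conversely, any solution of \eqref{eq:stokes.vem.Z} is a candidate velocity. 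Hence, by the preceding existence-and-uniqueness theorem for the kernel problem, the velocity component $\b u_h$ exists and is uniquely determined.

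Next I would construct the pressure. With $\b u_h$ now fixed, consider the linear functional
\[
F_h(\b v_h)\coloneq \int_\Omega \b f_h\cdot\b v_h+\int_{\Gamma_N}\b g\cdot\b v_h - a_h(\b u_h,\b v_h),\qquad \b v_h\in\VDG.
\]
By \eqref{eq:stokes.vem.Z}, $F_h$ vanishes on the kernel $\ZDG$, i.e.\ $F_h$ lies in the annihilator $\ZDG^{\circ}\subset(\VDG)'$. The discrete inf-sup condition of Lemma~\ref{inf-sup:vem} states precisely that the map $B^{\mathsf T}\colon q_h\mapsto b(\cdot,q_h)$ is injective from $\QDG$ into $(\VDG)'$; since we operate in finite dimensions, its range is automatically closed and coincides with $\ZDG^{\circ}=(\ker B)^{\circ}$, where $B\colon\b v_h\mapsto b(\b v_h,\cdot)$. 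Consequently there exists a unique $p_h\in\QDG$ such that $b(\b v_h,p_h)=F_h(\b v_h)$ for all $\b v_h\in\VDG$, which is exactly the first equation of \eqref{eq:stokes.vem}.

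Pressure uniqueness can alternatively be read off directly from the inf-sup inequality: if $p_h^{(1)},p_h^{(2)}$ were two admissible pressures, their difference $q_h$ would satisfy $b(\b v_h,q_h)=0$ for every $\b v_h\in\VDG$, whence Lemma~\ref{inf-sup:vem} yields $\overline\beta(r)\norm[L^{r'}(\Omega)]{q_h}\le \sup_{\b w_h\in\VDG\setminus\{\b 0\}}b(\b w_h,q_h)/\tri\b w_h\tri_r=0$, so $q_h=0$.

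The argument is almost entirely structural, and I expect no genuine obstacle; it is essentially the reason the statement is phrased as a corollary of the kernel-problem theorem and the inf-sup lemma. The one point deserving attention is that the inf-sup condition is formulated in the non-Hilbertian discrete norm $\tri\cdot\tri_r$ rather than in an $L^2/H^1$ setting; however, for the mere existence/uniqueness claim (as opposed to mesh-uniform stability) only the non-degeneracy of $b$ matters, and on the fixed finite-dimensional space $\VDG$ the norm $\tri\cdot\tri_r$ is equivalent to any other, so the linear-algebra step goes through verbatim. The mesh-independence encoded in $\overline\beta(r)$ is not needed here, but it will be the crucial ingredient later, in the \emph{a priori} error analysis of Section~\ref{sec:error_analysis}.
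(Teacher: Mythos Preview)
Your proposal is correct and follows exactly the route the paper indicates: the corollary is stated without a detailed proof, merely noting that it ``is derived by using the discrete inf-sup condition established in Lemma~\ref{inf-sup:vem} and the equivalence of the discrete problems \eqref{eq:stokes.vem} and \eqref{eq:stokes.vem.Z}.'' You have simply spelled out this standard saddle-point argument in full, and your observation that only the non-degeneracy of $b$ (not the mesh-uniform constant) is needed for mere well-posedness is accurate.
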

\begin{remark}[Stability estimates] 
\emph{A priori} estimates for the unique discrete velocity and pressure fields $(\b u_h, p_h)$ solving problem \eqref{eq:stokes.vem} can be obtained by reasoning as in the proof of Proposition \ref{prop:well-posed_cont}. The estimate 
$$
\tri{\b u_h}\tri_{r}\lesssim
\left(\| \b f_h \|_{\b L^{r'}(\Omega)} +
\| \b g \|_{\b L^{r'}(\Gamma_N)}\right)^{\frac1{r-1}}
$$
hinges on the monotonicity property in Proposition \ref{lem:holder_monotonicity_ah}; whereas the estimate for the discrete pressure field follows from the inf-sup condition in Lemma \ref{inf-sup:vem} and again Proposition \ref{lem:holder_monotonicity_ah} and reads 
$$
\norm[L^{r'}(\Omega)]{p_h} \lesssim  
\left(\| \b f_h \|_{\b L^{r'}(\Omega)} + \| \b g \|_{\b L^{r'}(\Gamma_N)}\right) + \delta^{r-2}
\left(\| \b f_h \|_{\b L^{r'}(\Omega)} +
\| \b g \|_{\b L^{r'}(\Gamma_N)}\right)^{\frac1{r-1}}.
$$

\end{remark}


\section{\emph{A priori} error analysis}
\label{sec:error_analysis}
This section is devoted to the \emph{a priori} error analysis.
\subsection{Additional properties of the stress-strain law}

We recall some important results regarding the stress-strain relation that are instrumental for the \emph{a priori} analysis of the scheme.
We mainly follow \cite[Section 3]{Berselli.Diening.ea:10} and \cite[Section 2]{Hirn:13}. 
For $r\geq 2$, we introduce, with $a\ge0$, the shifted functions $\varphi_a(t) = \int_0^t (a+s)^{r-2}s\, {\rm d} s$. The following Lemma provides important properties of the shifted functions $\varphi_a$. We refer the reader to \cite[Lemmata 28--32]{Diening.Ettwein:08} and \cite[Corollary 26]{Kreuzer-Diening:2008} for the detailed proof.

\begin{lemma}[Young type inequalities]
\label{lem:young_shifted}
Let $r\geq 2$. For all $\varepsilon>0$ there exists $C_\varepsilon >0$ only depending on $r$ such that for all $s,t,a\ge 0$ and all $\b\tau,\b\eta\in\mathbb{R}^{d\times d}$ there holds
\begin{subequations}
\begin{align}
\label{eq:Young1}
s\varphi_a'(t) +t\varphi_a'(s) &\le 
\varepsilon\varphi_a(s) + C_\varepsilon  \varphi_a(t),
\\
\varphi_{a+|\b\tau|}(t) &\le
\varepsilon \varphi_{a+|\b\eta|}(|\b\tau-\b\eta|)
+C_\varepsilon  \varphi_{a+|\b\eta|}(t).
\end{align}
\end{subequations}
\end{lemma}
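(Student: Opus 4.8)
The plan is to reduce both estimates to a single structural property of the shifted functions and then to elementary comparisons. Since $\varphi_a'(t)=(a+t)^{r-2}t$, I would first record that $\varphi_a$ is nonnegative, convex and increasing in $t$ and increasing in the shift $a$, and --- crucially --- the two--sided equivalence
\[
\varphi_a(t)\simeq (a+t)^{r-2}t^2\simeq t\,\varphi_a'(t),
\]
with constants depending only on $r$ and \emph{uniform in} $a$; the upper bound comes from pulling $(a+t)^{r-2}$ out of the defining integral, the lower bound from integrating only over $s\in[t/2,t]$. This reduces every occurrence of $\varphi$ and $\varphi'$ to products of the elementary weights $(a+\cdot)^{r-2}$ and squares. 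I would also note the $\Delta_2$ property $\varphi_a(2t)\lesssim\varphi_a(t)$ and, for the conceptual reading below, the Legendre identity $\varphi_a^\ast(\varphi_a'(t))=t\varphi_a'(t)-\varphi_a(t)\simeq\varphi_a(t)$.

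For the first inequality, the clean viewpoint is that $s\,\varphi_a'(t)$ and $t\,\varphi_a'(s)$ are primal--dual pairings for the N-function $\varphi_a$, to which I apply Young's inequality in the direction dictated by the asymmetric target $\varepsilon\varphi_a(s)+C_\varepsilon\varphi_a(t)$: for $s\,\varphi_a'(t)$ I use $xy\le\varepsilon\varphi_a(x)+C_\varepsilon\varphi_a^\ast(y)$ with $x=s$, $y=\varphi_a'(t)$, while for $t\,\varphi_a'(s)$ I use the flipped version $xy\le C_\varepsilon\varphi_a(x)+\varepsilon\varphi_a^\ast(y)$ with $x=t$, $y=\varphi_a'(s)$, so that in each case the small coefficient lands on the $s$--term after invoking $\varphi_a^\ast(\varphi_a'(\cdot))\simeq\varphi_a(\cdot)$. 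Equivalently, and more self-contained, I would unwind this via the equivalence and a case split: the term $s\varphi_a'(t)=st(a+t)^{r-2}$ is bounded by $\varphi_a(t)$ when $s\le t$ and, when $s>t$, by a scaled Young $st\le\varepsilon s^2+C_\varepsilon t^2$ that places the small factor on $s^2(a+s)^{r-2}\simeq\varphi_a(s)$; the symmetric term $t\varphi_a'(s)$ is treated the same way, with one extra split on whether $a\gtrless s$ to line up the weight $(a+s)^{r-2}$ with the correct argument.

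For the second inequality I would reduce to a shift--change estimate. Using $|\b\tau|\le|\b\eta|+|\b\tau-\b\eta|$ and monotonicity of $\varphi_c(t)$ in $c$, it suffices to prove $\varphi_{b+\rho}(t)\le C_\varepsilon\varphi_b(t)+\varepsilon\,\varphi_b(\rho)$ for all $b,\rho,t\ge0$, then specialise to $b=a+|\b\eta|$, $\rho=|\b\tau-\b\eta|$. Replacing each $\varphi$ by $(\cdot+\cdot)^{r-2}(\cdot)^2$, I split on $\rho\le t$ versus $\rho>t$: in the first case $b+\rho+t\simeq b+t$ and the left-hand side is absorbed into $C_\varepsilon\varphi_b(t)$; in the second $b+\rho+t\simeq b+\rho$, and a further threshold split (comparing $t$ with $\sqrt{\varepsilon}\,\rho$, and $b$ with $\rho$) sends the left-hand side either into $\varepsilon\varphi_b(\rho)$ or into $C_\varepsilon\varphi_b(t)$ with $C_\varepsilon\sim\varepsilon^{-(r-2)/2}$.

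The genuinely delicate point --- and the reason I would ultimately lean on the shifted-N-function calculus of \cite{Diening.Ettwein:08,Kreuzer-Diening:2008} rather than grind out every case --- is uniformity of all constants in the shift: the equivalence, the $\Delta_2$ bounds, and the conjugate relation $\varphi_a^\ast(\varphi_a'(t))\simeq\varphi_a(t)$ must hold with constants depending only on $r$, and $\varphi_a^\ast$ has no usable closed form. The second subtlety is the \emph{asymmetry} of the $\varepsilon$--$C_\varepsilon$ bookkeeping in the first inequality: the small coefficient must land on $\varphi_a(s)$ and never on $\varphi_a(t)$, which is exactly what forces the directional choice of Young (or the extra $a\gtrless s$ case) above.
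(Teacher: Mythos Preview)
Your proposal is correct and ultimately rests on the same foundation as the paper: the shifted-$N$-function calculus of Diening--Ettwein and Kreuzer--Diening, with the crucial point being that all constants are uniform in the shift $a$. The paper's proof, however, is much more minimal than yours: it does \emph{not} attempt any elementary derivation of the two inequalities, but simply cites those references for the inequalities themselves and then verifies the one condition needed to apply them, namely that the $\Delta_2$ constants $\Delta(\varphi_a)$ and $\Delta(\varphi_a^*)$ are bounded by $2^r$ uniformly in $a$ (via the short computations $\varphi_a(2t)\le 2^r\varphi_a(t)$ and $\varphi_a'(2t)\ge 2\varphi_a'(t)\Rightarrow\varphi_a^*(2t)\le 4\varphi_a^*(t)$).

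Your route is more self-contained: you record the explicit equivalence $\varphi_a(t)\simeq(a+t)^{r-2}t^2$ and offer direct case-split arguments for both inequalities. This buys pedagogical clarity and a nearly reference-free proof, at the cost of several threshold cases to track (and you rightly flag that the bookkeeping, especially the asymmetry of $\varepsilon$ versus $C_\varepsilon$ and the conjugate relation $\varphi_a^*(\varphi_a'(t))\simeq\varphi_a(t)$, is where one would still want to lean on the cited framework). The paper's route is shorter and cleaner for an expert audience: once the uniform $\Delta_2$ bound is established, the general theory delivers both inequalities at once.
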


\begin{proof} In the following we only sketch the proof that $C_\varepsilon $ does not depend on $a$, thanks to \cite[Lemmata 28--32]{Diening.Ettwein:08} and \cite[Corollary 26]{Kreuzer-Diening:2008}.
We define
\begin{subequations}
\begin{alignat}{2}
\Delta(\varphi_a) &\coloneq  \min\{\alpha > 0 : \varphi_a(2t) \le \alpha \varphi_a(t) \quad \forall t \in \mathbb{R}_0^+\},\\
\Delta(\{\varphi_a,\varphi_a^*\}) &\coloneq \max(\Delta(\varphi_a),\Delta(\varphi_a^*)),
 \end{alignat}
\end{subequations}
where $\varphi_a^*$ is the Fenchel conjugate of $\varphi_a$, i.e., $(\varphi_a^*)'(\varphi_a'(t))=t$ for all $t \in \mathbb R_0^+$. 
Now, we only need to show that $\Delta(\{\varphi_a,\varphi_a^*\})$ is bounded independently of $a$.
We have for all $ t \in \mathbb{R}_0^+$,
\begin{equation}
    \varphi_a(2t) = \int_0^{2t}(a+s)^{r-2}s \,\text{d}s = 4\int_0^{t}(a+2s)^{r-2}s \,\text{d}s \leq 2^r\int_0^{t}(a+s)^{r-2}s \,\text{d}s =  2^r\varphi_a(t)  
\end{equation}
so $\Delta(\varphi_a) \le 2^r$.
Moreover, using the fact that $
    \varphi_a'(2t) = (a+2t)^{r-2}2t \ge 2(a+t)^{r-2}t  =  2\varphi_a'(t)  
$, 
we obtain, ${\varphi_a}'(2{\varphi_a^{*}}'(2t)) \ge 2{\varphi_a}'({\varphi_a^{*}}'(t))=2t$, so $2{\varphi_a^{*}}'(t) \ge {\varphi_a^{*}}'(2t)$, thus $2{\varphi_a^{*}}(t) \ge \frac{1}{2}{\varphi_a^{*}}(2t)$ by integrating, hence, ${\varphi_a^{*}}(2t) \le 4{\varphi_a^{*}}(t)$ and we obtain $\Delta(\varphi_a^*) \le 4$.
Therefore, 
\begin{equation}
    \Delta(\{\varphi_a,\varphi_a^*\}) \le  \max(2^r,4) = 2^r. 
\end{equation}
\end{proof}

The next result showing the equivalence of several quantities is strictly related to the continuity and monotonicity assumptions given in Assumption~\ref{ass:hypo}. 
The proofs of the next lemma can be found in \cite[Section 2.3]{Hirn:13}. 
 The lemma here below applies to any (scalar or tensor valued) function $\b{\sigma}$ which satisfies Assumption~\ref{ass:hypo}. In the following, with a slight abuse of notation, we will apply such lemma both to the constitutive law $\b{\sigma}$ but also to the auxiliary scalar function $\sigma(\tau)= (\delta+|\tau|)^{r-2} \tau$.
\begin{lemma}\label{lm:Hirn}
Let $\b\sigma$ satisfy \eqref{eq:hypo} for $r\in [2,\infty)$ and $\delta \geq0$. 
Then, uniformly for all $\b\tau,\b\eta\in \mathbb{R}^{d\times d}_{\rm s}$ and all $\b{v},\b{w}\in\b{U}$ there hold 
\begin{subequations}
  \begin{alignat}{2} 
  \label{eq:extrass.continuity}
  |\b{\sigma}(\cdot,\b{\tau})-\b{\sigma}(\cdot,\b{\eta})| & \simeq 
  \left(\delta+|\b\tau|+|\b\eta| \right)^{r-2}
  |\b\tau-\b\eta| \simeq \varphi_{\delta+|\b\tau|}'(|\b\tau - \b\eta|),
  \\
  \label{eq:extrass.monotonicity}
  \left(\b{\sigma}(\cdot,\b\tau)-\b{\sigma}(\cdot, \b\eta)\right):\left(\b\tau-\b\eta\right) &\simeq 
  \left(\delta+|\b\tau|+|\b\eta|\right)^{r-2} |\b\tau-\b\eta|^{2} \simeq 
  \varphi_{\delta+|\b\tau|}(|\b\tau - \b\eta|) ,
  \end{alignat}
\end{subequations}
where the hidden constants only depend on  $\sigma_{\rm{c}}, \sigma_{\rm{m}}$ in Assumption~\ref{ass:hypo} and on $r$.
\end{lemma}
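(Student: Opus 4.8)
The plan is to derive all four equivalences from Assumption~\ref{ass:hypo} together with elementary estimates on the shifted function $\varphi_a$; the only genuinely structural step is a Cauchy--Schwarz bridge that upgrades the two one-sided bounds of Assumption~\ref{ass:hypo} into two-sided equivalences, and the only delicate bookkeeping is tracking that every hidden constant is uniform in $\b\tau,\b\eta$ (equivalently, in the shift $a$).

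First I would replace the $\ell^r$-type factor appearing in Assumption~\ref{ass:hypo} by the $\ell^1$-type factor used in the statement. Since all norms on $\R^3$ are equivalent with constants depending only on the fixed, finite number of terms, one has $(\delta^r+|\b\tau|^r+|\b\eta|^r)^{1/r} \simeq \delta+|\b\tau|+|\b\eta|$, and raising to the nonnegative power $r-2$ gives
\[
(\delta^r+|\b\tau|^r+|\b\eta|^r)^{\frac{r-2}{r}} \simeq (\delta+|\b\tau|+|\b\eta|)^{r-2},
\]
with hidden constants depending only on $r$. Substituting this into \eqref{eq:hypo.continuity} and \eqref{eq:hypo.monotonicity} yields at once the upper bound in \eqref{eq:extrass.continuity} and the lower bound in \eqref{eq:extrass.monotonicity}.

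For the two remaining (reverse) inequalities I would use the Cauchy--Schwarz estimate $(\b\sigma(\cdot,\b\tau)-\b\sigma(\cdot,\b\eta)):(\b\tau-\b\eta) \le |\b\sigma(\cdot,\b\tau)-\b\sigma(\cdot,\b\eta)|\,|\b\tau-\b\eta|$ as a bridge between the two scalar quantities. Combining it with the (already proved) lower bound in \eqref{eq:extrass.monotonicity} and dividing by $|\b\tau-\b\eta|$ when $\b\tau\neq\b\eta$ produces the lower bound $(\delta+|\b\tau|+|\b\eta|)^{r-2}|\b\tau-\b\eta| \lesssim |\b\sigma(\cdot,\b\tau)-\b\sigma(\cdot,\b\eta)|$, completing \eqref{eq:extrass.continuity}; the degenerate case $\b\tau=\b\eta$ is trivial since both sides vanish. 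Symmetrically, combining Cauchy--Schwarz with the upper bound in \eqref{eq:extrass.continuity} gives the missing upper bound in \eqref{eq:extrass.monotonicity}. I would stress that there is no circularity here: \eqref{eq:hypo.continuity} supplies both upper bounds, while \eqref{eq:hypo.monotonicity} supplies both lower bounds, and the bridge merely transfers them across.

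It then remains to identify the two middle quantities with the shifted functions, which is the part where uniformity in the shift must be watched. Since $\varphi_a'(t)=(a+t)^{r-2}t$ by the fundamental theorem of calculus, one has $\varphi_{\delta+|\b\tau|}'(|\b\tau-\b\eta|)=(\delta+|\b\tau|+|\b\tau-\b\eta|)^{r-2}|\b\tau-\b\eta|$ exactly. A direct integral estimate --- bounding $(a+s)^{r-2}$ above by $(a+t)^{r-2}$ on $[0,t]$ and below by $2^{-(r-2)}(a+t)^{r-2}$ on $[t/2,t]$, using monotonicity of $s\mapsto(a+s)^{r-2}$ for $r\ge2$ --- yields $\varphi_a(t)\simeq(a+t)^{r-2}t^2$ uniformly in $a\ge0$. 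Finally the triangle inequality gives $\delta+|\b\tau|+|\b\eta| \simeq \delta+|\b\tau|+|\b\tau-\b\eta|$, since $|\b\eta|\le|\b\tau|+|\b\tau-\b\eta|$ and $|\b\tau-\b\eta|\le|\b\tau|+|\b\eta|$, and raising to the power $r-2\ge0$ lets one swap the two shifts. Assembling these identifications with the equivalences of the previous paragraphs delivers the two $\varphi$-characterizations. The main obstacle, such as it is, is purely in the accounting: one must verify that the integral bounds for $\varphi_a$ and the base-equivalence above are independent of $a$, which is precisely the $a$-independence already exploited in the proof of Lemma~\ref{lem:young_shifted}.
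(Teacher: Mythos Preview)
Your proof is correct. Note that the paper itself does not supply a proof of this lemma; it simply states that ``the proofs of the next lemma can be found in \cite[Section~2.3]{Hirn:13}'' and moves on. Your argument is therefore a self-contained derivation standing in for that citation. The route you take --- first rewriting the $\ell^r$-type weight in Assumption~\ref{ass:hypo} as an $\ell^1$-type one, then using the Cauchy--Schwarz bridge $(\b\sigma(\cdot,\b\tau)-\b\sigma(\cdot,\b\eta)):(\b\tau-\b\eta)\le|\b\sigma(\cdot,\b\tau)-\b\sigma(\cdot,\b\eta)|\,|\b\tau-\b\eta|$ to turn the two one-sided bounds into full equivalences, and finally identifying with $\varphi_a$ and $\varphi_a'$ via the uniform-in-$a$ estimate $\varphi_a(t)\simeq(a+t)^{r-2}t^2$ together with the shift-swap $\delta+|\b\tau|+|\b\eta|\simeq\delta+|\b\tau|+|\b\tau-\b\eta|$ --- is the standard argument and is essentially what one finds in the cited reference. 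All hidden constants are indeed independent of $a$, so the bookkeeping you flag as the main concern checks out.
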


\subsection{\emph{A priori} error estimate: velocity}
{ We start by a simple lemma; the proof is perhaps different from expected since $\VDL \subset W^{1,2}(E)$ but $\VDL \not\subset W^{1,r}(E)$ for non-convex elements $E$ and large $r$, see Remark~\ref{rem:nonincl}.
\begin{lemma}\label{lem:nonconv:approx}
Let the mesh regularity in Assumption~\ref{ass:mesh} hold.
Let $E \in \Omega_h$ and $r \in [2,\infty]$. Let $\b v \in \b W^{s,{r}}(E)$, $1<s \le k+1$, and $\b v_I \in \VDG$ denote the interpolant of $\b v$ previously introduced. Then it holds
\begin{eqnarray}
\label{eq:lemmanonconv:1}
&& \| \b{\epsilon}(\b v) - \Pi^{0, E}_{k-1} \b{\epsilon}(\b v_I) \|_{\b L^{r}(E)} \lesssim h_E^{s-1} |\b v |_{\b W^{s,r}(E)} \, , \\
&& |\DOF((I - \Pi^0_k) \b v_I)| \lesssim h_E^{s-2/r} |\b v 
|_{\b W^{s,{r}(E)}} \, .
\label{eq:lemmanonconv:2}
\end{eqnarray}
\end{lemma}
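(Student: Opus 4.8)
The two bounds concern the interpolant $\b v_I \in \VDG$ of a function $\b v \in \b W^{s,r}(E)$, and the obstacle flagged by the authors is that $\VDL \subset \b W^{1,2}(E)$ but not necessarily in $\b W^{1,r}(E)$ for non-convex $E$ and large $r$. My plan is therefore to route everything through the $\b L^2$-based approximation properties of the interpolant (Lemma~\ref{lem:approx-interp}, which is stated for $\ell=2$) and the polynomial inverse estimate \eqref{eq:inverse}, rather than trying to estimate $\b v_I$ directly in an $\b L^r$-based seminorm.

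For the first bound \eqref{eq:lemmanonconv:1}, I would insert the polynomial $\P0 \b\epsilon(\b v)$ (the $\b L^2$-projection of $\b\epsilon(\b v)$ onto $[\Pk_{k-1}(E)]^{2\times2}$, extended tensorially) and split via the triangle inequality as
\begin{equation*}
\| \b\epsilon(\b v) - \Pi^{0,E}_{k-1}\b\epsilon(\b v_I)\|_{\b L^r(E)}
\le \|\b\epsilon(\b v) - \Pi^{0,E}_{k-1}\b\epsilon(\b v)\|_{\b L^r(E)}
+ \|\Pi^{0,E}_{k-1}\b\epsilon(\b v - \b v_I)\|_{\b L^r(E)}.
\end{equation*}
The first term is controlled directly by the projection estimate in Lemma~\ref{lem:approx-interp} (with $m=0$, $n=k-1$, $\ell=r$), giving $h_E^{s-1}|\b v|_{\b W^{s,r}(E)}$ since $s\le k+1$ means $s-1 \le k$. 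The second term is the delicate one: I would apply the inverse estimate \eqref{eq:inverse} to pass from the $\b L^r$-norm of the polynomial $\Pi^{0,E}_{k-1}\b\epsilon(\b v-\b v_I)$ to its $\b L^2$-norm, paying a factor $h_E^{2/r-1}$, then use the $\b L^2$-stability of $\Pi^{0,E}_{k-1}$ and the $\b W^{1,2}$-interpolation estimate of Lemma~\ref{lem:approx-interp} to bound $|\b v - \b v_I|_{\b W^{1,2}(E)} \lesssim h_E^{s-1}|\b v|_{\b W^{s,2}(E)}$. This leaves $h_E^{2/r-1}\,h_E^{s-1}|\b v|_{\b W^{s,2}(E)}$, and I would finally invoke the embedding $\b W^{s,r}(E)\hookrightarrow \b W^{s,2}(E)$ with its scaling on a domain of diameter $h_E$ (the standard $\b L^2$-vs-$\b L^r$ comparison contributing $h_E^{2(1/2-1/r)} = h_E^{1-2/r}$) to convert $|\b v|_{\b W^{s,2}(E)}$ into $h_E^{1-2/r}|\b v|_{\b W^{s,r}(E)}$, so that the $h_E$-powers recombine to $h_E^{s-1}$.

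For the second bound \eqref{eq:lemmanonconv:2}, I would use Lemma~\ref{Lem:vemstab-2}, which gives $|\DOF((I-\Pi^0_k)\b v_I)| \lesssim |(I-\Pi^0_k)\b v_I|_{\b W^{1,2}(E)}$ since $(I-\Pi^0_k)\b v_I$ has vanishing $\P0$-projection. Then $|(I-\Pi^0_k)\b v_I|_{\b W^{1,2}(E)} \le |\b v_I - \b v|_{\b W^{1,2}(E)} + |(I-\Pi^0_k)\b v|_{\b W^{1,2}(E)}$, and both pieces are $\b L^2$-based: the first is the $\b W^{1,2}$-interpolation estimate $h_E^{s-1}|\b v|_{\b W^{s,2}(E)}$, the second combines the $\b W^{1,2}$-stability of $\Pi^0_k$ with the projection estimate. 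The target exponent $h_E^{s-2/r}$ appears precisely because converting the resulting $|\b v|_{\b W^{s,2}(E)}$ back to $|\b v|_{\b W^{s,r}(E)}$ costs the same factor $h_E^{1-2/r}$ as above, and $h_E^{s-1}\cdot h_E^{1-2/r} = h_E^{s-2/r}$.

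The main obstacle in both parts is the same: one cannot measure $\b v_I$ or its symmetric gradient in $\b L^r$ directly, because the virtual functions need not live in $\b W^{1,r}(E)$. The workable strategy is to stay in $\b L^2$ for everything involving $\b v_I$, using Lemma~\ref{Lem:vemstab-2} and the $\b L^2$-interpolation bounds of Lemma~\ref{lem:approx-interp}, and to absorb the $\b L^2$-to-$\b L^r$ discrepancy through the inverse estimate \eqref{eq:inverse} on polynomials together with a careful tracking of the $h_E$-scaling of the norm-comparison $|\b v|_{\b W^{s,2}(E)}\lesssim h_E^{1-2/r}|\b v|_{\b W^{s,r}(E)}$. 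Getting these powers of $h_E$ to line up exactly to $h_E^{s-1}$ and $h_E^{s-2/r}$ respectively is the bookkeeping one must do with care.
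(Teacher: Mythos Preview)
Your overall strategy is the same as the paper's: route everything through $\b L^2$-based bounds on $\b v_I$ (Lemma~\ref{lem:approx-interp} and Lemma~\ref{Lem:vemstab-2}), use the polynomial inverse estimate \eqref{eq:inverse} to pass from $\b L^r$ to $\b L^2$, and recover the lost $h_E$-power via the H\"older inequality $|\b v|_{\b W^{s,2}(E)} \lesssim h_E^{1-2/r}|\b v|_{\b W^{s,r}(E)}$. For \eqref{eq:lemmanonconv:1} your argument coincides with the paper's proof line by line.

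For \eqref{eq:lemmanonconv:2} there is a small but genuine slip. Your displayed triangle inequality
\[
|(I-\Pi^0_k)\b v_I|_{\b W^{1,2}(E)} \le |\b v_I - \b v|_{\b W^{1,2}(E)} + |(I-\Pi^0_k)\b v|_{\b W^{1,2}(E)}
\]
is false as stated: the identity $(I-\Pi^0_k)\b v_I = (\b v_I - \b v) + (I-\Pi^0_k)\b v + \Pi^0_k(\b v - \b v_I)$ shows a third term is missing. The paper keeps all three pieces and handles the extra polynomial term by the inverse estimate,
\[
|\Pi^0_k(\b v - \b v_I)|_{\b W^{1,2}(E)} \lesssim h_E^{-1}\|\Pi^0_k(\b v - \b v_I)\|_{\b L^2(E)} \le h_E^{-1}\|\b v - \b v_I\|_{\b L^2(E)} \lesssim h_E^{s-1}|\b v|_{\b W^{s,2}(E)},
\]
which then feeds into the same H\"older step you describe. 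Alternatively, you could repair your two-term split by first invoking the $\b W^{1,2}$-stability of $(I-\Pi^0_k)$ (Lemma~\ref{lem:approx-interp} with $m=s=1$) on $\b v_I - \b v$; but as written the inequality does not hold. Once this is fixed, the rest of your argument is correct and matches the paper.
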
 
\begin{proof}
We start by some trivial manipulation and afterwards apply the first bound in Lemma \ref{lem:approx-interp} together with the polynomial inverse estimate \eqref{eq:inverse}, obtaining
$$
\begin{aligned}
\| \b{\epsilon}(\b v) - \Pi^{0, E}_{k-1} \b{\epsilon}(\b v_I) \|_{\b L^{{r}}(E)}
& \le 
\| \b{\epsilon}(\b v) - \Pi^{0, E}_{k-1} \b{\epsilon}(\b v) \|_{\b L^{{r}}(E)}
+ \| \Pi^{0, E}_{k-1} \b{\epsilon}(\b v - \b v_I) \|_{\b L^{{r}}(E)} \\
& \lesssim h_E^{s-1} |\b v |_{\b W^{s,{r}}(E)} +
|E|^{1/{{r}} - 1/2} \| \Pi^{0, E}_{k-1} \b{\epsilon}(\b v - \b v_I) \|_{\b L^2(E)} \, .
\end{aligned}
$$
We conclude the proof of the first bound by the $L^2(E)$ continuity of $\Pi^{0, E}_{k-1}$, the interpolation estimates in Lemma \ref{lem:approx-interp} and finally a H\"older inequality on the element:
$$
\begin{aligned}
\| \b{\epsilon}(\b v) - \Pi^{0, E}_{k-1} \b{\epsilon}(\b v_I) \|_{\b L^{{r}}(E)}
& \lesssim h_E^{s-1} |\b v |_{\b W^{s,{r}}(E)} +
|E|^{1/r - 1/2} h_E^{s-1} |\b v |_{\b W^{s,2}(E)} \\
& \lesssim h_E^{s-1} |\b v |_{\b W^{s,{r}}(E)} \, .
\end{aligned}
$$
In order to deal with the second bound, we first apply Lemma \ref{Lem:vemstab-2}, then some obvious manipulations, finally Lemma \ref{lem:approx-interp} and an inverse estimate for polynomials. We obtain
$$
\begin{aligned}
|\DOF(I - \Pi^0_k) \b v_I)| & \lesssim |(I - \Pi^0_k) \b v_I|_{\b W^{1,2}(E)}
\le |\b v- \b v_I|_{\b W^{1,2}(E)} + |(I - \Pi^0_k) \b v|_{\b W^{1,2}(E)}
+ |\Pi^0_k(\b v - \b v_I)|_{\b W^{1,2}(E)} \\
& \lesssim h_E^{s-1} |\b v |_{\b W^{s,2}(E)} + 
h_E^{-1} \|\Pi^{0,E}_k(\b v - \b v_I)\|_{\b L^{2}(E)} \, .
\end{aligned}
$$
We now recall the $L^2(E)$ continuity of $\Pi^{0,E}_{k}$, again Lemma \ref{lem:approx-interp} and a H\"older inequality:
$$
|\DOF(I - \Pi^0_k) \b v_I)|\lesssim h_E^{s-1} |\b v |_{\b W^{s,2}(E)}
\lesssim h_E^{s-2/{r}} |\b v |_{\b W^{s,{r}}(E)} \, .
$$
\end{proof}
}

{
Moreover, we state the following  interpolation lemma.

\begin{lemma}\label{lem:interp:tribar}
Let the mesh regularity in Assumption~\ref{ass:mesh} hold.
Let $\b v \in \b W^{s,r}(\Omega_h)$, with $1<s \le k+1$ and $r \ge 2$. Let $\b v_I \in \VDG$ denote the interpolant of $\b v$ previously introduced. Then it holds
\begin{equation}\label{eq:main_delta_nonzero}
\tri \b u - \b u_I \tri_{\delta,r}^r  
\lesssim(\delta+h^{s-1}|\b u|_{W^{s,r}(\Omega_h)})^{r-2} h^{2(s-1)}|\b u|_{W^{s,r}(\Omega_h)}^2.
\end{equation}
\end{lemma}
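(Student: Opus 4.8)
The plan is to bound the two pieces of $\tri\cdot\tri_{\delta,r}$ separately, recalling that
\[
\tri \b u - \b u_I \tri_{\delta,r}^r
= \|\b\Pi^0_{k-1}\b \epsilon(\b u - \b u_I)\|_{\mathbb{L}^r(\Omega)}^r
+S((I-\Pi^0_{k})(\b u-\b u_I),(I-\Pi^0_{k})(\b u-\b u_I)).
\]
For the consistency part I would write $\b\Pi^0_{k-1}\b\epsilon(\b u-\b u_I)=\big(\b\epsilon(\b u)-\b\Pi^0_{k-1}\b\epsilon(\b u_I)\big)-\big(\b\epsilon(\b u)-\b\Pi^0_{k-1}\b\epsilon(\b u)\big)$ and apply the first estimate \eqref{eq:lemmanonconv:1} of Lemma~\ref{lem:nonconv:approx}, which gives elementwise $\|\b\epsilon(\b u)-\Pi^{0,E}_{k-1}\b\epsilon(\b u_I)\|_{\b L^r(E)}\lesssim h_E^{s-1}|\b u|_{\b W^{s,r}(E)}$, hence after summing over $E$ the bound $\|\b\Pi^0_{k-1}\b\epsilon(\b u-\b u_I)\|_{\mathbb{L}^r(\Omega)}^r\lesssim h^{r(s-1)}|\b u|^r_{W^{s,r}(\Omega_h)}$.

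For the stabilization part I would use the equivalence \eqref{eq:equi.stab} to control $S^E$ from above by $\overline\mu_E(\delta^r+h_E^{-r}|\DOF((I-\Pi^0_k)\b u_I)|^r)^{\frac{r-2}r}|\DOF((I-\Pi^0_k)\b u_I)|^2$, then insert the interpolation estimate \eqref{eq:lemmanonconv:2}, namely $|\DOF((I-\Pi^0_k)\b u_I)|\lesssim h_E^{s-2/r}|\b u|_{\b W^{s,r}(E)}$. Substituting, the factor $h_E^{-r}|\DOF(\cdot)|^r$ becomes $\lesssim h_E^{r(s-1)}|\b u|_{\b W^{s,r}(E)}^r$ (the $h_E^{-2}$ from $h_E^{-r}$ pairing with the $h_E^{-2}$ inside $|\DOF|^r$ cancels against $h_E^{r(s-2/r)}=h_E^{r(s-1)+2-r}\cdot h_E^{\,\cdots}$, so I would track exponents carefully here), and $|\DOF(\cdot)|^2\lesssim h_E^{2(s-2/r)}|\b u|^2_{\b W^{s,r}(E)}=h_E^{2(s-1)+2-4/r}|\b u|^2$. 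Collecting the powers of $h_E$ one should arrive at the elementwise bound $S^E\lesssim (\delta^r+h_E^{r(s-1)}|\b u|^r_{\b W^{s,r}(E)})^{\frac{r-2}r}h_E^{2(s-1)}|\b u|^2_{\b W^{s,r}(E)}$, after absorbing $|E|\simeq h_E^2$.

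To pass from the elementwise bounds to the global bound \eqref{eq:main_delta_nonzero} I would sum over $E$ and apply a discrete H\"older inequality with exponents $(\frac{r}{r-2},\frac{r}{2})$, exactly as in the proof of the inequality $\tri\b v\tri_{\delta,r}\lesssim(\delta+\tri\b v\tri_r)^{\frac{r-2}r}\tri\b v\tri_r^{2/r}$ in \eqref{eq:comparison_delta_r_norm}: the $(\delta^r+\cdots)^{\frac{r-2}r}$ factors collect into $(\delta+h^{s-1}|\b u|_{W^{s,r}(\Omega_h)})^{r-2}$ and the remaining $h_E^{2(s-1)}|\b u|^2$ factors collect into $h^{2(s-1)}|\b u|^2_{W^{s,r}(\Omega_h)}$ (using $h_E\le h$ and boundedness of $\delta$-dependent terms via $|\Omega|$). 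The consistency contribution, which is of order $h^{r(s-1)}|\b u|^r$, is dominated by the right-hand side of \eqref{eq:main_delta_nonzero} since $h^{r(s-1)}|\b u|^r\lesssim (h^{s-1}|\b u|)^{r-2}h^{2(s-1)}|\b u|^2\le(\delta+h^{s-1}|\b u|)^{r-2}h^{2(s-1)}|\b u|^2$. The main obstacle I anticipate is bookkeeping the powers of $h_E$ correctly when substituting \eqref{eq:lemmanonconv:2} into the $\delta=0$ (degenerate) part of the stabilization, since the exponent $2/r$ in the DoF estimate and the scaling weight $h_E^{2-r}$ implicit in the discrete norm must combine to yield precisely $h_E^{r(s-1)}$ and $h_E^{2(s-1)}$; a careless cancellation would spoil the claimed orders.
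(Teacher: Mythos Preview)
Your outline is essentially the paper's proof: split $\tri\b u-\b u_I\tri_{\delta,r}^r$ into the consistency piece $\|\b\Pi^0_{k-1}\b\epsilon(\b u-\b u_I)\|_{\mathbb L^r}^r$ and the stabilization piece, bound the first via Lemma~\ref{lem:nonconv:approx}, and control the second by substituting a DoF estimate and then applying a discrete $(\tfrac{r}{r-2},\tfrac{r}{2})$-H\"older inequality to globalize.

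One point needs tightening. The quantity inside the stabilization is $\DOF\bigl((I-\Pi^0_k)(\b u-\b u_I)\bigr)$, not $\DOF\bigl((I-\Pi^0_k)\b u_I\bigr)$, so \eqref{eq:lemmanonconv:2} does not apply verbatim. The paper closes this by first invoking Lemma~\ref{Lem:vemstab-2} to pass from $|\DOF({\b e}_I^\perp)|$ to $|{\b e}_I^\perp|_{\b W^{1,2}(E)}$, then using standard approximation in $W^{1,2}$ (triangle inequality on $(I-\Pi^0_k)\b u$ and $(I-\Pi^0_k)\b u_I$) to get $|{\b e}_I^\perp|_{\b W^{1,2}(E)}\lesssim h_E^{s-1}|\b u|_{\b W^{s,2}(E)}$, and finally a H\"older inequality $|\b u|_{\b W^{s,2}(E)}\lesssim h_E^{1-2/r}|\b u|_{\b W^{s,r}(E)}$ to recover the $W^{s,r}$ norm. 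This detour is what makes the exponent bookkeeping you are worried about come out cleanly: one ends with $h_E^{2-r}|\DOF({\b e}_I^\perp)|^r\lesssim h_E^{r(s-1)}|\b u|_{\b W^{s,r}(E)}^r$, and the discrete H\"older step then produces exactly the right-hand side of \eqref{eq:main_delta_nonzero}. Your direct substitution of \eqref{eq:lemmanonconv:2} would work too, but only after extending that estimate to $(I-\Pi^0_k)(\b u-\b u_I)$, which amounts to reproducing the same $W^{1,2}$ argument.
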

\begin{proof}
The proof will be presented briefly, since it essentially makes use of techniques already developed in previous results of this contribution.
Let now $\b e_I = \b u - \b u_I$ and, as usual, ${\b e}_I^\perp : = (I-\Pi^{0}_{k})\b e_I$.
By definition we have
$$
\tri \b e_I \tri_{\delta,r}^r = \| \Pi^{0}_{k-1} \b e_I \|_{\b L^r(\Omega_h)}^r 
+ S(e_I^\perp,e_I^\perp) =: T_1 + T_2 \, .
$$
The first term on the right-hand side is easily bounded by the triangle inequality, polynomial approximation estimates and Lemma \ref{lem:nonconv:approx}, obtaining
$$
T_1  \lesssim h^{r(s-1)} | \b u|_{W^{s,r}(\Omega_h)}^r \, .
$$
By definition of the stabilization form, first by a trivial manipulation, then by a discrete $(\frac{r}{2},\frac{r}{r-2})$-H\"older inequality, we write (recall $|E|\simeq h_E^2$)
\begin{equation}\label{eq:koala}
\begin{aligned}
T_2 &\simeq \sum_{E \in \Omega_h} (\delta + h_E^{-1}|\DOF({\b e}_I^\perp)|)^{r-2} |\DOF({\b e}_I^\perp)|^2 \\
& = \sum_{E \in \Omega_h} (h_E^{2/r}\delta + h_E^{2/r - 1}|\DOF({\b e}_I^\perp)|)^{r-2} 
(h_E^{(2-r)/r} |\DOF({\b e}_I^\perp)|)^2 \\
& \lesssim 
\big( \sum_{E\in\Omega_h} h_E^2\delta^r + h_E^{2-r}\vert \DOF({\b e}_I^\perp) \vert^r \big)^{\frac{r-2}{r}} (\sum_{E\in\Omega_h}h_E^{2-r}|\DOF ({\b e}_I^\perp)|^r)^\frac{2}{r} \\
&\lesssim \big(\delta^r + \sum_{E\in\Omega_h}h_E^{2-r}\vert \DOF({\b e}_I^\perp) \vert^r \big)^{\frac{r-2}{r}} (\sum_{E\in\Omega_h}h_E^{2-r}|\DOF ({\b e}_I^\perp)|^r)^\frac{2}{r} \, .
\end{aligned}
\end{equation}
We now apply Lemma  \ref{Lem:vemstab-2} and approximation properties for polynomials
$$
\begin{aligned}
T_2 &\lesssim \big(\delta^r + \sum_{E\in\Omega_h}h_E^{2-r}|{\b e}_I^\perp|_{W^{1,2}(E)}^r \big)^{\frac{r-2}{r}} (\sum_{E\in\Omega_h}h_E^{2-r}|{\b e}_I^\perp|_{W^{1,2}(E)}^r)^\frac{2}{r}
\\
& \lesssim \big(\delta^r + \sum_{E\in\Omega_h}h_E^{2-r+r(s-1)} | \b u|_{W^{s,2}(E)}^r \big)^{\frac{r-2}{r}} (\sum_{E\in\Omega_h}h_E^{2-r+r(s-1)} | \b u|_{W^{s,2}(E)}^r)^\frac{2}{r}
\\
&\le \big(\delta^r + \sum_{E\in\Omega_h}h_E^{r(s-1)} | \b u|_{W^{s,r}(E)}^r \big)^{\frac{r-2}{r}} (\sum_{E\in\Omega_h}h_E^{r(s-1)} | \b u|_{W^{s,r}(E)}^r)^\frac{2}{r}
\\
&\lesssim(\delta+h^{s-1}|\b u|_{W^{s,r}(\Omega_h)})^{r-2} (h^{s-1}|\b u|_{W^{s,r}(\Omega_h)})^2 \, .
\end{aligned}
$$
The proof is concluded trivially by combining the bounds above.
\end{proof}
}

We now present the main result of this section (see also the important Remark~\ref{rem:orders}).

\begin{theorem}\label{theo:main} 
Let $\b u$ be the solution of problem \eqref{eq:stokes.weak.Z} and let $\b u_h$ be the solution of problem \eqref{eq:stokes.vem.Z}. 
Assume that $\b u \in \b W^{k_1+1,r}(\Omega_h)$, 
$\b\sigma(\cdot, \b \epsilon(\b u)) \in \mathbb{W}^{k_2,r'}(\Omega_h)$, $\b f \in \b W^{k_3+1,r'}(\Omega_h)$ for some positive integers $k_1$, $k_2$, $k_3 \le k$. 
Let the mesh regularity assumptions stated in Assumption~\ref{ass:mesh} hold. Then, we have
\begin{equation}\label{falcao}
    \tri \b u -\b u_h \tri_{\delta,r} \lesssim
 (\delta+h^{k_1}R_1+R_4)^\frac{r-2}{r}h^{\frac{2k_1}{r}}R_1^\frac{2}{r} +
    h^{\frac{k_2}{r-1}}R_2^\frac{1}{r-1} +h^{\frac{{ k_3+2}}{r-1}} R_3^\frac{1}{r-1}\, ,
\end{equation}
where the regularity terms are
\begin{equation}\label{eq:reg:terms}
\begin{aligned}
& R_1 = | \b u |_{\b W^{k_1+1,r}(\Omega_h)} \, , \qquad
R_2 = |\b\sigma(\cdot, \b \epsilon(\b u))|_{\mathbb{W}^{k_2,r'}(\Omega_h)} \,, \\
& R_3 = |\b f |_{\b W^{k_3+1,r'}(\Omega_h)} \, , \qquad
R_4 : =  \| \b \epsilon(\b u) \|_{L^r(\Omega_h)} 
\, .
\end{aligned}
\end{equation}
\end{theorem}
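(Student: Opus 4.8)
I would prove \eqref{falcao} by a triangle inequality followed by the monotonicity of $a_h$, organizing the work so that the \emph{velocity} always enters through an interpolation error measured in the sharp shifted energy, while the \emph{consistency} of the scheme is isolated into the flux and loading regularities. Since $\divs\b u=0$, its interpolant $\b u_I$ lies in $\ZDG$; writing $\b e_h:=\b u_I-\b u_h\in\ZDG$ and using that $\tri\cdot\tri_{\delta,r}$ dominates and is dominated by $\tri\cdot\tri_r$ through \eqref{eq:comparison_delta_r_norm}, a quasi-triangle inequality gives $\tri\b u-\b u_h\tri_{\delta,r}\lesssim\tri\b u-\b u_I\tri_{\delta,r}+\tri\b e_h\tri_{\delta,r}$. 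The first summand is exactly Lemma~\ref{lem:interp:tribar} with $s=k_1+1$, whose $r$-th root is $(\delta+h^{k_1}R_1)^{(r-2)/r}h^{2k_1/r}R_1^{2/r}$ and hence is dominated by the first term of \eqref{falcao} (since $R_4\ge 0$). It remains to bound $\tri\b e_h\tri_{\delta,r}$.

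For the discrete part, apply the monotonicity \eqref{eq:ah_ineq_2}, but keep its \emph{sharp} shifted form rather than degrading it to $\tri\cdot\tri_r^r$: by \eqref{eq:extrass.monotonicity} and the first bound in \eqref{eq:monotonicity:local_stab}, the quantity $\mathcal E:=\int_\Omega\varphi_{\delta+|\b E_I|}(|\b E_I-\b E_h|)+S((I-\Pi^0_k)\b e_h,(I-\Pi^0_k)\b e_h)$ satisfies $\tri\b e_h\tri_{\delta,r}^r\lesssim\mathcal E\lesssim a_h(\b u_I,\b e_h)-a_h(\b u_h,\b e_h)$, where $\b E_u:=\b\epsilon(\b u)$, $\b E_I:=\b\Pi^0_{k-1}\b\epsilon(\b u_I)$, $\b E_h:=\b\Pi^0_{k-1}\b\epsilon(\b u_h)$. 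Using the discrete equation \eqref{eq:stokes.vem.Z}, $a_h(\b u_h,\b e_h)=\int_\Omega\b f_h\cdot\b e_h+\int_{\Gamma_N}\b g\cdot\b e_h$, and I would relate this to the exact problem by testing $-\DIV\b\sigma(\cdot,\b\epsilon(\b u))+\nabla p=\b f$ element by element, integrating by parts against the \emph{polynomial} projection $\b\Pi^0_{k-1}\b\sigma(\cdot,\b\epsilon(\b u))$; this is the device that circumvents the lack of $\b W^{1,r}$-regularity of VEM functions on non-convex elements (Remark~\ref{rem:nonincl}), a polynomial being smooth enough to pair with the merely $\b W^{1,2}$ field $\b e_h$. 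The pressure contributions vanish thanks to $\divs\b e_h=0$ and the zero trace of $\b e_h$ on $\Gamma_D$, and $L^2$-orthogonality of $\b\Pi^0_{k-1}$ recombines the viscous term of $a_h$ with the leading flux term. The net decomposition reads $\mathcal E\lesssim\int_\Omega[\b\sigma(\cdot,\b E_I)-\b\sigma(\cdot,\b E_u)]:(\b E_I-\b E_h)+S((I-\Pi^0_k)\b u_I,(I-\Pi^0_k)\b e_h)+\mathcal R(\b e_h)$, where $\mathcal R(\b e_h)$ gathers interior flux-jump and Neumann terms (controlled by $h^{k_2}R_2$ via the trace inequality \eqref{eq:cont_trace} and polynomial approximation of the stress) and the loading term $\int_\Omega(\b f-\b f_h)\cdot\b e_h$ (super-convergent, $h^{k_3+2}R_3$, by orthogonality of $\b f_h=\Pi^0_k\b f$ against polynomials plus a Poincar\'e/approximation estimate controlled in the discrete norm via Lemma~\ref{Lem:vemstab-2}); both are linear in $\tri\b e_h\tri_r\le\tri\b e_h\tri_{\delta,r}$.

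The crux is the nonlinear viscous term: the naive route through \eqref{eq:hypo.continuity}, bounding $\b\sigma(\cdot,\b E_I)-\b\sigma(\cdot,\b E_u)$ and factoring $\tri\b e_h\tri_r$ out linearly, yields only the suboptimal $h^{k_1/(r-1)}R_1^{1/(r-1)}$. Instead I would measure everything in the shifted N-function $\varphi_{\delta+|\b E_u|}$ and use Lemma~\ref{lem:young_shifted}. Splitting $\b E_I-\b E_h=(\b E_I-\b E_u)+(\b E_u-\b E_h)$, the first piece is the pure interpolation distance $\int_\Omega\varphi_{\delta+|\b E_u|}(|\b E_I-\b E_u|)$; for the second, \eqref{eq:extrass.continuity} rewrites the integrand as $\varphi'_{\delta+|\b E_u|}(|\b E_I-\b E_u|)\,|\b E_u-\b E_h|$, and the Young inequality \eqref{eq:Young1} (applied with $s=|\b E_u-\b E_h|$, $t=|\b E_I-\b E_u|$) bounds it by $\varepsilon\,\varphi_{\delta+|\b E_u|}(|\b E_u-\b E_h|)+C_\varepsilon\,\varphi_{\delta+|\b E_u|}(|\b E_I-\b E_u|)$. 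The $C_\varepsilon$-term is again interpolation; the $\varepsilon$-term, after $\Delta_2$ and the shift-change inequality of Lemma~\ref{lem:young_shifted} to re-anchor the weight from $\b E_u$ to $\b E_I$, produces $\varepsilon C_{\varepsilon'}\int_\Omega\varphi_{\delta+|\b E_I|}(|\b E_I-\b E_h|)$, which is exactly the viscous part of the coercive $\mathcal E$; choosing first $\varepsilon'$ and then $\varepsilon$ small absorbs it into the left-hand side. Crucially, this absorption works only because $\mathcal E$ is kept in shifted form (it would fail against $\tri\cdot\tri_r^r$, whose power-$2$ cross terms cannot be absorbed). The stabilization consistency $S((I-\Pi^0_k)\b u_I,(I-\Pi^0_k)\b e_h)$ is handled identically at the DoF level, using \eqref{eq:S_ineq_1} with the discrete shifted-Young and absorbing into $S((I-\Pi^0_k)\b e_h,\cdot)$, the remainder being the stabilization interpolation error bounded through \eqref{eq:lemmanonconv:2}.

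Every surviving interpolation contribution is $\int_\Omega\varphi_{\delta+|\b E_u|}(|\b E_I-\b E_u|)\simeq\int_\Omega(\delta+|\b E_u|+|\b E_I|)^{r-2}|\b E_I-\b E_u|^2$, which a $(\tfrac r{r-2},\tfrac r2)$-H\"older inequality together with \eqref{eq:lemmanonconv:1} bounds by $(\delta+h^{k_1}R_1+R_4)^{r-2}h^{2k_1}R_1^2$, whose $r$-th root is the first term of \eqref{falcao} (here $R_4=\|\b\epsilon(\b u)\|_{L^r(\Omega_h)}$ enters precisely through the weight $(\delta+|\b E_u|+\cdots)^{r-2}$). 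After absorption I thus reach $\tri\b e_h\tri_{\delta,r}^r\lesssim\mathcal E\lesssim(\delta+h^{k_1}R_1+R_4)^{r-2}h^{2k_1}R_1^2+(h^{k_2}R_2+h^{k_3+2}R_3)\tri\b e_h\tri_{\delta,r}$, and a final Young inequality on the last two terms (dividing through by $\tri\b e_h\tri_{\delta,r}^{\,r-1}$) yields $h^{k_2/(r-1)}R_2^{1/(r-1)}$ and $h^{(k_3+2)/(r-1)}R_3^{1/(r-1)}$, the exponent $1/(r-1)$ being the signature of the monotone-operator inversion. Collecting with the interpolation bound of the first paragraph gives \eqref{falcao}. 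The main obstacle throughout is the shifted-function bookkeeping described above: one must retain coercivity in $\varphi$-form, perform the two-parameter ($\varepsilon$, then $\varepsilon'$) absorption, and re-anchor weights only against $\varepsilon$-weighted quantities, so that the velocity regularity $R_1$ contributes the sharp order $h^{2k_1/r}$ instead of the crude $h^{k_1/(r-1)}$.
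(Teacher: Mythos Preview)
Your strategy is correct and matches the paper's: triangle inequality plus Lemma~\ref{lem:interp:tribar} for the interpolation piece, then monotonicity of $a_h$ combined with the shifted Young inequalities of Lemma~\ref{lem:young_shifted} to absorb the nonlinear viscous consistency, leaving linear residuals in $h^{k_2}R_2$ and $h^{k_3+2}R_3$ that are converted to the $1/(r-1)$ powers by a final Young step.

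Two differences are worth flagging. First, your route through ``element-by-element integration by parts against $\b\Pi^0_{k-1}\b\sigma$'' and the resulting ``interior flux-jump terms'' is an unnecessary detour. Since $\VDG\subset\b W^{1,2}(\Omega)$ is $C^0$-conforming and $\b\sigma(\cdot,\b\epsilon(\b u))\in\mathbb L^2(\Omega)$ under the assumed regularity, the pairing $\int_\Omega\b\sigma(\cdot,\b\epsilon(\b u)):\b\epsilon(\b\xi_h)$ is well defined after a single global integration by parts; the paper then uses the purely algebraic identity (self-adjointness of $\b\Pi^0_{k-1}$) to split this into the volume terms $T_1^A=\int_\Omega(I-\b\Pi^0_{k-1})\b\sigma(\cdot,\b\epsilon(\b u)):(I-\b\Pi^0_{k-1})\b\epsilon(\b\xi_h)$ and $T_1^B=\int_\Omega(\b\sigma(\cdot,\b\epsilon(\b u))-\b\sigma(\cdot,\b E_I)):\b\Pi^0_{k-1}\b\epsilon(\b\xi_h)$, with no face terms at all. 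Second, your two-parameter absorption (anchor at $\b E_u$, then shift-change to $\b E_I$) works, but the paper anchors at $\b E_I$ from the outset: writing $T_1^B\lesssim\int\varphi'_{\delta+|\b E_I|}(|\b E_I-\b E_u|)\,|\b\Pi^0_{k-1}\b\epsilon(\b\xi_h)|$ and applying \eqref{eq:Young1} yields $\varepsilon\int\varphi_{\delta+|\b E_I|}(|\b E_I-\b E_h|)$, which by \eqref{eq:extrass.monotonicity} is $\simeq\varepsilon$ times the consistency part of $a_h(\b u_h,\b\xi_h)-a_h(\b u_I,\b\xi_h)$ itself. The absorption is thus done in one step against the full $a_h$-difference rather than against the auxiliary quantity $\mathcal E$, which avoids the shift-change bookkeeping. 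Both approaches deliver the same bound; the paper's is shorter.
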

\begin{proof}
We set $\b \xi_h=\b u_h -\b u_I$, and as usual, $\b v^\perp=(I - \Pi^0_k) \b v$ for any $\b v \in \b L^2(\Omega)$.
First, by a triangle inequality and Lemma \ref{lem:interp:tribar} (with $s=k_1+1$), we have
\begin{equation}\label{eq:bound_u_uh}
\tri\b u -\b u_h \tri_{\delta,r} \le \tri\b u -\b u_I \tri_{\delta,r} +\tri\b \xi_h \tri_{\delta,r} 
\lesssim 
(\delta+h^{k_1}R_1)^\frac{r-2}{r}h^{\frac{2k_1}{r}}R_1^\frac{2}{r} 
+ \tri\b \xi_h \tri_{\delta,r} \,.
\end{equation}
Since $\b \xi_h \in \ZDG$, manipulating \eqref{eq:stokes.vem.Z} and \eqref{eq:stokes.continuous} and recalling \eqref{eq:forma ah} we have
\begin{equation}
\label{eq:conv0}
\begin{aligned}
    &a_h(\b u_h, \b \xi_h) - a_h(\b u_I, \b \xi_h)
    =  
    \int_\Omega (-\DIV\b{\sigma}(\cdot,\b{\epsilon}(\b{u}))  
    - \b{f}) \cdot \b \xi_h  - a_h(\b u_I, \b \xi_h) +
    (\b f_h, \b \xi_h )  
    \\
    &= 
    \int_\Omega\left( \b\sigma(\cdot, \b \epsilon(\b u)) : \b\epsilon( \b \xi_h) - \b\sigma(\cdot, \b \Pi^0_{k-1}\b \epsilon(\b u_I)) : \b \Pi^0_{k-1} \b\epsilon( \b \xi_h)  \right) - S({\b u}_I^\perp, {\b \xi}_h^\perp) 
    + (\b f_h -\b f , \b \xi_h )
    \\
    &=: T_1+T_2+T_3 \,.
\end{aligned}    
\end{equation}

We next estimate each term on the right-hand side above.
In the following $C$  will denote a generic positive constant independent of $h$ that may change at each occurrence, whereas the positive parameter $\theta$ adopted in \eqref{eq:T1A} and \eqref{eq:T3} will be specified later.

\noindent 
$\bullet$ \, Estimate of $T_1$. Employing the definition of $L^2$-projection \eqref{eq:P0_k^E} we have
\begin{equation}
\label{eq:T10}
\begin{aligned}
T_1 &= \int_\Omega \bigl(\b\sigma(\cdot, \b \epsilon(\b u)) - \b \Pi^0_{k-1}\b\sigma(\cdot, \b \Pi^0_{k-1} \b \epsilon(\b u_I)) \bigr) : \b\epsilon(\b \xi_h)
\\
&=
\int_\Omega \bigl((I - \b \Pi^0_{k-1})\b\sigma(\cdot, \b \epsilon(\b u))  \bigr) : ((I - \b \Pi^{0}_{k-1})\b\epsilon(\b \xi_h))  \\
&\qquad +
\int_\Omega \bigl(\b\sigma(\cdot, \b \epsilon(\b u)) - \b\sigma(\cdot, \b \Pi^0_{k-1} \b \epsilon(\b u_I)) \bigr) : \b \Pi^0_{k-1} \b\epsilon(\b \xi_h)\\
&=:T_1^A+ T_1^B \,.
\end{aligned}
\end{equation}
We now recall the standard polynomial interpolation result
\begin{equation}\label{eq:pol-temp}
\| (I - \b \Pi^{0,E}_{k-1})\b\sigma(\cdot, \b \epsilon(\b u)) \|_{\b L^2(E)} 
\lesssim h_E^{k_2 -1} |\b\sigma(\cdot, \b \epsilon(\b u))|_{\mathbb{W}^{k_2,1}(E)} \, .
\end{equation}
 Furthermore, combining Lemma \ref{Lem:vemstab-2} with the first line of equation \eqref{eq:strong1}, it is easy to check
\begin{equation}\label{newkidinblock}
\|\b\epsilon({\b \xi}_h^\perp) \|_{\b L^2(E)} 
\lesssim |E|^{\frac{1}{2}-\frac{1}{r}}S^E({\b \xi}_h^\perp,{\b \xi}_h^\perp)^\frac{1}{r} \, .
\end{equation} 
The term $T_1^A$ can be bounded as follows   
\begin{equation}
\label{eq:T1A}
\begin{aligned}
    T_1^A &= \sum_{E \in \Omega_h}
    \int_E \bigl((I - \b \Pi^{0,E}_{k-1})\b\sigma(\cdot, \b \epsilon(\b u))  \bigr) : ((I - \b \Pi^{0,E}_{k-1})\b\epsilon(\b \xi_h))
    \\
   & \le \sum_{E\in \Omega_h} { \| (I - \b \Pi^{0,E}_{k-1})\b\sigma(\cdot, \b \epsilon(\b u)) \|_{\b L^2(E)}}  \|(I - \b \Pi^{0,E}_{k-1})\b\epsilon(\b \xi_h) \|_{\b L^2(E)} 
    & \quad & \text{(Cauchy--Schwarz ineq.)}
        \\
   & \le C \sum_{E\in \Omega_h} h_E^{k_2 -1} |\b\sigma(\cdot, \b \epsilon(\b u))|_{\mathbb{W}^{k_2,1}(E)} \|\b\epsilon({\b \xi}_h^\perp) \|_{\b L^2(E)}
    & \quad & { \text{(Cont of $\Pi^{0,E}_{k-1}$ \& \eqref{eq:pol-temp})}}
    \\
       & \le C \sum_{E\in \Omega_h} h_E^{k_2 -1} |E|^\frac{1}{r}|\b\sigma(\cdot, \b \epsilon(\b u))|_{\mathbb{W}^{k_2,r'}(E)}  |E|^{\frac{1}{2}-\frac{1}{r}}S^E({\b \xi}_h^\perp,{\b \xi}_h^\perp)^\frac{1}{r} 
    & \quad & \text{($(r',r)$-H\"older ineq. \& \eqref{newkidinblock})}
    \\
   &
   \le \frac{C}{r'\theta^{r'}}  h^{k_2r'} |\b\sigma(\cdot, \b \epsilon(\b u))|_{\mathbb{W}^{k_2,r'}(\Omega_h)}^{r'} + 
   \frac{\theta^r}{2r}S({\b \xi}_h^\perp,{\b \xi}_h^\perp),
    & \quad & \text{($(r',r)$-Young ineq.)}
\end{aligned}
\end{equation}
where we used the fact that $|E|^\frac{1}{2} \simeq h_E$.
%
%
Employing \eqref{eq:extrass.continuity}, we obtain 
   \[
   \begin{aligned}
    T_1^B &\le
    \sum_{E\in\Omega_h}\int_E \vert \b\sigma(\cdot, \b \epsilon(\b u)) - \b\sigma(\cdot, \PP0 \b \epsilon(\b u_I))\vert \, \vert \PP0 \b\epsilon(\b \xi_h)\vert 
    \\
    &\lesssim \sum_{E\in\Omega_h}\int_E
    \varphi_{\delta+|\PP0 \b \epsilon(\b u_I)|}'(|\PP0 \b \epsilon(\b u_I) - \b \epsilon(\b u)|) \,\vert \PP0 \b\epsilon(\b \xi_h)\vert \,.
    \end{aligned}
    \]
Employing \eqref{eq:Young1} we get that for every $\varepsilon>0$, there exists a positive constant $C_\varepsilon $ such that 
\[
\begin{aligned}
     T_1^B &\le
    \varepsilon \sum_{E\in\Omega_h}\int_E
    \varphi_{\delta+|\PP0 \b \epsilon(\b u_I)|}(|\PP0 \b \epsilon(\b \xi_h)|) + 
    C_\varepsilon  \sum_{E\in\Omega_h}\int_E\varphi_{\delta+|\PP0 \b \epsilon(\b u_I)|}(|\PP0 \b \epsilon(\b u_I) - \b \epsilon(\b u)|) .
\end{aligned}
\]     
Using \eqref{eq:extrass.monotonicity}  we obtain 
(with $\gamma$ denoting the associated uniform hidden constant)
\[
\begin{aligned}
T_1^B &\le
    \gamma \varepsilon
    (\b\sigma(\cdot, \b\Pi_{k-1}^0 \b \epsilon(\b u_h))- \b\sigma(\cdot, \b\Pi_{k-1}^0 \b \epsilon(\b u_I)), \b\Pi_{k-1}^0 \b\epsilon(\b \xi_h)) \\
    &\qquad +C_\varepsilon  
    \sum_{E\in\Omega_h} \int_E (\delta+|\PP0 \epsilon(\b u_I)|+|\b\epsilon(\b u)|)^{r-2}  
    |\b\epsilon(\b u) - \PP0 \epsilon(\b u_I)|^2  \,.   
\end{aligned}
\]
Notice that the constant $C_\varepsilon$ depends only on $\sigma_{\rm c}, \sigma_{\rm m}, r$, and $\varepsilon$. With respect to $\varepsilon$, it may depend on the degree $k$ and the domain $\Omega$. However, given our mesh assumptions, it is independent of the particular mesh or mesh element within the family $\{ \Omega_h \}_h$.
Using an $(\frac{r}{r-2},\frac{r}{2})$-H\"older inequality since $r > 2$, from the last equation we get
\[
\begin{aligned}
T_1^B&\le
    \gamma \varepsilon
    (\b\sigma(\cdot, \b\Pi_{k-1}^0 \b \epsilon(\b u_h))- \b\sigma(\cdot, \b\Pi_{k-1}^0 \b \epsilon(\b u_I)), \b\Pi_{k-1}^0 \b\epsilon(\b \xi_h))\\
    &\qquad +C_\varepsilon  \sum_{E\in\Omega_h}
    ( |E| \delta^r+\|\b\Pi_{k-1}^0\b \epsilon(\b u_I)\|^r_{\mathbb{L}^r(E)}+\| \b\epsilon(\b u)\|^r_{\mathbb{L}^r(E)})^\frac{r-2}{r}\| \b\epsilon(\b u) - \b\Pi_{k-1}^0 \epsilon(\b u_I)\|^2_{\mathbb{L}^r(E)} \, ,
    \end{aligned}
\]
which, making use of Lemma \ref{lem:nonconv:approx} and taking $\varepsilon= \frac{1}{2 \gamma}$ becomes (using a discrete $(\frac{r}{r-2},\frac{r}{2})$-H\"older inequality)
\begin{equation}
\label{eq:T1B}
\begin{aligned}
T_1^B&\le
    \frac{1}{2}
    (\b\sigma(\cdot, \b\Pi_{k-1}^0 \b \epsilon(\b u_h))- \b\sigma(\cdot, \b\Pi_{k-1}^0 \b \epsilon(\b u_I)), \b\Pi_{k-1}^0 \b\epsilon(\b \xi_h))\\
    &\qquad + C h^{2k_1}( |\Omega| \delta^r+\| \b\epsilon(\b u)\|^r_{\mathbb{L}^r(\Omega_h)})^\frac{r-2}{r} | \b u |_{W^{k_1+1,r}(\Omega_h)}^2 \, .
\end{aligned}
\end{equation}
Combining \eqref{eq:T1A} and \eqref{eq:T1B} in \eqref{eq:T10} we infer
\begin{equation}
\label{eq:T1}
\begin{aligned}
T_1 &\le
    \frac{1}{2}
    (\b\sigma(\cdot, \b\Pi_{k-1}^0 \b \epsilon(\b u_h))- \b\sigma(\cdot, \b\Pi_{k-1}^0 \b \epsilon(\b u_I)), \b\Pi_{k-1}^0 \b\epsilon(\b \xi_h))\\
    &\qquad +
    C h^{2k_1} (\delta^r+R_4^r)^\frac{r-2}{r} R_1^2 
      +\frac{C}{r'\theta^{r'}}   h^{k_2 r'} R_2^{r'}+ 
    \frac{\theta^r}{2r} \tri\b \xi_h \tri_{\delta,r}^r \, .
\end{aligned}    
\end{equation}
\noindent 
$\bullet$ \, Estimate of $T_2$. 
Recalling definitions \eqref{eq:Sglobal} and \eqref{eq:dofi}, with some trivial algebra we obtain
\[
    T_2 = - \sum_{E \in \Omega_h}S^E({\b u}_I^\perp, {\b \xi}_h^\perp)
    \le \sum_{E\in \Omega_h} C h_E^{2-r} (\delta h_E+|\DOF({\b u}_I^\perp)|)^{r-2} \vert\DOF ({\b u}_I^\perp) \vert \, \vert \DOF ({\b \xi}_h^\perp)\vert \,.
\]
Employing \eqref{eq:extrass.continuity} in Lemma \ref{lm:Hirn} to the scalar function   
$\sigma(\tau)= (\delta h_E+|\tau|)^{r-2} \tau$ (hence $\eta =0$) we have
\[
T_2 \le C\sum_{E \in \Omega_h}  h_E^{2-r}\varphi'_{\delta h_E+|\DOF ({\b u}_I^\perp)|}(|\DOF ({\b u}_I^\perp)|) \, \vert \DOF ({\b \xi}_h^\perp)\vert \,.
\]
Employing \eqref{eq:Young1} we get that for every $\varepsilon>0$ there exists a positive constant $C_\varepsilon $ such that 
\[
\begin{aligned} 
    T_2 & \le \varepsilon \sum_{E\in \Omega_h} h_E^{2-r} 
      \varphi_{\delta h_E+|\DOF ({\b u}_I^\perp)|}(|\DOF({\b \xi}_h^\perp)|)
    + C_\varepsilon  \sum_{E\in \Omega_h} h_E^{2-r}  \varphi_{\delta h_E+|\DOF ({\b u}_I^\perp)|}(|\DOF ({\b u}_I^\perp)|) \,.
\end{aligned}
\]
We now use \eqref{eq:extrass.monotonicity} and, denoting with $\gamma$ the hidden constant, we infer
\begin{equation}
\label{eq:T20}
\begin{aligned} 
T_2 & \le \varepsilon \gamma \sum_{E\in \Omega_h} h_E^{2-r} 
      (\delta h_E+|\DOF ({\b u}_I^\perp)| + |\DOF ({\b u}_h^\perp)|)^{r-2} |\DOF({\b \xi}_h^\perp)|^2 \\
      &\qquad + C_\varepsilon  \sum_{E\in \Omega_h} h_E^{2-r}(\delta h_E+|\DOF ({\b u}_I^\perp)|)^{r-2}|\DOF ({\b u}_I^\perp)|^2 =: T_2^A + T^2_B \,.
\end{aligned}
\end{equation}
Employing \eqref{eq:strong1} (cf. proof of Lemma \ref{Lemma:S_holder_monotonicity}) with  $\b u_h = {\b u}_h^\perp$ and  $\b w_h = {\b u}_I^\perp$ and recalling definition \eqref{eq:Sglobal} we obtain
\begin{equation}
\label{eq:T2A}
T_2^A \le \varepsilon C  \gamma \sum_{E \in \Omega_h}
(S^E({\b u}_h^\perp, {\b \xi}_h^\perp) - 
S^E({\b u}_I^\perp, {\b \xi}_h^\perp) ) 
= \varepsilon C  \gamma
(S({\b u}_h^\perp, {\b \xi}_h^\perp) - 
S({\b u}_I^\perp, {\b \xi}_h^\perp) ) \,.
\end{equation}
Reasoning as in \eqref{eq:koala} and employing Lemma \ref{lem:nonconv:approx} we can easily derive 
\begin{equation}
\label{eq:T2B}
     T_2^B \lesssim  h^{2k_1} (\delta+h^{k_1}R_1)^{r-2}R_1^2 \, .
\end{equation}
Combining \eqref{eq:T2A} and \eqref{eq:T2B} in \eqref{eq:T20} and taking $\epsilon = \frac{1}{2 C \gamma}$ we infer
\begin{equation}
\label{eq:T2}
T_2 \le 
\frac{1}{2}
(S({\b u}_h^\perp, {\b \xi}_h^\perp) - 
S({\b u}_I^\perp, {\b \xi}_h^\perp) ) + 
C h^{2k_1}(\delta+h^{k_1}R_1)^{r-2}R_1^2 \, .
\end{equation}

\noindent 
$\bullet$ \, Estimate of $T_3$. With analogous arguments to those in \eqref{eq:T1A} we infer 
\begin{equation}
\label{eq:T3}
\begin{aligned}
    T_3 &= \sum_{E \in \Omega_h}\int_E (\Pi^{0,E}_k \b f - \b f) \cdot {\b \xi}_h^\perp 
    & \quad & \text{(def. \eqref{eq:forma fh} \& def. \eqref{eq:P0_k^E})}
    \\
   & 
   \le C \sum_{E\in \Omega_h} h_E^{k_3} |\b f|_{\mathbb{W}^{{ k_3+1},1}(E)} h_E|\DOF({\b \xi}_h^\perp)|
  & & \text{(same reasoning as in \eqref{eq:T1A})}
    \\
       & \le C \sum_{E\in \Omega_h} h_E^{k_3} |E|^\frac{1}{r}|\b f|_{\mathbb{W}^{{ k_3+1},r'}(E)}  h_E^{1+\frac{r-2}{r}}S^E({\b \xi}_h^\perp,{\b \xi}_h^\perp)^\frac{1}{r} 
    & \quad & \text{($(r',r)$-H\"older ineq. \& Lemma \ref{Lemma:S_holder_monotonicity})}
    \\
   &
   \le \frac{C}{r'\theta^{r'}}   h^{{ (k_3+2)}r'} |\b f|_{\mathbb{W}^{{ k_3+1},r'}(\Omega_h)}^{r'} + 
  \frac{\theta^r}{2r}S({\b \xi}_h^\perp,{\b \xi}_h^\perp)
    & \quad & \text{($|E|^\frac{1}{2} \simeq h_E$ \& $(r',r)$-Young ineq.)}
     \\
   &
   \le \frac{C}{r'\theta^{r'}}  h^{{ (k_3+2)}r'} R_3^{r'} + 
   \frac{\theta^r}{2r}\tri\b \xi_h \tri_{\delta,r}^r
   &
\end{aligned}
\end{equation}
Plugging the estimates in \eqref{eq:T1}, \eqref{eq:T2} and \eqref{eq:T3} in \eqref{eq:conv0} and recalling definition \eqref{eq:forma ah} we obtain
\begin{equation}
\label{eq:upper}
\begin{aligned}
\frac{1}{2} (a_h(\b u_h, \b \xi_h) - a_h(\b u_I, \b \xi_h))    
   & \le  
   Ch^{2k_1}(\delta+{h^{k_1}R_1} + R_4)^{r-2}R_1^2 +
    \frac{C}{r'\theta^{r'}} h^{{k_2}{r'}} R_2^{r'}  \\
    &\qquad +
    \frac{C}{r'\theta^{r'}}   h^{{ (k_3+2)}{r'}} R_3^{r'} + 
   \frac{\theta^r}{r} \tri\b \xi_h \tri_{\delta,r}^r
   \, .
\end{aligned}
\end{equation}
We now write with \eqref{eq:ah_ineq_2} that
\begin{equation}
\label{eq:lower}
\begin{aligned}
\frac{1}{2} (a_h(\b u_h, \b \xi_h) - a_h(\b u_I, \b \xi_h)) & \ge
{\widetilde C} {\tri\b \xi_h \tri_{\delta,r}^r}.
\end{aligned}
\end{equation}
Combining  \eqref{eq:upper} and \eqref{eq:lower} and taking $\theta= (\frac{{\widetilde C}r}{2})^{1/r}$ we obtain:
\begin{equation}
\label{eq:pressF}
\begin{aligned}
\tri\b \xi_h \tri_{\delta,r}^r &\lesssim
 h^{2k_1}(\delta+{h^{k_1}R_1}+ R_4)^{r-2}R_1^2+
    h^{k_2r'} R_2^{r'} +h^{{(k_3+2)}r'} R_3^{r'}
\,.
\end{aligned}
\end{equation}
The proof follows by {\eqref{eq:bound_u_uh} together with} the previous bound. 
\end{proof}

According to Remark~\ref{rem:nonincl}, if in addition to  Assumption~\ref{ass:mesh} we assume that all the mesh elements are convex, we can state the error estimate with respect to the standard $\b W^{1,r}$-norm. Indeed, in this case, due to the elliptic regularity results established in \cite{Kellogg.Osborn:76} together with Sobolev embeddings, we have $\VDG(E) \subset \b W^{2,2}(E) \subset \b W^{1,r}(E)$ for all $E \in \Omega_h$ and $r \in [2,\infty)$, which combined with the global continuity of $\VDG$ implies $\VDG \subset \b W^{1,r}(\Omega)$. 
Furthermore, in such case our error estimate in Theorem \ref{theo:main}, which is expressed in the $\tri \cdot \tri_{\delta,r}$ norm, directly translates into equivalent estimates in the classical $W^{1,r}(\Omega)$ norm. 
\begin{corollary}\label{cor:W1r.estimate}
Let $\b u$ be the solution of problem \eqref{eq:stokes.weak.Z} and let $\b u_h$ be the solution of problem \eqref{eq:stokes.vem.Z}. Let all the mesh elements be convex. Under the regularity assumptions of Theorem~\ref{theo:main} and with the regularity terms defined as in \eqref{eq:reg:terms}, it holds
\[
    \| \b u -\b u_h \|_{\b W^{1,r}(\Omega)} \lesssim
 (\delta+h^{k_1}R_1+R_4)^\frac{r-2}{r}h^{\frac{2k_1}{r}}R_1^\frac{2}{r} +
    h^{\frac{k_2}{r-1}}R_2^\frac{1}{r-1} +h^{\frac{{ k_3+2}}{r-1}} R_3^\frac{1}{r-1}\, .
\]
\end{corollary}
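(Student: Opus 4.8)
The plan is to deduce the $\b W^{1,r}$ bound from the already-established $\tri\cdot\tri_{\delta,r}$ estimate of Theorem~\ref{theo:main} by controlling the \emph{genuine} symmetric gradient of the error through the discrete quantities entering the discrete norm. Since all elements are convex, the inclusions recalled just before the statement give $\b U_h \subset \b W^{1,r}(\Omega)$, so that $\b u - \b u_h \in \b W^{1,r}_{0,\Gamma_D}(\Omega)$ and Korn's inequality \eqref{eq:Korn} reduces the task to bounding $\|\b\epsilon(\b u - \b u_h)\|_{\mathbb{L}^r(\Omega)}$. I would then split $\b u - \b u_h = (\b u - \b u_I) - \b\xi_h$, with $\b\xi_h = \b u_h - \b u_I \in \b Z_h$ as in the proof of Theorem~\ref{theo:main}, and estimate the two contributions separately: the interpolation part by approximation, the discrete part by the discrete norm.

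The crucial new ingredient is a local inverse-type estimate, valid on convex elements: for every $E$ and every $\b v_h \in \b U_h(E)$ with $\Pi^{0,E}_k \b v_h = \b 0$,
\[
|\b v_h|_{\b W^{1,r}(E)} \lesssim h_E^{\frac{2-r}{r}}\,|\boldsymbol\chi(\b v_h)|.
\]
To prove it I would map $E$ to a reference configuration of unit diameter; the symmetric-gradient seminorm scales precisely as $h_E^{(2-r)/r}$, while, as is readily checked from the definitions, the degree-of-freedom vector $\boldsymbol\chi$ is scale invariant. On the reference element the space is finite dimensional and, thanks to the convexity-based inclusion $\b U_h(\widehat E)\subset \b W^{1,r}(\widehat E)$ recalled before the statement (elliptic regularity \cite{Kellogg.Osborn:76} together with Sobolev embedding), the $\b W^{1,r}$-seminorm is a genuine finite seminorm there; equivalence of norms on the finite-dimensional space, combined with Lemma~\ref{Lem:vemstab-2}, then controls it by $|\boldsymbol\chi(\b v_h)|$. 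The main obstacle is the \emph{uniformity} of the hidden constant over the whole mesh family: this is exactly where Assumption~\ref{ass:mesh}, and convexity through a uniform elliptic-regularity and Sobolev constant, must be invoked to keep the reference configurations in a compact class.

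With this estimate in hand the assembly is routine. Writing $\b v^\perp = (I-\Pi^0_k)\b v$ and using that $(I-\b\Pi^0_{k-1})\b\epsilon(\b v_h) = (I-\b\Pi^0_{k-1})\b\epsilon(\b v_h^\perp)$ for $\b v_h \in \b U_h$ (because $\b\epsilon(\Pi^0_k\b v_h)$ is a degree-$(k-1)$ tensor polynomial), I would bound $\|(I-\b\Pi^0_{k-1})\b\epsilon(\b v_h^\perp)\|_{\b L^r(E)}$ by $\|\b\epsilon(\b v_h^\perp)\|_{\b L^r(E)}$ (controlled by the displayed estimate) plus the polynomial term $\|\b\Pi^0_{k-1}\b\epsilon(\b v_h^\perp)\|_{\b L^r(E)}$ (controlled by the inverse estimate \eqref{eq:inverse}, $L^2$-stability of $\b\Pi^0_{k-1}$ and Lemma~\ref{Lem:vemstab-2}), both producing the weight $h_E^{(2-r)/r}|\boldsymbol\chi(\b v_h^\perp)|$. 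Applied to $\b v_h = \b u_I$ and combined with Lemma~\ref{lem:nonconv:approx}, estimates \eqref{eq:lemmanonconv:1} and \eqref{eq:lemmanonconv:2}, this yields $\|\b\epsilon(\b u - \b u_I)\|_{\mathbb{L}^r(\Omega_h)} \lesssim h^{k_1}R_1$; applied to $\b v_h = \b\xi_h$, together with $\|\b\Pi^0_{k-1}\b\epsilon(\b\xi_h)\|_{\mathbb{L}^r(\Omega)} \le \tri\b\xi_h\tri_r$, it yields $\|\b\epsilon(\b\xi_h)\|_{\mathbb{L}^r(\Omega_h)} \lesssim \tri\b\xi_h\tri_r \le \tri\b\xi_h\tri_{\delta,r}$. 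Collecting, $\|\b u - \b u_h\|_{\b W^{1,r}(\Omega)} \lesssim h^{k_1}R_1 + \tri\b\xi_h\tri_{\delta,r}$; finally $\tri\b\xi_h\tri_{\delta,r}$ is bounded by the right-hand side of \eqref{falcao} through \eqref{eq:pressF}, and since $r\ge 2$ forces $2k_1/r \le k_1$, the remainder $h^{k_1}R_1$ is dominated, for $h$ small enough, by the first term $(\delta+h^{k_1}R_1+R_4)^{\frac{r-2}{r}}h^{\frac{2k_1}{r}}R_1^{\frac 2r}$, which closes the argument.
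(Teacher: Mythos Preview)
Your proposal is correct and follows essentially the same route as the paper. Both arguments hinge on the same key ingredient: a local inverse-type estimate $|\b v_h^\perp|_{\b W^{1,r}(E)}\lesssim h_E^{(2-r)/r}|\boldsymbol\chi(\b v_h^\perp)|$ for virtual functions on convex elements (what the paper simply calls ``an inverse estimate on virtual functions''), combined with Lemma~\ref{Lem:vemstab-2}. The paper packages this as a single global bound $\|\b v_h\|_{\b W^{1,r}(\Omega)}\lesssim\tri\b v_h\tri_r$ for $\b v_h\in\VDG$, applied to $\b\xi_h=\b u_h-\b u_I$, and handles the interpolation error $\b u-\b u_I$ in $\b W^{1,r}$ separately; you carry out the same decomposition element by element and are more explicit about how the interpolation piece is controlled via Lemma~\ref{lem:nonconv:approx}. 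Your remark on the uniformity of the inverse constant over the mesh family is well placed---the paper glosses over this point as well. One minor observation: your final absorption of $h^{k_1}R_1$ into the first term of \eqref{falcao} holds for \emph{all} $h$, not just small $h$, since $h^{k_1}R_1=(h^{k_1}R_1)^{(r-2)/r}h^{2k_1/r}R_1^{2/r}\le(\delta+h^{k_1}R_1+R_4)^{(r-2)/r}h^{2k_1/r}R_1^{2/r}$.
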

\begin{proof}
We show the proof only briefly. 
The key step is deriving that 
\begin{equation}\label{eq:add:X}
\| \b v_h \|_{\b W^{1,r}(\Omega)} \lesssim \tri \b v_h \tri_{r} \qquad \forall \b v_h \in \VDG \, ,
\end{equation}
which can be shown easily, first by the Korn and triangle inequalities,
$$
\begin{aligned}
\| \b v_h \|_{\b W^{1,r}(\Omega)} & \lesssim \| \b\Pi^0_{k-1}\b \epsilon(\b v_h) \|_{\b L^{r}(\Omega_h)} 
+ \| ({\mathbf I} - \b\Pi^0_{k-1}) \b \epsilon(\b v_h) \|_{\b L^{r}(\Omega_h)} \\
& \lesssim \| \b\Pi^0_{k-1}\b \epsilon(\b v_h) \|_{\b L^{r}(\Omega_h)} + | (I - \Pi_k^0)\b v_h |_{\b W^{1,r}(\Omega_h)} 
\end{aligned}
$$
and then by definition of $\tri \cdot \tri_{r}$ norm and the stabilizing form, using Lemma \ref{Lem:vemstab-2} and an inverse estimate on virtual functions.  
Afterwards, by the triangle inequality and interpolation estimates combined with \eqref{eq:add:X}, we can write
$$
\| \b u -\b u_h \|_{\b W^{1,r}(\Omega)} \lesssim
\| \b u -\b u_I \|_{\b W^{1,r}(\Omega)}  + 
\| \b u_h -\b u_I \|_{\b W^{1,r}(\Omega)} \lesssim
C h^k | \b u |_{\b W^{k+1,r}(\Omega)} + \tri \b u_h -\b u_I \tri_{r} \, .
$$
The proof then follows from the above bound and \eqref{eq:comparison_delta_r_norm}, combined with Theorem \ref{theo:main}.
\end{proof}

We have also the following Corollary, stating a better bound for the $\b\sigma$ approximation term in Theorem~\ref{theo:main}, valid in the $\delta>0$ case.

\begin{corollary}\label{cor:main} 
Under the same assumptions of Theorem~\ref{theo:main}, if in addition $\delta>0$, $\b\sigma(\cdot, \b \epsilon(\b u)) \in \mathbb{W}^{k_2,2}(\Omega_h)$ and $\b f \in \b{W}^{k_3+1,2}(\Omega_h)$, it holds
\begin{equation}\label{eq:est_vel_delta_nonzero}
    \tri \b u -\b u_h \tri_{\delta,r} \lesssim
 h^{\frac{2k_1}{r}}(\delta +{h^{k_1}R_1}+ R_4)^\frac{r-2}{r}R_1^\frac{2}{r}+
    h^{\frac{2k_2}{r}}{\delta^{\frac{2-r}{r}}} \widehat{R}_2^{\frac{2}{r}} + h^{\frac{2(k_3+2)}{r}} {\delta^{\frac{2-r}{r}}}\widehat{R}_3^\frac{2}{r}\, ,
\end{equation}
with new regularity terms
\begin{equation}\label{eq:R_2-3-hat}
\widehat{R}_2 = |\b\sigma(\cdot, \b \epsilon(\b u))|_{\mathbb{W}^{k_2,2}(\Omega_h)}  \, ,
\qquad 
\widehat{R}_3 : = |\b f |_{\b W^{k_3+1,2}(\Omega_h)}
\, .
\end{equation}
\end{corollary}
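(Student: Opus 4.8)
The plan is to re-run the proof of Theorem~\ref{theo:main} verbatim up to the decomposition $a_h(\b u_h, \b \xi_h) - a_h(\b u_I, \b \xi_h) = T_1 + T_2 + T_3$ (with $\b \xi_h = \b u_h - \b u_I$, $T_1 = T_1^A + T_1^B$, and setting $\b\xi_h^\perp = (I - \Pi^0_k)\b\xi_h$), and to re-estimate only the two terms $T_1^A$ and $T_3$, i.e.\ those carrying the regularity of $\b\sigma$ and of $\b f$. The terms $T_1^B$ and $T_2$ are kept exactly as before; together they reproduce the summand $h^{2k_1}(\delta + h^{k_1}R_1 + R_4)^{r-2}R_1^2$ of the bound for $\tri\b\xi_h\tri_{\delta,r}^r$, which after taking roots gives the first term of \eqref{eq:est_vel_delta_nonzero}.

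The single new ingredient exploits $\delta>0$. From \eqref{eq:monotonicity:local_stab} (equivalently \eqref{eq:equi.stab}) one has $S^E(\b v_h, \b v_h) \gtrsim (\delta^r + h_E^{-r}|\DOF(\b v_h)|^r)^{\frac{r-2}{r}}|\DOF(\b v_h)|^2 \gtrsim \delta^{r-2}|\DOF(\b v_h)|^2$, whence, applying Lemma~\ref{Lem:vemstab-2} to $\b\xi_h^\perp$ (which satisfies $\Pi^{0,E}_k\b\xi_h^\perp = 0$),
\[
\|\b\epsilon(\b\xi_h^\perp)\|_{\b L^2(E)} \lesssim |\DOF(\b\xi_h^\perp)| \lesssim \delta^{\frac{2-r}{2}}\,S^E(\b\xi_h^\perp, \b\xi_h^\perp)^{\frac12}.
\]
This replaces the bound \eqref{newkidinblock} used in Theorem~\ref{theo:main}: it trades the weight $|E|^{1/2-1/r}$ and the exponent $1/r$ on the stabilization for the factor $\delta^{(2-r)/2}$ and the exponent $1/2$, which is exactly what upgrades the rate once $\delta$ is bounded away from zero.

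With this at hand I would re-estimate $T_1^A$ via the $L^2$ polynomial approximation $\|(I - \b \Pi^{0,E}_{k-1})\b\sigma(\cdot, \b \epsilon(\b u))\|_{\b L^2(E)} \lesssim h_E^{k_2}\widehat R_2|_E$ (Lemma~\ref{lem:approx-interp}, with $\widehat R_2|_E = |\b\sigma(\cdot, \b \epsilon(\b u))|_{\mathbb{W}^{k_2,2}(E)}$) in place of \eqref{eq:pol-temp}, then Cauchy--Schwarz on each $E$ and over the mesh, obtaining $T_1^A \lesssim \delta^{\frac{2-r}{2}}h^{k_2}\widehat R_2\, S(\b\xi_h^\perp, \b\xi_h^\perp)^{1/2}$; a $(2,2)$-Young inequality (instead of the $(r',r)$-Young of Theorem~\ref{theo:main}) then gives $T_1^A \lesssim \theta^{-2}\delta^{2-r}h^{2k_2}\widehat R_2^2 + \theta^2 S(\b\xi_h^\perp, \b\xi_h^\perp)$. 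The term $T_3$ is handled identically: $L^2$-orthogonality of $\Pi^0_k$ removes $\Pi^0_k\b\xi_h$, the estimate $\|(I - \Pi^{0,E}_k)\b f\|_{\b L^2(E)} \lesssim h_E^{k_3+1}|\b f|_{\b W^{k_3+1,2}(E)}$ together with a Poincaré inequality $\|\b\xi_h^\perp\|_{\b L^2(E)} \lesssim h_E|\DOF(\b\xi_h^\perp)|$ and the displayed bound yield $T_3 \lesssim \theta^{-2}\delta^{2-r}h^{2(k_3+2)}\widehat R_3^2 + \theta^2 S(\b\xi_h^\perp, \b\xi_h^\perp)$.

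To conclude, I would collect the three estimates, invoke the monotonicity lower bound \eqref{eq:ah_ineq_2}, $a_h(\b u_h, \b \xi_h) - a_h(\b u_I, \b \xi_h) \gtrsim \tri\b\xi_h\tri_{\delta,r}^r$, use $S(\b\xi_h^\perp, \b\xi_h^\perp) \le \tri\b\xi_h\tri_{\delta,r}^r$, and fix $\theta$ small enough to absorb the $\theta^2$-terms into the left-hand side, producing $\tri\b\xi_h\tri_{\delta,r}^r \lesssim h^{2k_1}(\delta + h^{k_1}R_1 + R_4)^{r-2}R_1^2 + \delta^{2-r}h^{2k_2}\widehat R_2^2 + \delta^{2-r}h^{2(k_3+2)}\widehat R_3^2$. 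Taking $r$-th roots (subadditivity of $t \mapsto t^{1/r}$) and bounding $\tri\b u - \b u_h\tri_{\delta,r} \le \tri\b u - \b u_I\tri_{\delta,r} + \tri\b\xi_h\tri_{\delta,r}$, with the interpolation term controlled by Lemma~\ref{lem:interp:tribar} and absorbed into the first summand, gives \eqref{eq:est_vel_delta_nonzero}. The only genuinely delicate point is tracking the $\delta$-powers and the matching Young exponents: recognizing that for $\delta>0$ the stabilization controls $|\DOF(\b\xi_h^\perp)|^2$ (exponent $1/2$) rather than $|\DOF(\b\xi_h^\perp)|^r$ (exponent $1/r$) is precisely what turns the consistency and loading errors from order $h^{k_2/(r-1)}$, $h^{(k_3+2)/(r-1)}$ into order $h^{2k_2/r}$, $h^{2(k_3+2)/r}$, at the unavoidable cost of the factor $\delta^{(2-r)/r}$ that diverges as $\delta \to 0$.
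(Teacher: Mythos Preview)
Your proposal is correct and follows essentially the same route as the paper's proof: the paper also re-runs Theorem~\ref{theo:main} modifying only $T_1^A$ and $T_3$, uses precisely your observation that $S^E(\b\xi_h^\perp,\b\xi_h^\perp)\gtrsim\delta^{r-2}|\DOF(\b\xi_h^\perp)|^2\gtrsim\delta^{r-2}|\b\xi_h^\perp|_{\b W^{1,2}(E)}^2$ (this is \eqref{bound:alternative}, replacing \eqref{newkidinblock}), combines it with the $L^2$-approximation $\|(I-\b\Pi^{0,E}_{k-1})\b\sigma(\cdot,\b\epsilon(\b u))\|_{\b L^2(E)}\lesssim h_E^{k_2}|\b\sigma(\cdot,\b\epsilon(\b u))|_{\mathbb W^{k_2,2}(E)}$, and applies a Young inequality to reach $T_1^A\le \delta^{2-r}\frac{C}{2\theta^r}h^{2k_2}\widehat R_2^2+\frac{\theta^r}{2r}S(\b\xi_h^\perp,\b\xi_h^\perp)$ (and analogously for $T_3$). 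The only cosmetic difference is the parameterization of the Young inequality (you write $\theta^{-2}/\theta^2$, the paper keeps the $\theta^r$ scaling from Theorem~\ref{theo:main}); either choice absorbs correctly into the monotonicity lower bound.
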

\begin{proof}
The proof follows the same steps as that of Theorem~\ref{theo:main}, the only modification being the bounds for $T_1^A$ and for $T_3$. 
We start by noting that from definition \eqref{eq:dofi} and Lemma \ref{Lem:vemstab-2} it follows
\begin{equation}\label{bound:alternative}
S^E({\b \xi}_h^\perp,{\b \xi}_h^\perp) \gtrsim \delta^{r-2} | \DOF({\b \xi}_h^\perp) |^2
\gtrsim \delta^{r-2} | {\b \xi}_h^\perp |_{W^{1,2}(E)}^2 \, .
\end{equation}
Furthermore, standard polynomial interpolation results yield
\begin{equation}\label{eq:pol-temp-bis}
\| (I - \b \Pi^{0,E}_{k-1})\b\sigma(\cdot, \b \epsilon(\b u)) \|_{\b L^2(E)} 
\le C h_E^{k_2} |\b\sigma(\cdot, \b \epsilon(\b u))|_{\mathbb{W}^{k_2,2}(E)} \, .
\end{equation}
We now take the steps from \eqref{eq:T1A} but modify the bound for $\|\b\epsilon({\b \xi}_h^\perp) \|_{\b L^2(E)}$ using \eqref{bound:alternative} instead of \eqref{newkidinblock}, and apply \eqref{eq:pol-temp-bis} instead of \eqref{eq:pol-temp}. 
We obtain, also using a classical Young inequality,
\begin{equation}
\label{eq:T1A-bis}
\begin{aligned}
    T_1^A & \le {C\delta^{(2-r)/2}} \sum_{E\in \Omega_h} h_E^{k_2} |\b\sigma(\cdot, \b \epsilon(\b u))|_{\mathbb{W}^{k_2,2}(E)} 
    S^E({\b \xi}_h^\perp,{\b \xi}_h^\perp)^{1/2}
    \\
   &
   \le {\delta^{2-r}} \frac{C}{2\theta^{r}}  h^{2 k_2} |\b\sigma(\cdot, \b \epsilon(\b u))|_{\mathbb{W}^{k_2,2}(\Omega_h)}^{2} + 
   \frac{\theta^r}{2r}S({\b \xi}_h^\perp,{\b \xi}_h^\perp) \, .
\end{aligned}
\end{equation}
The loading term $T_3$ can be handled by analogous modifications.
The rest of the proof then follows as in Theorem~\ref{theo:main}.
\end{proof}

\subsection{\emph{A priori} error estimate: pressure}

\begin{theorem}\label{theo:main:2} 
Let $(\b u, p)$ be the solution of problem \eqref{eq:stokes.weak} and $(\b u_h, p_h)$ the solution of problem \eqref{eq:stokes.vem}. 
Assume that $\b u \in \b W^{k_1+1,r}(\Omega_h)$, 
$\b\sigma(\cdot, \b \epsilon(\b u)) \in \mathbb{W}^{k_2,r'}(\Omega_h)$, 
$\b f \in \b W^{k_3+1,r'}(\Omega_h)$, and 
$p \in  W^{k_4,r'}(\Omega_h)$, for some $k_1$, $k_2$, $k_3$, $k_4 \le k$. Let the mesh regularity  in Assumption~\ref{ass:mesh} hold. Then, we have
\begin{equation}
\begin{aligned}
    \|p - p_h\|_{L^{r'}(\Omega_h)} &\lesssim 
     h^{k_2}R_2+ h^{k_3+2}R_3+(\delta+\tri  \b u-\b u_h\tri_{\delta,r}+R_4)^{r-2}(\tri  \b u-\b u_h\tri_{\delta,r}+h^{k_1} R_1)\\
&+ (\delta^r+ { h^{2k_1} (\delta+h^{k_1}R_1 + R_4)^{r-2}R_1^2} +
    h^{k_2{r'}} R_2^{r'} +
      h^{{ (k_3+2)}{r'}} R_3^{r'})^\frac{r-2}{2r}\\ 
      &\times ( { h^{2k_1} (\delta+h^{k_1}R_1 +R_4)^{r-2}R_1^2} +
    h^{k_2{r'}} R_2^{r'} +
      h^{{ (k_3+2)}{r'}} R_3^{r'})^\frac{1}{2}
  +
    h^{k_4} R_5  \,,
    \end{aligned}
\end{equation}
where the regularity term $R_1$, $R_2$, $R_3$, and $R_4$ are defined in \eqref{eq:reg:terms} and
$R_5 = |p |_{W^{k_4,r'}(\Omega_h)}$.
\end{theorem}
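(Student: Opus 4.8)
The plan is to combine the discrete inf--sup condition of Lemma~\ref{inf-sup:vem} with a consistency estimate that reuses, almost verbatim, the term-by-term analysis already carried out for the velocity in Theorem~\ref{theo:main}. First I would split the pressure error with a triangle inequality,
\[
\|p-p_h\|_{L^{r'}(\Omega_h)}\le \|p-\Pi^0_{k-1}p\|_{L^{r'}(\Omega_h)}+\|\Pi^0_{k-1}p-p_h\|_{L^{r'}(\Omega_h)},
\]
noting that $\Pi^0_{k-1}p\in\QDG$ (it has the correct zero average when $\Gamma_D=\partial\Omega$) and that the first term is $\lesssim h^{k_4}R_5$ by Lemma~\ref{lem:approx-interp}. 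Since $\Pi^0_{k-1}p-p_h\in\QDG$, Lemma~\ref{inf-sup:vem} furnishes $\b w_h\in\VDG$ with $\overline\beta(r)\,\|\Pi^0_{k-1}p-p_h\|_{L^{r'}(\Omega_h)}\,\tri\b w_h\tri_r\le b(\b w_h,\Pi^0_{k-1}p-p_h)$, so the whole problem reduces to bounding $b(\b w_h,\Pi^0_{k-1}p-p_h)$ by the right-hand side of the claim times $\tri\b w_h\tri_r$.

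For the consistency identity I would first observe that $b(\b w_h,\Pi^0_{k-1}p)=b(\b w_h,p)$, because $\divs\b w_h\in\Pk_{k-1}(E)$ elementwise is $L^2$-orthogonal to $p-\Pi^0_{k-1}p$. Testing the strong momentum balance against $\b w_h\in\VDG\subset\b W^{1,2}_{0,\Gamma_D}(\Omega)$ and integrating by parts gives $\int_\Omega\b\sigma(\cdot,\b\epsilon(\b u)):\b\epsilon(\b w_h)+b(\b w_h,p)=\int_\Omega\b f\cdot\b w_h+\int_{\Gamma_N}\b g\cdot\b w_h$; this step is legitimate even on non-convex elements (where $\b w_h\notin\b W^{1,r}$) because $\b\sigma(\cdot,\b\epsilon(\b u))\in\mathbb{W}^{1,r'}(\Omega_h)\hookrightarrow\mathbb{L}^2(\Omega)$ in dimension two and has single-valued interior normal traces thanks to $\DIV\b\sigma(\cdot,\b\epsilon(\b u))\in\b L^{r'}(\Omega)$, so every pairing is an honest $L^2$--$L^2$ (or $L^{r'}$--$L^r$) duality. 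Subtracting the discrete equation \eqref{eq:stokes.vem} tested against $\b w_h$, the $\int_{\Gamma_N}\b g\cdot\b w_h$ contributions cancel and, after inserting $\pm\, a_h(\b u_I,\b w_h)$, I obtain
\[
b(\b w_h,\Pi^0_{k-1}p-p_h)=\big(a_h(\b u_h,\b w_h)-a_h(\b u_I,\b w_h)\big)-(T_1+T_2+T_3),
\]
where $T_1,T_2,T_3$ are exactly the quantities of \eqref{eq:conv0} with the kernel test function $\b\xi_h$ replaced by $\b w_h$ (and $\b w_h^\perp:=(I-\Pi^0_k)\b w_h$).

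The three $T_i$ are then controlled by literally the computations \eqref{eq:T10}--\eqref{eq:T3}, the only difference being that the factors carrying $\b w_h$ are now \emph{kept} as $\tri\b w_h\tri_r$ instead of being absorbed by monotonicity: the polynomial part $T_1^A$ together with $T_3$ produce $(h^{k_2}R_2+h^{k_3+2}R_3)\tri\b w_h\tri_r$ (using \eqref{newkidinblock} and the interpolation bounds of Lemma~\ref{lem:nonconv:approx}), while the nonlinear part $T_1^B$ and the whole difference $a_h(\b u_h,\b w_h)-a_h(\b u_I,\b w_h)$ are bounded through the H\"older continuity \eqref{eq:hypo.continuity} of $\b\sigma$ and \eqref{eq:ah_ineq_1}, giving the factor $(\delta+\tri\b u-\b u_h\tri_{\delta,r}+R_4)^{r-2}(\tri\b u-\b u_h\tri_{\delta,r}+h^{k_1}R_1)\tri\b w_h\tri_r$ after using $\tri\b\xi_h\tri_r\lesssim\tri\b u-\b u_h\tri_{\delta,r}+h^{k_1}R_1$ (via Lemma~\ref{lem:interp:tribar} and \eqref{eq:comparison_delta_r_norm}) and $\tri\b u_h\tri_r,\tri\b u_I\tri_r\lesssim\tri\b u-\b u_h\tri_{\delta,r}+R_4$ up to higher-order terms.

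The genuinely new term, and the main obstacle, is $T_2=-S(\b u_I^\perp,\b w_h^\perp)$: in the velocity proof it was reabsorbed into the monotone left-hand side, but here there is nothing to absorb it into, so it must be estimated on its own. I would apply the stabilization inequality \eqref{eq:S_ineq_1} with first argument $\b u_I^\perp$ and second argument $\b w_h^\perp$, which yields $|T_2|\lesssim(\delta^r+S(\b u_I^\perp,\b u_I^\perp))^{\frac{r-2}{2r}}S(\b u_I^\perp,\b u_I^\perp)^{\frac12}\tri\b w_h\tri_r$; since $S(\b u_I^\perp,\b u_I^\perp)\lesssim h^{2k_1}(\delta+h^{k_1}R_1)^{r-2}R_1^2$ by the interpolation estimate of Lemma~\ref{lem:nonconv:approx} (reasoning exactly as in \eqref{eq:koala}), and this is dominated by the quantity $E$ on the right of \eqref{eq:pressF} with $E\simeq\tri\b\xi_h\tri_{\delta,r}^r$, this is precisely the term $(\delta^r+E)^{\frac{r-2}{2r}}E^{\frac12}$ appearing in the statement. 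Collecting all contributions, dividing by $\tri\b w_h\tri_r$, invoking \eqref{eq:pressF} to replace the $E$-factors by their explicit $h$-powers, and adding back $h^{k_4}R_5$ gives the asserted bound. The delicate points to watch are the legitimacy of the integration by parts on non-convex elements (handled by the $\mathbb{L}^2$ embedding above) and making sure the $\b w_h$-factor in every stabilization estimate is the ``pure'' discrete quantity $\big(\sum_{E}h_E^{2-r}|\DOF(\b w_h^\perp)|^r\big)^{1/r}\le\tri\b w_h\tri_r$, as in \eqref{eq:pressT4:1}, rather than $S(\b w_h^\perp,\b w_h^\perp)^{1/r}$.
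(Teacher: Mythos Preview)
Your argument is correct and in fact takes a more direct route than the paper. The paper does \emph{not} insert $\pm\, a_h(\b u_I,\b w_h)$; instead it writes
\[
b(\b w_h,\rho_h)=\underbrace{\textstyle\int_\Omega\big((I-\b\Pi^0_{k-1})\b\sigma(\cdot,\b\epsilon(\b u))\big){:}(I-\b\Pi^0_{k-1})\b\epsilon(\b w_h)}_{T_1}
+\underbrace{(\b f_h-\b f,\b w_h)}_{T_2}+T_3-\underbrace{S(\b u_h^\perp,\b w_h^\perp)}_{-T_4},
\]
with $T_3$ comparing $\b\sigma(\cdot,\b\epsilon(\b u))$ to $\b\sigma(\cdot,\b\Pi^0_{k-1}\b\epsilon(\b u_h))$ directly. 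The awkward term is therefore $T_4=-S(\b u_h^\perp,\b w_h^\perp)$, with the \emph{discrete solution} in the first slot. To control $S(\b u_h^\perp,\b u_h^\perp)$ the paper proves a separate technical result (Lemma~\ref{Lemma:B}) relating it to $S(\b u_h^\perp,\b\xi_h^\perp)-S(\b u_I^\perp,\b\xi_h^\perp)$, hence to $a_h(\b u_h,\b\xi_h)-a_h(\b u_I,\b\xi_h)$, and then feeds in the full velocity consistency machinery once more. By contrast, your insertion of $\pm\, a_h(\b u_I,\b w_h)$ shifts the stabilization term to $-S(\b u_I^\perp,\b w_h^\perp)$, which is a pure interpolation quantity bounded immediately via \eqref{eq:S_ineq_1} and Lemma~\ref{lem:nonconv:approx}; the leftover $a_h(\b u_h,\b w_h)-a_h(\b u_I,\b w_h)$ is then dispatched with the global H\"older continuity \eqref{eq:ah_ineq_1}. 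This avoids Lemma~\ref{Lemma:B} entirely and even yields a marginally sharper constant (your $S(\b u_I^\perp,\b u_I^\perp)$ contributes only the $h^{2k_1}$ piece of the bracket, whereas the paper's detour through \eqref{eq:pressF} drags in the extra $h^{k_2r'}R_2^{r'}+h^{(k_3+2)r'}R_3^{r'}$). The paper's route has the minor compensating advantage that comparing $\b u$ with $\b u_h$ directly makes the prefactor $(\delta+\tri\b u-\b u_h\tri_{\delta,r}+R_4)^{r-2}$ fall out without the short triangle-inequality step you need to show $\tri\b u_h\tri_r,\tri\b u_I\tri_r\lesssim \tri\b u-\b u_h\tri_{\delta,r}+R_4+h^{k_1}R_1$.
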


\begin{proof}
Let $p_I = \Pi^0_{k-1} p \in \Pk_{k-1}(\Omega_h)$  and let $\rho_h=p_h-p_I$. 
For the sake of brevity also in this proof we employ the notation ${\b v}^\perp=(I - \Pi^0_k) \b v$ for any $\b v \in \b L^2(\Omega)$.
Employing  \eqref{eq:stokes.weak} and  \eqref{eq:stokes.vem}, recalling the definition of the form $b(\cdot, \cdot)$ in \eqref{eq:a.b} and combining item $(ii)$ in \eqref{eq:v-loc} with the definition of $L^2$-projection, for all $\b w_h\in \b U_h$ we get
   \begin{equation}
   \label{eq:press0}
   \begin{aligned}
   b(\b w_h,\rho_h) &= - a_h(\b u_h, \b w_h) + (\b f_h,\b w_h)
   +a(\b u, \b w_h)  - (\b f,\b w_h) + b(\b w_h,p-p_I) & &
   \\
   & = -a_h(\b u_h, \b w_h)+  a(\b u, \b w_h) + (\b f_h -\b f,\b w_h)
   \\
   & = \int_\Omega \left((I - \b \Pi^0_{k-1})\b\sigma(\cdot, \b \epsilon(\b u)) \right): (I - \b \Pi^0_{k-1})\b\epsilon( \b \w_h) +   (\b f_h -\b f,\b w_h)  
       \\
      & \qquad + \int_\Omega \left(\b\sigma(\cdot, \b \Pi^0_{k-1}\b \epsilon(\b u)) -
        \b\sigma(\cdot, \b \epsilon(\b u_h))
        \right): \b \Pi^0_{k-1} \b\epsilon( \b \w_h) -
       S({\b u}_h^\perp,{\b w}_h^\perp)  
    \\
    & =: T_1 + T_2 + T_3 + T_4 \,.
   \end{aligned}
   \end{equation}
In the first identity above, we have taken $\b v = \b w_h$ as a test function in the continuous weak problem \eqref{eq:stokes.weak} even if, as observed in Remark~\ref{rem:nonincl}, $\b w_h\in \b W^{1,2}(\Omega)$ but $\b w_h\notin \b W^{1,r}(\Omega)$. This is possible due to the additional regularity assumptions on the forcing term $\b f \in \b W^{1,r'}(\Omega_h)\subset \b L ^2(\Omega)$ and exact stress field $\b\sigma(\cdot, \b \epsilon(\b u)) \in \mathbb W^{1,r'}(\Omega_h)\subset \mathbb L ^2(\Omega)$.
We estimate separately each term in \eqref{eq:press0}.
Employing the H\"older inequality with exponents $(r',r)$ and polynomial approximation properties  we infer
   \begin{equation}
   \label{eq:pressT1}
   \begin{aligned}
       T_1  & \le \sum_{E\in \Omega_h} { \| (I - \b \Pi^{0,E}_{k-1})\b\sigma(\cdot, \b \epsilon(\b u)) \|_{\b L^2(E)}}  \|(I - \b \Pi^{0,E}_{k-1})\b\epsilon(\b w_h) \|_{\b L^2(E)} 
    & \quad & \text{(Cauchy--Schwarz ineq.)}
        \\
   & \lesssim \sum_{E\in \Omega_h} h_E^{k_2 -1} |\b\sigma(\cdot, \b \epsilon(\b u))|_{\mathbb{W}^{k_2,1}(E)} \|\b\epsilon({\b w}_h^\perp) \|_{\b L^2(E)}
    & \quad & { \text{(Cont of $\Pi^{0,E}_{k-1}$ \& \eqref{eq:pol-temp})}}
    \\
       & \lesssim \sum_{E\in \Omega_h} h_E^{k_2-1} |E|^\frac{1}{r}|\b\sigma(\cdot, \b \epsilon(\b u))|_{\mathbb{W}^{k_2,r'}(E)}  |E|^{\frac{1}{2}-\frac{1}{r}}S^E({\b w}_h^\perp,{\b w}_h^\perp)^\frac{1}{r} 
    & \quad & \text{($(r',r)$-H\"older ineq. \& \eqref{newkidinblock})}
    \\
   &
   \lesssim h^{k_2} |\b\sigma(\cdot, \b \epsilon(\b u))|_{\mathbb{W}^{k_2,r'}(\Omega_h)} S({\b w}_h^\perp,{\b w}_h^\perp)^\frac{1}{r}
    & \quad & \text{($|E|^\frac{1}{2} \simeq h_E$ \& $(r',r)$-Young ineq.)}\\
           &\lesssim  h^{k_2} R_2 \,   (\delta+\tri \b w_h \tri_{r})^\frac{r-2}{r}\tri \b w_h \tri_{r}^\frac{2}{r} \,. & \quad &\text{(apply \eqref{eq:comparison_delta_r_norm})}
          \end{aligned}
   \end{equation}   
 On the other hand,
   \begin{equation}
   \label{eq:pressT2}
   \begin{aligned}
   T_2 &= \sum_{E \in \Omega_h}\int_E (\Pi^{0,E}_k \b f - \b f) \cdot {\b w}_h^\perp 
    & \quad & \text{(def. \eqref{eq:forma fh} \& def. \eqref{eq:P0_k^E})}
    \\
   & \le { \sum_{E\in \Omega_h} \|(\Pi^{0,E}_k \b f - \b f) \|_{\b L^2(E)}} \|{\b w}_h^\perp\|_{\b L^2(E)} 
   \le C \sum_{E\in \Omega_h} h_E^{k_3} |\b f|_{\mathbb{W}^{{ k_3+1},1}(E)} h_E|\DOF({\b w}_h^\perp)|
  & & \text{(same as in \eqref{eq:T3})}
    \\
       & \lesssim \sum_{E\in \Omega_h} h_E^{k_3} |E|^\frac{1}{r}|\b f|_{\mathbb{W}^{{ k_3+1},r'}(E)}  h_E^{1+\frac{r-2}{r}}S^E({\b w}_h^\perp,{\b w}_h^\perp)^\frac{1}{r} 
    & \quad & \text{($(r',r)$-H\"older ineq. \& Lemma \ref{Lemma:S_holder_monotonicity})}
    \\
   &
   \lesssim h^{k_3+2} |\b f|_{\mathbb{W}^{k_3+1,r'}(\Omega_h)} 
  S({\b w}_h^\perp,{\b w}_h^\perp)^\frac{1}{r}
    & \quad & \text{($|E|^\frac{1}{2} \simeq h_E$ \& $(r',r)$-Young ineq.)}
     \\
   &
   \lesssim h^{k_3+2}R_3 (\delta+\tri \b w_h \tri_{r})^\frac{r-2}{r}\tri \b w_h \tri_{r}^\frac{2}{r}\,.& \quad &\text{(apply \eqref{eq:comparison_delta_r_norm})}
   \end{aligned}
   \end{equation}

   Employing \eqref{eq:hypo.continuity},  the 3-terms $(\frac{r}{r-2},r,r)$-H\"older inequality, the triangle inequality together with the $L^r$-stability of $\Pi^0_{k-1}$ we have
   \begin{equation}
   \label{eq:pressT3}
   \begin{aligned}
       T_3 &\lesssim
       \int_\Omega (\delta+|\b \Pi^0_{k-1} \b \epsilon(\b u_h)|+|\b \epsilon(\b u)|)^{r-2} |\b \Pi^0_{k-1} \b \epsilon(\b u_h)-\b \epsilon(\b u))| \, |\b \Pi^0_{k-1} \b\epsilon( \b \w_h)| & \quad &\\
       &\lesssim
       (\delta^r+\|\b \Pi^0_{k-1} \b \epsilon(\b u_h)\|_{\mathbb L^{r}(\Omega)}^r+\|\b \epsilon(\b u)\|_{\mathbb L^{r}(\Omega)}^r)^\frac{r-2}{r}\|\b \Pi^0_{k-1} \b \epsilon(\b u_h)-\b \epsilon(\b u)\|_{\mathbb L^{r}(\Omega)} \,\|\b \Pi^0_{k-1} \b\epsilon( \b \w_h)\|_{\mathbb L^{r}(\Omega)} & \quad & 
      \\
      &\lesssim
       (\delta^r+\tri  \b u-\b u_h\tri_{\delta,r}^r +\|\b \epsilon(\b u)\|_{\mathbb L^{r}(\Omega)}^r)^\frac{r-2}{r}(\tri  \b u-\b u_h\tri_{\delta,r}+\|(I-\b \Pi^0_{k-1})\b \epsilon(\b u)\|_{\mathbb L^{r}(\Omega)}) \, \tri\b w_h\tri_{r}& \quad &\\
             &\lesssim
       (\delta^r+\tri  \b u-\b u_h\tri_{\delta,r}^r+R_4^r)^\frac{r-2}{r}(\tri  \b u-\b u_h\tri_{\delta,r}+h^{k_1} R_1) \, (\delta+\tri \b w_h \tri_{r})^\frac{r-2}{r}\tri \b w_h \tri_{r}^\frac{2}{r} \\
       &\eqcolon  R_{1,h}(\delta+\tri \b w_h \tri_{r})^\frac{r-2}{r}\tri \b w_h \tri_{r}^\frac{2}{r} \,.& \quad &\\
   \end{aligned}    
   \end{equation}
Using \eqref{eq:S_ineq_1} 
and Lemma \ref{Lemma:B},
we obtain 
\begin{equation}
\begin{aligned}
    T_4 
    &\lesssim \big(\delta^r+S({\b u}_h^\perp, {\b u}_h^\perp)\big)^\frac{r-2}{2r}S({\b u}_h^\perp, {\b u}_h^\perp)^\frac{1}{2}S({\b w}_h^\perp, {\b w}_h^\perp)^\frac{1}{r}
    \\
    &\lesssim  \big(\delta^r+S({\b u}_h^\perp, {\b u}_h^\perp)\big)^\frac{r-2}{2r}S({\b u}_h^\perp, {\b u}_h^\perp)^\frac{1}{2}
    \tri \b \w_h\tri_{\delta,r}
    \\
    & \lesssim
    \big(\delta^r+S({\b u}_h^\perp, {\b \xi}_h^\perp) - 
    S({\b u}_I^\perp, {\b \xi}_h^\perp) + 
      R_{h}\big)^\frac{r-2}{2r}\left( S({\b u}_h^\perp, {\b \xi}_h^\perp) - 
    S({\b u}_I^\perp, {\b \xi}_h^\perp) + 
      R_{h} \right)^\frac{1}{2}
     \tri \b \w_h\tri_{\delta,r} \\
    & \lesssim
    \big(\delta^r+{a_h({\b u}_h, {\b \xi}_h)} - 
    a_h({\b u}_I, {\b \xi}_h) + 
      R_{h}\big)^\frac{r-2}{2r}\left( {a_h({\b u}_h, {\b \xi}_h)} - 
    a_h({\b u}_I, {\b \xi}_h) + 
      R_{h} \right)^\frac{1}{2}
     \tri \b \w_h\tri_{\delta,r} \, ,
\end{aligned}
\end{equation}
which, using the bound for 
$(a_h({\b u}_h, {\b \xi}_h) -a_h({\b u}_I, {\b \xi}_h))$ encased in the proof of Theorem~\ref{theo:main}, check equation \eqref{eq:conv0}, becomes
\begin{equation}     
\label{eq:pressT4}
\begin{aligned}
      T_4 & \lesssim
     (\delta^r+R_{h}' +
    h^{{k_2}{r'}} R_2^{r'}+  h^{{ (k_3+2)}{r'}} R_3^{r'})^\frac{r-2}{2r}(R_{h} +
    h^{{k_2}{r'}} R_2^{r'}+  h^{{ (k_3+2)}{r'}} R_3^{r'})^\frac{1}{2}
     {(\delta+\tri \b w_h \tri_{r})^\frac{r-2}{r}\tri \b w_h \tri_{r}^\frac{2}{r}} \\
     &\eqcolon R_{2,h} (\delta+\tri \b w_h \tri_{r})^\frac{r-2}{r}\tri \b w_h \tri_{r}^\frac{2}{r}
     \,,
\end{aligned}    
\end{equation}
where $R_{h}' = h^{2k_1} (\delta+h^{k_1}R_1 + R_4)^{r-2}R_1^2$.
Employing the discrete inf-sup of Lemma \ref{inf-sup:vem}, and collecting \eqref{eq:pressT1}, \eqref{eq:pressT2}, \eqref{eq:pressT3}, and \eqref{eq:pressT4} in \eqref{eq:press0}, we obtain
\begin{equation}\label{eq:proof_press_b}
   \begin{aligned}
   \|\rho_h\|_{L^{r'}(\Omega_h)}
    &   \le \frac{1}{\bar{\beta}(r)}
       \sup_{\b w_h \in \VDG, \tri {\b w_h} \tri_{r} = 1}
       b(\b w_h,\rho_h)
       \\
    &  \lesssim 
       \sup_{\b w_h \in \VDG, \tri {\b w_h} \tri_{r} = 1}
       \left[\big( h^{k_2}R_2+ h^{k_3+2}R_3+R_{1,h}+R_{2,h}
\big) 
      (\delta+\tri \b w_h \tri_{r})^\frac{r-2}{r}\tri \b w_h \tri_{r}^\frac{2}{r}\right]
       \\
   &     = 
       \big( h^{k_2}R_2+ h^{k_3+2}R_3+R_{1,h}+R_{2,h})   (\delta+1)^\frac{r-2}{r}   \,.
   \end{aligned}    
   \end{equation}
Now the thesis follows from triangular inequality and standard polynomial approximation properties
\begin{equation}
    \label{eq:bound_p_ph}
    \Vert p - p_h\Vert_{L^{r'}(\Omega_h)} \leq \Vert p - p_I\Vert_{L^{r'}(\Omega_h)}  + \Vert \rho_h\Vert_{L^{r'}(\Omega_h)} \leq  h^{k_4} R_5 + \Vert \rho_h\Vert_{L^{r'}(\Omega_h)} \,.
\end{equation}
\end{proof}

\begin{corollary}\label{cor:main:2} 
Under the same assumptions of Theorem~\ref{theo:main}, if in addition $\delta>0$, $\b\sigma(\cdot, \b \epsilon(\b u)) \in \mathbb{W}^{k_2,2}(\Omega_h)$ and $f \in \mathbb{W}^{k_3+1,2}(\Omega_h)$, it holds
\begin{equation}\label{eq:est_press_delta_nonzero}
\begin{aligned}
    \|p - p_h\|_{L^{r'}(\Omega_h)} &\lesssim 
     h^{k_2} \delta^{\frac{2-r}{2}}\widehat{R}_2 + h^{k_3+2} \delta^{\frac{2-r}{2}}\widehat{R}_3+(\delta+\tri  \b u-\b u_h\tri_{\delta,r}+R_4)^{r-2}(\tri  \b u-\b u_h\tri_{\delta,r}+h^{k_1} R_1)\\
&+ (\delta^r+ h^{2k_1} (\delta+h^{k_1}R_1 + R_4)^{r-2}R_1^2 +
    h^{2k_2} \delta^{\frac{2-r}{2}}\widehat{R}_2^{2} + h^{2(k_3+2)} \delta^{\frac{2-r}{2}}\widehat{R}_3^2)^\frac{r-2}{2r}\\ 
      &\times ( h^{2k_1} (\delta+h^{k_1}R_1 + R_4)^{r-2}R_1^2 +
    h^{2k_2} \delta^{\frac{2-r}{2}}\widehat{R}_2^{2} + h^{2(k_3+2)} \delta^{\frac{2-r}{2}}\widehat{R}_3^2)^\frac{1}{2}
 +
    h^{k_4} R_5  \,,
    \end{aligned}
\end{equation}
where the regularity terms $R_1$ and $R_4$ are from \eqref{eq:reg:terms}, $\widehat{R}_2$ and $\widehat{R}_3$ from \eqref{eq:R_2-3-hat}, and
$R_5 = |p |_{W^{k_4,r'}(\Omega_h)}$.
\end{corollary}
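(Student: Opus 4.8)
The plan is to repeat the proof of Theorem~\ref{theo:main:2} verbatim in structure and to upgrade only the ingredients where, thanks to $\delta>0$, the exponent pair $(r',r)$ can be traded for $(2,2)$---exactly the substitution already carried out for the velocity in Corollary~\ref{cor:main}. I would keep the decomposition $b(\b w_h,\rho_h)=T_1+T_2+T_3+T_4$ of \eqref{eq:press0}, the test space $\b w_h\in\VDG$ with $\tri\b w_h\tri_r=1$, and re-estimate $T_1$, $T_2$ and $T_4$, leaving $T_3$ untouched.

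For $T_1$ and $T_2$ I would proceed as in \eqref{eq:pressT1}--\eqref{eq:pressT2} but replace the control of $\|\b\epsilon(\b w_h^\perp)\|_{\b L^2(E)}$: instead of \eqref{newkidinblock} I would invoke \eqref{bound:alternative}, which for $\delta>0$ gives $\|\b\epsilon(\b w_h^\perp)\|_{\b L^2(E)}\lesssim\delta^{(2-r)/2}S^E(\b w_h^\perp,\b w_h^\perp)^{1/2}$, together with the $L^2$ polynomial-approximation bound \eqref{eq:pol-temp-bis} in place of \eqref{eq:pol-temp}. A Cauchy--Schwarz step and a $(2,2)$-Young inequality (as in \eqref{eq:T1A-bis}) then turn the prefactors $h^{k_2}R_2$ and $h^{k_3+2}R_3$ into $h^{k_2}\delta^{(2-r)/2}\widehat R_2$ and $h^{k_3+2}\delta^{(2-r)/2}\widehat R_3$, while the $\b w_h$-factor $(\delta+\tri\b w_h\tri_r)^{(r-2)/r}\tri\b w_h\tri_r^{2/r}$ is unchanged.

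Term $T_3$ is already written in \eqref{eq:pressT3} purely through $\tri\b u-\b u_h\tri_{\delta,r}$ and $h^{k_1}R_1$, so I would leave it as is; the improved velocity rate enters only at the very end, by inserting \eqref{eq:est_vel_delta_nonzero}. The delicate term is $T_4$. Here I would reuse the chain of \eqref{eq:pressT4}: first \eqref{eq:S_ineq_1} and Lemma~\ref{Lemma:B} to pass from $S(\b u_h^\perp,\b w_h^\perp)$ to $S(\b u_h^\perp,\b\xi_h^\perp)-S(\b u_I^\perp,\b\xi_h^\perp)$, then monotonicity to bound this difference by $a_h(\b u_h,\b\xi_h)-a_h(\b u_I,\b\xi_h)$. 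At this point, rather than the energy bound \eqref{eq:pressF} from Theorem~\ref{theo:main}, I would substitute the sharper estimate for $a_h(\b u_h,\b\xi_h)-a_h(\b u_I,\b\xi_h)$ produced inside the proof of Corollary~\ref{cor:main}, so that within the two factors raised to the powers $\tfrac{r-2}{2r}$ and $\tfrac12$ the quantities $h^{k_2 r'}R_2^{r'}$ and $h^{(k_3+2)r'}R_3^{r'}$ become $h^{2k_2}\delta^{(2-r)/2}\widehat R_2^2$ and $h^{2(k_3+2)}\delta^{(2-r)/2}\widehat R_3^2$. Finally, dividing by $\overline{\beta}(r)$ via Lemma~\ref{inf-sup:vem} and adding $\|p-p_I\|_{L^{r'}(\Omega_h)}\lesssim h^{k_4}R_5$ as in \eqref{eq:bound_p_ph} yields \eqref{eq:est_press_delta_nonzero}.

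The main obstacle is the bookkeeping in $T_4$: one must check that the two nested factors carry consistent $\delta$-powers after inserting the Corollary~\ref{cor:main} energy bound, and that the $\delta$-dependent constants generated by \eqref{eq:comparison_delta_r_norm} once $\tri\b w_h\tri_r=1$ is imposed (of the form $(\delta+1)^{(r-2)/r}$) are harmless, being absorbed into the hidden constant, which is already permitted to depend on $\delta$. Every remaining computation is a verbatim repetition of the Theorem~\ref{theo:main:2} argument.
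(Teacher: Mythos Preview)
Your proposal is correct and matches the paper's own argument, which is stated in a single sentence: ``The proof follows the same steps as that of Theorem~\ref{theo:main:2} but applying Corollary~\ref{cor:main} instead of Theorem~\ref{theo:main}.'' You have simply unpacked what this means term by term---modifying $T_1$ and $T_2$ via \eqref{bound:alternative} and \eqref{eq:pol-temp-bis} (the analogues of the $T_1^A$, $T_3$ modifications in Corollary~\ref{cor:main}), leaving $T_3$ unchanged, and injecting the Corollary~\ref{cor:main} energy bound into $T_4$. One minor remark: the stated bound \eqref{eq:est_press_delta_nonzero} keeps $\tri\b u-\b u_h\tri_{\delta,r}$ explicit, so there is no need to substitute \eqref{eq:est_vel_delta_nonzero} at the end; that step is optional for a fully explicit order.
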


\begin{proof}
    The proof follows the same steps as that of Theorem~\ref{theo:main:2} but applying Corollary~\ref{cor:main} instead of Theorem~\ref{theo:main}.
\end{proof}

\begin{lemma}\label{Lemma:B}
Under the same assumptions of Theorem~\ref{theo:main}, let $\b u_I \in \VDG$ be the interpolant of $\b u$ (cf. Lemma \ref{lem:approx-interp}) and $\b \xi_h =\b u_h - \b u_I$, then the following holds
\begin{equation}
    S({\b u}_h^\perp, {\b u}_h^\perp)\lesssim
    S({\b u}_h^\perp, {\b \xi}_h^\perp) - 
    S({\b u}_I^\perp, {\b \xi}_h^\perp) + 
       h^{2k_1} (\delta+h^{k_1}R_1)^{r-2}R_1^2 \,,
\end{equation}
where the regularity term $R_1$ is from \eqref{eq:reg:terms}.
\end{lemma}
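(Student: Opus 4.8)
The plan is to decompose $S({\b u}_h^\perp, {\b u}_h^\perp)$ by a quasi-triangle inequality, absorb one resulting piece through monotonicity, and control the other by interpolation. The starting observation is that, since $\b u_h = \b \xi_h + \b u_I$ and $(I-\Pi^0_k)$ is linear, one has ${\b u}_h^\perp = {\b \xi}_h^\perp + {\b u}_I^\perp$, and hence $\DOF({\b u}_h^\perp) = \DOF({\b \xi}_h^\perp) + \DOF({\b u}_I^\perp)$ by linearity of the DoF map.

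First I would establish the quasi-triangle inequality
$$
S({\b u}_h^\perp, {\b u}_h^\perp) \lesssim S({\b \xi}_h^\perp, {\b \xi}_h^\perp) + S({\b u}_I^\perp, {\b u}_I^\perp).
$$
By the equivalence \eqref{eq:equi.stab}, $S^E(\b v_h,\b v_h) \simeq (\delta + h_E^{-1}|\DOF(\b v_h)|)^{r-2}|\DOF(\b v_h)|^2$, which is nondecreasing in $|\DOF(\b v_h)|$ for $r\ge 2$; applying it to $\b v_h = {\b u}_h^\perp$ and using $|\DOF({\b u}_h^\perp)| \le |\DOF({\b \xi}_h^\perp)| + |\DOF({\b u}_I^\perp)|$ reduces the claim to the scalar inequality $\phi(a+b)\lesssim \phi(a)+\phi(b)$ for $\phi(t)=(\delta+h_E^{-1}t)^{r-2}t^2$. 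This follows either elementarily, by bounding the two cross terms $(\delta+h_E^{-1}a)^{r-2}b^2$ and $(\delta+h_E^{-1}b)^{r-2}a^2$ by $\max\{\phi(a),\phi(b)\}$, or more structurally from the $\Delta_2$-property $\varphi_\delta(2t)\le 2^r\varphi_\delta(t)$ of the shifted function obtained in the proof of Lemma~\ref{lem:young_shifted}, after noting $\phi(t)\simeq h_E^2\varphi_\delta(h_E^{-1}t)$. Summing over $E\in\Omega_h$ yields the global inequality.

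Second, the strong monotonicity \eqref{eq:monotonicity:local_stab}, applied with $\b u_h \mapsto {\b u}_h^\perp$, $\b w_h\mapsto{\b u}_I^\perp$, $\b e_h\mapsto{\b \xi}_h^\perp$ and summed over elements, gives $S({\b \xi}_h^\perp,{\b \xi}_h^\perp)\lesssim S({\b u}_h^\perp,{\b \xi}_h^\perp)-S({\b u}_I^\perp,{\b \xi}_h^\perp)$, which handles the first term in the quasi-triangle bound. For the second term, I would estimate $S({\b u}_I^\perp,{\b u}_I^\perp)$ exactly as the term $T_2$ in the proof of Lemma~\ref{lem:interp:tribar} (see \eqref{eq:koala}): a discrete $(\frac{r}{2},\frac{r}{r-2})$-H\"older inequality gives $S({\b u}_I^\perp,{\b u}_I^\perp)\lesssim (\delta^r+\sum_{E}h_E^{2-r}|\DOF({\b u}_I^\perp)|^r)^{\frac{r-2}{r}}(\sum_{E}h_E^{2-r}|\DOF({\b u}_I^\perp)|^r)^{\frac{2}{r}}$, and inserting the interpolation estimate \eqref{eq:lemmanonconv:2} with $s=k_1+1$, namely $|\DOF({\b u}_I^\perp)|\lesssim h_E^{k_1+1-2/r}|\b u|_{\b W^{k_1+1,r}(E)}$, yields $\sum_{E}h_E^{2-r}|\DOF({\b u}_I^\perp)|^r\lesssim h^{k_1 r}R_1^r$ and therefore $S({\b u}_I^\perp,{\b u}_I^\perp)\lesssim h^{2k_1}(\delta+h^{k_1}R_1)^{r-2}R_1^2$. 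Combining the three steps gives the assertion.

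The hard part is the quasi-triangle inequality of the first step: unlike the genuine norm case, the stabilization seminorm couples a quadratic factor with an $(r-2)$-power of a $\delta$-shifted argument, so additivity up to a constant is not automatic and must be extracted either from the $\Delta_2$/N-function structure of $\varphi_\delta$ or from the careful cross-term estimate above. The remaining two steps are essentially bookkeeping built on results already established in the excerpt.
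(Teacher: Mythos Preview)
Your argument is correct and takes a genuinely different, more elementary route than the paper. The paper does not decompose via a quasi-triangle inequality; instead it writes $S({\b u}_h^\perp,{\b u}_h^\perp)\simeq\sum_E h_E^{2-r}\widehat{\b\sigma}(\DOF({\b u}_h^\perp))\cdot\DOF({\b u}_h^\perp)$ with $\widehat{\b\sigma}(\b x)=(h_E\delta+|\b x|)^{r-2}\b x$, adds and subtracts $\widehat{\b\sigma}(\DOF({\b u}_I^\perp))$, and then treats each piece with the shifted $N$-function machinery of Lemmas~\ref{lem:young_shifted} and~\ref{lm:Hirn} (the Young-type inequality \eqref{eq:Young1} together with \eqref{eq:extrass.continuity}--\eqref{eq:extrass.monotonicity}), producing absorbable $\varepsilon\gamma\,S({\b u}_h^\perp,{\b u}_h^\perp)$ terms before invoking \eqref{eq:strong1} and the bound \eqref{eq:T2B}. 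Your approach bypasses all of that: the $\Delta_2$/cross-term estimate for $\phi(t)=(\delta+h_E^{-1}t)^{r-2}t^2$ gives the quasi-triangle inequality directly, after which the two terms are dispatched by \eqref{eq:monotonicity:local_stab} and by the computation already carried out in \eqref{eq:koala}/\eqref{eq:T2B} (the interpolation input \eqref{eq:lemmanonconv:2} you cite is exactly the right one). The paper's route mirrors the treatment of the consistency term $T_1^B$ in Theorem~\ref{theo:main} and is more uniform with the rest of the analysis; yours is shorter and avoids the $\varepsilon$-absorption argument entirely. Either way the final interpolation bound $S({\b u}_I^\perp,{\b u}_I^\perp)\lesssim h^{2k_1}(\delta+h^{k_1}R_1)^{r-2}R_1^2$ is the same.
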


\begin{proof}
Let $\widehat{\b \sigma}(\b x)= (h_E\delta+|\b x|)^{r-2} \b x$ for any $\b x \in \R^{N_E}$. 
Then simple computations yield
\begin{equation}
    \label{eq:lemmaB0}
    \begin{aligned}
S({\b u}_h^\perp, {\b u}_h^\perp) &\simeq 
\sum_{E \in \Omega_h} h_E^{2-r}(h_E\delta+ |\DOF({\b u}_h^\perp)|)^{r-2}|\DOF({\b u}_h^\perp)|^2 =
\sum_{E \in \Omega_h} h_E^{2-r}\widehat{\b \sigma}(\DOF({\b u}_h^\perp)) \cdot \DOF({\b u}_h^\perp)
\\
&=
\sum_{E \in \Omega_h}   h_E^{2-r}\bigl(\widehat{ \b \sigma}(\DOF({\b u}_h^\perp)) -
\widehat{\b \sigma}(\DOF({\b u}_I^\perp)) \bigr)\cdot \DOF({\b u}_h^\perp) +
\sum_{E \in \Omega_h}  h_E^{2-r}\widehat{\b \sigma} (\DOF({\b u}_I^\perp)) \cdot \DOF({\b u}_h^\perp)
\\
& =: T_1 + T_2 \,.
    \end{aligned}
\end{equation}
In the following $C$ will denote a generic
positive constant independent of $h$ that may change at each occurrence, whereas the parameter $\varepsilon$ adopted in \eqref{eq:lemmaBT1} and \eqref{eq:lemmaBT2} will be specified later.
Using Lemma \ref{lem:young_shifted} and Lemma \ref{lm:Hirn} we infer
\begin{equation}
    \begin{aligned}
T_1 & \le 
    C \sum_{E \in \Omega_h} h_E^{2-r}\varphi'_{h_E\delta+|\DOF({\b u}_h^\perp)|}(|\DOF({\b \xi}_h^\perp)|) \,  |\DOF({\b u}_h^\perp)| 
    & \quad & \text{(by \eqref{eq:extrass.continuity})}
    \\
    & \le
    \varepsilon 
    \sum_{E \in \Omega_h} 
    h_E^{2-r}\varphi_{h_E\delta+|\DOF({\b u}_h^\perp)|}(|\DOF({\b u}_h^\perp)|)  +
    C_\varepsilon  \sum_{E \in \Omega_h} 
    h_E^{2-r}\varphi_{h_E\delta+|\DOF({\b u}_h^\perp)|}(|\DOF({\b \xi}_h^\perp)|)
    & \quad & \text{(by \eqref{eq:Young1})}
    \\
    &\le
    \gamma \varepsilon 
    \sum_{E \in \Omega_h} h_E^{2-r}(h_E\delta+|\DOF({\b u}_h^\perp)|)^{r-2}|\DOF({\b u}_h^\perp)|^2 + 
    C_\varepsilon  \sum_{E \in \Omega_h} 
    h_E^{2-r}(h_E\delta+|\DOF({\b u}_h^\perp)| + |\DOF({\b u}_I^\perp)|)^{r-2} \, |\DOF({\b \xi}_h^\perp)|^2 ,
    & \quad & \text{(by \eqref{eq:extrass.monotonicity})}
\end{aligned}
\end{equation}
where in the last line $\gamma$ denotes the uniform hidden positive constant from Lemma \ref{lm:Hirn}.
Applying a consequent bound of \eqref{eq:strong1} (from the second row to the last row) yields
\begin{equation}
    \label{eq:lemmaBT1}
    \begin{aligned}   
    T_1 &\le 
    \gamma \varepsilon S({\b u}_h^\perp, {\b u}_h^\perp)  + 
    C_\varepsilon  \sum_{E \in \Omega_h} h_E^{2-r}
    (h_E\delta+|\DOF({\b u}_h^\perp)| + |\DOF({\b u}_I^\perp)|)^{r-2} \, |\DOF({\b \xi}_h^\perp)|^2 
    \\
  &\le 
    \gamma \varepsilon S({\b u}_h^\perp, {\b u}_h^\perp)  + 
    C_\varepsilon  (S({\b u}_h^\perp, {\b \xi}_h^\perp) -
  S({\b u}_I^\perp, {\b \xi}_h^\perp)) 
    \, .
    \end{aligned}
\end{equation}
Using analogous arguments we have
\begin{equation}
    \label{eq:lemmaBT2}
    \begin{aligned}
T_2 & \le 
    C \sum_{E \in \Omega_h} h_E^{2-r} \varphi'_{h_E\delta+|\DOF({\b u}_I^\perp)|}(|\DOF({\b u}_I^\perp)|) \,  |\DOF({\b u}_h^\perp)| 
    & \quad & \text{(by \eqref{eq:extrass.continuity})}
    \\
    & \le
    \varepsilon 
    \sum_{E \in \Omega_h} h_E^{2-r}
    \varphi_{h_E\delta}(|\DOF({\b u}_h^\perp)|)  +
    C_\varepsilon  \sum_{E \in \Omega_h} h_E^{2-r}
    \varphi_{h_E\delta}(|\DOF({\b u}_I^\perp)|)
    & \quad & \text{(by \eqref{eq:Young1})}
    \\
    &\le
    \gamma \varepsilon 
    \sum_{E \in \Omega_h} h_E^{2-r} (h_E\delta+|\DOF({\b u}_h^\perp)|)^{r-2}|\DOF({\b u}_h^\perp)|^2 + 
    C_\varepsilon  \sum_{E \in \Omega_h} h_E^{2-r} (h_E\delta+|\DOF({\b u}_I^\perp)|)^{r-2}|\DOF({\b u}_I^\perp)|^2 
    & \quad & \text{(by \eqref{eq:extrass.monotonicity})}
    \\
    &\le 
    \gamma \varepsilon S({\b u}_h^\perp, {\b u}_h^\perp)  + 
    C_\varepsilon \sum_{E \in \Omega_h} h_E^{2-r} (h_E\delta+|\DOF({\b u}_I^\perp)|)^{r-2}|\DOF({\b u}_I^\perp)|^2  \,,    
    \end{aligned}
\end{equation}
Taking in \eqref{eq:lemmaBT1} and \eqref{eq:lemmaBT2} $\epsilon = \frac{1}{4 \gamma}$, from \eqref{eq:lemmaB0} we obtain
\[
 \begin{aligned}
S({\b u}_h^\perp, {\b u}_h^\perp) &\lesssim
S({\b u}_h^\perp, {\b \xi}_h^\perp) -
  S({\b u}_I^\perp, {\b \xi}_h^\perp)  +
 \sum_{E \in \Omega_h} h_E^{2-r} (h_E\delta+|\DOF({\b u}_I^\perp)|)^{r-2}|\DOF({\b u}_I^\perp)|^2 \\
 &\lesssim
  S({\b u}_h^\perp, {\b \xi}_h^\perp) -
  S({\b u}_I^\perp, {\b \xi}_h^\perp) +
   h^{2k_1} (\delta+h^{k_1}R_1)^{r-2}R_1^2\,.  
  \end{aligned}
\] 
where we used the bound \eqref{eq:T2B} of $T_2^B$.
\end{proof}

\begin{remark}[Orders of convergence]\label{rem:orders}
    We comment on the orders of convergence of velocity and pressure under enough regularity, i.e., if $k_1=k_2=k_3=k$.
    Employing Theorem~\ref{theo:main} and Corollary~\ref{cor:main},  for $h$ asymptotically small and  
    for any $r \ge 2$ and $\delta \ge 0$, the order of convergence of the velocity is:
    \begin{equation}\label{eq:vel:orders}
    \begin{aligned}
    & \mathcal{O}(u) = \min\left(\frac{k}{r-1},\frac{2k}{r}\right) = \frac{k}{r-1} \qquad \textrm{valid for } \delta \ge 0 \, , \\
    & \mathcal{O}(u) = \frac{2k}{r}
    \qquad \textrm{valid for } \delta > 0 \, .
    \end{aligned}
    \end{equation}
    In the first bound above we leave the minimum  expressed explicitly to shed light on the origin of the leading order $k/(r-1)$. Indeed, this order stems from the "$\sigma$ approximation" term $T^1_A$ (cf. proof of Theorem~\ref{theo:main} ) and therefore, in many situations, it is expected to dominate the estimate only asymptotically, but possibly not for practical mesh sizes (see also Section~\ref{sec:tests}). 
    Furthermore note that such ``$\sigma$-approximation term'' could be ameliorated by raising the order of the projection $\Pi^0_{k-1}$ appearing in the first addendum (consistency part) of \eqref{eq:forma ah}, that is using $\Pi^0_{\ell}$ with $\ell \ge k$. As a consequence, if $\sigma$ is sufficiently regular, a simple modification of bound \eqref{eq:T1A} would lead to the more favorable final bound 
    $$
    \mathcal{O}(u) = \min\left(\frac{\ell+1}{r-1},\frac{2k}{r}\right)
    \qquad \textrm{valid for } \delta \ge 0 \, .
    $$
    The above improvement can be achieved by suitably enhancing the virtual space, resulting in a more cumbersome computation of the local discrete forms (but not increasing the size of the global system).

For what concerns the pressure, from Theorem~\ref{theo:main:2} and Corollary~\ref{cor:main:2}, for $h$ asymptotically small and  
    for any $r \ge 2$ and $\delta \ge 0$, the order of convergence of the pressure is:
        \begin{equation}
    \mathcal{O}(p) = \mathcal{O}(u).
    \end{equation}
\end{remark}

\begin{remark}[The role of $\delta$]
    When $\delta$ is positive but small and comparable to $h$, the pre-asymptotic error reduction rate will be the outcome of the competition among the two bounds appearing in \eqref{eq:vel:orders}. In such cases, although the asymptotic convergence rate will clearly behave as $h^{{2k}/{r}}$, we may experience a slower pre-asymptotic error reduction rate, more similar to $h^{{k}/{r-1}}$.
    More precisely, a careful analysis reveals that when $\delta$ is small and comparable to $h$, the term $\delta^{\frac{2-r}{r}}$ in \eqref{eq:est_vel_delta_nonzero} (and \eqref{eq:est_press_delta_nonzero}) plays a role in the error reduction rate. In particular, 
    we obtain for $h<1$ that the velocity error is bounded by $Ch^\gamma$, with $C$ independent of $h, \delta$
 and     \begin{equation}\label{eq:OCV_delta_nonzero}
    \begin{aligned}
    & \gamma = \max\left(\frac{2k}{r}+\frac{2-r}{r}\frac{\ln(\min(\delta,1))}{\ln(h)},\frac{k}{r-1}\right) 
    \qquad \textrm{valid for } \delta > 0 \, ,
    \end{aligned}
    \end{equation}
    where we used $ \delta^\frac{2-r}{r} = e^{\frac{2-r}{r}\ln(\delta)} = e^{(\frac{2-r}{r}\frac{\ln(\delta)}{\ln(h)}) \ln(h)} = h^{\frac{2-r}{r}\frac{\ln(\delta)}{\ln(h)}}$ and we noted that if $\delta > 1$ then the other terms of \eqref{eq:est_vel_delta_nonzero} dominate the estimate, hence the minimum in \eqref{eq:OCV_delta_nonzero}. 
\end{remark}

\section{Numerical Results}
\label{sec:tests}
 In this section, we present three numerical tests to validate the theoretical results of Theorems~\ref{theo:main}  and~\ref{theo:main:2} (and the associated corollaries)
for different values of the parameters $\delta$ and $r$, as well as of the Sobolev regularity indices $k_1$, $k_2$, $k_3$ and $k_4$.
To compute the VEM error between the exact solution $\b (u_{\rm ex}, p_{\rm ex})$ and the VEM solution $(\b u_h, p_h)$, we consider the computable error quantities 
\begin{equation}
    \label{eq:err_quant}
\begin{aligned}
\texttt{err}(\b u_h, \tri{\b \cdot}\tri_{r}) &=
\frac{\tri{\b u_{\rm ex} - \b u_h}\tri_r}{\tri{\b u_{\rm ex}}\tri_r}  \,,
\\
\texttt{err}(\b u_h, W^{1,r}) &=
\frac{\|\GRAD \b u_{\rm ex} - \b \Pi^{0}_{k-1} \GRAD \b u_h\|_{L^r(\Omega)}}{\|\GRAD \b u_{\rm ex}\|_{\mathbb{L}^r(\Omega)}}  \,,
\\
\texttt{err}(p_h, L^{r'}) &= \frac{\|p_{\rm ex} - p_h\|_{L^{r'}(\Omega)}}
{\|p_{\rm ex}\|_{L^{r'}(\Omega)}} \,,
\\
\texttt{err}(\b \sigma, L^{r'}) &=
\frac{\|\b\sigma(\cdot, \b \epsilon(\b u_{\rm ex})) - \b\sigma(\cdot, \b\Pi_{k-1}^0 \b \epsilon(\b u_h))\|_{L^{r'}(\Omega)}}
{\|\b\sigma(\cdot, \b \epsilon(\b u_{\rm ex}))\|_{\mathbb{L}^{r'}(\Omega)}}  \, .
\end{aligned}
\end{equation}
We make use of the $\tri \cdot \tri_r$ norm, which is bounded by the $\tri \cdot \tri_{\delta,r}$ norm, in order to have the same error measure in all tests.
Furthermore, note that we also include an error measure on the stress $\sigma$, although deriving a theoretical estimate for such a quantity is beyond the scope of the present contribution.\\

Given a sequence of $N+1$ meshes with mesh diameters $h_0 > \dots > h_N$, and denoting by $E_h$ any of the error quantities listed in \eqref{eq:err_quant}, we define the average experimental order of convergence \texttt{AEOC} as
\[
\texttt{AEOC}= \frac{1}{N} \sum_{n=1}^N 
\frac{\log(E_{h_{n-1}}/E_{h_{n}})}{\log(h_{n-1}/h_{n})} \,.
\]

As a model equation, we consider the Carreau-Yasuda model \eqref{eq:Carreau}, with $\alpha=2$ (i.e. corresponding to the Carreau model), $\mu=1$. In order to verify the apriori error estimates of Section~\ref{sec:error_analysis}, numerical tests are performed with the following values of $r$ and $\delta$:
\begin{equation}
\label{eq:r-delta}
r=
\texttt{2.00}, \,
\texttt{2.25}, \,
\texttt{2.50}, \,
\texttt{3.00}, 
\qquad 
\delta=
\texttt{1}, \,
\texttt{0} \,.
\end{equation}
In the forthcoming tests, we consider the scheme \eqref{eq:stokes.vem} with $k=2$. 
The nonlinear problem is solved by means of a two-step Picard-type iteration. First, we solve the
problem corresponding to $\bar r$, defined as the midpoint between $2$ and $r$, using as initial
guess the solution of the associated linear Stokes problem. Then, the solution obtained for $\bar r$ is employed as the initial iterate for a Picard iteration with exponent $r$.
An analogous strategy was adopted in \cite[Section 5.1]{Antonietti_et_al_2024} for the case $r \in (1, 2]$.
The domain $\Omega$ (specified in each test) is partitioned with the following sequences of polygonal meshes:
\texttt{QUADRILATERAL} distorted meshes,
\texttt{RANDOM} Voronoi meshes, and 
\texttt{CARTESIAN} meshes (see Fig.~\ref{fig:meshes}).
For the generation of the Voronoi meshes we used the code \texttt{Polymesher} \cite{polymesher}.
We emphasize that, for the families of meshes under consideration, all mesh elements are convex, therefore, according to Remark~\ref{rem:nonincl}, the discrete solution satisfies
$\b u_h \in \b W^{1,r}$.
\begin{figure}[!htbp]
\centering
\begin{overpic}[scale=0.23]{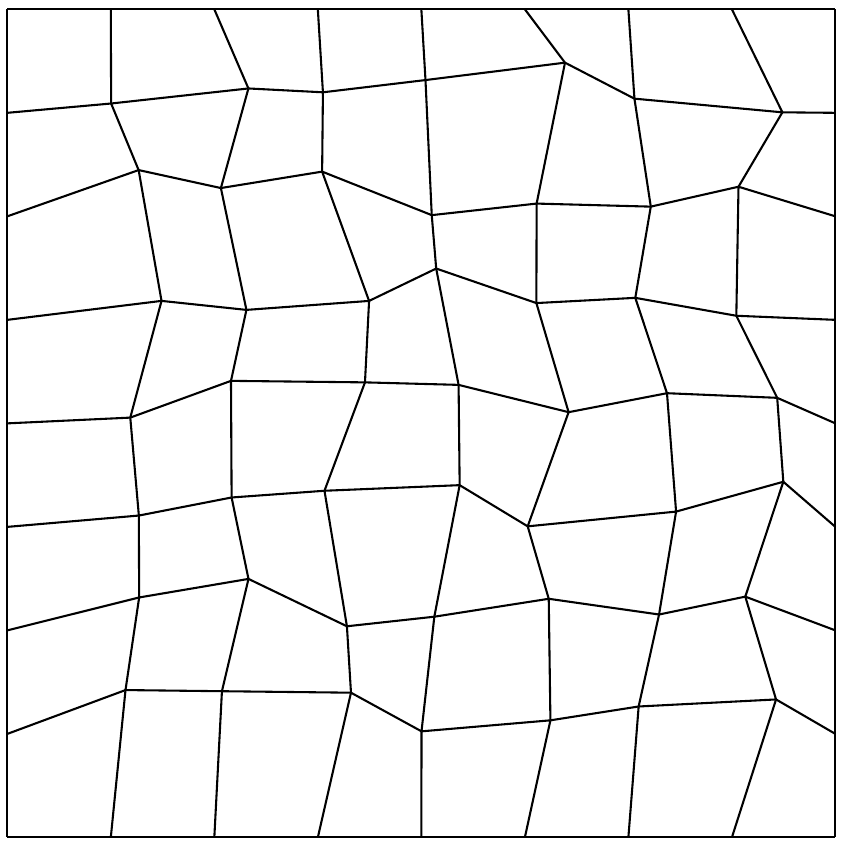}
\put(10,-15){{{\texttt{QUADRILATERAL}}}}
\end{overpic}
\qquad
\begin{overpic}[scale=0.23]{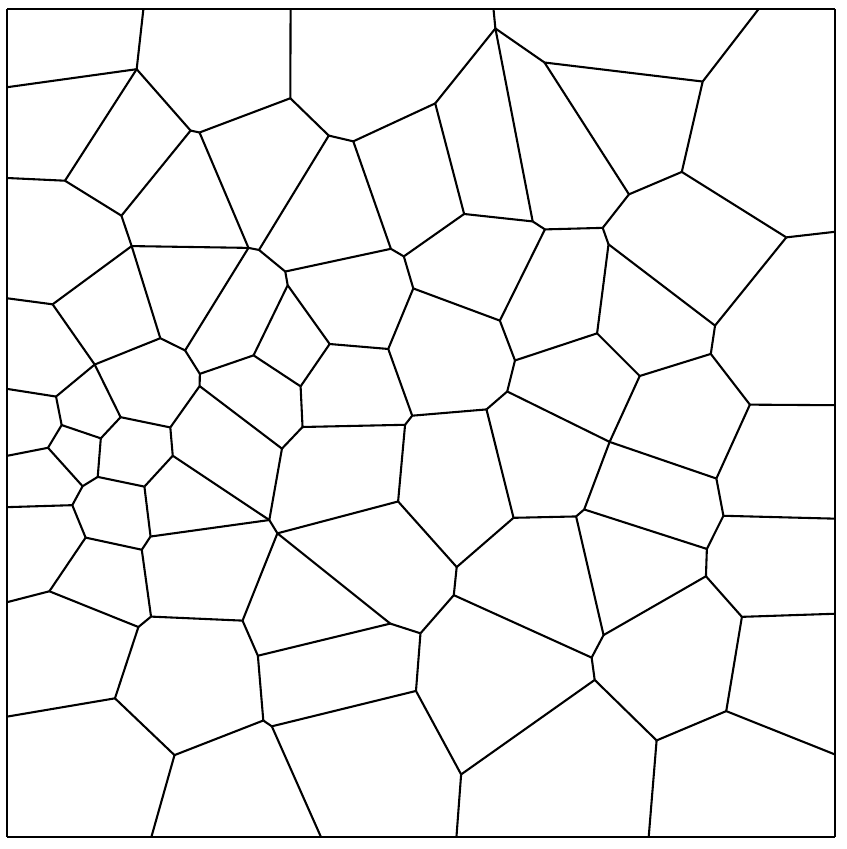}
\put(30,-15){{{\texttt{RANDOM}}}}
\end{overpic}
\qquad
\begin{overpic}[scale=0.20]{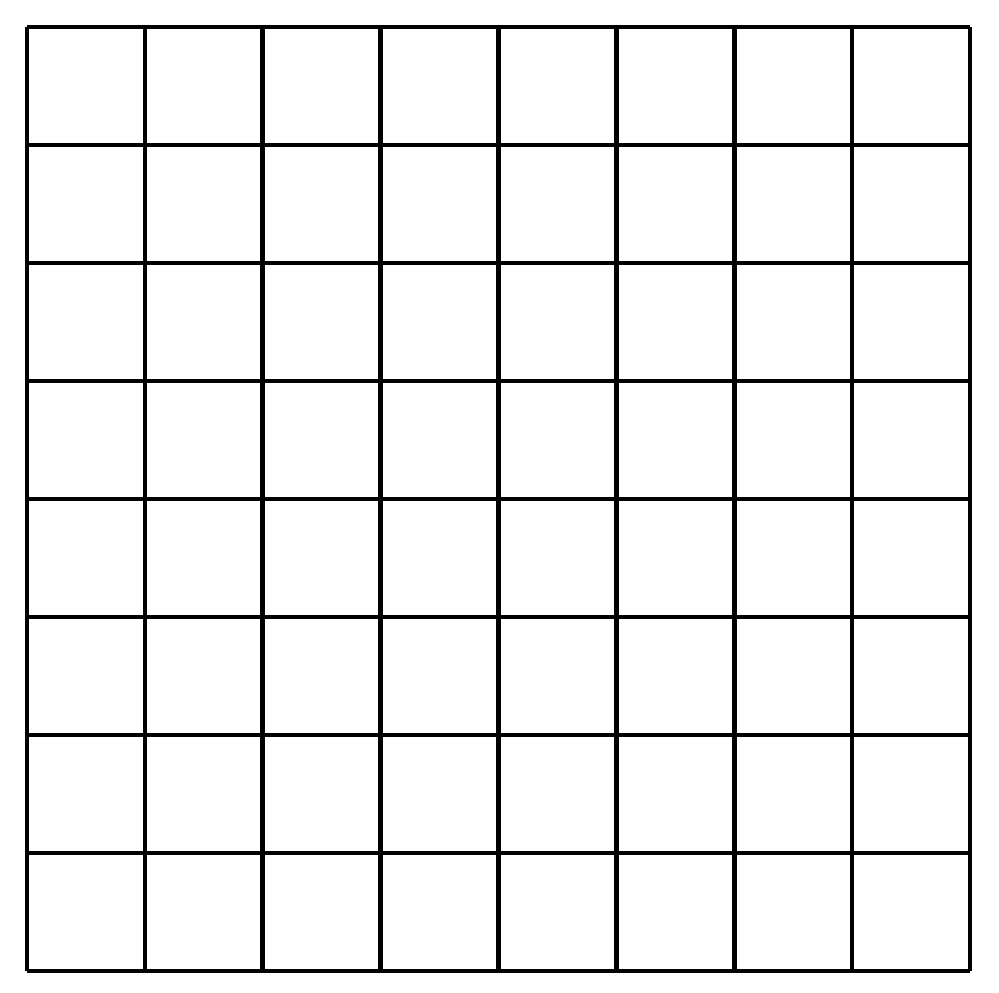}
\put(20,-15){{{\texttt{CARTESIAN}}}}
\end{overpic}
\vspace{0.5cm}
\caption{Example of the adopted polygonal meshes.}
\label{fig:meshes}
\end{figure}

\subsection{Test 1. Regular solution}
In the first test case, we consider Problem \eqref{eq:stokes.continuous} 
with full Dirichlet boundary conditions (i.e. $\Gamma_D = \partial \Omega$) on the unit square $\Omega = (0, 1)^2$. The load terms $\b f$ (depending on $r$ and $\delta$ in \eqref{eq:Carreau}) and the Dirichlet boundary conditions are chosen according to the analytical solution
\[
\b u_{\rm ex}(x_1,x_2) = 
\begin{bmatrix}
\sin\bigl(\frac{\pi}{2}x_1\bigr)\cos\bigl(\frac{\pi}{2}x_2\bigr)
\\
-\cos\bigl(\frac{\pi}{2}x_1\bigr)\sin\bigl(\frac{\pi}{2}x_2\bigr)
\end{bmatrix} \,,
\qquad
p_{\rm ex}(x_1,x_2) =  -\sin\biggl(\frac{\pi}{2}x_1\biggr)
\sin\biggl(\frac{\pi}{2}x_2\biggr) + \frac{4}{\pi^2} \,.
\]
The domain $\Omega$ is partitioned using the family of \texttt{QUADRILATERAL} distorted
meshes and the family of \texttt{RANDOM} meshes. For each mesh family, we consider a mesh sequence with diameters
$\texttt{h} = \texttt{1/4}, \, \texttt{1/8}, \, \texttt{1/16}, \, \texttt{1/32}, \, \texttt{1/64}$.
In Fig.~\ref{fig:test1-Q} and Fig.~\ref{fig:test1-R}, we plot the computed error quantities in \eqref{eq:err_quant} for the sequences of aforementioned meshes and parameters $r$ and $\delta$ as in \eqref{eq:r-delta}.
We observe that for $\delta = \texttt{1}$ (left panels of Fig.~\ref{fig:test1-Q} and Fig.~\ref{fig:test1-R}), a convergence rate of order \texttt{2} is observed, 
whereas for $\delta = \texttt{0}$ (left panels of Fig.~\ref{fig:test1-Q} and Fig.~\ref{fig:test1-R}), the plot shows the average experimental orders of convergence  \texttt{AEOC}.
\begin{figure}[htbp]
    \centering
    {\texttt{QUADRILATERAL MESHES}}
\\
    \begin{subfigure}{0.45\textwidth}
        \centering
        \includegraphics[width=\linewidth]{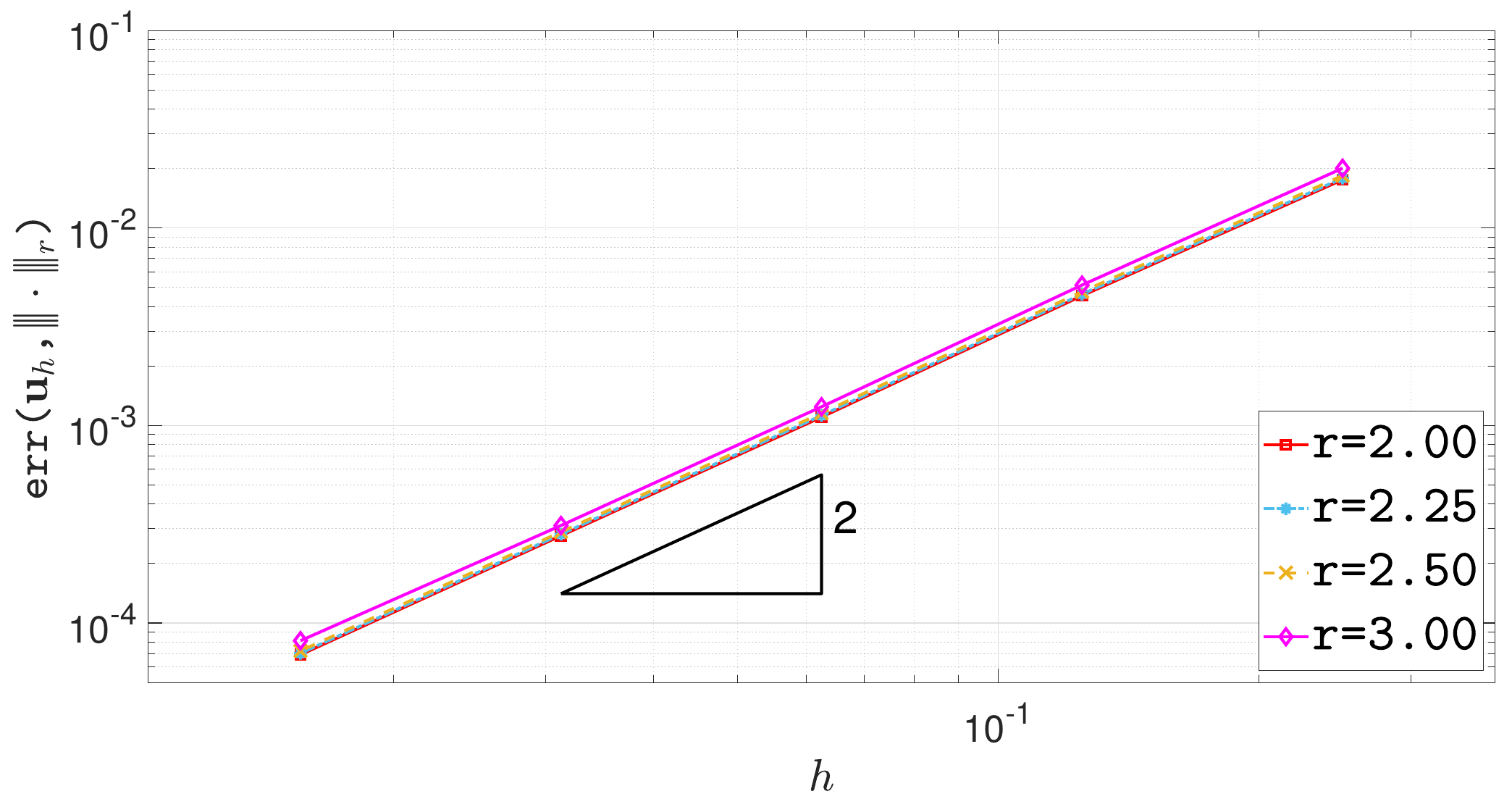}
        
    \end{subfigure}\qquad
    \begin{subfigure}{0.45\textwidth}
        \centering
        \includegraphics[width=\linewidth]{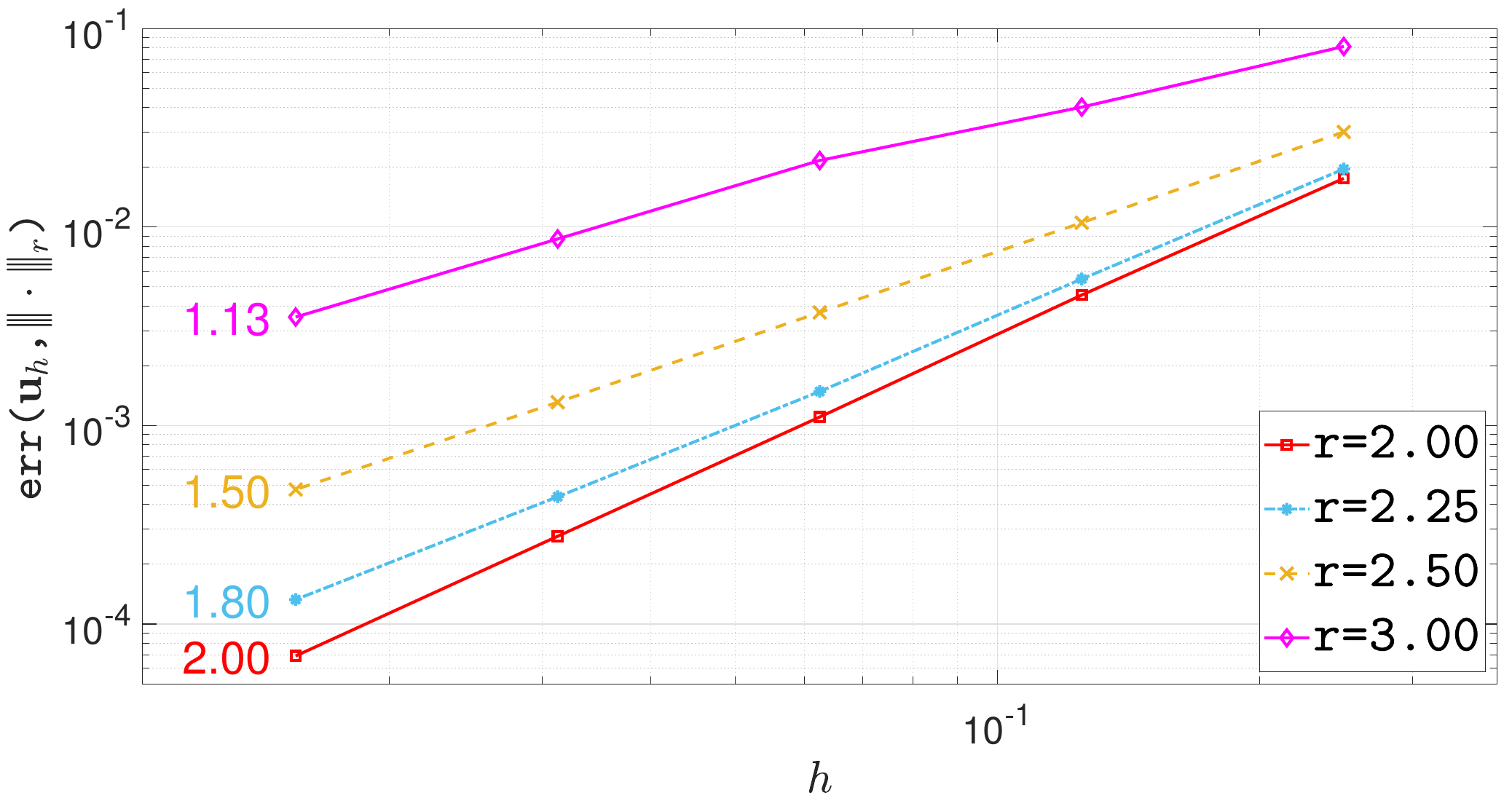}
    \end{subfigure}

    \vspace{0.5cm}

    \begin{subfigure}{0.45\textwidth}
        \centering
        \includegraphics[width=\linewidth]{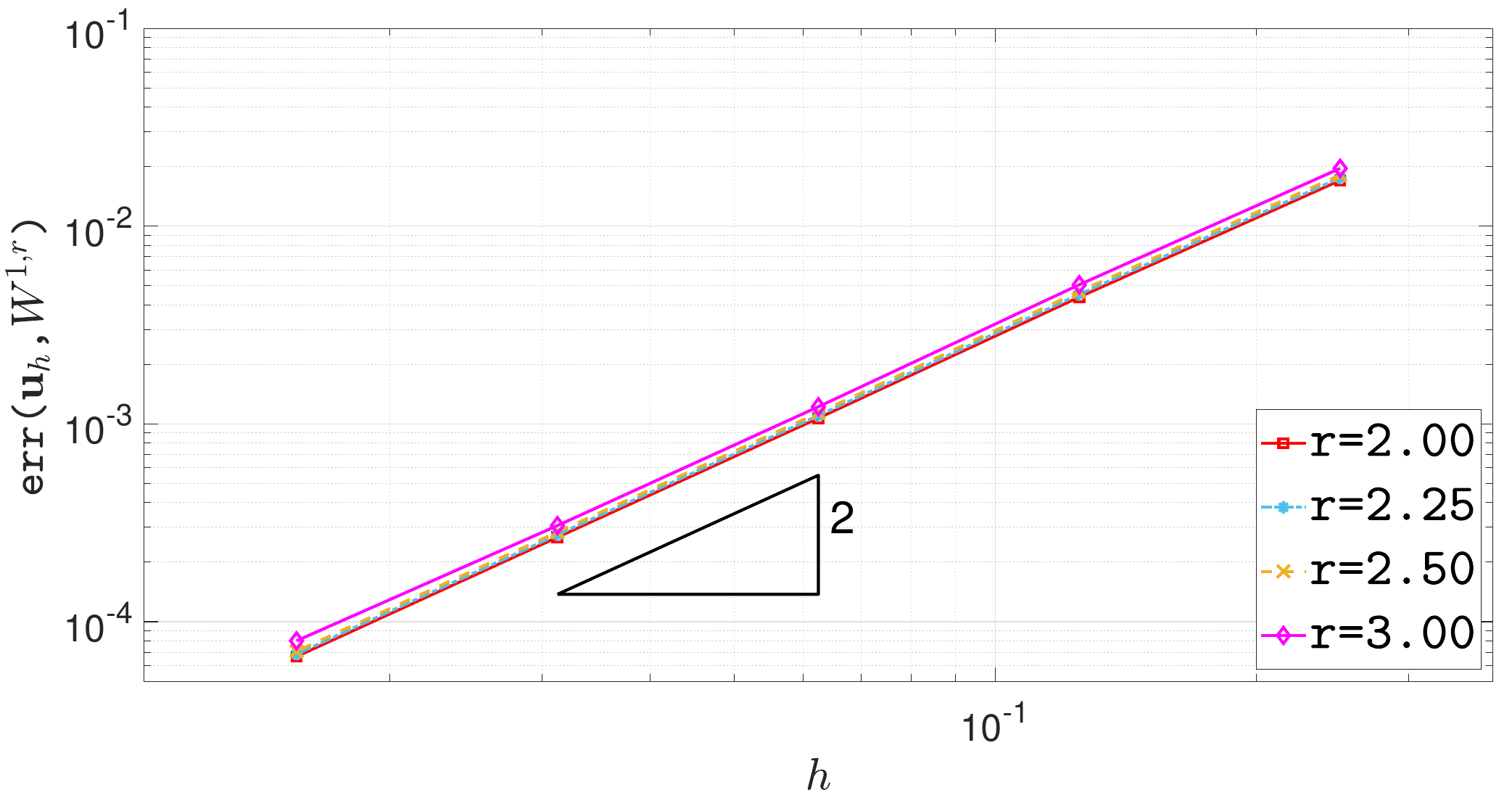}
    \end{subfigure}\qquad
    \begin{subfigure}{0.45\textwidth}
        \centering
        \includegraphics[width=\linewidth]{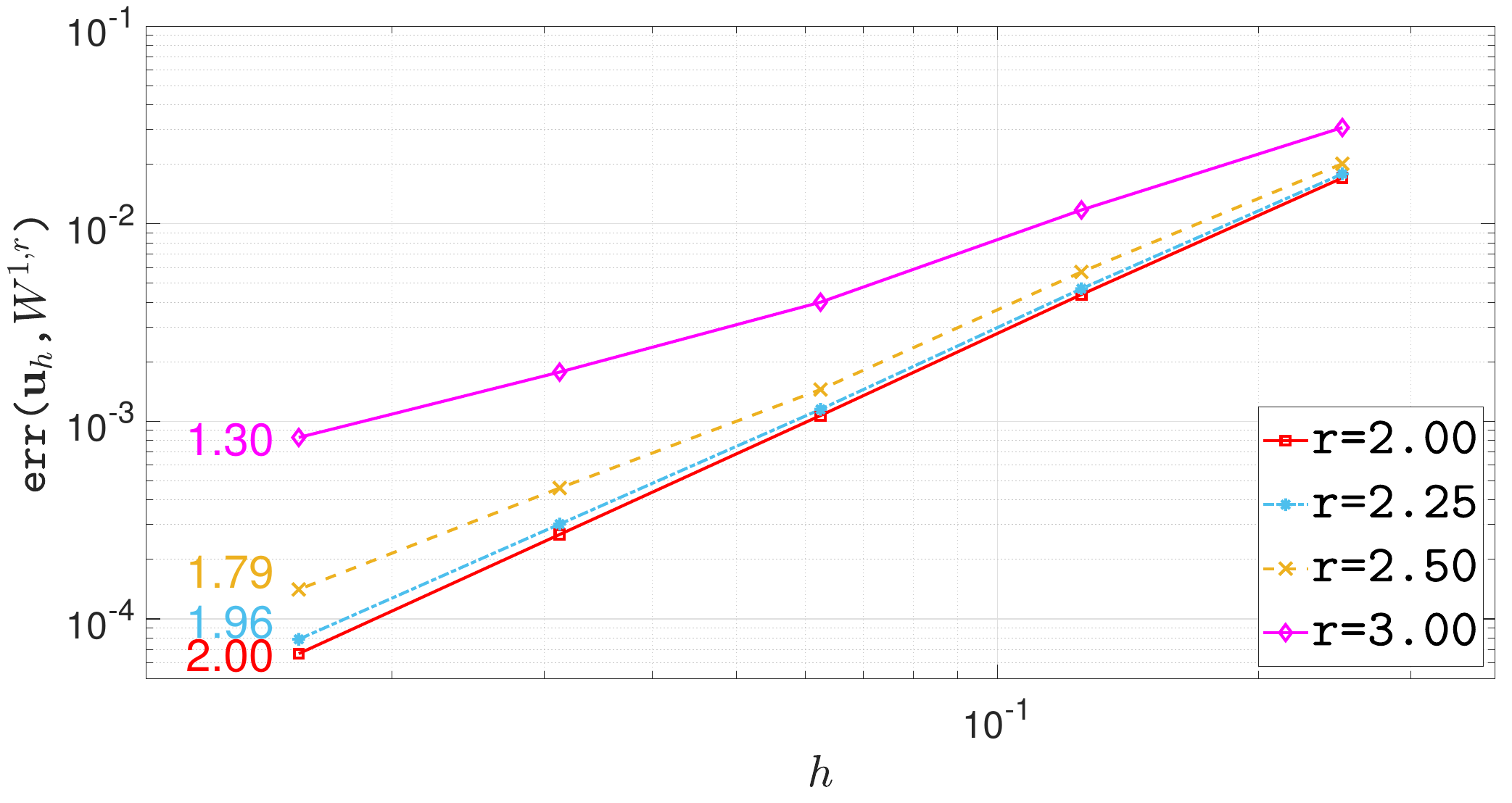}
    \end{subfigure}

    \vspace{0.5cm}

    \begin{subfigure}{0.45\textwidth}
        \centering
        \includegraphics[width=\linewidth]{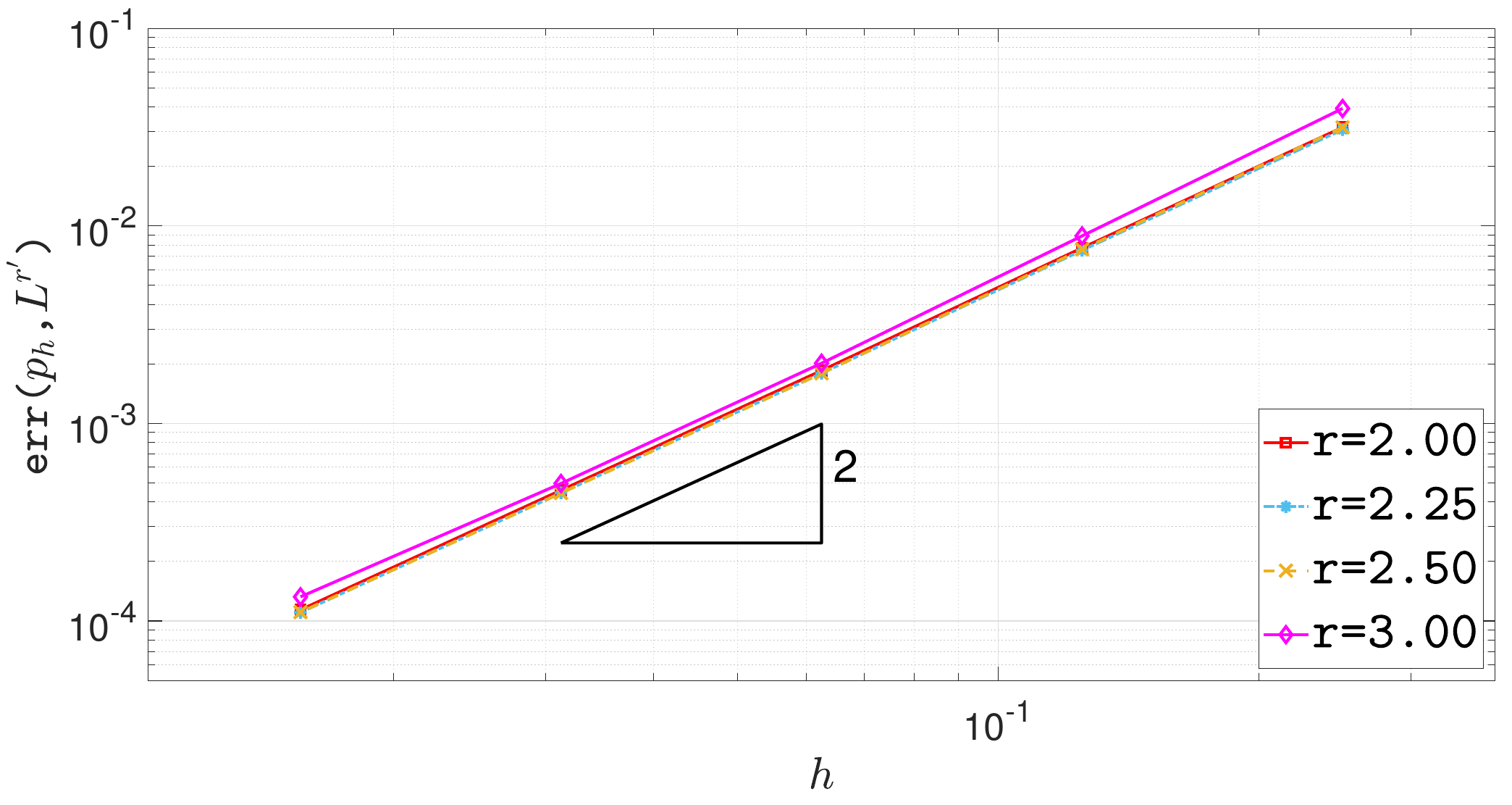}
    \end{subfigure}\qquad
    \begin{subfigure}{0.45\textwidth}
        \centering
        \includegraphics[width=\linewidth]{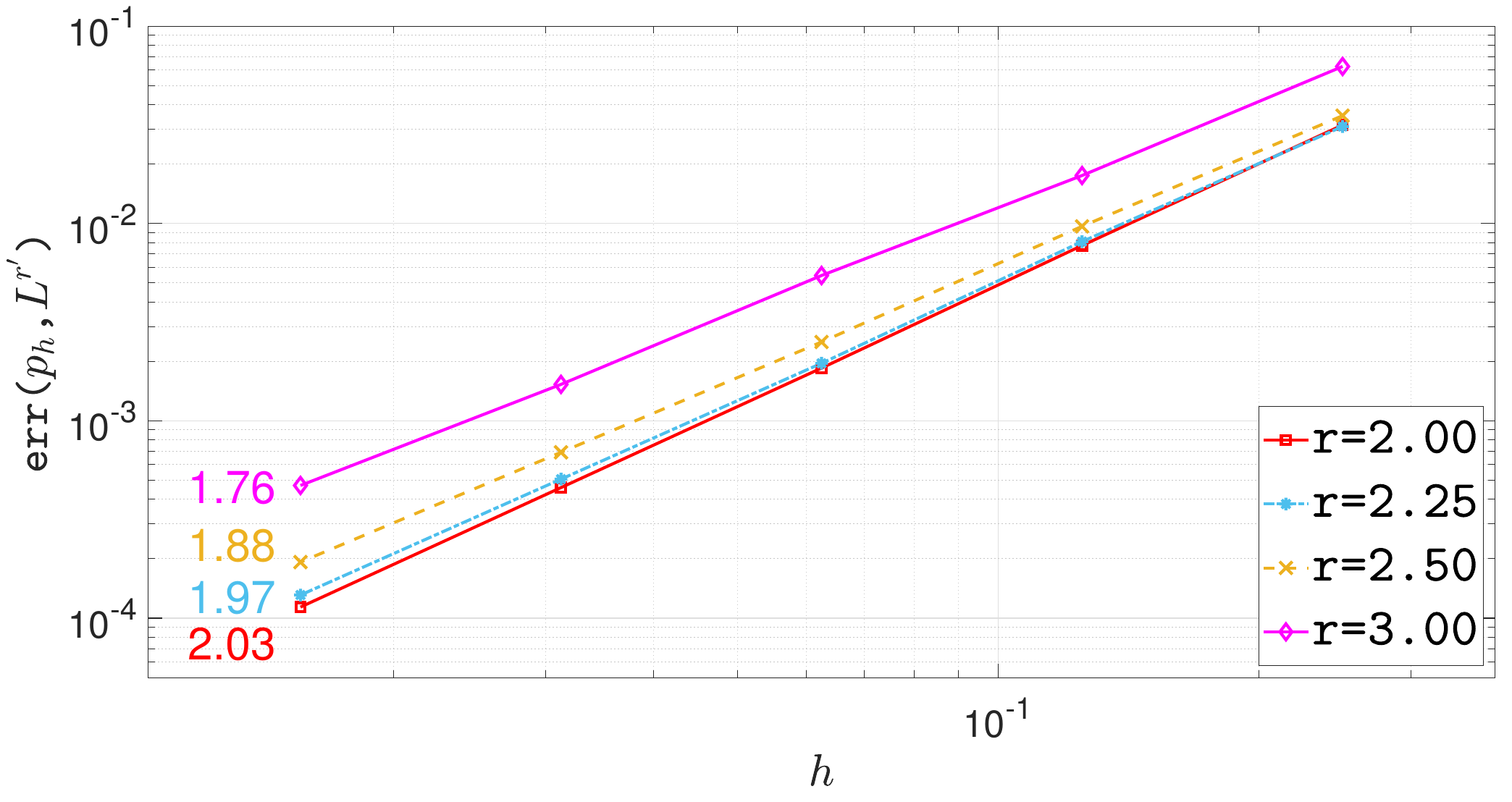}
    \end{subfigure}

    \vspace{0.5cm}
    
    \begin{subfigure}{0.45\textwidth}
        \centering
        \includegraphics[width=\linewidth]{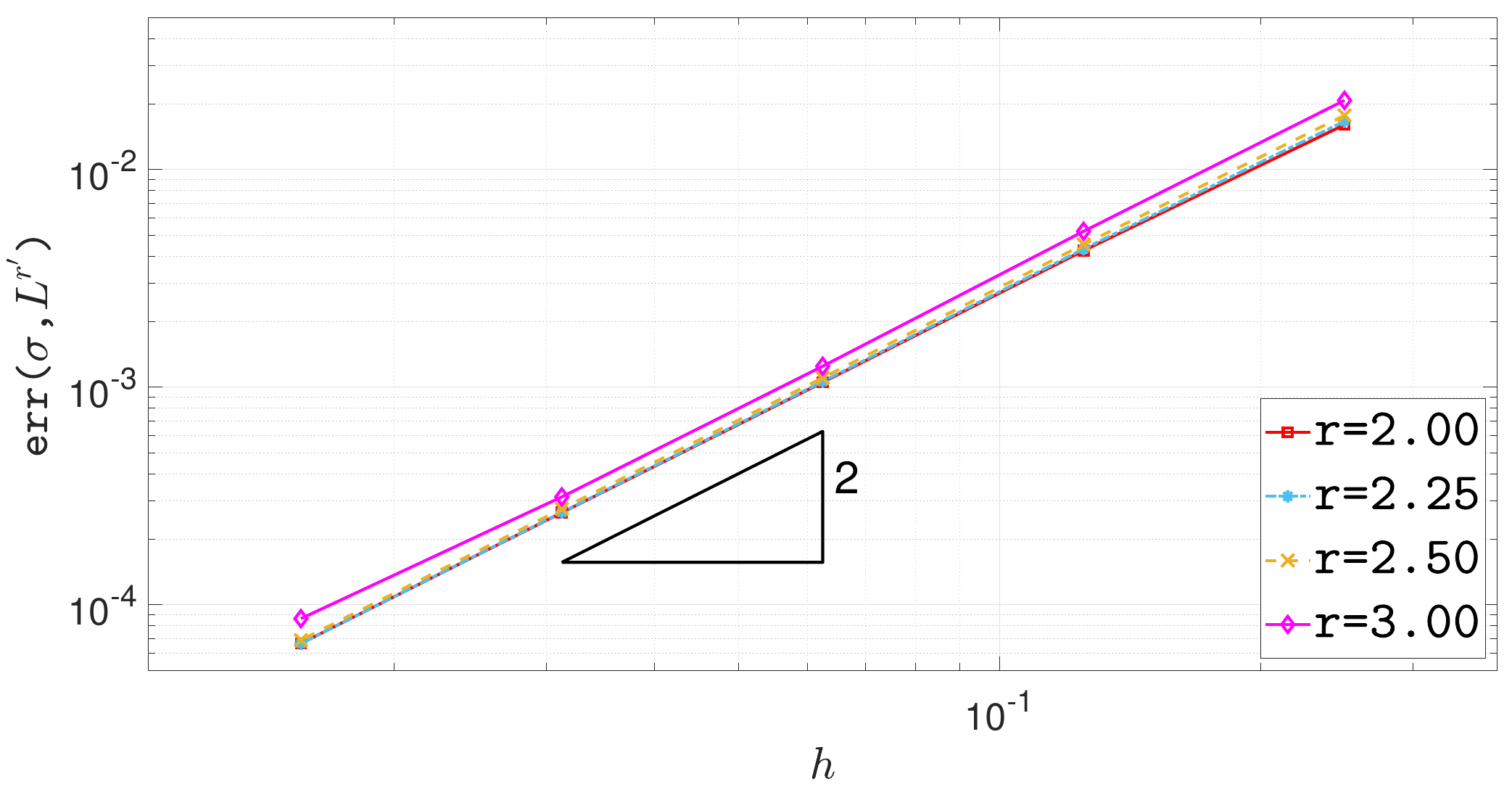}
    \end{subfigure}\qquad
    \begin{subfigure}{0.45\textwidth}
        \centering
        \includegraphics[width=\linewidth]{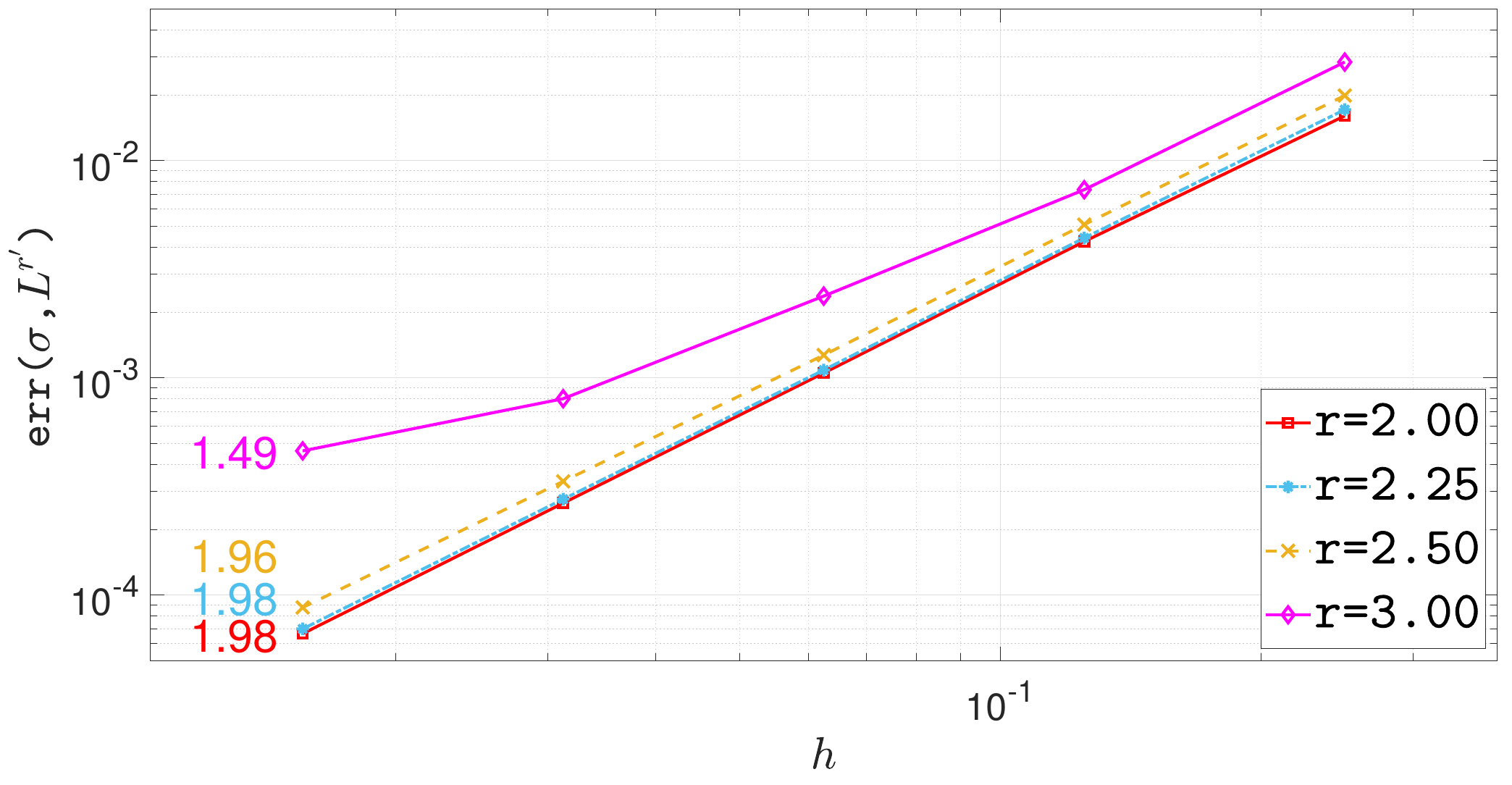}
    \end{subfigure}
   \caption{Test 1. Computed errors defined as in \eqref{eq:err_quant} as a function of the mesh size (loglog scale), for the mesh family \texttt{QUADRILATERAL}. Left panel: $\delta=1$, right panel: $\delta=0$.} 
    \label{fig:test1-Q}
\end{figure}
\begin{figure}[htbp]
    \centering
    {\texttt{RANDOM MESHES}}
\\
    \begin{subfigure}{0.45\textwidth}
        \centering
        \includegraphics[width=\linewidth]{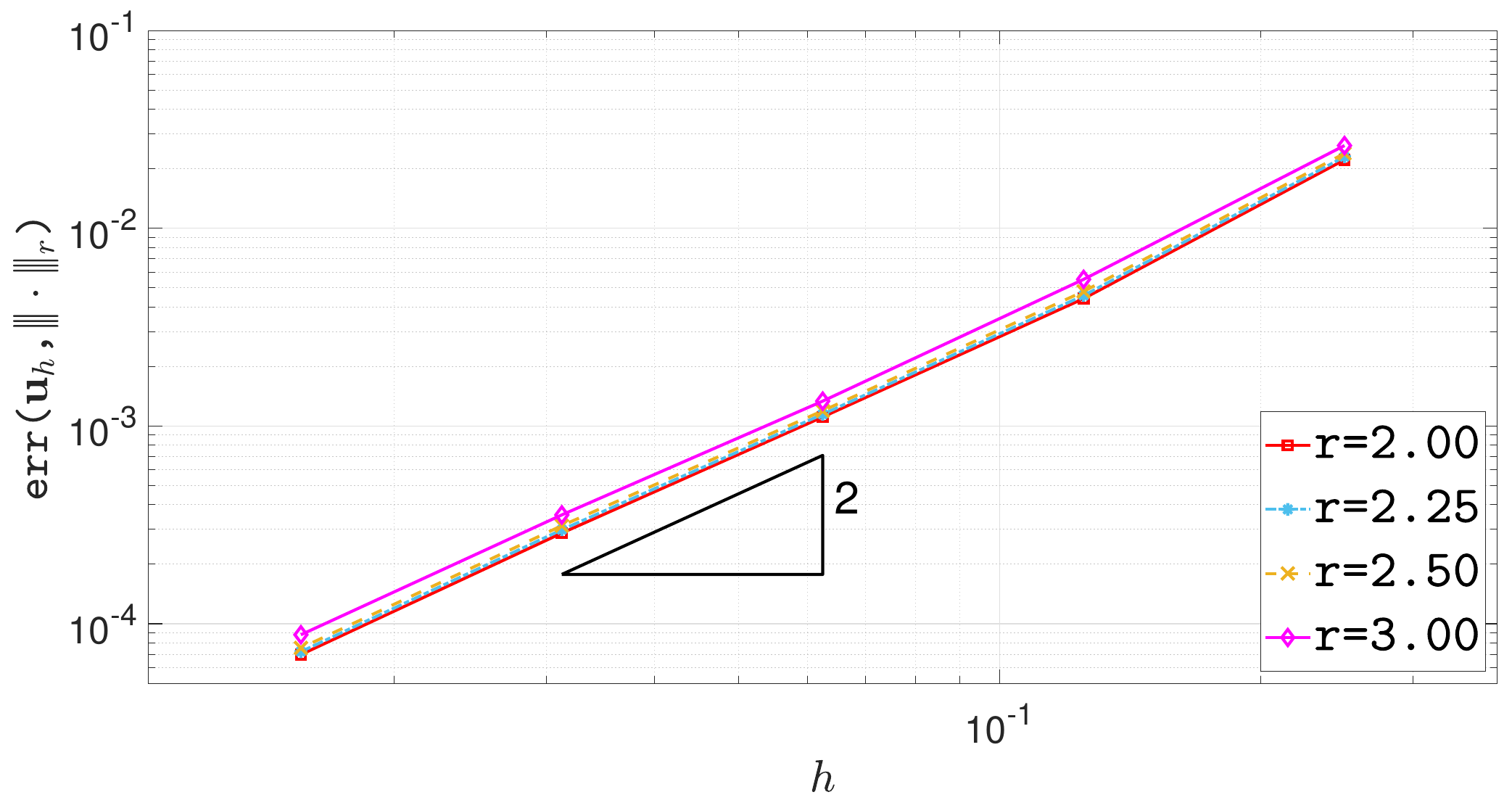}
        
    \end{subfigure}\qquad
    \begin{subfigure}{0.45\textwidth}
        \centering
        \includegraphics[width=\linewidth]{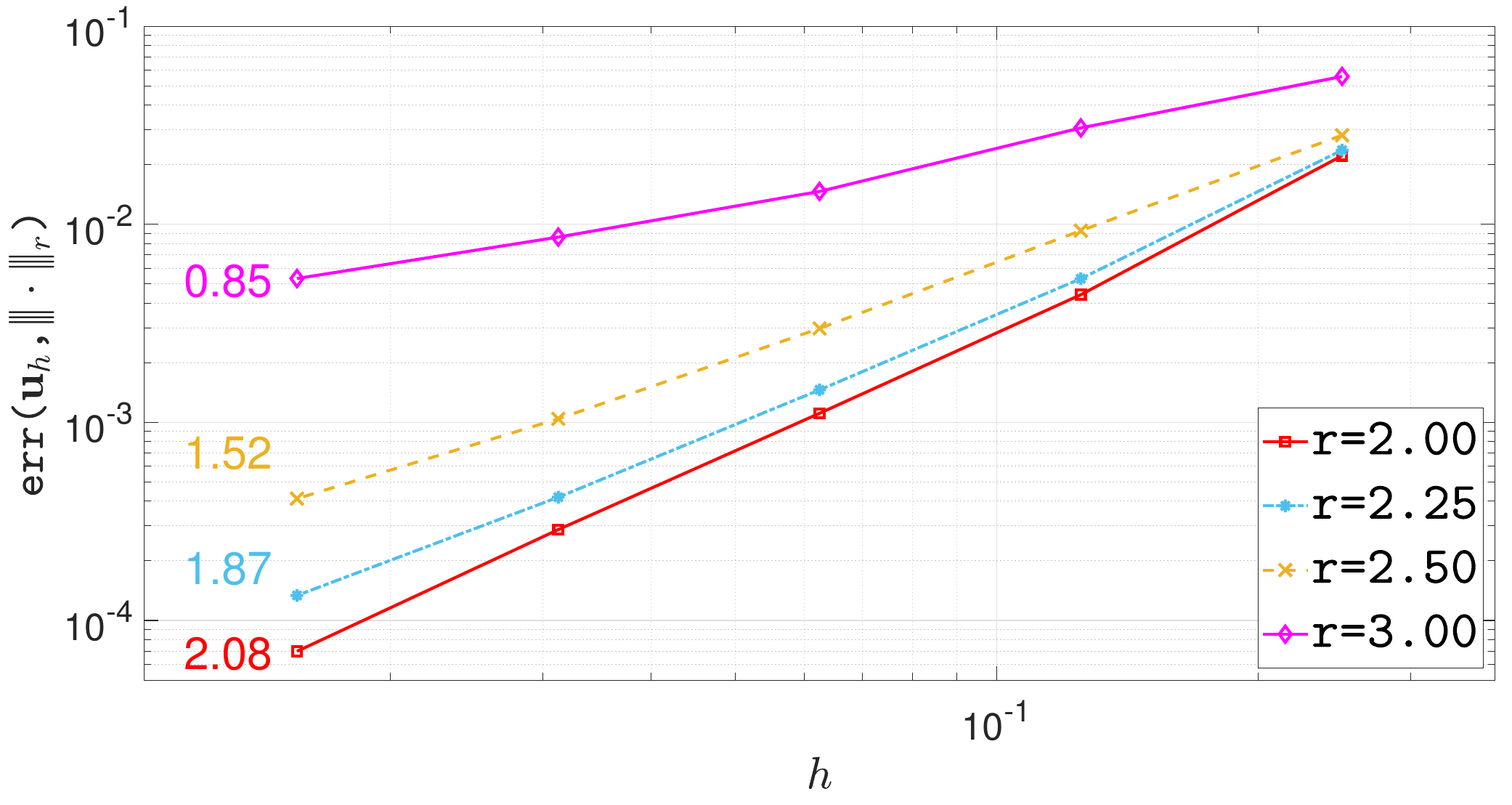}
    \end{subfigure}

    \vspace{0.5cm}

    \begin{subfigure}{0.45\textwidth}
        \centering
        \includegraphics[width=\linewidth]{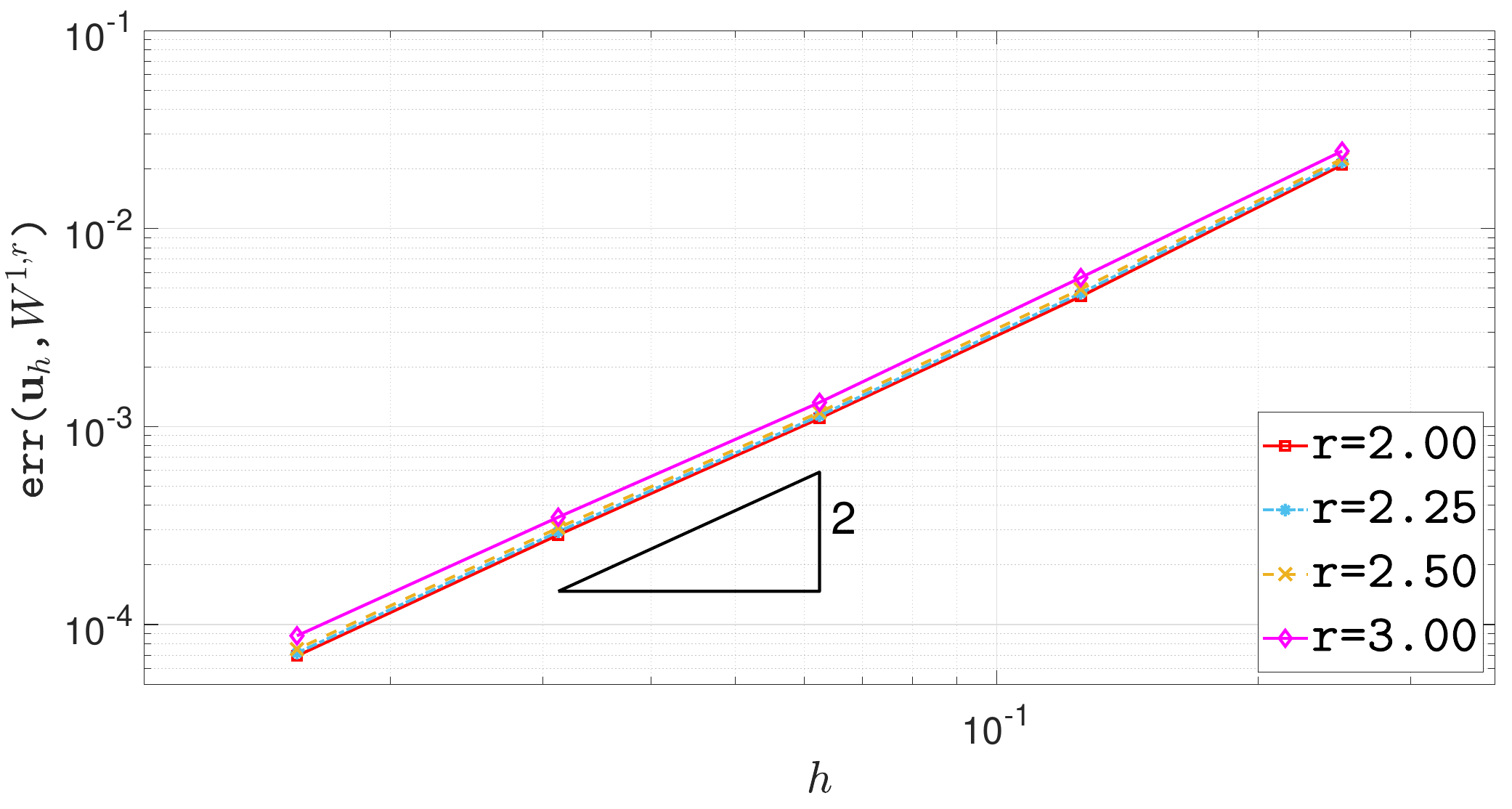}
    \end{subfigure}\qquad
    \begin{subfigure}{0.45\textwidth}
        \centering
        \includegraphics[width=\linewidth]{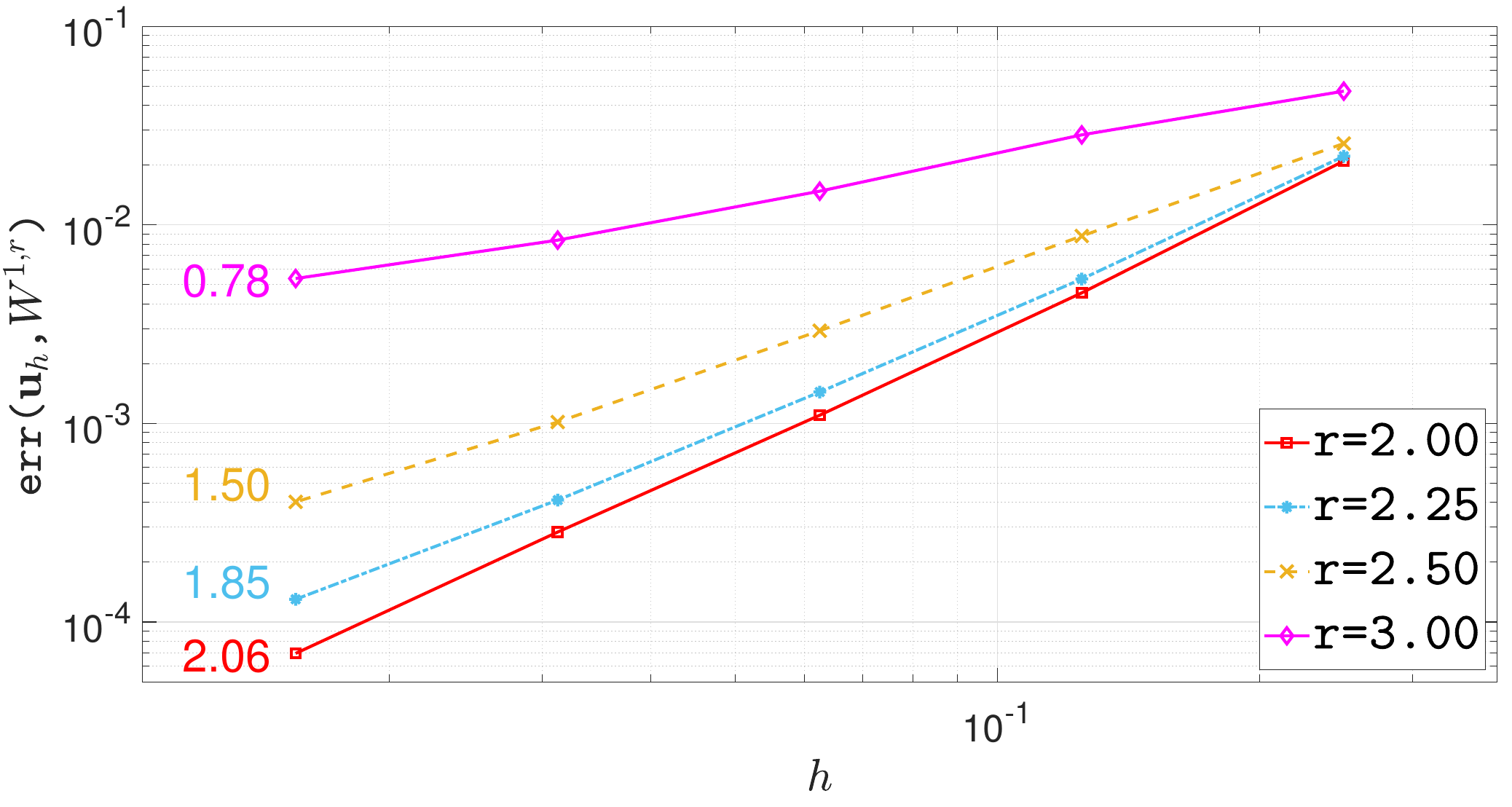}
    \end{subfigure}

    \vspace{0.5cm}

    \begin{subfigure}{0.45\textwidth}
        \centering
        \includegraphics[width=\linewidth]{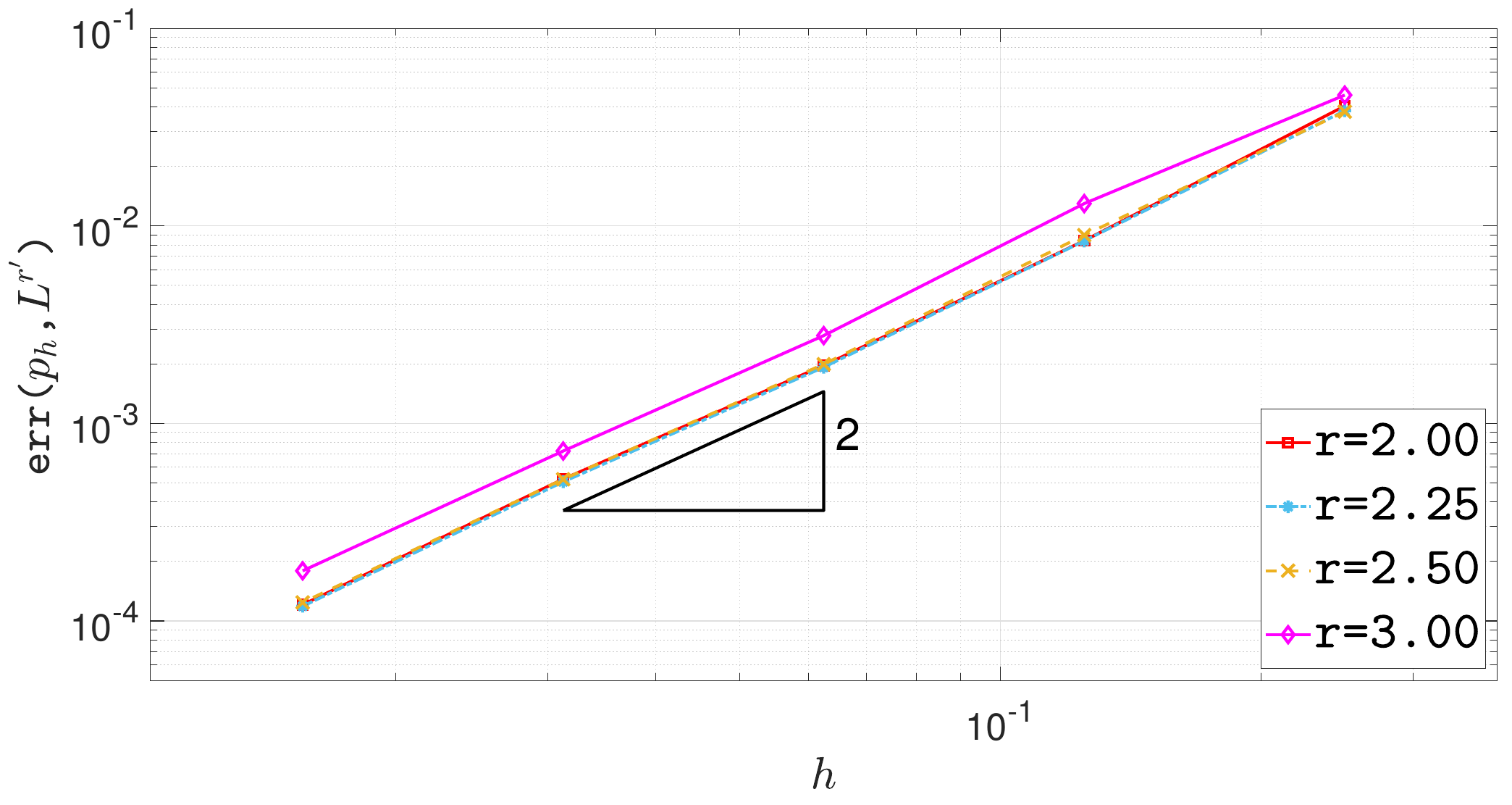}
    \end{subfigure}\qquad
    \begin{subfigure}{0.45\textwidth}
        \centering
        \includegraphics[width=\linewidth]{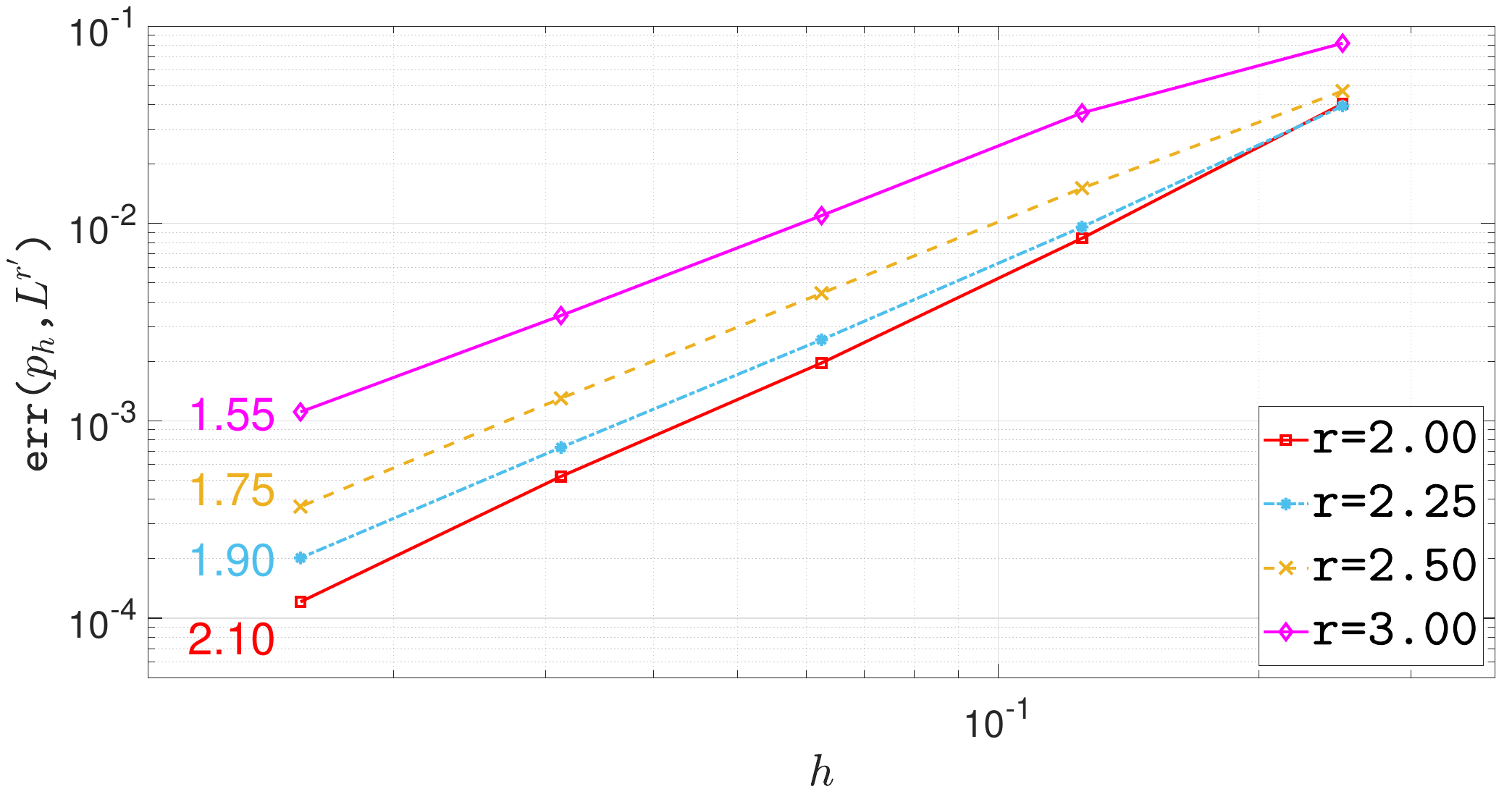}
    \end{subfigure}

    \vspace{0.5cm}
    
    \begin{subfigure}{0.45\textwidth}
        \centering
        \includegraphics[width=\linewidth]{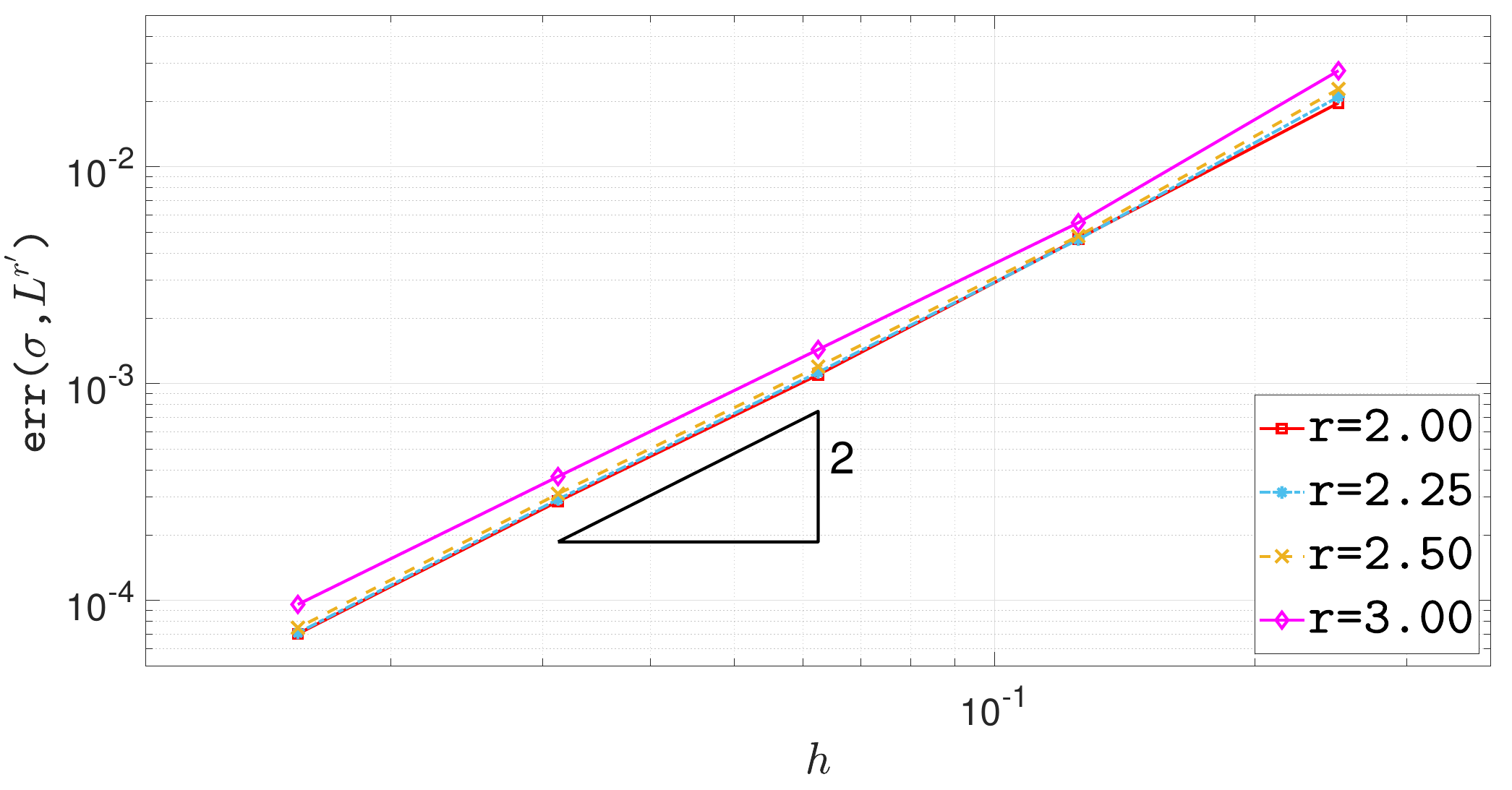}
    \end{subfigure}\qquad
    \begin{subfigure}{0.45\textwidth}
        \centering
        \includegraphics[width=\linewidth]{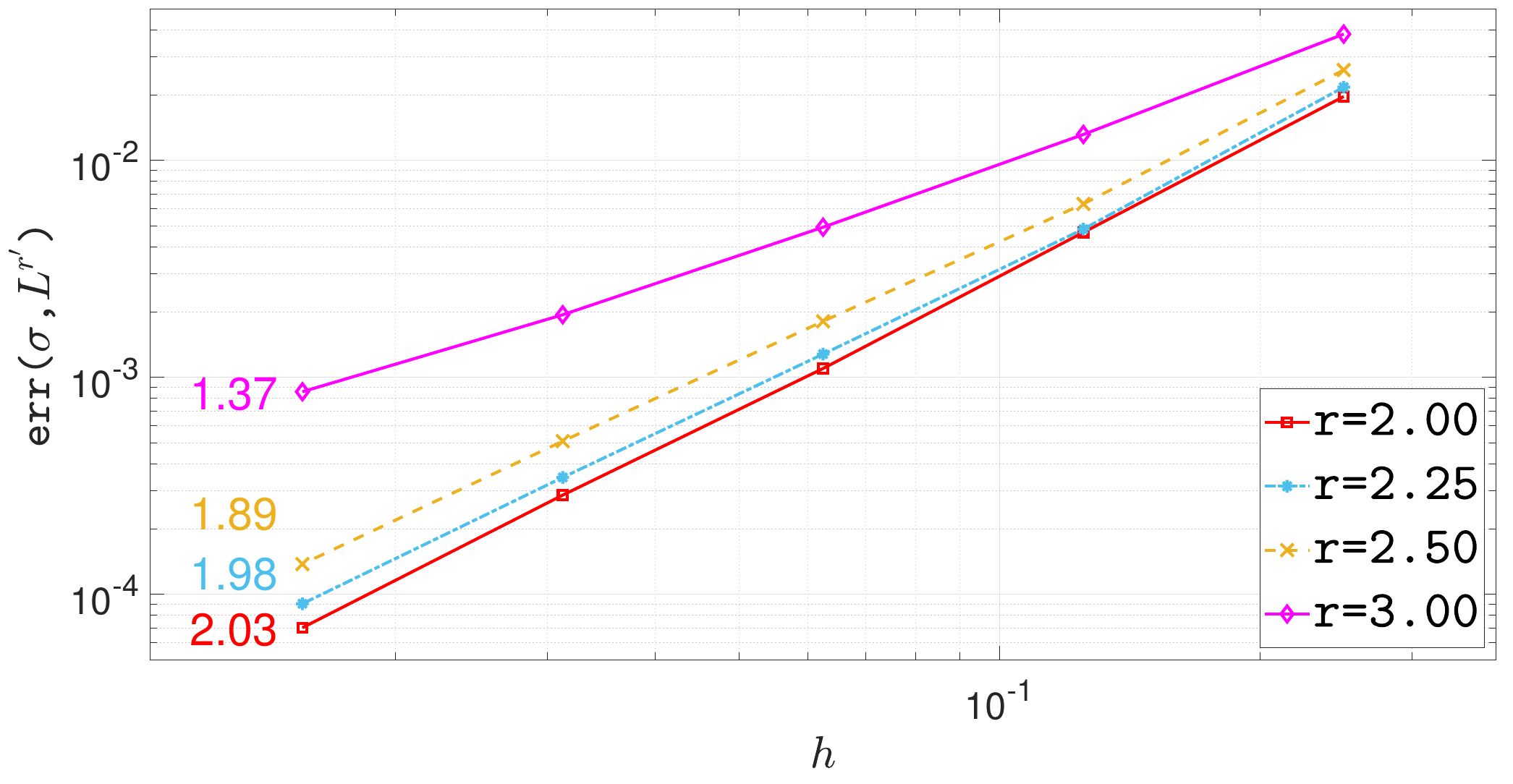}
    \end{subfigure}
   \caption{Test 1. Computed errors defined as in \eqref{eq:err_quant} as a function of the mesh size (log-log scale), for the mesh family \texttt{RANDOM}. Left panel: $\delta=1$, right panel: $\delta=0$.}
    \label{fig:test1-R}
\end{figure}
In order to interpret the results illustrated in Fig.~\ref{fig:test1-Q} and Fig.~\ref{fig:test1-R} with respect to the theoretical estimates established in Section~\ref{sec:error_analysis}, Table~\ref{tab:test1-err} reports the expected convergence orders corresponding to the different sources of error derived in Theorems~\ref{theo:main} and~\ref{theo:main:2}, specifically the two terms appearing on the right in equations \eqref{eq:bound_u_uh} and \eqref{eq:bound_p_ph}, respectively (see also Remark~\ref{rem:orders}). 
In particular, we report the interpolation errors  
$\b u_{\rm ex} - \b u_I$ and $p_{\rm ex} - p_I$,
as well as the terms \(h^{k/(r-1)}\) and \(h^{2k/r}\) appearing in the bounds \eqref{eq:pressF} and \eqref{eq:proof_press_b}. 
To analyze the stress errors $\texttt{err}(\b \sigma, L^{r'})$, we also show the quantities $\Vert \b \sigma(\cdot, \b \epsilon (\b u_{\rm ex})) - \b \sigma(\cdot, \b \epsilon (\b u_I)) \Vert_{L^{r'}}$ (denoted by $\Vert \b \sigma_{\rm ex} - \b \sigma_I \Vert_{L^{r'}}$).
\begin{table}[!htbp]
\centering
\begin{small}
\begin{tabular}{l|cccccc}
\toprule
\texttt{r} & 
$\tri{\b u_{\rm ex} - \b u_I}\tri_{r}$ &
$\Vert \b u_{\rm ex} - \b u_I \Vert_{W^{1,r}}$ &
$\Vert  p_{\rm ex} - p_I \Vert_{L^{r'}}$ &
$\Vert  \b \sigma_{\rm ex} - \b \sigma_I \Vert_{L^{r'}}$ &
$2/(r-1)$ &
$4/r$
\\
\midrule
\texttt{2.00} &
\texttt{2.00} &
\texttt{2.00} &
\texttt{2.00} &
\texttt{2.00} &
\texttt{2.00} &
\texttt{2.00} 
\\
\texttt{2.25} &
\texttt{2.00} &
\texttt{2.00} &
\texttt{2.00} &
\texttt{2.00} &
\texttt{1.60} &
\texttt{1.77} 
\\
\texttt{2.50} &
\texttt{2.00} &
\texttt{2.00} &
\texttt{2.00} &
\texttt{2.00} &
\texttt{1.33} &
\texttt{1.60} 
\\
\texttt{3.00} &
\texttt{2.00} &
\texttt{2.00} &
\texttt{2.00} &
\texttt{2.00} &
\texttt{1.00} &
\texttt{1.33} 
\\
\bottomrule
\end{tabular}
\end{small}
\caption{Test 1. Expected orders of convergence for the terms appearing in the \emph{a priori} error estimates in Section~\ref{sec:error_analysis}.}
\label{tab:test1-err}
\end{table}
It can be observed that, for $\delta = 1$, the interpolation errors dominate all the error quantities defined in \eqref{eq:err_quant}. 
For $\delta = 0$ the results are less pronounced compared to the case $\delta = 1$.
Let us analyze the velocity errors in the discrete norm. 
For $r=\texttt{2.25}$ the averaged rates \texttt{1.80} and  \texttt{1.87} are close to the rate $4/r$. 
For $r=\texttt{2.50}$  velocity errors have rates \texttt{1.50} and  \texttt{1.52}, which fall between the rates $2/(r-1)$ and $4/r$. 
For $r=\texttt{3.00}$ we observe rates \texttt{1.13} and  \texttt{0.85},
with the expected rate $2/(r-1)$ nearly attained.
Similar rates are observed for the continuous norm.
The pressure errors exhibit in general better rates, lying between $4/r$ and $2$.

\FloatBarrier
\subsection{Test 2. Polynomial solution}

To further investigate how the different sources of error combine in the error estimates, 
we consider Problem~\eqref{eq:stokes.continuous} on $\Omega=(0,1)^2$ where the Dirichlet datum and the loading term  are chosen in accordance with the exact solution
\[
\b u_{\rm ex}(x_1,x_2) = 
\begin{bmatrix}
x_1^2 + x_2^2 + 3x_1 + 5
\\
-2x_1 x_2 - x_1^2 - 3x_2 + 7
\end{bmatrix} \,,
\qquad
p_{\rm ex}(x_1,x_2) =  0 \,.
\]
We notice that $\b u_{\rm ex} \in [\Pk_2(\Omega)]^2 \subseteq \VDG$, hence, by Theorem~\ref{theo:main}, we have $R_1 = 0$. As a consequence the asymptotically dominant contribution to the  error arising from the approximation of $\b \sigma(\cdot, \epsilon(\b u_{\rm ex}))$ should be better appreciated (with less or no influence by the other terms).
In Fig.~\ref{fig:test2-Q} we show the error quantities in \eqref{eq:err_quant} (for the pressures we plot the absolute errors) for the sequence of \texttt{QUADRILATERAL} meshes and parameters $r$ and $\delta$ in \eqref{eq:r-delta}.

\begin{figure}[htbp]
    \centering
    {\texttt{QUADRILATERAL MESHES}}
\\
    \begin{subfigure}{0.45\textwidth}
        \centering
        \includegraphics[width=\linewidth]{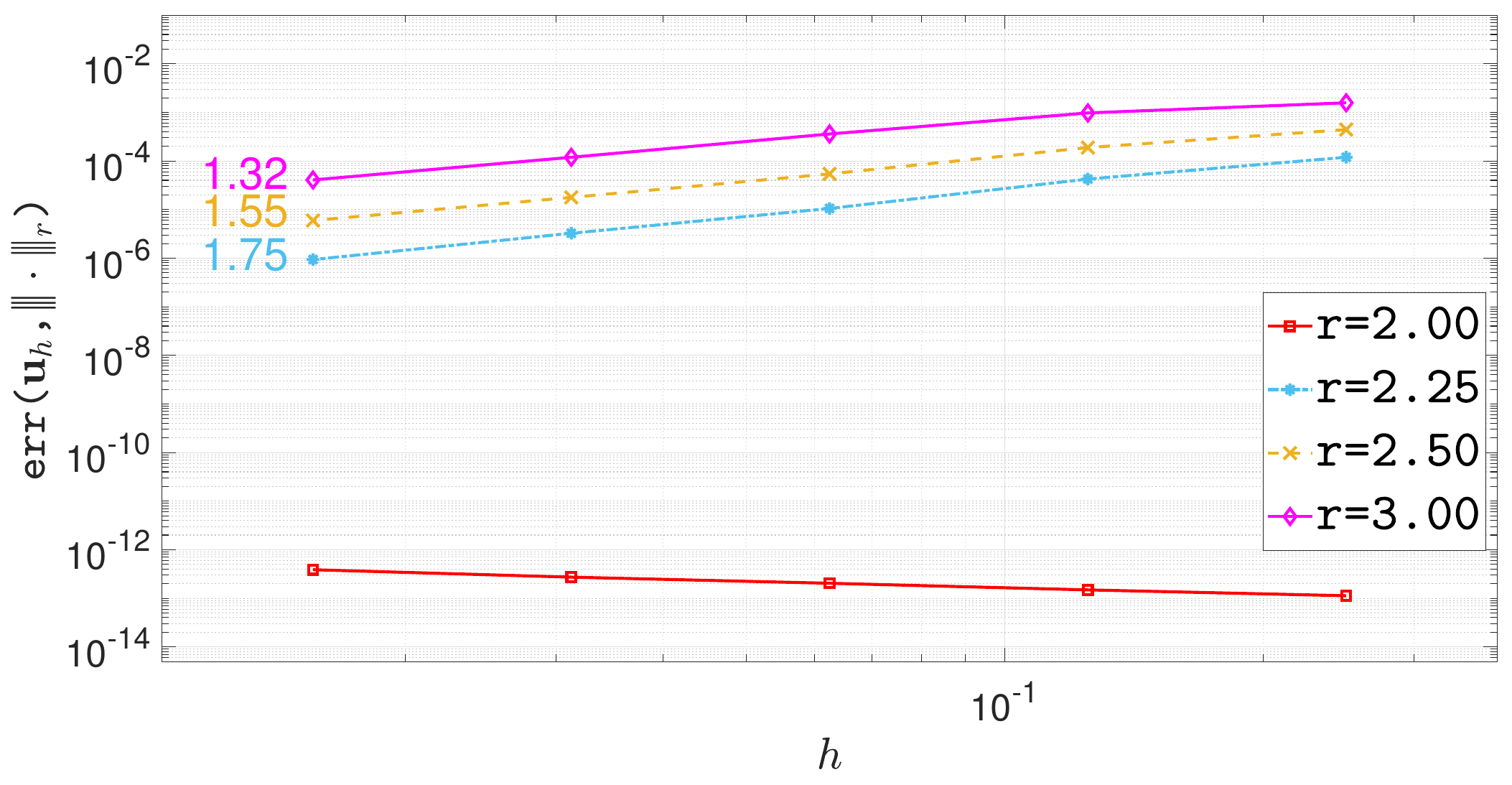}
        
    \end{subfigure}\qquad
    \begin{subfigure}{0.45\textwidth}
        \centering
        \includegraphics[width=\linewidth]{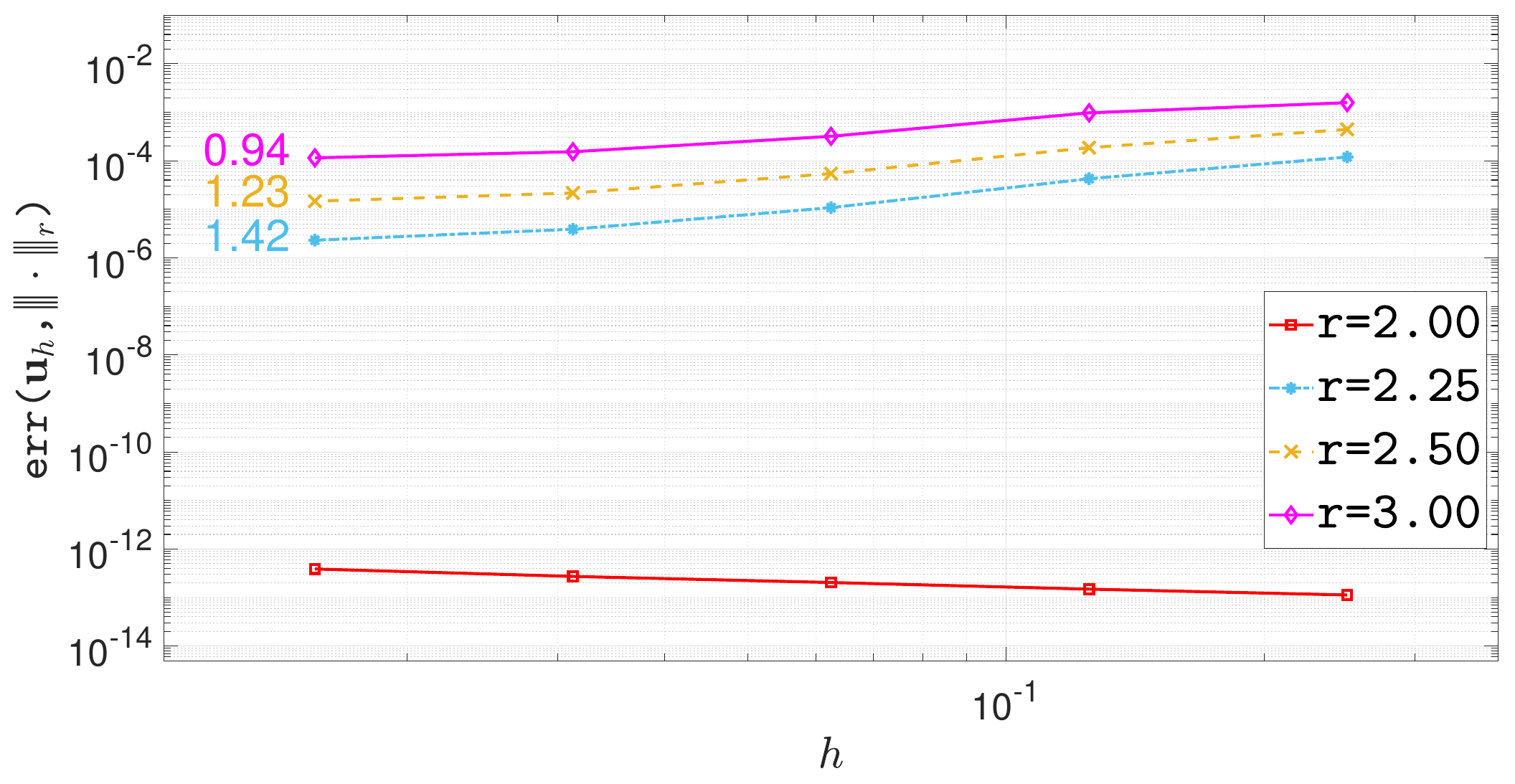}
    \end{subfigure}

    \vspace{0.5cm}

    \begin{subfigure}{0.45\textwidth}
        \centering
        \includegraphics[width=\linewidth]{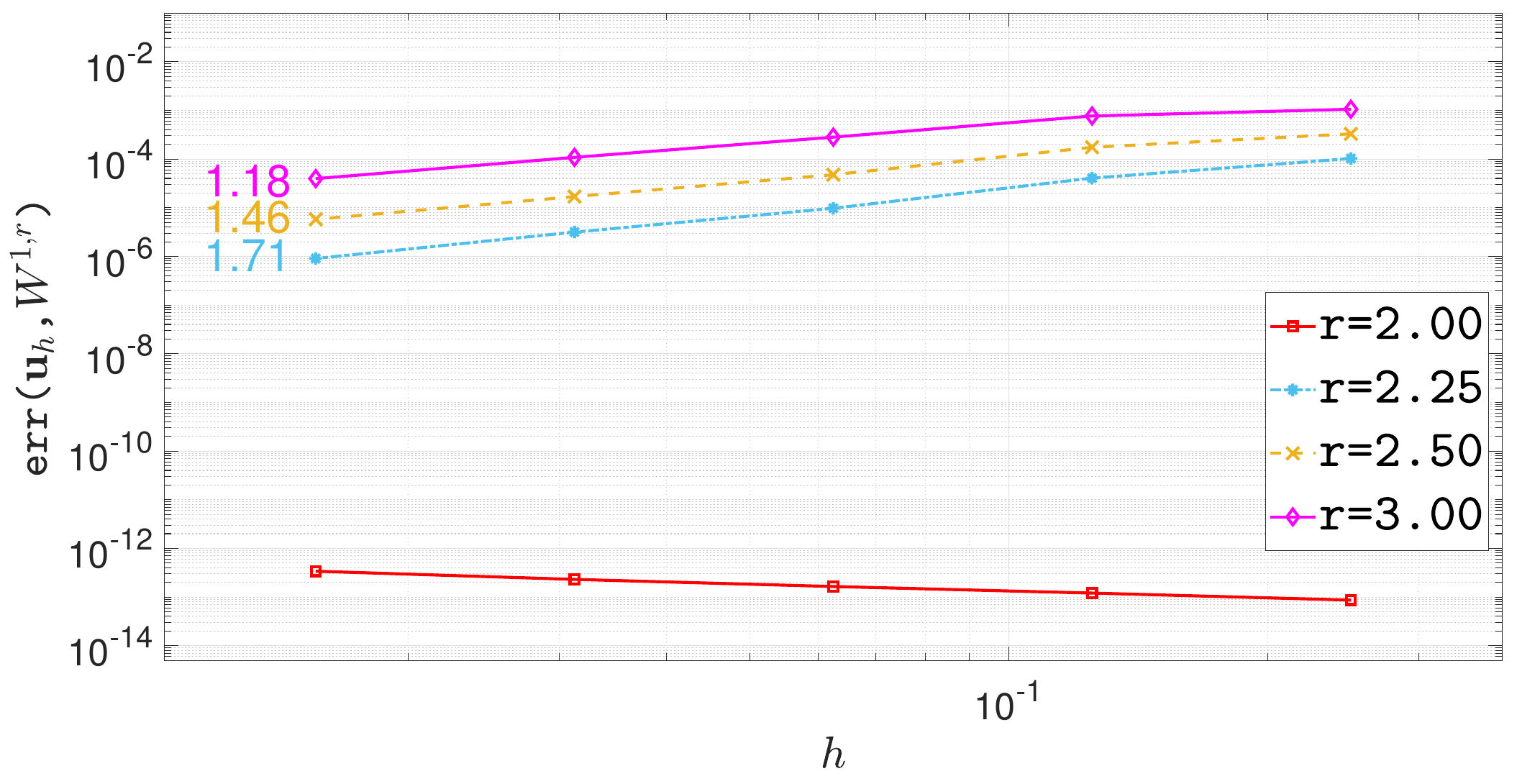}
    \end{subfigure}\qquad
    \begin{subfigure}{0.45\textwidth}
        \centering
        \includegraphics[width=\linewidth]{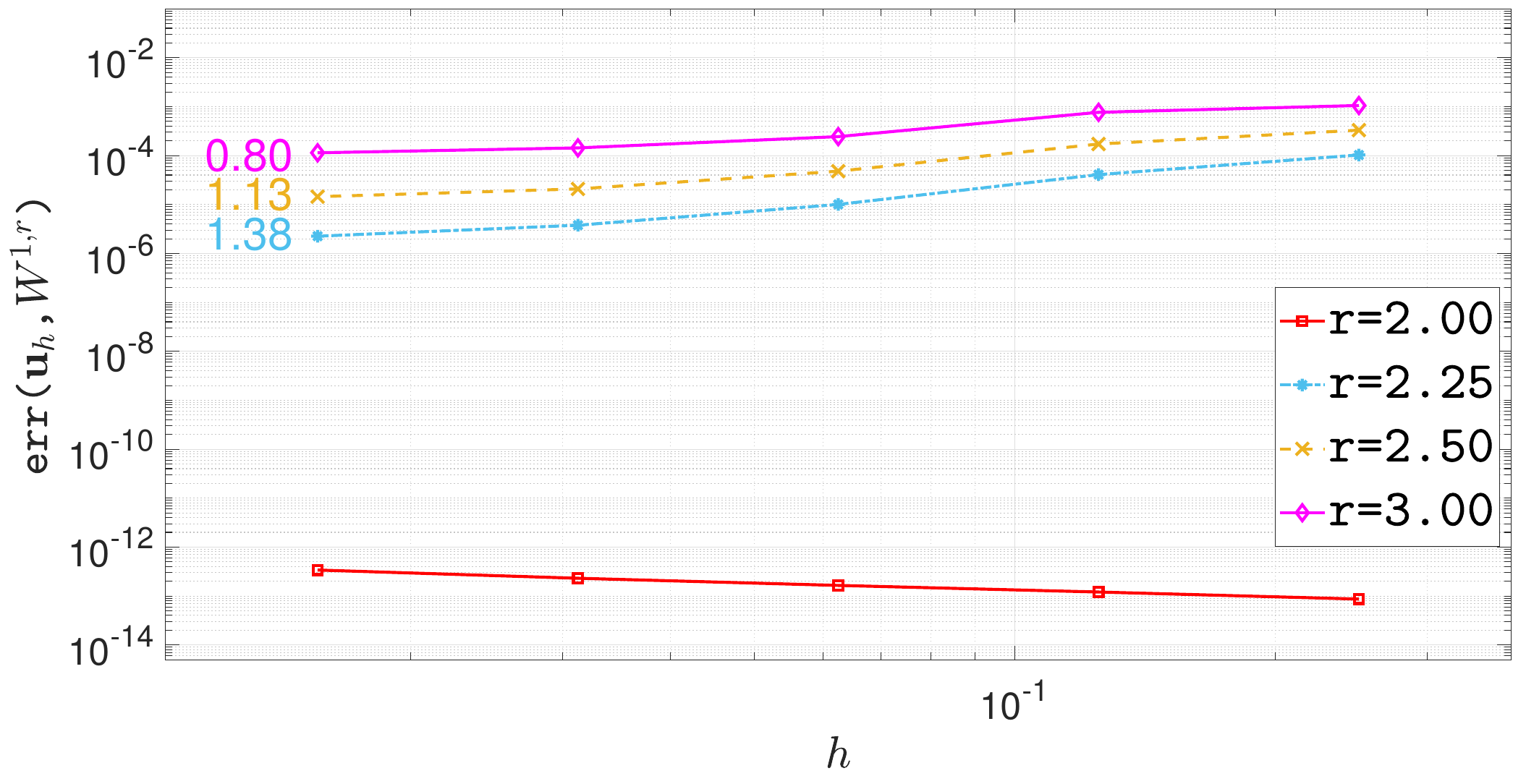}
    \end{subfigure}

    \vspace{0.5cm}

    \begin{subfigure}{0.45\textwidth}
        \centering
        \includegraphics[width=\linewidth]{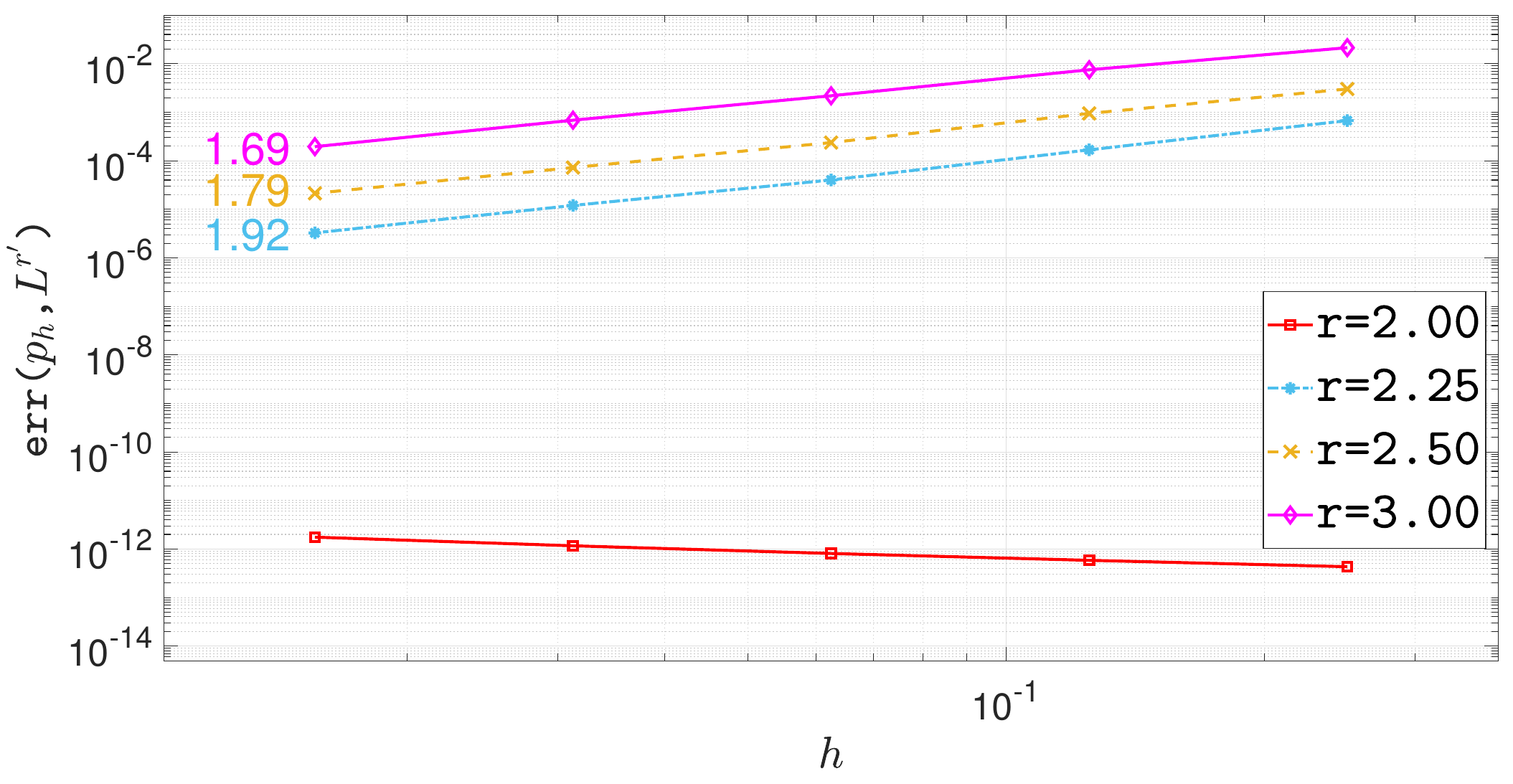}
    \end{subfigure}\qquad
    \begin{subfigure}{0.45\textwidth}
        \centering
        \includegraphics[width=\linewidth]{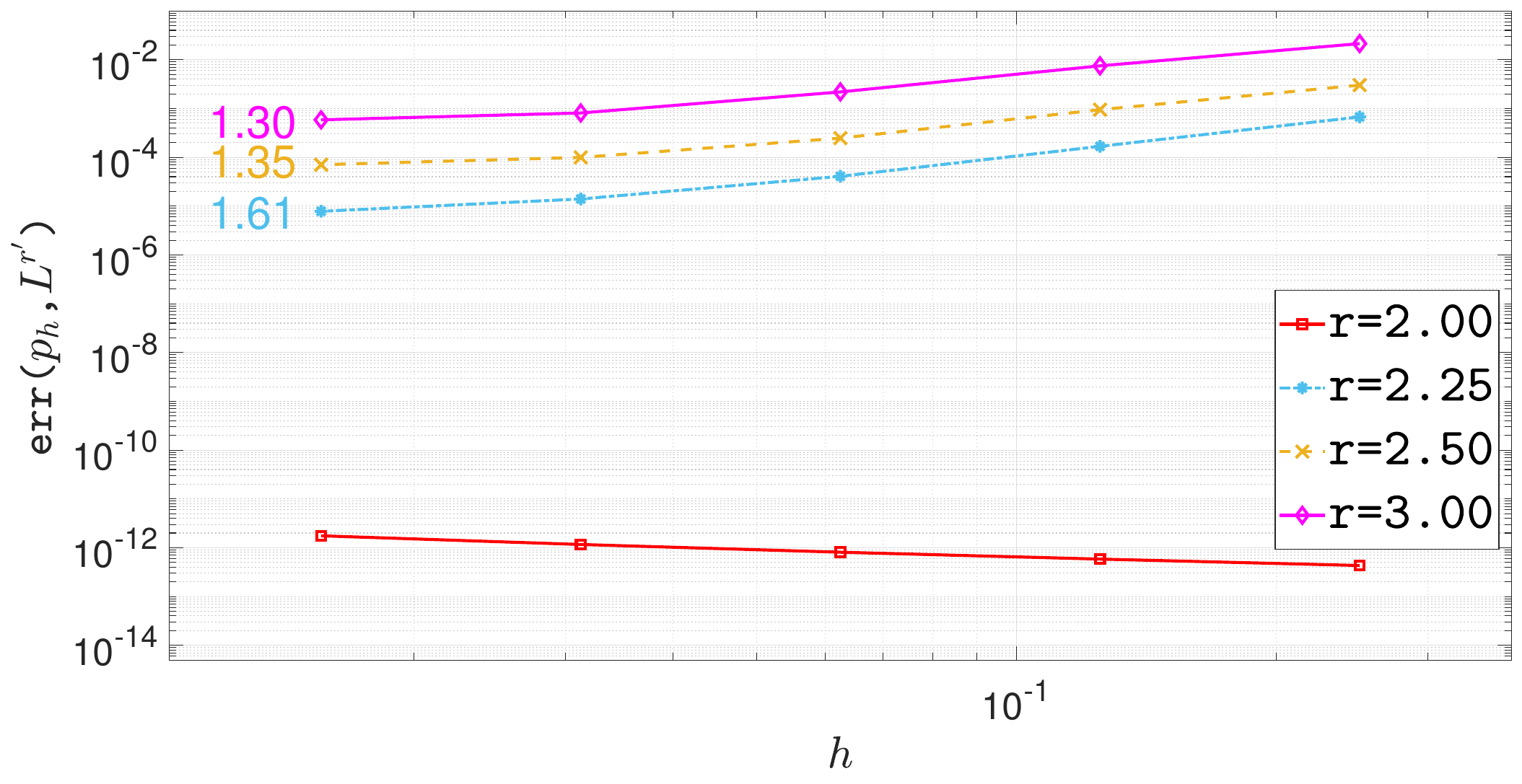}
    \end{subfigure}

    \vspace{0.5cm}
    
    \begin{subfigure}{0.45\textwidth}
        \centering
        \includegraphics[width=\linewidth]{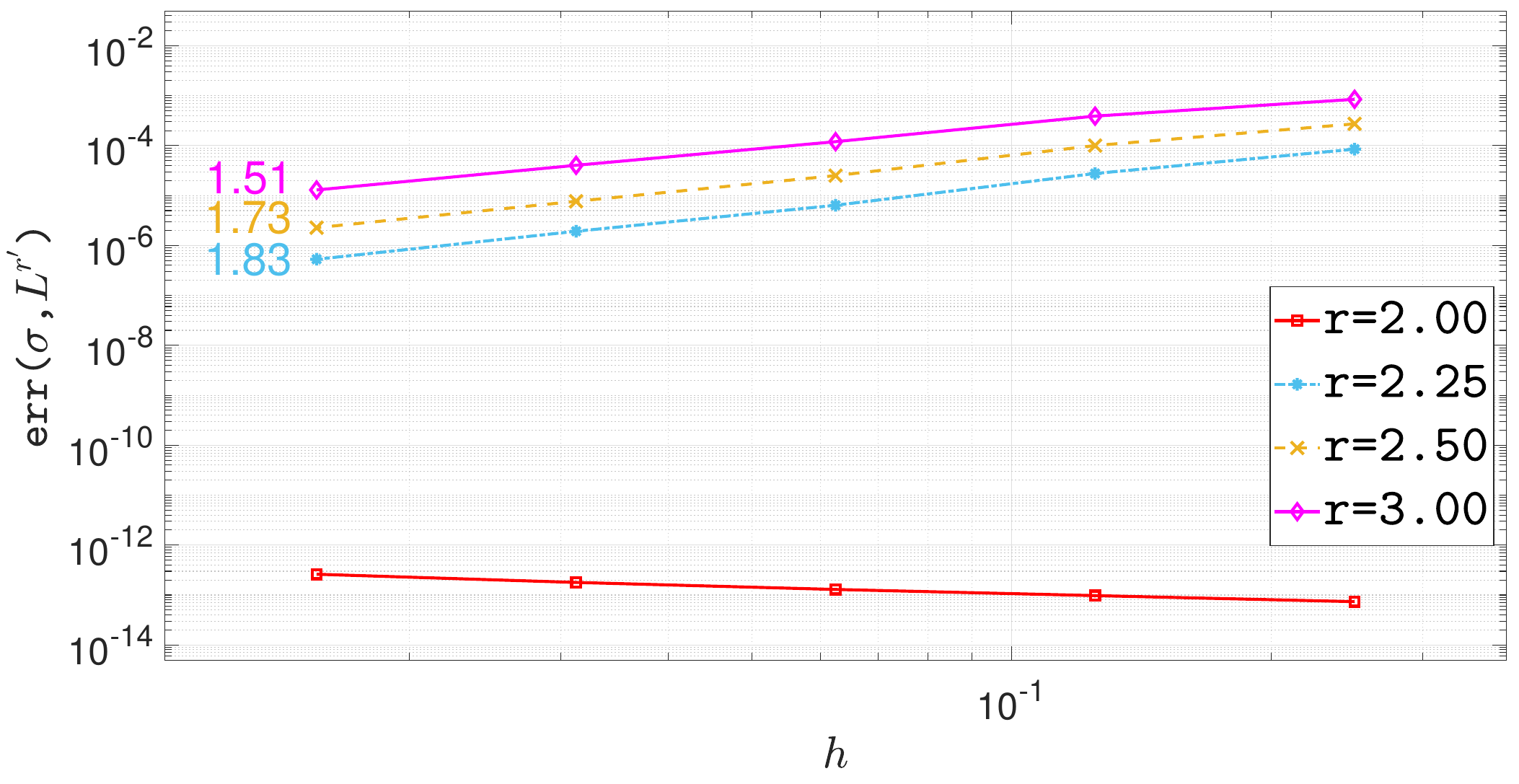}
    \end{subfigure}\qquad
    \begin{subfigure}{0.45\textwidth}
        \centering
        \includegraphics[width=\linewidth]{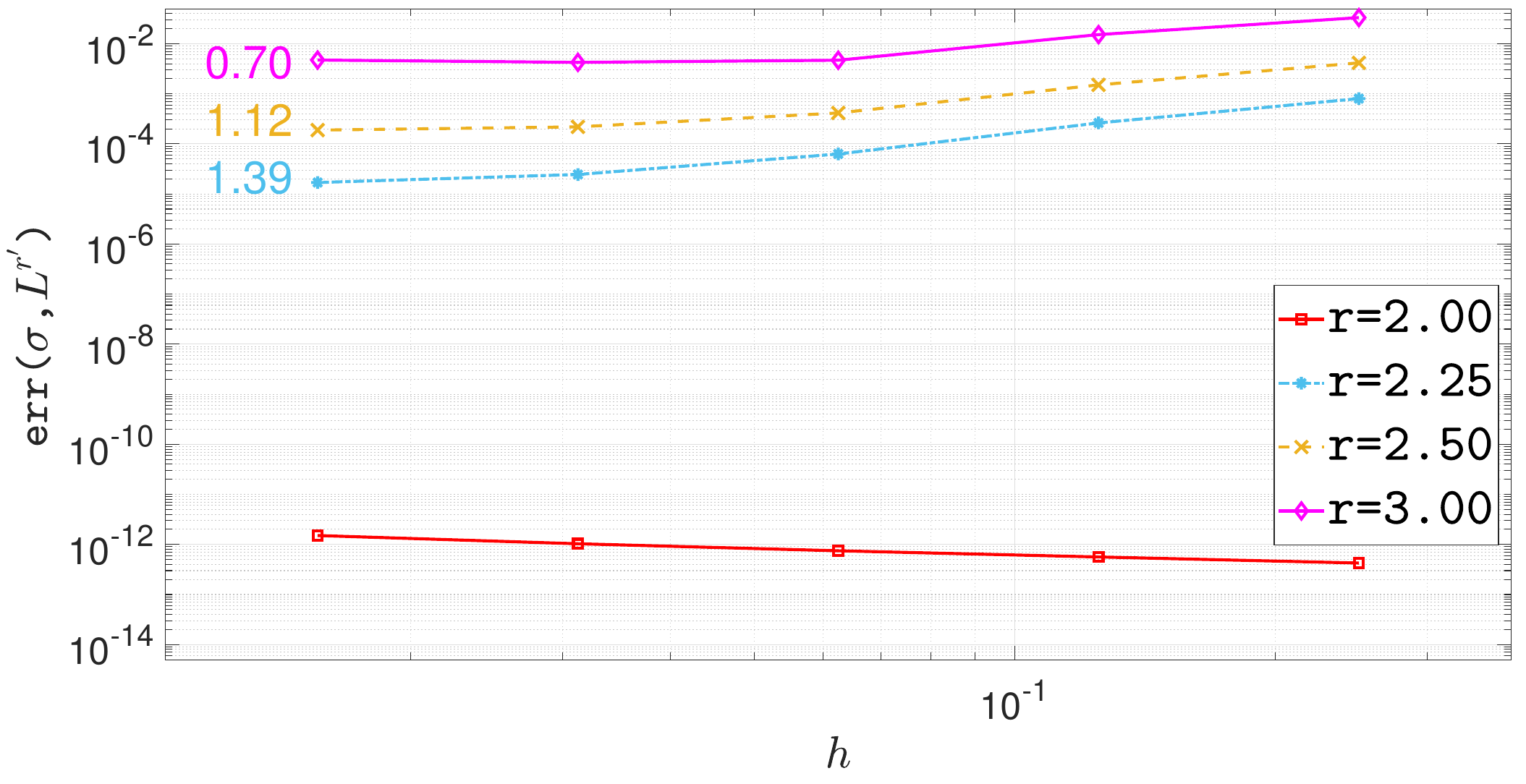}
    \end{subfigure}
   \caption{Test 2. Computed errors defined as in \eqref{eq:err_quant} as a function of the mesh size (loglog scale), for the mesh family \texttt{QUADRILATERAL}. Left panel: $\delta=1$, right panel: $\delta=0$.} 
    \label{fig:test2-Q}
\end{figure}
As expected, for $r=\texttt{2}$, we recover the so-called ``patch test'', i.e. the discrete solution and the exact solution coincide up to machine precision. 
For $r>2$ the average experimental order of convergence \texttt{AEOC} of the $\texttt{err}(\b u_h, \tri{\b \cdot}\tri_{r})$ is in good agreement with rates predicted by Corollary \ref{cor:main} for $\delta=1$ and Theorem \ref{theo:main} for $\delta=0$, namely $4/r$ and $2/(r-1)$ respectively (cf. Table \ref{tab:test1-err}). 
The pressure errors exhibit in both cases better rates.
\FloatBarrier
\FloatBarrier
\subsection{Test 3. Singular solution}
The purpose of this test is to assess the performance of the method in the presence of solutions with low Sobolev regularity. 
%
\begin{table}[!htbp]
\centering
\begin{small}
$\texttt{err}(\b u_h, \tri{\b \cdot}\tri_{r})$\\
\begin{tabular}{p{2cm}|cccc}
\toprule
& \multicolumn{4}{c}{\texttt{r}}
\\
   \texttt{1/h}
&  $\texttt{2.00}$ 
&  $\texttt{2.25}$      
&  $\texttt{2.50}$ 
&  $\texttt{3.00}$       
\\
\midrule          
  \texttt{4}
& \texttt{7.577256e-04}
& \texttt{1.461758e-03}
& \texttt{3.500512e-03}
& \texttt{1.119863e-02}
\\
  \texttt{8}
& \texttt{3.772397e-04}
& \texttt{8.214921e-04}
& \texttt{2.262381e-03}
& \texttt{8.787189e-03}
\\
  \texttt{16}
& \texttt{1.874426e-04}
& \texttt{4.634299e-04}
& \texttt{1.468054e-03}
& \texttt{6.907361e-03}
\\
  \texttt{32}
& \texttt{9.308978e-05}
& \texttt{2.622244e-04}
& \texttt{9.550462e-04}
& \texttt{5.434720e-03}
\\
\midrule     
{\texttt{AEOC}}
& \texttt{1.008326e+00}
& \texttt{8.262771e-01}
& \texttt{6.246411e-01}
& \texttt{3.476817e-01}
\\
\midrule     
\texttt{$\frac{4}{r^2}$}
& \texttt{1.00}
& \texttt{0.79}
& \texttt{0.64}
& \texttt{0.44}
\\
\bottomrule
\end{tabular}
\\[0.5cm]
\end{small}
$\texttt{err}(\b u_h, W^{1,r})$\\
\begin{small}
\begin{tabular}{p{2cm}|cccc}
\toprule
& \multicolumn{4}{c}{\texttt{r}}
\\
   \texttt{1/h}
&  $\texttt{2.00}$ 
&  $\texttt{2.25}$      
&  $\texttt{2.50}$ 
&  $\texttt{3.00}$       
\\
\midrule          
  \texttt{4}
& \texttt{7.576044e-04}
& \texttt{1.457011e-03}
& \texttt{3.487738e-03}
& \texttt{1.117410e-02}
\\
  \texttt{8}
& \texttt{3.772245e-04}
& \texttt{8.207718e-04}
& \texttt{2.260225e-03}
& \texttt{8.782263e-03}    
\\
  \texttt{16}
& \texttt{1.874407e-04}
& \texttt{4.633219e-04}
& \texttt{1.467695e-03}
& \texttt{6.906383e-03}
\\
  \texttt{32}
& \texttt{9.308954e-05}
& \texttt{2.622083e-04}
& \texttt{9.549867e-04}
& \texttt{5.434526e-03}
\\
\midrule     
{\texttt{AEOC}}
& \texttt{1.008251e+00}
& \texttt{8.247422e-01}
& \texttt{6.229130e-01}
& \texttt{3.466444e-01}
\\
\midrule     
\texttt{$\frac{4}{r^2}$}
& \texttt{1.00}
& \texttt{0.79}
& \texttt{0.64}
& \texttt{0.44}
\\
\bottomrule
\end{tabular}
\end{small}
\\[0.5cm]
$\texttt{err}(p_h, L^{r'})$\\
\begin{small}
\begin{tabular}{p{2cm}|cccc}
\toprule
& \multicolumn{4}{c}{\texttt{r}}
\\
   \texttt{1/h}
&  $\texttt{2.00}$ 
&  $\texttt{2.25}$      
&  $\texttt{2.50}$ 
&  $\texttt{3.00}$       
\\
\midrule          
  \texttt{4}
& \texttt{1.173547e-01}
& \texttt{1.170544e-01}
& \texttt{1.238788e-01}
& \texttt{1.354350e-01}
\\
  \texttt{8}
& \texttt{5.832495e-02}
& \texttt{5.822885e-02}
& \texttt{6.167576e-02}
& \texttt{6.752532e-02}
\\
  \texttt{16}
& \texttt{2.896772e-02}
& \texttt{2.892814e-02}
& \texttt{3.065185e-02}
& \texttt{3.358190e-02}
\\
  \texttt{32}
& \texttt{1.438465e-02}
& \texttt{1.436589e-02}
& \texttt{1.522427e-02}
& \texttt{1.668436e-02}
\\
\midrule     
{\texttt{AEOC}}
& \texttt{1.009424e+00}
& \texttt{1.008820e+00}
& \texttt{1.008161e+00}
& \texttt{1.007010e+00}
\\
\midrule     
\texttt{$\Vert p_{\rm ex} - p_I \Vert_{L^{r'}}$}
& \texttt{1.00}
& \texttt{1.00}
& \texttt{1.00}
& \texttt{1.00}
\\
\bottomrule
\end{tabular}
\end{small}
\caption{Test 3. Computed errors $\texttt{err}(\b u_h, \tri{\b \cdot}\tri_{r})$ (top), $\texttt{err}(\b u_h, W^{1,r})$ (middle), and $\texttt{err}(p_h, L^{r'})$ (bottom) as in \eqref{eq:err_quant} for the mesh family \texttt{CARTESIAN}: $\delta=1$.}
\label{tab:test3-d1}
\end{table}
To this end, we examine the behavior of the proposed method for the benchmark
test introduced in \cite[Section~7]{Belenki.Berselli.ea:12}.
We consider Problem~\eqref{eq:stokes.continuous} on the square domain $\Omega = (-1,1)^2$, where
the forcing term $\b f$ (depending on $r$ and $\delta$ in \eqref{eq:Carreau}) and the Dirichlet boundary conditions prescribed on $\partial\Omega$
are chosen in accordance with the exact solution
\[
\b u_{\rm ex}(x_1,x_2) = 
\vert \b x \vert^{0.01}
\begin{bmatrix}
x_2
\\
-x_1
\end{bmatrix} \,,
\qquad
p_{\rm ex}(x_1,x_2) =  -\vert \b x \vert^{\gamma} + c_\gamma \,,
\]
where $\gamma=\frac{2}{r} - 1 + 0.01$ and $c_\gamma$ is s.t. $p_{\rm ex}$ is zero averaged.
We note that for all $r \in [2, \infty)$
\[
\b u_{\rm ex} \in \b W^{2/r+1, r}(\Omega) \,,
\quad
\b{\sigma}(\cdot, \b\epsilon(\b u_{\rm ex})) \in \mathbb{W}^{2/r',r'}(\Omega) \,,
\quad
\b f \in \b {W}^{2/r' -1,r'}(\Omega)\,,
\quad
p_{\rm ex} \in W^{1, r'}(\Omega) \,,
\]
therefore, with the notation of Theorem~\ref{theo:main} and Theorem~\ref{theo:main:2}: 
\[
k_1= \frac{2}{r} \,,
\quad
k_2= \frac{2}{r'} \,,
\quad
k_3= \frac{2}{r'} -2 \,,
\quad
k_4= 1 \,.
\]
The domain $\Omega$ is partitioned with a sequence of \texttt{CARTESIAN} meshes with diameter $\texttt{h}$ =$\texttt{1/4}$, $\texttt{1/8}$, $\texttt{1/16}$, $\texttt{1/32}$ (see Fig.\ref{fig:meshes}).
Table~\ref{tab:test3-d1} presents the computed errors $\texttt{err}(\b u_h, \tri{\b \cdot}\tri_{r})$ (top panel), $\texttt{err}(\b u_h, W^{1,r})$ (middle panel), and $\texttt{err}(p_h, L^{r'})$ (bottom panel), cf. \eqref{eq:err_quant}, and the associated average experimental orders of convergence  \texttt{AEOC},
for the values of $r=\texttt{2.00,2.25,2.50,3.00}$ and $\delta=1$. The corresponding results for  $\delta=0$ are shown in Table~\ref{tab:test3-d0}. 
Notice that, according to Theorem~\ref{theo:main} and Corollary~\ref{cor:W1r.estimate}, the expected rate of convergence for the velocity in both the discrete and continuous norm is $2k/r^2$
Finally, for the error $\texttt{err}(\b \sigma, L^{r'})$, linear convergence is observed.
\begin{table}[!htbp]
\centering
$\texttt{err}(\b u_h, \tri{\b \cdot}\tri_{r})$\\
\begin{small}
\begin{tabular}{p{2cm}|cccc}
\toprule
& \multicolumn{4}{c}{\texttt{r}}
\\
   \texttt{1/h}
&  $\texttt{2.00}$ 
&  $\texttt{2.25}$      
&  $\texttt{2.50}$ 
&  $\texttt{3.00}$       
\\
\midrule          
  \texttt{4}
& \texttt{7.577256e-04}
& \texttt{3.757320e-03}
& \texttt{1.784934e-02}
& \texttt{1.572178e-01}
\\
  \texttt{8}
& \texttt{3.772397e-04}
& \texttt{2.139041e-03}
& \texttt{1.114599e-02}
& \texttt{1.064344e-01}
\\
  \texttt{16}
& \texttt{1.874426e-04}
& \texttt{1.222004e-03}
& \texttt{6.981838e-03}
& \texttt{7.427671e-02}
\\
  \texttt{32}
& \texttt{9.308978e-05}
& \texttt{6.988137e-04}
& \texttt{4.376918e-03}
& \texttt{5.255378e-02}
\\
\midrule     
{\texttt{AEOC}}
& \texttt{1.008326e+00}
& \texttt{8.0890813-01}
& \texttt{6.759612e-01}
& \texttt{5.269660e-01}
\\
\midrule     
\texttt{$\frac{4}{r^2}$}
& \texttt{1.00}
& \texttt{0.79}
& \texttt{0.64}
& \texttt{0.44}
\\
\bottomrule
\end{tabular}
\end{small}
\\[0.5cm]
$\texttt{err}(\b u_h, W^{1,r})$\\
\begin{small}
\begin{tabular}{p{2cm}|cccc}
\toprule
& \multicolumn{4}{c}{\texttt{r}}
\\
   \texttt{1/h}
&  $\texttt{2.00}$ 
&  $\texttt{2.25}$      
&  $\texttt{2.50}$ 
&  $\texttt{3.00}$       
\\
\midrule          
  \texttt{4}
& \texttt{7.576044e-04}
& \texttt{3.732379e-03}
& \texttt{1.774680e-02}
& \texttt{1.564706e-01}
\\
  \texttt{8}
& \texttt{3.772245e-04}
& \texttt{2.135423e-03}
& \texttt{1.112967e-02}
& \texttt{1.063065e-01}
\\
  \texttt{16}
& \texttt{1.874407e-04}
& \texttt{1.221485e-03}
& \texttt{6.979270e-03}
& \texttt{7.425567e-02}
\\
  \texttt{32}
& \texttt{9.308954e-05}
& \texttt{6.987394e-04}
& \texttt{4.376515e-03}
& \texttt{5.255063e-02}
\\
\midrule     
{\texttt{AEOC}}
& \texttt{1.008251e+00}
& \texttt{8.057564e-01}
& \texttt{6.732348e-01}
& \texttt{5.247039e-01}
\\
\midrule     
\texttt{$\frac{4}{r^2}$}
& \texttt{1.00}
& \texttt{0.79}
& \texttt{0.64}
& \texttt{0.44}
\\
\bottomrule
\end{tabular}
\end{small}
\\[0.5cm]
$\texttt{err}(p_h, L^{r'})$\\
\centering
\begin{small}
\begin{tabular}{p{2cm}|cccc}
\toprule
& \multicolumn{4}{c}{\texttt{r}}
\\
   \texttt{1/h}
&  $\texttt{2.00}$ 
&  $\texttt{2.25}$      
&  $\texttt{2.50}$ 
&  $\texttt{3.00}$       
\\
\midrule          
  \texttt{4}
& \texttt{1.173547e-01}
& \texttt{1.170279e-01}
& \texttt{1.237627e-01}
& \texttt{1.349763e-01}
\\
  \texttt{8}
& \texttt{5.832495e-02}
& \texttt{5.821788e-02}
& \texttt{6.162369e-02}
& \texttt{6.731919e-02}
\\
  \texttt{16}
& \texttt{2.896772e-02}
& \texttt{2.892321e-02}
& \texttt{3.062604e-02}
& \texttt{3.348035e-02}
\\
  \texttt{32}
& \texttt{1.438465e-02}
& \texttt{1.436359e-02}
& \texttt{1.521138e-02}
& \texttt{1.663401e-02}
\\
\midrule     
{\texttt{AEOC}}
& \texttt{1.009424e+00}
& \texttt{1.008788e+00}
& \texttt{1.008117e+00}
& \texttt{1.006832e+00}
\\
\midrule     
\texttt{$\Vert p_{\rm ex} - p_I \Vert_{L^{r'}}$}
& \texttt{1.00}
& \texttt{1.00}
& \texttt{1.00}
& \texttt{1.00}
\\
\bottomrule
\end{tabular}
\end{small}
\caption{Test 3. Computed errors $\texttt{err}(\b u_h, \tri{\b \cdot}\tri_{r})$ (top), $\texttt{err}(\b u_h, W^{1,r})$ (middle), and $\texttt{err}(p_h, L^{r'})$ (bottom) as in \eqref{eq:err_quant} for the mesh family \texttt{CARTESIAN}: $\delta=0$.}
\label{tab:test3-d0}
\end{table}

\section{Conclusions}
\label{sec:conclusions}
We presented a theoretical analysis of Virtual Element discretizations of incompressible non-Newtonian flows governed by the Carreau–Yasuda constitutive law in the shear-thickening regime ($r>2$). Our analysis also covers the degenerate limit ($\delta = 0$), which corresponds to the power-law model. The proposed Virtual Element method is fully compatible with general polygonal meshes and yields an exactly divergence-free discrete velocity field.  To carry out the analysis, we introduced novel theoretical tools, including an inf–sup stability bound in non-Hilbertian norms, a suitably tuned stabilization for the case $r>2$, and discrete norms consistent with the constitutive law. We presented numerical results to demonstrate the theoretical findings and assess the practical performance of the proposed method.  The present results extend and complete those in \cite{Antonietti_et_al_2024}, which covered the case $1<r<2$ (shear-thinning regime), demonstrating that the VEM provides a robust discretization framework for Carreau–Yasuda non-Newtonian flows in both shear-thickening and shear-thinning regimes.  

\section*{Acknowledgments}
This research has been partially funded by the European Union (ERC, NEMESIS, project number 101115663). Views and opinions expressed are however those of the author(s) only and do not necessarily reflect those of the European Union or the European Research Council Executive Agency. Neither the European Union nor the granting authority can be held responsible for them.
The present research is part of the activities of ``Dipartimento di Eccellenza 2023-2027''. PA and MV also acknowledge MUR--PRIN/PNRR 2022 grant n. P2022BH5CB, funded by MUR.
GV has been partially funded by  
PRIN2022 n. \verb+2022MBY5JM+\emph{``FREYA - Fault REactivation: a hYbrid numerical Approach''} research grant, and
PRIN2022PNRR n. \verb+P2022M7JZW+\emph{``SAFER MESH - Sustainable mAnagement oF watEr Resources: ModEls and numerical MetHods''} research grant,
funded by the Italian Ministry of Universities and Research (MUR) and  by the European
Union through Next Generation EU, M4C2.
GV acknowledges financial support of INdAM-GNCS through project
CUP\verb+E53C24001950001+ \emph{``VEM per la trattazione di problemi definiti su domini parametrici o randomici''}.
The authors are members of INdAM-GNCS.
\bibliographystyle{plain}
\bibliography{references}
\end{document}